\numberwithin{equation}{section}
\newcommand\C{\mathbb{C}}
\newcommand\CDach{\widehat{\C}}
\newcommand\N{{\mathbb N}}
\newcommand\R{{\mathbb R}}
\newcommand\D{{\mathbb D}}
\newcommand\Db{\overline{{\mathbb D}}}
\newcommand\Hy{\mathbb{H}}
\newcommand\Sp{{\mathbb S}}
\newcommand\U{{\mathbf U}}
\newcommand\V{{\mathbf V}}
\providecommand{\abs}[1]{\lvert#1\rvert}
\providecommand{\norm}[1]{\lVert#1\rVert}
\newcommand{\inte}  {\operatorname{int}}
\newcommand{\crit}{\operatorname{crit}}
\newcommand{\post}{\operatorname{post}}
\newcommand{\diam}{\operatorname{diam}}
\newcommand{\mesh}{\operatorname{mesh}}
\newcommand{\length}{\operatorname{length}}
\newcounter{mylistnum}
\newtheorem{theorem}{Theorem}[section]
\newtheorem{corollary}[theorem]{Corollary}
\newtheorem{lemma}[theorem]{Lemma}
\newtheorem{proposition}[theorem]{Proposition}
\newtheorem{claim}{Claim}
\theoremstyle{remark}
\newtheorem*{remark}{Remark}
\numberwithin{figure}{theorem}
\theoremstyle{definition}
\newtheorem{definition}[theorem]{Definition}
\newtheorem{example}[theorem]{Example}
\title{Extending Rational Expanding Thurston Maps}
\author{Daniel Meyer, Julia M\"{u}nch}
\date{}
\begin{document}

\begin{abstract}
  We consider postcritically finite rational maps
  $f\colon \CDach \to \CDach$ whose Julia set is the whole
  Riemann sphere $\CDach$. We call such a map an expanding
  rational Thurston map. Identifying $\CDach$ with the unit
  sphere in $\R^3$, we show that $f$ may be extended on a 
  neighborhood $\Omega\subset \R^3$ of $\CDach$ to a
  quasi-regular map $F\colon \Omega \to \R^3$. In fact, $F$ is
  uniformly quasi-regular in the following sense. The sequence of
  iterates $F^n$, each of which is defined on a neighborhood
  $\Omega_n$ of 
  $\CDach= \Sp^2 \subset \R^3$, is uniformly regular. Here
  $\Omega_n$ shrink to $\CDach$, 
  meaning that $\bigcap \Omega_n = \CDach$.   

  This result may be viewed as a non-homeomorphic version of the
  extension of a quasi-conformal mapping $f:\R^2\to \R^2$ to a
  quasi-conformal mapping $F\colon \R^3 \to \R^3$ due to Ahlfors. 


	
\end{abstract}
\maketitle
\tableofcontents
\section{Introduction}
In this paper we extend certain rational maps (i.e., holomorphic
self-maps on the Riemann sphere $\CDach$) to a neighborhood $\Omega
\subset \R^3$ of $\CDach$, viewed as the unit sphere.

There are essentially two, very different, approaches to
generalize holomorphic maps. The first is the setting of several
complex variables, which is based on considering power series in
several complex variables. The second approach, which is the one
considered in this work, is to generalize the geometric properties of
holomorphic maps. A holomorphic function is conformal away from
its critical points. In dimensions three and higher there is a
rigidity phenomenon; by Liouville's theorem every locally
conformal map is (the restriction of) a M\"{o}bius
transformation. Since the maps we are considering are
non-invertible (i.e., cannot be M\"{o}bius transformations), we
have to consider a condition that is weaker than conformality.

The natural generalization of conformal maps are
\emph{quasi-conformal maps}. Roughly speaking, while conformal
maps preserve angles, quasi-conformal ones are allowed to change
angles up to a fixed multiplicative amount.
Another point of view is that such a map maps an infinitesimal
field of ellipsoids of uniformly bounded eccentricity to an
infinitesimal field of spheres (at almost every point). The bound
on the eccentricity of denoted as the \emph{distortion} of the
map, usually denoted by $K\geq 1$. 

Closely related are
\emph{quasi-regular maps}, which need not be homeomorphisms (as
quasi-conformal ones are by definition). Away from critical
points, they locally satisfy the same distortion estimates as
quasi-conformal ones. See Section~\ref{sec:quasi-regular-maps}
for precise definitions. 

Quasi-conformality/quasi-regularity is not preserved under
products (except in trivial cases). This is one of the main reasons
that extending such maps to higher dimensions is far from trivial. 
Ahlfors and Beurling showed that a quasi-symmetric map $f:\R\to
\R$ (which is a variant of a quasi-conformal map appropriate to $\R$)
admits a quasi-conformal extension $F:\R^2\to \R^2$, see
\cite{AhlforsBeurling1956}. Later 
Ahlfors proved the analogous statement for quasi-conformal
mappings in two dimensions; each quasi-conformal mapping
$f:\R^2\to \R^2$ can be extended to a quasi-conformal mapping
$F:\R^3\to \R^3$, see \cite{Ahlfors1964}. Carleson constructed
quasi-conformal extension from $R^3$ to $\R^3$,
\cite{Carleson1974}. The question was finally 
settled in any dimension $n$ by Tukia and V\"{a}is\"{a}l\"{a} who
proved that every quasi-conformal map $f\colon \R^n\to \R^n$ admits a
quasi-conformal extension $F\colon \R^{n+1} \to \R^{n+1}$, see
\cite{Tukia1982}. 

Quasi-regular maps are in many ways natural generalizations of
holomorphic maps. For example, Rickman showed an analogue of
Picard's theorem for quasi-regular maps $f\colon \R^n \to
\R^n$, see \cite{Ri80}. This means that such a map $f$ may only
omit a finite number of points, unless it is constant. The number
of possible omitted points depends on the dimension $n$, as well
as the distortion of $f$.
This result is sharp in the sense that for
any finite set $P\subset \R^n$ there exists a quasi-regular
mapping $f:\R^n\to \R^n\setminus P$. In dimension $n=3$ this is
due to Rickman \cite{Rickman1985} and in $n\geq 3$ the result is
proved by Drasin and Pankka \cite{DrasinPankka2015}.


Another instant where quasi-regular maps serve as generalizations
of holomorphic maps is to consider their dynamics under
iteration. See for example \cite{Ber10} and \cite{BN14}. Here
however, the following problem arises. Given a quasi-regular map $f$
with distortion $K$, the distortion of the $n$-th iterate $f^n$
is $K^n$ in general. This poses a major obstacle to apply
standard techniques. However, in special cases the family of
iterates $\{f^n\}$ of a quasi-regular map $f$ may have a uniform bound
on their distortion $K$ independent of $n$. In this case, $f$ is
called \emph{uniformly quasi-regular}.

However it is not
easy to find examples of uniformly quasi-regular mappings, the
first example of a non-injective uniformly quasi-regular self-map
of $\Sp^n$ with interesting dynamical behavior was constructed
by Iwaniec and Martin in \cite{Iwaniec1996}.

Further examples of uniformly quasi-regular maps have been found
as extensions of \emph{Latt\`{e}s maps}. These are very special
rational maps $f\colon \CDach \to \CDach$. They are
\emph{postcritically finite}, meaning that each critical point
has a finite orbit, and  their Julia set is $J(f) =
\CDach$. Furthermore, they are obtained as quotients of maps of
the form $z\mapsto az +b$, where $a, b\in \C$, $\abs{a} >1$, by a
crystallographic group. This means they can be thought of as
having constant expansion (i.e., with respect to the so-called
canonical orbifold metric they expand by $\abs{a}$). See
\cite[Chapter~3]{BonkMeyer2017} and \cite{MilLat06} for details. 
Mayer showed that a Latt\`{e}s map has a uniformly quasi-regular
extension $F\colon \R^3\cup \{\infty\}\to\R^3\cup \{\infty\}$
see  \cite{Mayer1997}. Here $\CDach$ is identified with the unit
sphere in $\R^3$. 
The points $0$ and $\infty$ are super-attracting fixed points under $F$ with basin of attraction equal to $B(0, 1)$ respectively $\R^3\cup \{\infty\}\setminus \overline{B}(0, 1)$. The Julia set of the extension $F$ is equal to $\CDach$. If we restrict $F$ to a sphere $\partial B(0, r)$ for arbitrary $r\in (0, 1)$ the image is a sphere $\partial B(0, r')$ of smaller radius $r'<r$.

There are also similar results about the existence of
quasi-regular extensions of holomorphic maps
$f\colon\CDach\to\CDach$ whose Julia set is not the entire
Riemann sphere in \cite{Martin2004}. Martin shows that if
$f:\CDach\to \CDach$ (here we view $\CDach$ as the boundary of
$3$-dimensional hyperbolic space $\Hy^3$) admits a so-called \textit{inverse disc system} there exists a uniformly quasi-regular map $F: \Hy^3\to \Hy^3$ with induced boundary correspondence $f$. Note that by conjugation with a M\"{o}bius transformation we may also view this as an extension of a self-map of $\CDach$ to $B(0, 1)\subset \R^3$. A sufficient condition for the existence of an inverse disc system is that each critical point of $f$ lies in the immediate basin of an attracting fixed point. Further, Martin found criteria for the non-existence of uniformly quasi-regular extensions. We will name a few sufficient conditions. 
\begin{enumerate}
	\item If $f$ is a rational map that has a super-attracting fixed point,
	\item if $f$ has a super-attracting cycle,
	\item if there exists an $N\in \N$ such that $f^N$ has at least two fixed points which are either attracting or indifferent,
	\item if $f$ has a Siegel disk or Herman ring or a cycle of such,
\end{enumerate}
then there does not exist a uniformly quasi-regular extension $F : \Hy^3\to\Hy^3$ so that $f$ is an induced map on the boundary \cite{Martin2004}. 

\smallskip
In this paper we will consider functions $f\colon \CDach \to \CDach$ satisfying the following. 

\begin{itemize}
	\item The map $f$ is holomorphic.
	\item It is postcritically finite, meaning that the postcritical
	set
	\begin{equation*}
		\post(f) = \{f^n(c) : \text{$c$ is a critical point of $f$},
		n\geq 1\}
	\end{equation*}
	is a finite set. 
	\item $f$ is expanding, meaning that its Julia set is all of
	$\CDach$. 
\end{itemize}

Such a map $f$ is often called an \emph{expanding rational Thurston map}. Latt\`{e}s maps are a subfamily of rational expanding Thurston maps.

In a recent paper Li, Pankka and Zheng suggest \textit{expanding cellular Markov branched covers} $f:\Sp^n\to \Sp^n$ as analogues of expanding Thurston maps in higher dimensions \cite{Li2024}. In particular they prove that a \textit{visual metric} (a certain metric arising from the dynamics of $f$) is quasi-symmetric to the spherical metric if and only if $f$ is uniformly quasi-regular and the \textit{local multiplicities} of the iterates is finite. Comparing to expanding Thurston maps on $\CDach$, Bonk and the first author proved that the visual metric is quasi-symmetrically equivalent to the spherical metric if and only if the map is topologically conjugate to a rational map \cite[Theorem 18.1]{BonkMeyer2017}.

We will turn to our main result now. 
Let $F:\Omega \subset \R^n \to \R^n$ be any map. Define 
\begin{equation}
	\label{eq:Omega_n}
	\Omega_n
	\coloneqq 
	F^{-(n-1)}(\Omega),
\end{equation} 
for all $n\in \N$, where $F^{0}$ is the identity. 
Note that on $\Omega_n$ the $n$-th iterate of $F$ is defined. This notion is introduced for mappings that are defined on a domain that is strictly contained in its image.

\begin{theorem}
	\label{theorem: extension result}
	Let $f:\CDach \to \CDach$ be a rational expanding Thurston map. There exists a neighborhood $\Omega\subset\R^3$ so that each $\Omega_n$ (for $\Omega_n$ as in \eqref{eq:Omega_n}) contains $\CDach$ in its interior and a map $F:\Omega \to \R^3$ with $F|_{\CDach}\equiv f$ and a constant $K\geq 1$ so that $F^n|_{\Omega_n}$ is $K$-quasi-regular for all $n\in\N$. 
\end{theorem}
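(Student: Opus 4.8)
The natural strategy is to build $F$ locally on pieces coming from the canonical cell decompositions associated to $f$, and then argue that the iterates cannot accumulate distortion because the combinatorics of the cell decomposition is the *same* at every level (only the geometry shrinks, and that shrinking is controlled by a visual metric). Concretely, since $f$ is an expanding rational Thurston map, there is a Jordan curve $\mathcal C\subset\CDach$ with $\post(f)\subset\mathcal C$ and, after passing to an iterate if necessary, $f(\mathcal C)\subset\mathcal C$ (this is Bonk–Meyer, \cite[Chapter 15]{BonkMeyer2017}); this produces cell decompositions $\mathcal D^n$ of $\CDach$, where the $n$-cells are the closures of components of $f^{-n}(\CDach\setminus\mathcal C)$, and $f$ maps each $(n+1)$-cell homeomorphically (or as a branched model near a critical point) onto an $n$-cell. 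Crucially, each $n$-tile is a quasidisk with uniformly bounded distortion constant, independent of $n$, because $f$ is conjugate to a rational map and hence the visual metric is quasisymmetrically equivalent to the spherical one (\cite[Theorem 18.1]{BonkMeyer2017}).

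**Step 1: the base extension.** First I would extend $f$ on the $0$-level: fix the curve $\mathcal C$ and thicken it to a solid-torus neighborhood $N\subset\R^3$ of $\mathcal C$, so that $\Sp^2\cup N$ cuts a neighborhood $\Omega$ of $\Sp^2$ into finitely many pieces, one lying "above" and one "below" each $0$-tile. On each such piece I would define $F$ by coning/interpolating: on a piece over a $0$-tile $X^0$ whose image is the tile $f(X^0)$, use the fact that $f|_{X^0}\colon X^0\to f(X^0)$ is a boundary homeomorphism between quasidisks and extend it to a quasiconformal map between the $3$-dimensional pieces via the Tukia–Väisälä (or Ahlfors) extension \cite{Tukia1982,Ahlfors1964}. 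Near the postcritical set and near critical points of $f$, replace this by an explicit local model: in suitable coordinates $f$ looks like $z\mapsto z^d$, which extends to $\R^3$ as a quasiregular "power map" (e.g. in cylindrical coordinates $(\rho,\theta,t)\mapsto(\rho^{?},d\theta,\cdot)$, the standard winding map composed with radial bending), with distortion depending only on $\max_c\deg_c f$. Matching these pieces along the solid torus $N$ and along $\Sp^2$ requires the pieces to be glued by quasiconformal maps that agree on overlaps; this can be arranged because the number of combinatorial types is finite, so one makes finitely many choices once and for all.

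**Step 2: self-similarity of the iterates.** This is the heart of the argument and the main obstacle. The point is that $\Omega_n=F^{-(n-1)}(\Omega)$ is, by construction, tiled by "$3$-dimensional $n$-tiles": pieces lying over and under the $n$-tiles $X^n\in\mathcal D^n$, and $F$ maps each $3$-dimensional $(n+1)$-tile onto a $3$-dimensional $n$-tile by *the same finite list of local models* used in Step 1 — because the branched-cover structure $f\colon\mathcal D^{n+1}\to\mathcal D^n$ is modeled on $f\colon\mathcal D^1\to\mathcal D^0$ tile-by-tile. Hence $F^n$, restricted to any $3$-dimensional $n$-tile, is a composition of $n$ maps each drawn from the *same* finite family of quasiconformal/quasiregular model maps. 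The naive bound on its distortion is $K_0^n$; to get a uniform bound one must exploit that on $\Sp^2$ the composition $f^n$ has bounded distortion in the *spherical* metric (equivalently, the visual metric comparison of \cite[Theorem 18.1]{BonkMeyer2017} gives uniform quasisymmetry of the identity $(\CDach,\text{visual})\to(\CDach,\text{spherical})$), and the $3$-dimensional model maps can be chosen so that $F$ is, with respect to a suitable "product" metric (radially a fixed warp, tangentially the visual metric at the appropriate scale), *isometric up to a fixed factor* on each $3$-dimensional tile — this is exactly the Lattès picture of Mayer \cite{Mayer1997}, where $F$ collapses spheres $\partial B(0,r)$ to smaller spheres, generalized by replacing the round-sphere self-similarity with the combinatorial self-similarity of $\mathcal D^n$. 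Concretely: choose $F$ on the $3$-dimensional $1$-tile so that, measured in the metric that makes the target $0$-tile "unit size" and the source $1$-tile "unit size", $F$ is a fixed quasiconformal model independent of the tile; then $F^n$ measured in the metric making the source $n$-tile unit-size is again that same fixed model, so its distortion is $\le K_0$, and since this rescaled metric is bi-Lipschitz to the $\R^3$-metric with a constant absorbed into $K$ (using bounded geometry of the quasidisks $X^n$, uniform in $n$), $F^n|_{\Omega_n}$ is $K$-quasiregular with $K$ independent of $n$.

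**Step 3: $\Omega_n\supset\CDach$ and shrinking.** Since each $3$-dimensional $n$-tile straddles $\Sp^2$, their union is a neighborhood of $\Sp^2$, giving $\CDach\subset\inte\Omega_n$; and since the $n$-tiles have spherical (equivalently visual, by \cite[Theorem 18.1]{BonkMeyer2017}) diameter tending to $0$ uniformly by expansion, while $F$ contracts the transverse (radial) direction by a fixed factor at each step, one gets $\diam\Omega_n\to 0$ in the transverse direction and $\bigcap_n\Omega_n=\CDach$, as claimed. The main difficulty I anticipate is the compatibility/gluing in Step 2: one must choose the finitely many $3$-dimensional model maps so that they not only glue along $\Sp^2$ and along the "walls" (the pieces over the skeleton of $\mathcal C$) at level $1$, but so that this gluing is *reproduced at every level* under the self-similar structure — i.e. the models must be equivariant with respect to the edge- and vertex-identifications of the cell decomposition. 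Handling vertices of $\mathcal C$ (where several tiles meet, possibly with branching) and the postcritical points simultaneously, with a single finite model list, is the delicate combinatorial-geometric bookkeeping that the proof will have to organize carefully.
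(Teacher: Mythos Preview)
Your approach is genuinely different from the paper's, and Step~2 contains a real gap that I do not see how to close.

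The crucial sentence is: ``then $F^n$ measured in the metric making the source $n$-tile unit-size is again that same fixed model, so its distortion is $\le K_0$.'' This does not follow. Even if each single step $F\colon(\text{$3$-dim $(k{+}1)$-tile})\to(\text{$3$-dim $k$-tile})$ is, after rescaling source and target to unit size, one of finitely many fixed quasiconformal models $g_1,\dots,g_N$, the composition $F^n$ in rescaled coordinates is a word $g_{i_n}\circ\cdots\circ g_{i_1}$ of length $n$ in these models, and its distortion is a priori $K_0^n$. For the distortion to stay bounded you would need the family $\{g_i\}$ to generate a \emph{uniformly quasiconformal semigroup}, which is a very strong condition and is exactly what you are trying to prove. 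The visual-metric argument you invoke controls only the $2$-dimensional boundary behaviour (where $f^n$ is conformal anyway, away from critical points); it says nothing about the transverse direction in $\R^3$.

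The underlying geometric obstruction is that for a non-Latt\`es expanding rational Thurston map the spherical derivative $(f^n)^{\#}(z)$ is \emph{not constant}: different $n$-tiles have different sizes, and the expansion varies from point to point. Mayer's Latt\`es construction \cite{Mayer1997} works precisely because the expansion is constant, so round concentric spheres can be mapped to round concentric spheres. If your $3$-dimensional tiles have a fixed transverse thickness (as a Tukia--V\"ais\"al\"a extension over a tile would naturally produce), the radial stretching of $F$ over an $n$-tile is a fixed constant, while the tangential stretching is $\asymp (f^n)^{\#}(z)$, and the ratio is unbounded in $n$. The paper resolves this by abandoning cells and round spheres entirely: it defines \emph{variable-radius} approximating spheres $S_n=\{(z,r_n(z))\}$ with $1-r_n(z)=s/\|D(f_1\circ\cdots\circ f_n)(z)\|_{\CDach}$, so that the chain rule gives $d_{n-1}(f(z))=f^{\#}(z)\,d_n(z)$ exactly. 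This forces the radial and tangential scalings of $F$ to agree pointwise, which is what yields the uniform distortion bound. The handling of critical points (where $(f^n)^{\#}$ vanishes) is done by replacing $f$ locally with a winding map to get nonvanishing derivative, not by a separate $3$-dimensional power-map model.
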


The constant $K$ does not depend on $n$ or $\Omega_n$. The extension $F$ is defined in such a way that for a point $p\in \Omega\setminus\CDach$ there exists a number $n=n(p)\in \N$ such that for $k< n$ we have that $F^k(p)\in \Omega$ and $F^n(p) \in F(\Omega)\setminus \Omega$. For points $p$ that are close to $\CDach$ this number $n(p)$ is large and when iterating $F$ the distance of $F^m(p)$ to $\CDach$ increases, assuming $m\leq n(p)$.

The neighborhood $\Omega$ depends on the function $f$ we start with. It is symmetric about $\CDach$ in the sense that if $M$ is the conformal map that reflects about $\CDach$ then $M(\Omega)=\Omega$.

Our proof strategy for expanding rational Thurston map differs to
the one in \cite{Mayer1997}. Mayer's proof relies on structures
that are available for  Latt\`{e}s maps but not for all Thurston
maps. 
Our construction can also be applied to Latt\`{e}s maps and it will yield a different extension than the one found by Mayer.

The maps we consider do not admit an inverse disk system, so the result is not covered by \cite{Martin2004}. The method of the proof is also different. 

An inspiration for our construction is the extension result in \cite{Meyer2010}. The first author proves that every snowsphere $S$ (a two-dimensional fractal homeomorphic to $\Sp^2$ with a similar construction as the Koch snowflake) is quasi-symmetric to $\Sp^2$ and additionally that this quasi-symmetry can be extended to a quasi-conformal map of the snowball (that is the union of the bounded component of $\R^3\setminus S$ and $S$) to the closed unit ball.

\subsection{Organization of the paper}

In Section \ref{sec:background} we review quasi-regular mappings and results that we will need for the proof of Theorem \ref{theorem: extension result}. 

We will prove Theorem \ref{theorem: extension result} first with the additional assumption that every critical value ($f(c)$ for $c\in \crit(f)$) is a fixed point of $f$. In Section \ref{section:ext_crit_value_fixed} we will define the objects needed in the construction of the extension. In order to extend the map $f$ we will define a sequence of sets $S_n\subset B(0, 1)$, each $S_n $ is homeomorphic to $\Sp^2$ and $S_n\to \CDach$ in the Hausdorff sense as $n\to \infty$. More precisely, the sets $S_n$ are defined as follows 
\begin{equation*}
	S_n=\{(z, r_n(z)): z\in \CDach, r_n(z)\in (0, 1)\},
\end{equation*} 
we think of them as spheres with a varying radius given by a continuous function $r_n:\CDach\to (0, 1)$. We will call them \textit{approximating spheres}. The radii in turn are defined via the derivative of an iterate of $f$. The crux lies in defining $r_n$ at a critical point. In order to do that, we will define a sequence of functions $f_n$ that converges uniformly to $f$ and each $f_n$ is differentiable almost everywhere with non-vanishing derivative. In exchange $f_n$ is not holomorphic near a critical point. The functions $f_n$ are obtained by locally replacing the map $f$ around a critical point by the winding map. 

With the sequence $f_n$ and the spheres $S_n$ we will define the extension $F|_{S_n}$ so that the $n$-th approximating sphere is mapped to the $(n-1)$-th approximating sphere, $F|_{S_n}(S_n)=S_{n-1}$. 

Let $M:\R^3\cup \{\infty\}\to \R^3\cup \{\infty\}$ denote (conformal) reflection along $\CDach$. The domain $\Omega$ is bounded by $S_{N}\cup M(S_{N})$ for a suitable index $N\in \N$ (we will see that in some cases $N=2$). 

In Section \ref{section:Def_and_qr_of_Extension} we extend the definition of the extension $F$ from the sets $S_n$ to the domain $\Omega$ by interpolation between the maps defined on the approximating spheres. We will also prove that there exists a constant $K$ such that $F^n$ is $K$-quasi-regular on $\Omega_n$ for all $n\in \N$ with $K$ independent of $n$. 

Section \ref{section:removing_assumption} deals with removing the assumption that each critical value is fixed. We will illustrate how to adapt the constructions from Section \ref{section:ext_crit_value_fixed} and argue that the proofs of uniform quasi-regularity can be adapted.

\section{Notation}
\label{sec:notation}

We identify the Riemann sphere $\CDach=\C \cup \{\infty\}$ with the unit sphere in $\R^3$, i.e., with $\{p\in\R^3: |p|=1\}$. Here and in the following
$|\cdot |$ denotes the Euclidean norm. The (open) unit disk is denoted by $\D=\{z\in \C : \abs{z}<1\}$,
the closed unit disk by $\overline{\D}$. The interior of a set $A$ will be denoted by $\inte(A)$. We will denote the unit circle by $\Sp^1= \{z\in\R^2: |z|=1\}$. The $n$-dimensional Lebesgue measure will be denoted by $\mathcal{L}^n$.

We will denote a ball centered at $z$ of radius $r>0$ by $B(z, r)$.  

For the Euclidean distance of two points $p, q\in \R^n$ or $z, w\in \mathbb{C}$ we will use the notation $|p-q| $ and $|z-w|$, the diameter of a set $A\subset \R^n$ will be denoted by $\diam(A)$. If we consider the spherical metric on $\CDach$ we will use the notation $|z-w|_{\CDach}$ for the spherical distance between $z, w\in\CDach$ and for a set $A\subset \CDach$ we will denote its diameter by $\diam_{\CDach} (A)$. 

Let $S\subset \R^3$ be homeomorphic to the sphere. By $\textup{interior}(S)$ we denote the bounded component of $\R^3\setminus S$. In general $\textup{interior}(S)$ need not be homeomorphic to a ball, but in the cases that we will encounter it will be. For $T\subset \textup{interior}(S)$ homeomorphic to the sphere we denote the open set with boundary components $S, T$ by $A(S, T)$. 

For two quantities $a, b\geq0 $ we say that they are \textit{comparable}, denoted by $a\asymp b$ if there exists a constant $C$ such that \begin{equation*}
	\frac{1}{C}a\leq b\leq Ca 
\end{equation*}
In some cases, the constant $C$ depends on parameters $X_1, \dots, X_n$, $n\in \N$ and in this case we write $C(\asymp )=C(X_1, \dots, X_n)$ to emphasize the dependence. If $a, b$ are as above and there exists a constant $C>0$ such that \begin{equation*}
	a\leq Cb \quad \textup{respectively} \quad a\geq Cb
\end{equation*} 
then we write \begin{equation*}
	a\lesssim b \quad \textup{respectively}\quad a\gtrsim b,
\end{equation*}
and here too if $C$ depends on parameters $X_1, \dots, X_n$ we indicate the dependence by $C(\lesssim)=C(X_1, \dots, X_n)$ respectively $C(\gtrsim)=C(X_1, \dots, X_n)$. 

\section{Background}
\label{sec:background}

\subsection{Thurston maps}

A \textit{Thurston map} $f\colon\CDach\to\CDach$ is a branched covering where every critical point has a finite orbit. If $f$ is additionally holomorphic, we will call it a \textit{rational} Thurston map. 
We will recall what it means for a Thurston map to be expanding and record some consequences and properties of rational expanding Thurston maps. The main focus will be holomorphic Thurston maps but expansion is a concept that also makes sense from a topological viewpoint. We are following \cite{BonkMeyer2017} but we will use a condition that is equivalent to their definition of expansion, see \cite[Proposition 6.4]{BonkMeyer2017}.

The definition is in terms of coverings. Let $(X, d)$ be a metric space, the \textit{mesh} of a covering $\mathcal{U}=\{U\in \mathcal{U}: \bigcup_{\mathcal{U}} U=X\}$ is defined as $\textup{mesh}(\mathcal{U})=\sup_{\mathcal{U}} \{\textup{diam}(U)\}$.

Let $\mathcal{U}$ be a covering of $\CDach$ consisting of connected sets. Then we define $f^{-n}(\mathcal{U})$ as \begin{equation*}
	f^{-n}(\mathcal{U}):=\{V\subset \CDach: V \textup{ is a connected component of }f^{-n}(U), \, U \in \mathcal{U}\}.
\end{equation*}
\begin{definition}
  \label{def:exp_Tmaps}
  Let $f\colon\CDach\to \CDach$ be a Thurston map. It is
  \textit{expanding} if there exists a constant $\delta_0>0$ such
  that if $\mathcal{U}$ is an open cover of $\CDach$ by open and
  connected subsets satisfying $\textup{mesh}(\mathcal{U})<
  \delta_0$ then  
  \begin{equation*}
    \lim_{n\to \infty}\mesh(f^{-n}(\mathcal{U}))= 0.
  \end{equation*}
\end{definition}

For rational Thurston maps it follows that the Julia set $J(f)$ is equal to the whole Riemann sphere. 
\begin{proposition}
	Let $f\colon \CDach\to\CDach$ be a rational Thurston map. Then the following are equivalent: 
	\begin{enumerate}
		\item $f$ is expanding, 
		\item $J(f)=\CDach$, 
		\item there do not exist periodic critical points.
	\end{enumerate}
\end{proposition}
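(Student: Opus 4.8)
The plan is to prove the cycle of implications $(i)\Rightarrow(ii)\Rightarrow(iii)\Rightarrow(i)$; the only substantial step is the last one. The implication $(ii)\Rightarrow(iii)$ is immediate: if $c\in\crit(f)$ satisfies $f^p(c)=c$ for some $p\geq1$, then $(f^p)'(c)=0$ by the chain rule, so $c$ is a super-attracting fixed point of $f^p$ and lies in the Fatou set; hence $J(f)\neq\CDach$.

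For $(i)\Rightarrow(ii)$ I would argue by contraposition. Suppose the Fatou set $\mathcal{F}$ of $f$ is non-empty; then by Sullivan's theorem it contains a periodic component $\Omega_0$. A postcritically finite rational map has no rotation domains — the boundary of a Siegel disk or Herman ring would be an infinite subset of the finite set $\overline{\post(f)}$ — and no parabolic or geometrically attracting cycles; equivalently, every periodic point is repelling or super-attracting (see \cite[Chapter~2]{BonkMeyer2017} and \cite{MilLat06}). Hence the first-return map to $\Omega_0$ tends locally uniformly to a super-attracting fixed point, so after replacing the period of $\Omega_0$ by a suitable multiple $p$ and setting $g=f^p$ there is a connected open set $O$ with $\overline{O}\subset\Omega_0$, $\diam O<\delta_0$ and $g(O)\subset O$. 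Then $O\subseteq f^{-p}(O)\subseteq f^{-2p}(O)\subseteq\cdots$, so for every $k$ the component $V_k$ of $f^{-pk}(O)$ containing $O$ satisfies $\diam V_k\geq\diam O>0$. Completing $\{O\}$ to an open cover $\mathcal{U}$ of $\CDach$ by connected sets of mesh $<\delta_0$, we get $\mesh(f^{-pk}(\mathcal{U}))\geq\diam O$ for all $k$, so $\mesh(f^{-n}(\mathcal{U}))\not\to0$ and $f$ is not expanding.

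For $(iii)\Rightarrow(i)$, assume $f$ has no periodic critical point. As in $(ii)\Rightarrow(iii)$ this excludes super-attracting cycles, and together with the absence of rotation domains and of attracting or parabolic cycles (loc. cit.) it forces $\mathcal{F}=\emptyset$, i.e. $J(f)=\CDach$ — this is also how I would get $(iii)\Rightarrow(ii)$. A Riemann–Hurwitz count then shows that $\#\post(f)\leq2$ is incompatible with having no periodic critical point, so $\#\post(f)\geq3$ and the canonical orbifold $\mathcal{O}_f=(\CDach,\nu_f)$ is defined and is either Euclidean (parabolic) or hyperbolic. In either case there is a length metric $\omega$ on $\CDach$, the orbifold metric, which is bi-Lipschitz to the spherical metric off $\post(f)$ and comparable to a fixed power of the spherical distance near each of the finitely many postcritical points, and with respect to which some iterate of $f$ expands lengths by a definite factor: there are $\lambda>1$ and $n_0\in\N$ with $\|(f^{n_0})'(z)\|_\omega\geq\lambda$ for all $z$. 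In the Euclidean case one takes $n_0=1$ and $\omega$ the flat orbifold metric, which $f$ multiplies by a constant $>1$; in the hyperbolic case the bound comes from the Schwarz lemma applied to the covering $f\colon\CDach\setminus f^{-1}(\post(f))\to\CDach\setminus\post(f)$, upgraded to a uniform estimate for an iterate by a compactness argument. Given this, a connected $U\subset\CDach$ of small spherical diameter has small $\omega$-diameter, every component $V$ of $f^{-n}(U)$ satisfies $\diam_\omega V\leq C\lambda^{-\lfloor n/n_0\rfloor}$ with $C$ depending only on $\diam_\omega U$, hence $\diam V\to0$ uniformly; this is precisely the condition in Definition~\ref{def:exp_Tmaps}, the passage from uniform metric expansion to the mesh formulation being \cite[Proposition~6.4]{BonkMeyer2017}.

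I expect the main obstacle to be $(iii)\Rightarrow(i)$: producing a metric in which $f$ expands uniformly even though critical points lie on the Julia set. This is exactly what the canonical orbifold is for, and it forces the dichotomy Euclidean versus hyperbolic orbifold to be handled separately — the Euclidean (Lattès-type) case being rigid with constant expansion, the hyperbolic case requiring the Schwarz–Pick estimate together with a compactness argument to turn pointwise strict expansion into a uniform bound for some iterate. A secondary technical point is the comparison of $\omega$ with the spherical metric near the cone points, which is what allows $\omega$-smallness of preimage components to be translated back into spherical smallness.
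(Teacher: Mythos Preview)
The paper does not give its own proof of this proposition; it simply cites \cite[Proposition~2.3]{BonkMeyer2017}. Your argument is correct and is essentially the standard one found there: $(ii)\Leftrightarrow(iii)$ via the classification of Fatou components for postcritically finite rational maps, and $(iii)\Rightarrow(i)$ via uniform expansion in the canonical orbifold metric, with the parabolic/hyperbolic dichotomy handled separately and the translation to the mesh condition supplied by \cite[Proposition~6.4]{BonkMeyer2017}.
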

This is shown in \cite[Proposition 2.3]{BonkMeyer2017}

\subsection{Quasi-regular maps}
\label{sec:quasi-regular-maps}
Quasi-regular maps can be viewed as generalizations of holomorphic maps. The definition is similar to the one of quasi-conformal maps with the difference that we do not assume the map to be a homeomorphism. For a map $f\colon\Omega\to\R^n$ we define the \textit{branching set} $B_f\subset\Omega$ as the set where $f$ is not locally a homeomorphism. 
We will give the definition in terms of Sobolev maps and the weak derivative but then also state a characterization that is easier to verify in some cases. 

\begin{definition}
  Let $\Omega\subset \R^n$ be a domain and $f\colon\Omega \to
  \R^n$ be a Sobolev map, $f\in W^{1, n}_{\textup{loc}}(\Omega,
  \R^n)$. If there exists a constant $K<\infty$ such that for
  almost all $z\in \Omega$ we have that
  \begin{equation*}
    \|Df(z)\|^n \leq K |\textup{det}(Df(z))|
  \end{equation*}
  then $f$ is called a \emph{$K$-quasi-regular} map. We call $K$
  the distortion of $f$.
\end{definition}

A map $f\colon \Omega \to \R^n$ is called quasi-regular, if it is
$K$-quasi-regular for some $K>0$. 

\begin{remark}
  If $f\colon \mathbb{C}\to \mathbb{C}$ is $K$-quasi-regular and
  $g\colon\mathbb{C}\to \mathbb{C}$ is holomorphic then the
  compositions $g\circ f$ as well as $f\circ g$ are
  $K$-quasi-regular as well.
	
  For maps in $\R^n$ we have that if
  $f\colon \R^n\to \R^n$ is $K$-quasi-regular and
  $g\colon\R^n\to \R^n$ is a M\"{o}bius
  transformation then $f\circ g$ as well as $g\circ f$ are
  $K$-quasi-regular.
\end{remark}
For more details on the definitions see for example
\cite{Rickman1993} and for the background on quasi-conformal
mappings see \cite{Vaisala1971}.

Instead of the definition we will verify the following criterion to prove that the extension is quasi-regular.
\begin{theorem}
  A non-constant mapping $f\colon \Omega\to \R^n$ is
  $K$-quasi-regular if and only if the following conditions are
  satisfied
  \begin{enumerate} 
  \item $f$ is orientation-preserving, open and discrete, 
  \item there exists a constant $K<\infty$ such that for almost
    all $x\in \Omega$ the distortion of $f$ at $x\in \Omega$ satisfies
    \begin{equation*}
      K_f(x)
      :=
      \limsup_{\epsilon \searrow 0}
      \frac{\max_{|x-y|=\epsilon}|f(x)-f(y)|}{\min_{|x-z|=\epsilon}|f(x)-f(z)|}
      \leq K.
    \end{equation*}
  \end{enumerate}
\end{theorem}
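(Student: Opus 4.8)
The statement is the classical equivalence between the analytic (Sobolev) definition of a $K$-quasiregular map and its metric–topological description: the implication ``analytic $\Rightarrow$ open, discrete, sense-preserving'' is Reshetnyak's theorem, and the converse is due to Väisälä and to Martio–Rickman–Väisälä. I would present it by recalling these two implications. The two formulations carry the same constant only up to a dimensional exponent: for an invertible linear map $A\colon\R^n\to\R^n$, writing $\|A\|=\max_{|v|=1}|Av|$, $\ell(A)=\min_{|v|=1}|Av|$, $H(A)=\|A\|/\ell(A)$, one has the elementary chain
\begin{equation*}
  H(A)\ \le\ \frac{\|A\|^{\,n}}{|\det A|}\ \le\ H(A)^{\,n-1},
\end{equation*}
so that ``$K$-quasiregular for some $K$'' and ``$K_f(x)\le K'$ almost everywhere for some $K'$'' determine one another. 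Since only the existence of a uniform bound is used in the sequel, I would aim at this qualitative form and not track the optimal constant.

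\textbf{Analytic $\Rightarrow$ metric and topological.} Let $f\in W^{1,n}_{\textup{loc}}(\Omega,\R^n)$ be non-constant with $\|Df\|^{n}\le K|\det Df|$ almost everywhere. First I would invoke Reshetnyak's theorem: a non-constant map of bounded distortion in the analytic sense is open, discrete and orientation-preserving, which is condition (i). Next, by the self-improving higher integrability of $Df$ (Gehring's lemma), $Df\in L^{\,n+\varepsilon}_{\textup{loc}}$ for some $\varepsilon=\varepsilon(n,K)>0$, so by Sobolev embedding $f$ is differentiable at almost every point, with classical derivative equal to the weak one. Fix such a point $x$ with $\det Df(x)>0$; this holds on a set of full measure, because the distortion inequality forces $Df(x)=0$ whenever $\det Df(x)=0$, while a non-constant map of bounded distortion has $\det Df>0$ almost everywhere. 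The first-order expansion of $f$ then carries the sphere $\partial B(x,\epsilon)$ onto an ellipsoid whose semi-axes are $\epsilon$ times the singular values $\sigma_1\ge\dots\ge\sigma_n>0$ of $Df(x)$, so
\begin{equation*}
  K_f(x)=H(Df(x))\ \le\ \frac{\|Df(x)\|^{\,n}}{|\det Df(x)|}\ \le\ K,
\end{equation*}
which is condition (ii).

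\textbf{Metric and topological $\Rightarrow$ analytic.} Conversely, assume $f$ is non-constant, open, discrete, orientation-preserving, with $K_f(x)\le K$ almost everywhere. The difficulty here is to extract Sobolev regularity from a purely metric hypothesis: the bound $K_f(x)\le K$ controls the ratio $L_f(x,r)/\ell_f(x,r)$ of the largest and smallest oscillation of $f$ on $\partial B(x,r)$, but not $L_f(x,r)/r$ directly. To close this gap I would use that an open discrete orientation-preserving map has a well-defined finite local index $i(x,f)\ge1$ and a controlled topological degree, so that the set function $A\mapsto\int N(f,A,y)\,dy$ (the crude multiplicity) is a locally finite Borel measure; Lebesgue differentiation against $\mathcal{L}^n$ then bounds $\limsup_{r\to0}\mathcal{L}^n(f(B(x,r)))/\mathcal{L}^n(B(x,r))$ at almost every point. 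Since $f(B(x,r))\supset B(f(x),\ell_f(x,r))$ for small $r$ (using discreteness, so $f$ is locally proper) and $L_f(x,r)\le K\ell_f(x,r)$, this gives $\limsup_{r\to0}L_f(x,r)/r<\infty$ almost everywhere, hence $f$ is differentiable almost everywhere by Stepanov's theorem. A length–area estimate (Fubini on lines) based on the same dilatation control then shows $f$ is ACL with $|Df|\in L^{n}_{\textup{loc}}$, so $f\in W^{1,n}_{\textup{loc}}$. Finally, at almost every point of differentiability, orientation-preservation gives $\det Df(x)\ge0$, the hypothesis gives $K_f(x)=H(Df(x))\le K$ (the rank-deficient nonzero case being excluded just as above, since it would force $K_f(x)=\infty$), and therefore $\|Df(x)\|^{n}\le H(Df(x))^{n-1}|\det Df(x)|\le K^{n-1}\det Df(x)$; thus $f$ is $K^{n-1}$-quasiregular in the analytic sense.

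\textbf{Main obstacle.} The two pillars---Reshetnyak's openness/discreteness/orientation theorem and the passage from the metric dilatation bound to $W^{1,n}$-regularity and almost-everywhere differentiability (the length--area method together with the multiplicity/degree estimate needed to control the volume distortion)---are each substantial theorems rather than routine computations. Consequently, in the paper itself I would not reproduce these arguments but simply cite the equivalence from \cite{Rickman1993} or \cite{Vaisala1971}.
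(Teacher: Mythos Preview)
Your proposal is correct and ends exactly where the paper does: the paper states the characterization and immediately writes ``For a proof of this characterization see \cite[Theorem 6.2]{Rickman1993},'' without reproducing any argument. Your expository sketch of the two directions (Reshetnyak for the forward implication, the Martio--Rickman--V\"ais\"al\"a degree/length--area machinery for the converse) is accurate and your closing remark that one should simply cite \cite{Rickman1993} matches the paper's treatment precisely.
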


For a proof of this characterization see \cite[Theorem 6.2]{Rickman1993}.

Let $f\colon\R^n\to\R^n$ and $g\colon\R^n\to \R^n$ be quasi-regular mappings with quasi-regularity constant $K$, respectively $K'$. Then their composition is a quasi-regular mapping as well with distortion bounded by $KK'$. So in particular for iterates $f^n$ we get a distortion bound of $K^n$. 

In some special cases, the distortion does not increase under composition. 
\begin{definition}
	Let $f\colon\R^n\to \R^n$ be a $K$-quasi-regular mapping such that for all $n\in\N$ the iterate $f^n\colon\R^n\to \R^n$ is also $K$-quasi-regular. Then $f$ is called \textit{uniformly} $K$-quasi-regular.
\end{definition}

For a $K$-quasi-regular mapping $f\colon \Omega \to \R^n$ with $\Omega \subset \R^n$ and $\Omega \subset F(\Omega)$ we define $\Omega_n\coloneqq f^{n-1}(\Omega)$. Then on $\Omega_n$ the $n$-th iterate of $f$ is defined, and we can find constants $K_n$ such that $f^n$ is $K_n$-quasi-regular on $\Omega_n$ (a priory $K_n\leq K^n$).

\begin{definition}
	Let $f: \R^n\to \R^n$ be a differentiable function. We
	introduce
	\begin{equation*}
		\begin{split}
			\| Df(x)\| &:= \max_{\|v\|=1} \|Df(x)v\|\\
			\|Df(x)\|_{*} &:= \min_{\|v\|=1} \|Df(x)v\|.
		\end{split}
	\end{equation*}
	the \textit{maximal}, respectively \textit{minimal expansion}
	of $f$ at $x$.  
\end{definition}
Here, the Euclidean norm is used on the right hand side. 
Note that $\norm{Df(x)}$ is simply the operator norm. Furthermore
$\norm{Df(x)}/\norm{Df(x)}_*= K_f(x)$. 


We will recall the definition of bi-Lipschitz maps in metric spaces and introduce quasi-similarities. They are both simple examples of quasi-regular maps. 

\begin{definition}
	Let $(X, d_X), (Y, d_Y)$ be metric spaces. A map $f\colon X\to Y $ is \textit{bi-Lipschitz} if there exist constants $L, L'$ such that for all $x, y\in X$ we have that \begin{equation*}
		L'\cdot d_X(x, y)\leq d_Y(f(x), f(y))\leq L\cdot d_X(x, y).
	\end{equation*}
	In case we wish to emphasize the constants we will say $f$ is $(L, L')$-bi-Lipschitz. If we wish to refer to only one constant we will set $M =\max\{L, 1/L'\}$ and say that $f$ is $M$-bi-Lipschitz.
\end{definition}

\begin{definition}\label{definition: quasi-similarity}
	Let $(X, d_X), (Y, d_Y)$ be metric spaces. We say that a map $f\colon X\to Y$ is a $(L, L', \mu)$\textit{-quasi-similarity} if there exist constants $L, L', \mu>0$ such that for all $x, y\in X$ \[L'\cdot d_X(x, y)\leq \frac{1}{\mu}\cdot  d_Y(f(x), f(y))\leq L\cdot d_X(x, y).\]
	We will call $\mu$ the \textit{stretch factor} of $f$. 
\end{definition}

\begin{remark}
	It is immediate that $(L, L')$-bi-Lipschitz maps as well as $(L, L', \mu)$-quasi-similarities are $K$-quasi-regular with $K \leq L/L'$. 
\end{remark}

\subsection{Metrics on the Riemann sphere and differentiation}
We will be interested in the spherical derivatives of holomorphic functions and an adaptation of the spherical derivative to functions that are real differentiable but not holomorphic.

Let $U\subset \C$ be a region equipped with the Euclidean metric. If $\sigma\colon U \to (0, \infty)$ is a positive and continuous function we can define another metric, called \textit{conformal metric} where the distance between $z, w\in U$ is given by the infimum of \begin{equation*}
	d_\sigma(z, w):= \inf_\gamma \int_\gamma \sigma(u)\, |du| 
\end{equation*}
where the infimum is taken over all rectifiable curves joining $z $ to $w$.

The density $\sigma$ for the spherical metric is given by $\sigma(z)=2/(1+|z|^2)$ on $\C$. 
For now we will denote the open set equipped with a \textit{conformal metric} given by the density $\sigma$ by $(U, \sigma)$. 
If $U, V\subset \mathbb{C}$ are open sets and $f\colon (U, \sigma)\to (V, \sigma')$ is a holomorphic map then we can express the derivative with respect to the conformal metrics by \begin{equation*}
	|f'(z)|_{\sigma, \sigma'}:= \lim_{w\to z}\frac{d_{\sigma'}(f(z), f(w))}{d_{\sigma}(z, w)}= \frac{\sigma'(f(z))}{\sigma(z)}|f'(z)|. 
\end{equation*}

In the special case that $U=V=\C$ and $\sigma'(z)=\sigma(z)=2/(1+|z|^2)$ we obtain the spherical metric.

\begin{definition} For a holomorphic function $f\colon \CDach\to\CDach$ the \textit{spherical derivative} is given by
	 \begin{equation*}
		f^{\#}(z)=\frac{1+|z|^2}{1+|f(z)|^2}|f'(z)|.
	\end{equation*} 
	If we suppose that $f\colon\CDach\to\CDach$ is real differentiable then we define \begin{equation*}\|Df(z)\|_{\CDach}:= \frac{1+|z|^2}{1+|f(z)|^2} \|Df(z)\| \end{equation*} 
	where $\|Df(z)\|$ denotes the operator norm of the differential at $z$.
	In both cases we take a limit if $z=\infty$ or $f(z)=\infty$.
 \end{definition}
For a holomorphic function both expressions agree. In some cases the notation is shorter if we do not reserve $\| Df(z)\|_{\CDach}$ strictly for non-holomorphic differentiable functions so we will sometimes use it for holomorphic functions as well.

One can show that for holomorphic maps $f, g\colon \CDach\to
\CDach$ and the spherical derivative the chain rule holds as
well;
\begin{equation*}
  (f\circ g)^{\sharp}(z)
  =
  f^{\#}(g(z))g^{\#}(z). 
\end{equation*}
It follows from the usual chain rule for holomorphic functions and the conformal factors cancel. 

In the same way we can prove a chain rule for maps $f,
g\colon\CDach\to \CDach$ that are differentiable in the real
sense we get an estimate
\begin{equation*}
  \|D(f\circ g)(z)\|_{\CDach}
  \leq
  \|Df(g(z))\|_{\CDach} \cdot\|Dg(z)\|_{\CDach}.
\end{equation*}
Often we will pre- and/or post-compose a function $g\colon\CDach \to
\CDach$ that is real differentiable (not necessarily holomorphic) with
holomorphic functions $f,h\colon\CDach \to
\CDach$. In this case we have
\begin{equation}
  \label{eq:Dfgh}
  \norm{D(f\circ g \circ h)(z)}
  =
  f^{\#}(z'') \cdot \norm{D g(z')} \cdot h^{\#}(z), 
\end{equation}
where $z'= f(z)$, $z''=g(z')= g\circ f(z)$. 


We will also consider holomorphic functions from $\CDach$ with the spherical metric to  $\C$ with the Euclidean metric and vice versa. For convenience we provide the formulas for the derivative adapted to the metrics. For holomorphic functions $f\colon U\subset \CDach \to \C$ where we consider the spherical metric on $\CDach$ we will -- slightly abusing notation -- denote the derivative by 
\begin{equation}
	\label{equation: spherical dericative convention 1}
	f^{\#}(z):= \frac{1+|z|^2}{2}|f'(z)|
\end{equation}
and for holomorphic functions $g\colon V\subset \C \to \CDach$ we denote the  derivative by \begin{equation*}
	g^{\#}(z):=\frac{2}{1+|f(z)|^2}|f'(z)|. 
\end{equation*}
\begin{remark}
	We are using the same notation for three different expressions. By the domain and target of the functions it will be clear which one we mean at each appearance. Usually functions denoted by $f$ or $g$ will be from the Riemann sphere to itself, and then we are considering the spherical derivative $f^{\#}(z)=\frac{1+|z|^2}{1+|f(z)|^2}|f'(z)|$. If we use coordinates, the maps will be denoted by $\phi$ or $\psi$ and the maps will be defined on a subset of $\CDach$ with $\C$ as the target. There the spherical derivative is $\phi^{\#}(z)=\frac{1+|z|^1}{2}|\phi'(z)|$.
\end{remark}

\subsection{Koebe Distortion Theorem}
We will use Koebe's distortion theorem for holomorphic functions. For a proof see \cite[Theorem 1.3]{Pommerenke1992}. For completeness we will state the version on the unit disc and an adaptation to the spherical metric. 

\begin{theorem}
	Let $f\colon \D\to \mathbb{C}$ be a holomorphic univalent function. Then \begin{equation*}
		|f'(0)|\frac{|z|}{(1+|z|)^2}\leq |f(z)-f(0)|\leq |f'(0)|\frac{|z|}{(1-|z|)^2}.
	\end{equation*}
\end{theorem}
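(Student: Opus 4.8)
The plan is to reduce to normalized univalent functions and then run the classical Bieberbach--Koebe argument. Given the univalent $f\colon\D\to\C$, I would set $g(z)\coloneqq (f(z)-f(0))/f'(0)$, which is again univalent on $\D$ and now satisfies $g(0)=0$, $g'(0)=1$. Since $|f(z)-f(0)|=|f'(0)|\,|g(z)|$, it suffices to prove
\[
  \frac{|z|}{(1+|z|)^2}\le |g(z)|\le \frac{|z|}{(1-|z|)^2}
\]
for every $g$ in the class $\mathcal{S}$ of such normalized univalent maps; write $g(z)=z+a_2z^2+a_3z^3+\cdots$.

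The crucial input is the Bieberbach coefficient bound $|a_2|\le 2$. To obtain it I would use the \emph{area theorem}: any injective holomorphic $h(\zeta)=\zeta+b_0+b_1\zeta^{-1}+b_2\zeta^{-2}+\cdots$ on $\{|\zeta|>1\}$ satisfies $\sum_{n\ge 1}n|b_n|^2\le 1$, since the complement of its image has nonnegative area (which one evaluates by Green's theorem on the circles $|\zeta|=\rho$ and lets $\rho\searrow 1$). Applying this to $h(\zeta)=1/\varphi(1/\zeta)$, where $\varphi$ is the odd square-root transform determined by $\varphi(z)^2=g(z^2)$ and $\varphi(z)=z+\tfrac{a_2}{2}z^3+\cdots$ (itself univalent), one finds $b_1=-a_2/2$, hence $|a_2|\le 2$. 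As a by-product, composing $g$ with the M\"obius map $w\mapsto \omega w/(\omega-w)$ for an omitted value $\omega\notin g(\D)$ and applying $|a_2|\le 2$ once more gives the Koebe one-quarter theorem $g(\D)\supset B(0,1/4)$.

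Next I would propagate $|a_2|\le2$ to a pointwise estimate through the Koebe transform: for $w\in\D$ the map $g_w(z)\coloneqq \bigl(g(\tfrac{z+w}{1+\bar w z})-g(w)\bigr)/\bigl((1-|w|^2)g'(w)\bigr)$ again lies in $\mathcal{S}$, and a short Taylor expansion gives $a_2(g_w)=\tfrac12\bigl[(1-|w|^2)\tfrac{g''(w)}{g'(w)}-2\bar w\bigr]$. Inserting this into $|a_2(g_w)|\le 2$, writing $w=re^{i\theta}$, multiplying by $e^{i\theta}/(1-r^2)$ and taking real parts yields
\[
  \Bigl|\,\partial_r\log|g'(re^{i\theta})|-\frac{2r}{1-r^2}\Bigr|\le \frac{4}{1-r^2},
\]
and integrating in $r$ over $[0,|z|]$ gives the distortion bounds $\tfrac{1-|z|}{(1+|z|)^3}\le|g'(z)|\le\tfrac{1+|z|}{(1-|z|)^3}$. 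The upper growth bound then follows by integrating $g'$ along the radius to $z$. The lower growth bound is trivial when $|g(z)|\ge\tfrac14$ (since $|z|/(1+|z|)^2\le\tfrac14$ for all $|z|<1$); otherwise $[0,g(z)]\subset g(\D)$ by the one-quarter theorem, so pulling this segment back under $g$ gives a curve $\Gamma$ from $0$ to $z$ with
\[
  |g(z)|=\int_\Gamma |g'(\zeta)|\,|d\zeta|\ge \int_\Gamma \frac{1-|\zeta|}{(1+|\zeta|)^3}\,|d\zeta|\ge \int_0^{|z|}\frac{1-t}{(1+t)^3}\,dt=\frac{|z|}{(1+|z|)^2},
\]
using that the last integrand is nonnegative and that $|\zeta|$ runs from $0$ to $|z|$ along $\Gamma$. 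Undoing the normalization recovers the claimed inequalities for $f$.

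The step I expect to be the main obstacle is the second one: the bound $|a_2|\le2$ is the only place where univalence is genuinely exploited, and it rests on the area theorem together with the square-root transform; once it is available, everything afterwards is essentially calculus. A smaller subtlety is the lower growth bound, which cannot be obtained by naive radial integration of $|g'|$ and really needs the one-quarter theorem together with integration along the preimage of a radial segment.
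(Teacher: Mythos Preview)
The paper does not actually prove this statement: it is quoted as background and referred to \cite[Theorem~1.3]{Pommerenke1992}. Your sketch is the classical Bieberbach--Koebe argument (area theorem $\Rightarrow |a_2|\le 2$ $\Rightarrow$ Koebe transform $\Rightarrow$ distortion bounds $\Rightarrow$ growth bounds), which is precisely the route taken in Pommerenke's book, and it is correct. The only step that is a touch terse is the final inequality in the lower growth bound: to pass from $\int_\Gamma \tfrac{1-|\zeta|}{(1+|\zeta|)^3}\,|d\zeta|$ to $\int_0^{|z|}\tfrac{1-t}{(1+t)^3}\,dt$ one uses that, in an arc-length parametrization $\zeta(s)$ of $\Gamma$, the function $\rho(s)=|\zeta(s)|$ satisfies $\rho'(s)\le 1$, so $F'(\rho(s))\ge F'(\rho(s))\rho'(s)$ for the nonnegative $F'(t)=\tfrac{1-t}{(1+t)^3}$, and integrating gives $\int_\Gamma F'(|\zeta|)\,|d\zeta|\ge F(|z|)-F(0)=|z|/(1+|z|)^2$. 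With that line added, the proof is complete.
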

\begin{remark}
	If we restrict to $z\in B(0, R)$ for some $R<1$ then we get the following estimate \begin{equation*}
		|f'(0)|\frac{1}{(1+R)^2} |z|\leq |f(z)-f(0)|\leq |f'(0)|\frac{1}{(1-R)^2}|z|,
	\end{equation*}
	we can view this as $f$ being a quasi-similarity (see \ref{definition: quasi-similarity}) with constants $(1/(1+R^2), 1/(1-R^2), |f'(0)|)$, i.e., the scaling factor $\mu=|f'(0)|$ and the constants $L', L$ only depend on $R$ and not on $f$.
\end{remark}

For the spherical version of the Koebe distortion theorem we refer to \cite[Theorem A.2]{BonkMeyer2017} for a proof. Note that there the chordal metric is used instead of the spherical metric. These metrics are bi-Lipschitz equivalent so we can easily adapt the statement.
\begin{theorem}
  \label{theorem: spherical Koebe}
  Let $0<r<R<\textup{diam}_{\CDach} (\CDach)=\pi$ and let
  $B(z_0, R)\subset \CDach$. Suppose that
  $f\colon B(z_0, R)\to \CDach$ is holomorphic, injective and
  additionally that $f(B(z_0, R))$ is contained in a hemisphere
  of $\CDach$. Then for all $z, w\in B(z, r)$ we have
  that
  \begin{align*}
    f^{\#}(z)&\asymp f^{\#}(w)\\
    f^{\#}(z) L_1|z-w|_{\CDach} \leq |f(z)&-f(w)|_{\CDach}
                                            \leq
                                            f^{\#}(z)L_2|z-w|_{\CDach}.
  \end{align*}
  The constants $L_1=L_1(r/R)\nearrow 1$ and
  $L_2=L_2(r/R)\searrow 1$ as $r/R\to 0$. The notation
  $f^{\#}(z)\asymp f^{\#}(w)$ stands for the existence of
  constants $C_1, C_2>0$ such that
  $C_1f^{\#}(w)\leq f^{\#}(z)\leq C_2f^{\#}(w)$.
\end{theorem}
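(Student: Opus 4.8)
\textbf{Plan of proof of Theorem~\ref{theorem: spherical Koebe}.}
The plan is to reduce the spherical statement to the classical Euclidean Koebe distortion theorem (and its one-quarter-theorem companion) by working in suitable local coordinates, controlling the conformal factors $\sigma(z)=2/(1+|z|^2)$ that convert Euclidean quantities to spherical ones. First I would normalize: using a rotation of $\CDach$ (which is a spherical isometry and an isometry for the relevant metrics) I may assume that the hemisphere containing $f(B(z_0,R))$ is, say, the hemisphere of points whose stereographic image lies in a fixed bounded disc $\D(0,1)\subset\C$, and likewise place $z_0$ away from $\infty$ so that the ball $B(z_0,R)$ has stereographic image contained in a fixed compact subset of $\C$. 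On any such fixed compact set the spherical density $\sigma$ satisfies $\sigma \asymp 1$ with constants depending only on the (fixed) normalization, so spherical distances and Euclidean distances of stereographic images are bi-Lipschitz equivalent there, and similarly $|z-w|_{\CDach}\asymp |z-w|$ and $f^{\#}(z)=\dfrac{\sigma(f(z))}{\sigma(z)}|f'(z)|\asymp |f'(z)|$ on the relevant sets.

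Next I would invoke the Euclidean Koebe distortion theorem in the form already quoted in the excerpt. Pull back by an affine map sending $\D$ onto a disc of radius $R'$ (comparable to $R$) centered at the stereographic image of $z_0$, apply the univalent distortion estimates on the sub-disc of radius $r'/R'$ where $r'\asymp r$, and obtain: for $z,w$ in $B(z_0,r)$, $|f'(z)|\asymp |f'(w)|$ with comparability constants tending to $1$ as $r/R\to 0$, and the two-sided estimate $|f'(z)|\,\tilde L_1|z-w| \le |\phi\circ f(z)-\phi\circ f(w)| \le |f'(z)|\,\tilde L_2|z-w|$ for Euclidean coordinates, again with $\tilde L_1\nearrow 1$, $\tilde L_2\searrow 1$. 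Converting back through $\sigma\asymp 1$ on the fixed compact image set gives the two displayed inequalities of the theorem. The statement $f^{\#}(z)\asymp f^{\#}(w)$ for $z,w\in B(z_0,r)$ follows directly from $|f'(z)|\asymp|f'(w)|$ together with the continuity of $\sigma$ and the fact that $f(z),f(w)$ and $z,w$ each lie in a fixed compact subset of $\C$ on which $\sigma$ is bounded above and below.

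The one subtlety I would treat carefully is the claim that the multiplicative constants in the \emph{distance} estimate (not merely the $\asymp$ for derivatives) converge to $1$ as $r/R\to 0$. Pure bi-Lipschitz control of $\sigma$ would only give constants bounded away from $1$. To get the sharp limiting behavior one has to note that as $r/R\to 0$ the set $B(z_0,r)$ shrinks, so the oscillation of $\sigma$ over $B(z_0,r)\cup f(B(z_0,r))$ tends to $0$; hence along a curve realizing (up to negligible error) the spherical distance between $f(z)$ and $f(w)$, the integral $\int\sigma$ is $(1+o(1))$ times $\sigma(f(z))$ times Euclidean length, and the Euclidean length of the $f$-image of the geodesic from $z$ to $w$ is $(1+o(1))|f'(z)|\cdot|z-w|$ by Euclidean Koebe on the small disc. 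Combining the two $(1+o(1))$ factors with the analogous estimate for $|z-w|_{\CDach}$ versus $|z-w|$ yields constants $L_1(r/R)\nearrow 1$ and $L_2(r/R)\searrow 1$. I expect this bookkeeping of the $o(1)$ terms — isolating which estimates genuinely sharpen as $r/R\to 0$ and which only give bounded comparability — to be the main (though routine) obstacle; the hemisphere hypothesis enters precisely to keep $f(B(z_0,R))$ inside a fixed chart where $\sigma$ is bounded, so that the Euclidean theorem can be applied at all and the conformal factors behave.
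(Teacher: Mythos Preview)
The paper does not actually prove Theorem~\ref{theorem: spherical Koebe}; it simply cites \cite[Theorem~A.2]{BonkMeyer2017} (stated there for the chordal metric, which is bi-Lipschitz equivalent to the spherical one) and moves on. So your outline goes well beyond what the paper itself provides, and there is no ``paper's own proof'' to compare against in detail.

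Your reduction to the Euclidean Koebe distortion theorem via stereographic coordinates and control of the density $\sigma$ is the standard route and is essentially how the Bonk--Meyer proof proceeds. One point worth making explicit: you write that ``as $r/R\to 0$ the set $B(z_0,r)$ shrinks, so the oscillation of $\sigma$ over $B(z_0,r)\cup f(B(z_0,r))$ tends to $0$.'' For the domain piece this is fine since $R<\pi$ forces $r\to 0$. For the image piece you need that $\diam_{\CDach}\!\big(f(B(z_0,r))\big)\to 0$ as $r/R\to 0$, and this is \emph{not} automatic from $r\to 0$ alone (the derivative $|f'(z_0)|$ could be large). The correct argument combines the hemisphere hypothesis with the Koebe one-quarter theorem: since $f(B(z_0,R))$ lies in a fixed chart of bounded Euclidean diameter, Koebe $1/4$ gives $|f'(z_0)|\lesssim 1/R'$ (where $R'$ is the Euclidean radius corresponding to $R$), and then the growth theorem bounds the Euclidean diameter of $f(B(z_0,r))$ by a quantity comparable to $r'/R'\asymp r/R$. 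You allude to this role of the hemisphere hypothesis in your last sentence, but it would strengthen the write-up to spell out that this is precisely where the shrinking of the \emph{image} comes from.
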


\subsection{Coordinates around critical points and Koenig's Theorem}\label{subsection: coordinates}
Let $f\colon \CDach\to\CDach$ be a rational map. Suppose $c$ is a critical point that is mapped to $p$. There exist local coordinates near $c$ respectively $p$, with respect to which $f$ is given as $z^d$ with $d=\deg(f, c)$ respectively $f'(p)\cdot z$. We will explain this in more detail. Let $U$ be a simply connected neighborhood of $p$ and suppose $p$ is the only critical value contained in $U$. Suppose $\partial U$ is locally connected and that $\partial U$ does not contain critical points or values. Then there exists a pre-image $V$ of $U$ containing the critical point $c$. It follows that $V$ is also simply connected and that $\partial V$ is locally connected. By the Riemann mapping theorem there exist bi-holomorphic maps $\phi\colon V\to \D$ and $\psi\colon U\to \D$ that are normalized with $\phi(c)=0$ and $\psi(p)=0$. The map $\psi\circ f\circ \phi^{-1}\colon \D\to \D$ is a holomorphic map and since the image is the disc it is bounded. By the following theorem we understand the map better. 

\begin{theorem}\cite[Exercise~IX.2.1]{Gamelin2001}
  \label{thm:Blaschke_coord}
	Let $f\colon \D\to\C $ be a holomorphic function that can be extended continuously to $\Sp^1$. If $\lim_{z\to 1}|f(z)|=1$ then $f$ is of the form $f(z)=e^{i\theta}b(z)$, where $b(z)$ is a Blaschke product.
\end{theorem}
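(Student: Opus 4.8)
The plan is to identify the zeros of $f$, divide them out by a \emph{finite} Blaschke product, and then squeeze the resulting quotient between $\lvert g\rvert\leq 1$ and $\lvert g\rvert\geq 1$ by the maximum principle.

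\textbf{Step 1: only finitely many zeros.} First I would note that $f\not\equiv 0$, since otherwise $\lvert f\rvert\equiv 0$ on $\Sp^1$, contradicting the hypothesis. The hypothesis together with the continuous extension to $\overline{\D}$ says that $\lim_{\lvert z\rvert\to 1}\lvert f(z)\rvert=1$, i.e. $\lvert f\rvert\equiv 1$ on $\Sp^1$; hence there is a $\rho<1$ with $\lvert f(z)\rvert\geq 1/2$ for $\rho\leq\lvert z\rvert\leq 1$. Thus every zero of $f$ lies in the compact disc $\overline{B(0,\rho)}$, and since $f\not\equiv 0$ its zeros are isolated, so $f$ has only finitely many zeros $a_1,\dots,a_n\in\D$, listed with multiplicity.

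\textbf{Step 2: divide out a Blaschke product.} Let $B(z)=\prod_{k=1}^{n}\frac{z-a_k}{1-\overline{a_k}z}$ be the corresponding finite Blaschke product. Each factor is holomorphic in a neighbourhood of $\overline{\D}$, so $B$ is continuous on $\overline{\D}$, has no zeros on $\Sp^1$, and satisfies $\lvert B(z)\rvert=1$ for $z\in\Sp^1$. Set $g\coloneqq f/B$. Removing the (removable) singularities at the $a_k$, the map $g$ is holomorphic and zero-free on $\D$, continuous on $\overline{\D}$, and $\lvert g(z)\rvert=\lvert f(z)\rvert/\lvert B(z)\rvert=1$ for all $z\in\Sp^1$.

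\textbf{Step 3: maximum principle on both sides.} Applying the maximum modulus principle to $g$ (continuous on $\overline{\D}$, holomorphic on $\D$) gives $\lvert g\rvert\leq 1$ on $\overline{\D}$. Since $g$ is zero-free, $1/g$ is again holomorphic on $\D$ and continuous on $\overline{\D}$ with $\lvert 1/g\rvert=1$ on $\Sp^1$, so the same argument yields $\lvert 1/g\rvert\leq 1$, i.e. $\lvert g\rvert\geq 1$, on $\overline{\D}$. Hence $\lvert g\rvert\equiv 1$, and an interior maximum forces $g$ to be a unimodular constant $e^{i\theta}$. Therefore $f=e^{i\theta}B$, as claimed.

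\textbf{Main obstacle.} There is no real obstacle; this is a classical fact and the proof is short. The only place needing a little care is Step 1: continuity up to $\Sp^1$ together with $\lvert f\rvert\equiv 1$ there is exactly what forces the zero set to be finite, which is what makes $B$ a \emph{finite} Blaschke product and keeps $g$ continuous up to the boundary. Once that is in place, the two-sided maximum-principle squeeze in Step 3 is immediate.
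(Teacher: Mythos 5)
The paper does not give a proof of this theorem; it is quoted as \cite[Exercise~IX.2.1]{Gamelin2001}, so there is no in-paper argument to compare against. Your proof is correct and is the standard one for this classical fact: continuity on $\overline{\D}$ together with $\lvert f\rvert\equiv 1$ on $\Sp^1$ isolates the zeros in a compact subdisc and hence makes them finite in number, a finite Blaschke product $B$ absorbs them with the right multiplicities, and the two-sided maximum-modulus squeeze applied to $g=f/B$ and $1/g$ forces $\lvert g\rvert\equiv 1$, hence $g$ constant. The only points requiring care --- that the zeros stay away from $\Sp^1$, that the singularities of $f/B$ are removable and the quotient is zero-free, and that $1/g$ is again holomorphic and bounded --- are all handled; this is exactly the intended solution to the cited exercise.
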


Since $0$ is the only zero of $\psi\circ f \circ \phi^{-1}$ the Blaschke product $b(z)=e^{i\theta}z^d$ for $d\in\N$ suitable. We may change the normalization of $\psi$, and denote the normalized function also by $\psi$ in order to obtain \begin{equation*}
	\psi\circ f \circ \phi^{-1}(z)=z^d
\end{equation*}
for all $z\in \D$, where $d$ is the local degree of $f$ at $\phi^{-1}(z)$.

Now we will look at the behavior at fixed points. Let $f\colon\C\to\C$ be a holomorphic function and let $p\in \C$ be a fixed point. We call the derivative at $p$ the \textit{multiplier} and we will denote it by $\lambda:= f'(p)$. If $|\lambda|\neq 0, 1$ then we can find local coordinates that conjugate $f$ with the map $z\mapsto \lambda \cdot z$. 

\begin{theorem}\label{theorem: Koenig}
	Let $f\colon \C\to\C$ be a holomorphic map and $p$ be a fixed point with multiplier $\lambda$ of modulus $|\lambda|\neq 0, 1$. Then there exists a local holomorphic coordinate $w=\phi(z)$ with $\phi(p)=0$ so that $\phi\circ f \circ\phi^{-1}$ is the linear map $w\mapsto \lambda \cdot w$ for all $w$ in a neighborhood of zero. 
\end{theorem}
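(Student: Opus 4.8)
The plan is to run the classical Koenigs iteration argument: I would first treat the attracting case $|\lambda|<1$ and then reduce the repelling case $|\lambda|>1$ to it by passing to the local inverse of $f$. After a translation we may assume $p=0$, and write $f(z)=\lambda z+g(z)$, where $g$ is holomorphic near $0$ and vanishes to second order, so that $|g(z)|\le C|z|^2$ for $|z|\le r_0$ and suitable constants $C,r_0>0$.

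Assume $|\lambda|<1$. Since $0<|\lambda|<1$, we may choose $s$ with $|\lambda|<s<\sqrt{|\lambda|}$, and then $r\in(0,r_0]$ small enough that $|f(z)|\le s|z|$ on $\overline{B(0,r)}$; consequently $|f^n(z)|\le s^n r$ for all $n$, so that $f$ maps $\overline{B(0,r)}$ into itself and every iterate is defined there. Set $\phi_n(z):=\lambda^{-n}f^n(z)$. Then on $\overline{B(0,r)}$ one has
\[
|\phi_{n+1}(z)-\phi_n(z)|=|\lambda|^{-(n+1)}\,|g(f^n(z))|\le\frac{Cr^2}{|\lambda|}\Bigl(\frac{s^2}{|\lambda|}\Bigr)^{\!n},
\]
and since $s^2/|\lambda|<1$ the telescoping series $\sum_n(\phi_{n+1}-\phi_n)$ converges uniformly on $\overline{B(0,r)}$. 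Hence $\phi_n\to\phi$ uniformly, $\phi$ is holomorphic, $\phi(0)=0$, and $\phi'(0)=\lim_n\lambda^{-n}(f^n)'(0)=1$ (as $0$ is fixed, $(f^n)'(0)=\lambda^n$ by the chain rule), so $\phi$ is a biholomorphism from a neighborhood of $0$ onto a neighborhood of $0$. Finally, passing to the limit in the identity $\phi_n\circ f=\lambda\,\phi_{n+1}$ yields $\phi\circ f=\lambda\,\phi$, that is, $\phi\circ f\circ\phi^{-1}(w)=\lambda w$ near $0$.

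If instead $|\lambda|>1$, then $f'(0)=\lambda\neq0$ gives $f$ a local holomorphic inverse $h$ fixing $0$ with $h'(0)=\lambda^{-1}$ and $|\lambda^{-1}|<1$; applying the attracting case to $h$ produces a local coordinate $\phi$ with $\phi\circ h\circ\phi^{-1}(w)=\lambda^{-1}w$, and the same $\phi$ conjugates $f$ to $w\mapsto\lambda w$ on a (possibly smaller) neighborhood of $0$. The one delicate point is the uniform convergence of the $\phi_n$, which hinges on being able to insert a contraction rate $s$ strictly between $|\lambda|$ and $\sqrt{|\lambda|}$; this is precisely where the hypotheses $|\lambda|\neq1$ and $\lambda\neq0$ enter, forcing the quadratic remainder $g$ to decay fast enough to absorb the growth of $\lambda^{-n}$. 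The remaining ingredients---that a uniform limit of holomorphic functions is holomorphic, and that a holomorphic map with nonvanishing derivative is locally biholomorphic---are standard.
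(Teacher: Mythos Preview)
Your proof is correct and is precisely the classical Koenigs iteration argument. The paper does not prove this theorem itself but simply cites \cite[Theorem~8.2]{Milnor2006}, where essentially the same proof appears; so your proposal matches the intended reference.
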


For a proof see \cite[Theorem 8.2]{Milnor2006}.

\section{The extension under additional assumptions}\label{section:ext_crit_value_fixed}
In this section we will define an extension $F\colon \Omega \subset \R^3 \to
\R^3$ of $f\colon\CDach \to \CDach$ under the simplifying
assumption that each critical value is a fixed point, i.e., for a
critical point $c$ we have $c\mapsto p\circlearrowleft$. Note
that this implies that the pre-image $\{f^{-n}(c): n\in
\N\}$ of $c$ does not contain any critical point. This means we
prove Theorem \ref{theorem: extension result} in this case. 
The general case will be treated in Section \ref{section:removing_assumption}. 

\subsection{Outline}
\label{sec:outline}

Here we give an outline of the construction in this section. In
particular, we introduce the principal objects that will be
defined. 

We will consider a sequence of  $2$-spheres
$\{S_n\}_{n\in \N_0}\subset B(0,1)\subset \R^3$, called
\textit{approximating spheres}. These will be defined in terms of
a \emph{sequence of radii} $\{r_n\}_{n\in \N_0}$, where
$r_n\colon \CDach \to (0,1)$ is a continuous function for each
$n\in \N_0$. In turn, these 
radii are defined using a \emph{sequence of functions}
$\{f_n\}_{n\in \N_0}$, where $f_n \colon \CDach \to \CDach$. The
desired extension $F$ will then be defined using these sequences.

The approximating spheres $S_n$ accumulate at the boundary 
$\CDach$. This means that for every $\epsilon>0$, there is an
$N\in \N$, such that $S_n$ is contained in the
$\epsilon$-neighborhood of $\CDach$ for all $n\geq N$. 

Each $S_n$ will be parametrized by spherical
coordinates $(z,r)$, where $z\in \CDach$, $r\in (0,1)$. We refer
to $z$ as the \emph{spherical part} and to $r$ as the
\emph{radial part} of the coordinates of a point $p=(z,r)$.

The radii of the approximating spheres will not be constant, but
will be given via the aforementioned sequence (of radii)
$\{r_n\}_{n\in \N_0}$. This means we have
\begin{equation}
  \label{eq:def_Sn}
  S_n = \{(z,r_n(z))\in \R^3 : z\in \CDach\}.
\end{equation}
Note that this implies that $\textup{interior}(S_n)$ is
homeomorphic to a ball for all $n\in\N$.

Recall that $\post(f)$ denotes the postcritical set of $f$. For
each $p\in \post(f)$, we consider a suitable closed
neighborhood $U_p\subset \CDach$ of $p$. Let us denote by $\U=
\bigcup_{p\in \post(f)}U_p$ and by $\mathbf{U}^n = f^{-n}(\U)$.

For $z\in \CDach \setminus \U^n$ we define
\begin{equation}
  \label{equation: naive r_n}
  r_n(z)= 1-s\,\frac{1}{(f^n)^{\#}(z)}
\end{equation}
for all $n\geq 1$. Here $s\in(0, 1)$ is a constant to guarantee
that for all $n\in\N$ and all $z\in\CDach$ we have that
$r_n(z)\in(0, 1)$. For $n=0$ we will set $r_0(z)\equiv r_0$ a
suitable constant $r_0\in (0, 1)$. This will be made precise in
Section \ref{section: sequence of radii}.

Clearly, at critical points of $f^n$ we are not able to use the
previous definition. We first explain the definition of
$r_1\colon \CDach \to (0,1)$. We define a new function $f_1\colon
\CDach\to \CDach$ that is obtained by modifying $f$ in a
neighborhood of critical points. On $\CDach \setminus \U^1$ we
set $f_1 = f$. 

Fix a point $p \in \post(f)$ and
consider a point $c\in f^{-1}$. Then there is a (unique) closed
component $U^1_c$ of $f^{-1}(U_p)$ that contains $c$. When $c$
is not a critical point, we set $f_1= f$ on $U^1_c$. When $c$
is a critical point, the map $f\colon U^1_c\to U_p$ is given as
$z\mapsto z^d$, where $d= \deg(f,c)$, in suitable coordinates in
domain and range. Using these coordinates, $f_1$ is given on
$U^1_c$ by the \emph{winding map}. Then $\norm{Df_1}_{\CDach}$ is
continuous, never $0$, and equal to $f^\#$ on $\CDach \setminus
\U^1$. 

For $z\in \CDach$ we define
\begin{equation}
  \label{equation: naive r_1}
  r_1(z)= 1-s\,\frac{1}{\norm{Df_1(z)}}_{\CDach}. 
\end{equation}

The maps $f_n\colon \CDach \to \CDach$ are similarly defined by
replacing $f$ in a neighborhood $U^n_c$ of each critical point
$c$ by the winding map. Here $U^n_c$ is a component of
$f^{-n}(U_p)$ containing $c$, where $p = f(c)$ is a postcritical
point.

This defines the sequence $\{f_n\}$. It converges uniformly to
$f$. The radii are now defined by

\begin{equation}
  \label{eq:def_rn}
  r_n(z)
  =
  1-s\, \frac{1}{\|D(f_1\circ\dots\circ f_n)(z)\|_{\CDach}},
\end{equation}
for all $z\in \CDach$ and $n\in \N_0$. We now choose $s\in(0, 1)$ is a constant chosen such that $r_n(z)\in (0,
1)$ for all $z\in \CDach$ and all $n\in \N$. We will set
$r_0 :=\frac{1}{2}\min_{z\in\CDach, n\geq 1}\{r_n(z)\}$ and
$r_0(z)=r_0$. 

Having defined the sequence $\{r_n\}$, the
sequence of approximating spheres $\{S_n\}$ is now defined by
\eqref{eq:def_Sn}. The desired extension $F$ is first defined on each
approximating sphere $S_n$ by 
\begin{equation}
  \label{eq:def_extension_outline}
  F(z, r_n(z)) = (z', r_{n-1}(z')).
\end{equation}
for each $z\in \CDach$. Here $z' = f_n(z)$. This means that
$F\colon S_n\to S_{n-1}$.





Sections \ref{subsection: extension far from critical points} and \ref{subsection: extension near critical points} are dedicated to the definition of the extending map away from respectively close to critical points. We also prove quasi-regularity in these sections.

\subsection{Local coordinates}
\label{subsection: local coordinates}
We will assume that each critical value is a fixed
point.  Let $p$ be a critical value. Because $f$ is expanding the
multiplier $\lambda=f'(p)$ satisfies $|\lambda|>1$. In a
neighborhood of $p$, the map $f$ may be given as $z\mapsto
\lambda z$ in suitable coordinates using Koenig's
Theorem \ref{theorem: Koenig}. 

The precise meaning is that there is a neighborhood $V$ of $p$, which is a Jordan domain, and a conformal map $\psi\colon V\to B(0, R)$ for some $R>1$ such that $\psi\circ f(z)=\delta_{\lambda}\circ \psi(z)$ for all $z\in V$. 

We may restrict $\psi $ to a closed set $U= U_p^0$ such that the following diagram commutes:

\begin{equation}
  \label{eq:loc_coord_Up}.
  \xymatrix{
    U_p^1 \ar[r]^{f} \ar[d]_{\psi}
    & U_p^0 \ar[d]^{\psi} \\
    \lambda^{-1}\overline{\D} \ar[r]^{\delta_{\lambda}}
    &\overline{\D}\rlap{.}
  }
\end{equation}
Here $U_p^1$ is the (unique) component of $f^{-1}(U^0_p)$
containing $p$ and $\delta_{\lambda}(z):=\lambda\cdot z$. We say that $\psi$ is conformal because it is a restriction of $\psi$ to a closed subset $U_p^1$ that is compactly contained in $V$.

Note that we may replace $U^0_p$ by $\psi^{-1}(r\D)$ for
any $r\in (0, 1)$. This means we can assume that $U_p^0$ is a
Jordan domain with analytic boundary, in particular $\partial
U_p^0$ and $\partial U_p^1$ are locally connected and are
null-sets with respect to the Lebesgue measure of $\CDach$.

Furthermore, we assume that for
distinct $p,p'\in \post(f)$ the sets $U^0_p$ and $U^0_{p'}$ are
disjoint. From now on we will assume that the sets
$U_p^0$ are of this form. 
The sets $U_p=U_p^0$, for $p\in
\post(f)$ and consequently $\U\coloneqq \bigcup_{p\in\post(f)}
U_p$ will be fixed from now on, with a possibility of choosing them smaller only for Lemma \ref{lemma:mon_radii_crit}. 

Note that it follows that each component $U^n$ of $f^{-n}(U_p^0)$ is a closed
disk with analytic boundary, in particular a Jordan domain whose
boundary is a null-set for $2$-dimen\-sional Lebesgue
measure. Each $U^n$ contains a unique point $z_n\in f^{-n}(p)$;
conversely for each such $z_n$, there is a unique such component
$U^n$ containing $z_n$. Let us denote
\begin{equation*}
  \U^n
  =
  \{U^n : U^n \text{
    is a component of } f^{-n}(U_p)
  \text{ for some } p\in \post(f)\}.
\end{equation*}
By choosing each $U_p$ sufficiently small (for $p\in \post(f)$), we
can assume that each component $U^1$ of $\U^1$ that does not
contain a postcritical point is disjoint from $\U$. 

Recall that each $p\in \post(f)$ is a fixed point by assumption, meaning that
$p\in f^{-n}(p)$ for each $n\in \N_0$. We denote by
\begin{equation}
  \label{eq:defUnp}
  U^n_p \text{ the unique component of } f^{-n}(U_p)
  \text{ containing } p,
\end{equation}
for each $n\in \N_0$. 
Note that from the
above it follows that $\psi(U^n_p) = \abs{\lambda}^{-n} \overline{\D}$, i.e., we
could have defined $U^n_p = \psi^{-1}(\abs{\lambda}^{-n} \overline{\D})$ (i.e., in
our Koenig's coordinates $U^n_p$ is
$\abs{\lambda}^{-n}\overline{\D}$). This description immediately implies that 
\begin{align*}
  &U_p = U^0_p \supset U^1_p \supset \dots,
  \bigcap_n U^n_p = \{p\}, \text{ and}
  \\
  &\diam_{\CDach} (U^n_p) \to 0, \text{ as } n\to \infty.
\end{align*}
Let us denote
\begin{equation}
  \label{eq:defUnpost}
  \U^n_{\post}\coloneqq \bigcup_{p\in \post} U^n_p. 
\end{equation}

Now consider a critical point $c\in f^{-1}(p)$ for some $p\in
\post(f)$. Let $U_c= U^1_c$ 
be the unique component of $f^{-1}(U_p)$ containing $c$. It will
be convenient to choose the sets $U_p$, for $p\in \post(f)$,
sufficiently small such that all the 

In
suitable local coordinates, we may express the map $f$ in a
neighborhood of $c$ as $z\mapsto z^d$, where $d= \deg(f,z)$. 
To make this precise, we note that by
Theorem~\ref{thm:Blaschke_coord}, and the discussion following it,
there exists a conformal map $\phi: U_c\to \overline{\D}$, mapping
$c$ to $0$, such that the
following diagram commutes: 
\begin{equation}
  \label{eq:loc_coord_zd}
  \xymatrix{
    U_c \ar[r]^{f} \ar[d]_{\phi}
    & U_p \ar[d]^{\psi} \\
    \overline{\D} \ar[r]^{z^d}
    & \overline{\D}\rlap{.}
  }
\end{equation}

We denote by
\begin{equation}
  \label{eq:defUnc}
  \text{$U^{n+1}_c$ the component of
  $f^{-1}(U_p^n)$ that contains $c$},
\end{equation}
for each $n\in \N_0$. Note that $U^{n+1}_c\subset \CDach$ is the
component of $f^{-(n+1)}(U_p)$ that contains $c$, i.e.,
$U^{n+1}_c$ exists and is unique. In our  
local coordinates provided by $\phi$, $U^{n+1}_c$ is given by
$|\lambda|^{-n/d} \overline{\D}$, i.e., $U^{n+1}_c =
\phi^{-1}(|\lambda|^{-n/d} \overline{\D})$.
Similar to the above, we have
\begin{align*}
  &U_c = U^1_c \supset U^2_p \supset \dots,
  \bigcap_n U^n_c = \{c\}, \text{ and}
  \\
  &\diam_{\CDach} (U^n_c) \to 0, \text{ as } n\to \infty.
\end{align*}

Let us denote
\begin{equation}
  \label{eq:defUncrit}  
  \U^{n+1}_{\crit} \coloneqq \bigcup_{c\in \crit} U^{n+1}_c, 
\end{equation}
for $n\in \N_0$. 
Note that $\U^{n+1}_{\crit}$ is the union of the components of
$f^{-(n+1)}(\U)$, respectively the union of components of
$f^{-1}(\U^n_{\post})$, containing a critical point. 

Note that in \eqref{eq:loc_coord_Up} as well as in
\eqref{eq:loc_coord_zd} the maps $\psi, \phi$, as well as the
local degree $d$ and the multiplier $\lambda$ depend on $p$,
respectively $c$. We omitted the respective subscripts to lighten
notation. We will continue doing this in the following. However,
these maps (and numbers) will not depend on $n$, even when
applied to the subsets $U^n_p\subset U_p$, respectively
$U^{n+1}_c\subset U_c$. 

\begin{figure}[htb]\label{figure: local coordinates}
	\begin{overpic}[scale=0.4]{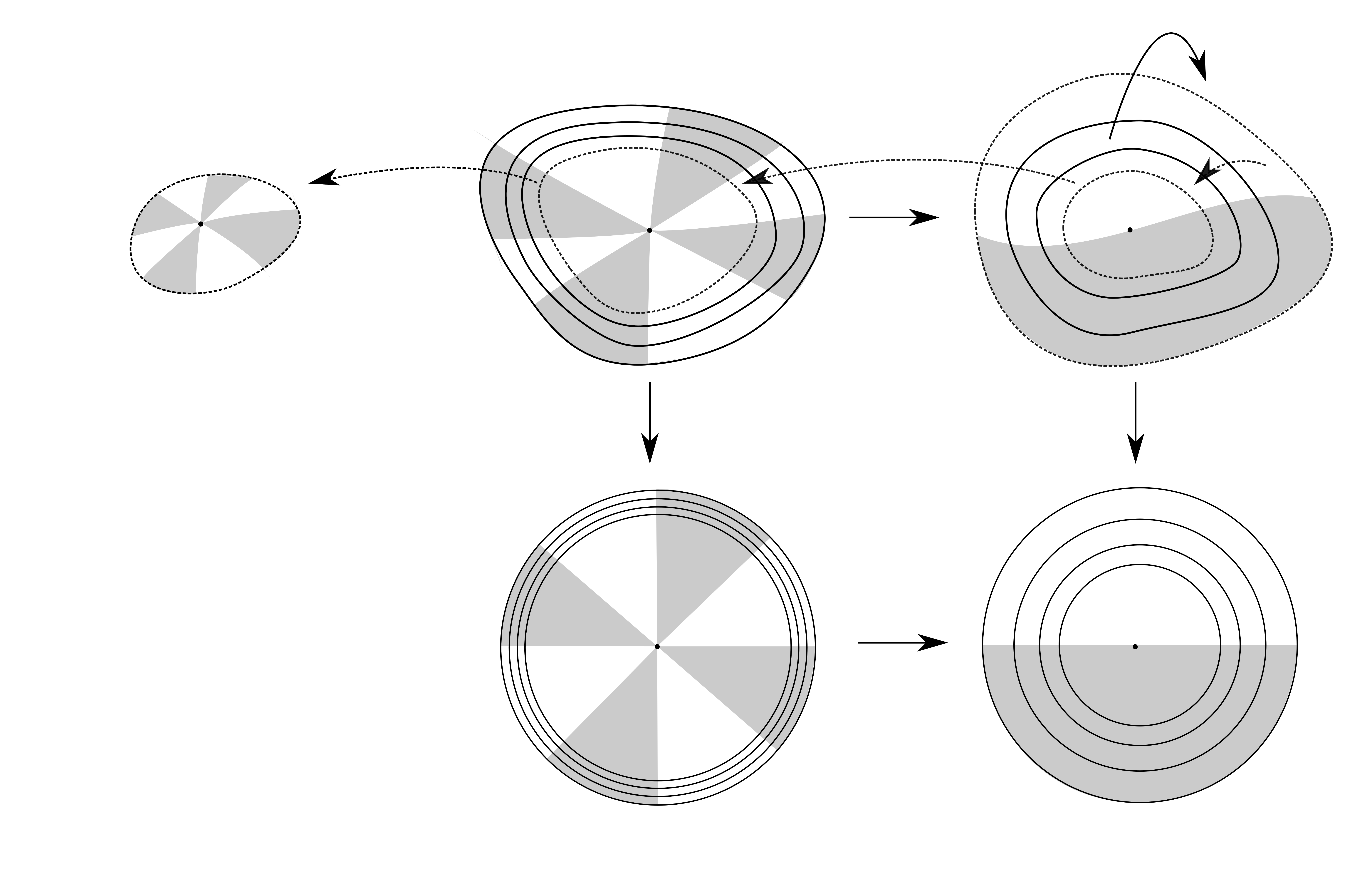}
		\put(28, 54){\footnotesize $f^{-m}$}
		\put(65, 55){\footnotesize $f^{-1}_c$}
		\put(65, 45){\footnotesize $f$}
		\put(65, 20){\footnotesize $\xi^d$}
		\put(44, 34){\footnotesize $\phi$}
		\put(84, 34){\footnotesize $\psi$}
		\put(82, 63){\footnotesize $f$}
		\put(93, 53){\footnotesize $f^{-3}_p$}
	\end{overpic}
	\caption{This indicates the backwards iteration of a post-critical fixed point. The dashed arrows and domains are an example where we iterate backwards for a few iterates with a conformal branch, then we take a pre-image containing the critical point and then again conformal branches of the inverse after passing the critical point.}
\end{figure}

\smallskip
Let us record some estimates for the coordinate maps $\psi,
\phi$, which will immediate consequences of Koebe's distortion
theorem. In fact, we will be estimating $\psi^{-1}\colon \Db \to
U_p\subset \CDach$ and $\phi\colon U_c \subset \CDach \to
\Db$. Recall that $\CDach$ is equipped with the spherical metric,
while $\Db\subset \C$ is equipped with the Euclidean
metric. Recall also that we are abusing notation and denote the
derivative with respect to these metrics by $\phi^{\#}$,
respectively $(\psi^{-1})^{\#}$. These are given by
\begin{equation*}
  \phi^{\#}= \frac{1+|z|^2}{2}|\phi'(z)|,
\end{equation*}
for all $z\in U_c$ and
\begin{align*}
  (\psi^{-1})^{\#}(w')
  =
  \frac{2}{1+\abs{\psi^{-1}(w')}^2}|(\psi^{-1})'(w')|
  =
  \frac{2}{1+\abs{w}^2} \abs{\psi'(w)}^{-1},
\end{align*}
for all $w'\in \Db$, where $w=\psi^{-1}(w') \Leftrightarrow w' =
\psi(w)$. In the case when $z=\infty$ and/or $w=\infty$, these
expressions are to be understood as a suitable limit.

\begin{lemma}
  \label{lem:Koebe_phi_psi}
  There is a sequence $\{L(n)\}$ with $L(n) \searrow 1$ as $n\to
  \infty$ with the following properties.
  \begin{enumerate}    
  \item
    \label{item:Koebe_phi}
    Whenever $z,z_0,z_1\in U^n_c$ for a $n\in \N$ and
    $c\in \crit(f)$, we have 
    \begin{align*}
      &\phi^{\#}(z)
      \asymp
      \phi^{\#}(c),
      \\
      &\abs{\phi(z_0)-\phi(z_1)}
      \asymp
      \phi^{\#}(c)\abs{z_0-z_1}_{\CDach},
    \end{align*}
    where $C(\asymp) = L(n)$.
  \item 
    \label{item:Koebe_psi}
    Consider a $p\in \post(f)$, $n\in \N_0$, as well as the
    corresponding set $U^n_p\subset \CDach$. 
    Whenever $w', w'_0, w'_1\in \abs{\lambda}^{-n}\Db =
    \psi(U^n_p)$ we have  
    \begin{align*}
      &(\psi^{-1})^{\#}(w')
      \asymp
      (\psi^{-1})^{\#}(0)
      =
        (\psi^{\#})^{-1}(p),
        \intertext{as well as}
        &\abs{w_0 - w_1}_{\CDach}
        =
        \abs{\psi^{-1}(w'_0)-\psi^{-1}(w'_1)}
        \asymp
          (\psi^{-1})^{\#}(0)\abs{w_0'-w'_1}
        \text{ or }
      \\
      &\abs{w'_0 - w'_1}
        =
        \abs{\psi(w_0)-\psi(w_1)}
        \asymp
      (\psi^{\#})(p)\abs{w_0-w_1}_{\CDach},
    \end{align*}
    where $w_0= \psi^{-1}(w'_0), w_1 = \psi^{-1}(w'_1)\in U^n_p$
    ($\Leftrightarrow \psi(w_0) = w'_0, \psi(w_1) =w'_1$). 
  \end{enumerate}
  Here $C(\asymp) = L(n)$ in all the above estimates. 
\end{lemma}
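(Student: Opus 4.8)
The plan is to deduce both parts directly from the classical (Euclidean) Koebe distortion theorem, applied to the univalent inverse branches $\phi^{-1}$ and $\psi^{-1}$, together with the elementary observation that on a subset of $\CDach$ of small spherical diameter the spherical metric agrees --- up to a multiplicative factor tending to $1$ with the diameter --- with a fixed constant multiple of the Euclidean metric read in a chart.

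I would start with part~(ii). Since $\psi$ is the restriction of a conformal map of a Jordan domain onto $B(0,R)$ with $R>1$ (see \eqref{eq:loc_coord_Up}), the branch $g:=\psi^{-1}$ is univalent on $B(0,R)$, and $\psi(U^n_p)=\abs{\lambda}^{-n}\overline{\D}=:\tau_n\overline{\D}$ with $\tau_n\searrow 0$. Rescaling $g$ to $\D$ and invoking the Koebe distortion theorem on the sub-disc of relative radius $\tau_n/R<1$ --- equivalently, $g|_{\tau_n\overline{\D}}$ is a quasi-similarity in the sense of Definition~\ref{definition: quasi-similarity} with stretch factor $\abs{g'(0)}$ and constants depending only on $\tau_n/R$ and decreasing to $1$ --- one obtains $\abs{g'(w')}\asymp\abs{g'(0)}$ and $\abs{g(w'_0)-g(w'_1)}\asymp\abs{g'(0)}\,\abs{w'_0-w'_1}$ for $w',w'_0,w'_1\in\tau_n\overline{\D}$, with $C(\asymp)$ a function of $\tau_n/R$ decreasing to $1$. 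Since $U^n_p=g(\tau_n\overline{\D})$ has spherical diameter tending to $0$, the density $2/(1+\abs{g(w')}^2)$ is comparable on $\tau_n\overline{\D}$ to $2/(1+\abs{p}^2)$, and the Euclidean distance between two points of $U^n_p$ is comparable to $2/(1+\abs{p}^2)$ times their spherical distance, both with constants decreasing to $1$. Multiplying the two displayed estimates by these density factors and using the identity $(\psi^{-1})^{\#}(0)=\tfrac{2}{1+\abs{p}^2}\,\abs{\psi'(p)}^{-1}=\bigl(\psi^{\#}(p)\bigr)^{-1}$ then yields all assertions of~(ii); the two displayed distance estimates there are equivalent reformulations of one another through this identity.

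Part~(i) I would handle in exactly the same way: with $p=f(c)$ and $d=\deg(f,c)$, the relation $\phi(z)^d=\psi(f(z))$ from \eqref{eq:loc_coord_zd} (together with analyticity of $\partial U_c$) shows that $\phi^{-1}$ extends to a univalent function on a disc $B(0,R')$ with $R'>1$, while $\phi(U^n_c)=\abs{\lambda}^{-(n-1)/d}\overline{\D}=:\rho_n\overline{\D}$ with $\rho_n\searrow 0$. Applying Koebe to $\phi^{-1}$ on $\rho_n\overline{\D}$, the same small-diameter comparison for $U^n_c=\phi^{-1}(\rho_n\overline{\D})$, and $\phi^{\#}(z)=\tfrac{1+\abs{z}^2}{2}\,\abs{(\phi^{-1})'(\phi(z))}^{-1}$, gives $\phi^{\#}(z)\asymp\phi^{\#}(c)$ and $\abs{\phi(z_0)-\phi(z_1)}\asymp\phi^{\#}(c)\,\abs{z_0-z_1}_{\CDach}$ for $z,z_0,z_1\in U^n_c$. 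Points at $\infty$ are dealt with by the usual limit convention (equivalently, by conjugating with $z\mapsto 1/z$ first); the relevant densities and derivatives remain finite and nonzero. Finally, since $\crit(f)$ and $\post(f)$ are finite, I take for each $n$ the maximum, over $c\in\crit(f)$, $p\in\post(f)$, and over the finitely many types of estimate, of all the constants produced above; this gives a single sequence $\{L(n)\}$, and since each of these constants is nonincreasing in $n$ (as are $\rho_n$, $\tau_n$, $\diam_{\CDach}(U^n_c)$, $\diam_{\CDach}(U^n_p)$) and tends to $1$, we get $L(n)\searrow 1$. I expect the only genuine point to be the passage between the Euclidean Koebe estimates and their spherical counterparts, which is precisely what the small-diameter comparison is for; beyond that the argument is bookkeeping, so I do not anticipate a serious obstacle.
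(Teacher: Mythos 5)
Your argument is correct and takes essentially the same approach as the paper: the paper's proof is a one-line citation of its spherical Koebe distortion theorem (Theorem~\ref{theorem: spherical Koebe}) applied to $\phi$ and $\psi^{-1}$, together with finiteness of $\crit(f)\cup\post(f)$, which is exactly where you land. Your write-up is a bit more explicit at one genuinely delicate point: you run classical Euclidean Koebe on the univalent extensions of $\phi^{-1}$ and $\psi^{-1}$ and then insert the spherical density factors $2/(1+\abs{\cdot}^2)$ separately, noting both contributions to $C(\asymp)$ tend to $1$ as $\diam_{\CDach}(U^n_c),\diam_{\CDach}(U^n_p)\to 0$; this cleanly handles the fact that $\phi$ and $\psi^{-1}$ each have one Euclidean and one spherical side, whereas Theorem~\ref{theorem: spherical Koebe} as stated concerns maps between subsets of $\CDach$ with the spherical metric on both ends, and it also makes the identity $(\psi^{-1})^{\#}(0)=(\psi^{\#}(p))^{-1}$ (and hence the equivalence of the two distance estimates in part~(ii)) transparent.
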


\begin{proof}
  This follows from Theorem \ref{theorem: spherical Koebe}
  applied to the maps $\phi, \psi^{-1}$ on the sets $U^n_c$,
  respectively $\abs{\lambda}^{-n}\Db= \psi(U_p^n)$. Since there
  are only finitely many critical as well as postcritical points,
  we may use the same constant $C(\asymp) = L(n)$ throughout.  
\end{proof}

Let us record some elementary estimates on the derivative of $f$. 

\begin{lemma}
  \label{lem:fn_Unp}
  \mbox{}
  \begin{enumerate}
  \item
    \label{item:fn_Unp}
    Let $z\in U^n_p$, for a $p\in \post(f)$. Then
    \begin{equation*}
      (f^n)^{\#}(z)
      =
      \abs{\lambda}^n(\psi^{-1})^{\#}(w)\psi^{\#}(z)
      \asymp
      \abs{\lambda}^n.
    \end{equation*}
    Here $w= \lambda^n \psi(z)$ and $C(\asymp)$ is a universal
    constant.
  \item 
    \label{item:fn_Un}
    Assume the component $U^1$ of $\U^1$ does not contain a
    critical point. Then
    \begin{equation*}
      f^{\#}(z)
      \asymp
      1,
    \end{equation*}
    for all $z\in U^1$ where $C(\asymp)$ is a constant. 
  \end{enumerate}
  
\end{lemma}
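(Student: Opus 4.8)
Both parts are direct applications of the coordinate descriptions and the Koebe-type estimates recorded just above (Lemma \ref{lem:Koebe_phi_psi} and Theorem \ref{theorem: spherical Koebe}), together with the chain rule for the spherical derivative. I will treat the two parts separately.

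For part \ref{item:fn_Unp}, I start from the commuting diagram \eqref{eq:loc_coord_Up}, iterated $n$ times, which says that on $U^n_p$ one has $\psi\circ f^n = \delta_\lambda^n \circ \psi = \delta_{\lambda^n}\circ \psi$, i.e. $f^n = \psi^{-1}\circ \delta_{\lambda^n}\circ \psi$ on $U^n_p$. Differentiating via the chain rule \eqref{eq:Dfgh} (all three maps here are holomorphic, so this is an equality of spherical derivatives, with the conformal factors of the two sphere-to-plane and plane-to-sphere pieces combining correctly): for $z\in U^n_p$, writing $w' = \psi(z)$ and $w = \lambda^n w' = \lambda^n\psi(z)$,
\begin{equation*}
  (f^n)^\#(z)
  =
  (\psi^{-1})^\#(w)\cdot |\lambda|^n \cdot \psi^\#(z)
  =
  |\lambda|^n (\psi^{-1})^\#(w)\,\psi^\#(z).
\end{equation*}
This is the claimed identity. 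For the comparability $(f^n)^\#(z)\asymp |\lambda|^n$ with a universal constant, note that $w' = \psi(z)$ ranges over $|\lambda|^{-n}\Db$ and hence $w = \lambda^n w'$ ranges over $\Db$; by Lemma \ref{lem:Koebe_phi_psi}\ref{item:Koebe_psi} (applied with a fixed reference radius, e.g. $r/R = 1/2$, or simply because $\psi$ is conformal on a neighborhood of the compact set $\psi^{-1}(\Db)$) we get $(\psi^{-1})^\#(w)\asymp (\psi^{-1})^\#(0) = (\psi^\#)^{-1}(p)$ and likewise $\psi^\#(z)\asymp \psi^\#(p)$ since $z\in U^n_p\subset U^0_p$. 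Multiplying, $(\psi^{-1})^\#(w)\,\psi^\#(z)\asymp (\psi^\#)^{-1}(p)\,\psi^\#(p) = 1$, and since there are only finitely many postcritical points, one constant works for all of them. Thus $(f^n)^\#(z)\asymp |\lambda|^n$ with a universal $C(\asymp)$.

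For part \ref{item:fn_Un}, the component $U^1$ of $\U^1$ in question does not contain a critical point, so $f\colon U^1\to U_p$ (for the appropriate $p = f(z_1) \in \post(f)$, where $z_1\in f^{-1}(p)$ is the marked preimage in $U^1$) is a conformal homeomorphism onto the Jordan domain $U_p = U^0_p$. Since $\overline{U^1}$ is compact, $f$ is holomorphic and injective on a neighborhood of it, and $U_p$ is contained in a hemisphere (having been chosen small), Theorem \ref{theorem: spherical Koebe} applies and yields $f^\#(z)\asymp f^\#(z_1)$ for all $z\in U^1$, with a constant depending only on the geometry of the fixed, finite collection of sets $U^1\in \U^1$. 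As $f^\#(z_1)$ takes only finitely many positive values (one per component not containing a critical point), we conclude $f^\#(z)\asymp 1$ on $U^1$ with an absolute constant.

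Neither part presents a real obstacle; the only point requiring a little care is bookkeeping the conformal factors in the chain rule of part \ref{item:fn_Unp} — making sure that the $\psi$ appearing as a sphere-to-plane map and the $\psi^{-1}$ appearing as a plane-to-sphere map contribute reciprocal conformal factors so that only the $|\lambda|^n$ from $\delta_{\lambda^n}$ survives globally — and confirming that the comparability constants can be taken independent of $n$, which follows because $w = \lambda^n\psi(z)$ always lands in the fixed disk $\Db$ regardless of $n$.
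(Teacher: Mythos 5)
Your proof is correct and is essentially an expanded version of the paper's proof, which also invokes the chain rule, the conformal extendability of $\psi$ to a neighborhood of $U^0_p$, and Koebe distortion for the same estimates. In particular, your observation that $w=\lambda^n\psi(z)$ always lands in the fixed disk $\Db$ (so the comparability constant for $(\psi^{-1})^\#$ does not degrade with $n$) is exactly the point the paper abbreviates by noting that $\psi$ extends conformally to a neighborhood of $U^0_p$.
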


\begin{proof}
  Property~\ref{item:fn_Unp} follows immediately from the chain
  rule, the fact that 
  $\psi$ can be extended conformally to a neighborhood of $U^0_p$
  and Koebe distortion; Property~\ref{item:fn_Un} is simply an
  application of the Koebe distortion. 
\end{proof}

\subsection{The winding map}
\label{sec:winding-map}

As outlined at the beginning of this section, the maps $f_1$ respectively $f_n$ will be constructed by replacing $f$ in the neighborhood $U_c^1$, respectively $U_c^n$ by the winding map.


\begin{definition}[The winding map]
	\label{definition: winding map}
  The \emph{winding map} of degree $d\geq 1$ is the map $h_d\colon
  \C \to \C$ given as follows.
  For $z= r e^{it}$, where $r\geq 0$, $t\in \R$, we define
  \begin{equation*}
    h_d(z) = h_d(re^{it})
    \coloneqq
    \begin{cases}
      r e^{idt}, & 0\leq r \leq 1;
      \\
      z^d, & r\geq 1. 
    \end{cases}
  \end{equation*}
\end{definition}
Note that $h_d$ is well-defined and continuous. Furthermore,
$h_d$ agrees with the map $z\mapsto z^d$ on $\Sp^1 = \partial
\D$. We will be mostly interested in $h_d$ on $\overline{\D}$,
but it will be convenient to have $h_d$ defined on a larger
domain.

We will consider sets of the form
\begin{equation}
  \label{eq:def_sector}
  \Db(\theta_0, \alpha)
  \coloneqq
  \left\{re^{i \theta} \in \Db : 0\leq r < 1,\,
    \theta_0 - \frac{\alpha}{2}
    \leq
    \theta
    \leq
    \theta_0 + \frac{\alpha}{2}
  \right\} \subset \Db,
\end{equation}
for $\theta_0,\in \R$, $\alpha>0$. Such a set is called a
\emph{sector of angle $\alpha$}. On suitable such sectors $h_d$
will be injective, which will help to resolve certain issues
later on. 



\begin{lemma}
  \label{lemma: h is bilipschitz}
  The winding map $h_d$ satisfies the following.
  \begin{enumerate}    
  \item
    \label{item:prop_hd_2}
    The map $h_d$ is real differentiable
    everywhere except for 
    the origin and on $\Sp^1$.
    The operator norm of the differential $\|Dh_d(z)\|$ extends
    continuously to $\C$ by
    \begin{equation*}
      \norm{Dh_d(z)}
      =
      \begin{cases}
        d, & z\in \overline{\D};
        \\
        d \abs{z}^{d-1}, & \abs{z}>1. 
      \end{cases}
    \end{equation*}
    The minimal distortion satisfies
    \begin{equation*}
      \norm{Dh_d(z)}_{*}
      =
      \begin{cases}
        1, &z\in \D;
        \\
        d\abs{z}^{d-1} &|z|>1.
      \end{cases}
    \end{equation*}
  \item
    \label{item:prop_hd_1}
    On each sector $\Db(\theta_0, \pi/d)$ of angle $\pi/d$ (see
    \eqref{eq:def_sector}), we have the following
    estimate 
    \begin{equation*}
      \begin{split}
        |z-w|
        \leq 
        |h_d(x)-h_d(w)|
        \leq 
        d\cdot |z-w|,
      \end{split}
    \end{equation*}
    for $z,w\in \D(\theta_0, \alpha)$. 
  \item
    \label{item:prop_hd_3}
    The distortion of $h_d$ is equal to $d$ of $\overline{\D}$
    and $0$ on $\C\setminus \overline{\D}$. 
  \end{enumerate} 
\end{lemma}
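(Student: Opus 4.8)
The plan is to reduce every assertion to the explicit polar description of $h_d$, so I would first separate off the trivial case $d=1$ (where $h_1$ is the identity) and, for $d\geq2$, record that $h_d$ is $re^{it}\mapsto re^{idt}$ on $\overline\D$ and the holomorphic map $z\mapsto z^d$ on $\{\abs z>1\}$. The only genuinely non-routine points are the behaviour at the origin and across $\Sp^1$, which I would handle head-on after the computations in the two open regions.

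For part \ref{item:prop_hd_2} I would compute $Dh_d$ region by region. On $0<\abs z<1$, writing $z=re^{it}$ so that the map is $(r,t)\mapsto(r,dt)$ in polar coordinates, I would pass to the positively oriented orthonormal frames $(e^{it},ie^{it})$ at $z$ and $(e^{idt},ie^{idt})$ at $h_d(z)$; in these frames $Dh_d(z)$ is the diagonal matrix with entries $1$ and $d$ (the radial direction is preserved, while a unit angular vector is stretched by $d$ since $t\mapsto re^{idt}$ has speed $rd$), which gives $\norm{Dh_d}=d$, $\norm{Dh_d}_*=1$, and $\det Dh_d=d>0$ there. On $\abs z>1$ the map is holomorphic with derivative $dz^{d-1}\neq0$, so $Dh_d$ is conformal of norm $d\abs z^{d-1}=\norm{Dh_d}_*$. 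At the origin I would observe that the directional limits $\lim_{r\searrow0}h_d(re^{it})/r=e^{idt}$ depend non-linearly on $t$ for $d\geq2$, so $h_d$ cannot be differentiable there; on $\Sp^1$ I would compare the one-sided differentials, $\operatorname{diag}(1,d)$ from inside against the conformal $\operatorname{diag}(d,d)$ from outside, which disagree for $d\geq2$, so $h_d$ is not differentiable on $\Sp^1$ either. Finally I would check that $\norm{Dh_d}$ equals $d$ on both sides of $\Sp^1$, so it extends continuously to $\C$ by the stated formula, whereas $\norm{Dh_d}_*$ jumps from $1$ to $d$ across $\Sp^1$, which is why it is only recorded on $\D$ and on $\{\abs z>1\}$.

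For part \ref{item:prop_hd_1} I would fix a sector $\Db(\theta_0,\pi/d)$ from \eqref{eq:def_sector}; since it lies in $\overline\D$, $h_d$ restricts there to $re^{it}\mapsto re^{idt}$. Writing $z=re^{it}$, $w=\rho e^{i\tau}$ with $\alpha=\abs{t-\tau}\leq\pi/d$, the law of cosines gives $\abs{h_d(z)-h_d(w)}^2-\abs{z-w}^2=2r\rho(\cos\alpha-\cos(d\alpha))\geq0$, using that $0\leq\alpha\leq d\alpha\leq\pi$ and $\cos$ is decreasing on $[0,\pi]$; this is the lower bound. For the upper bound I would either invoke part \ref{item:prop_hd_2} — $\norm{Dh_d}\leq d$ on the convex sector, so $h_d$ is $d$-Lipschitz there — or use the same law-of-cosines expression together with $r^2+\rho^2\geq2r\rho$ and the elementary inequality $\abs{\sin(d\beta)}\leq d\abs{\sin\beta}$ (induction on $d$). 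Part \ref{item:prop_hd_3} then drops out of part \ref{item:prop_hd_2}: at points of differentiability the distortion is the ratio $\norm{Dh_d}/\norm{Dh_d}_*$, which is $d$ on $\D$ and $1$ on $\{\abs z>1\}$, and $\Sp^1\cup\{0\}$ is a Lebesgue null set.

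I expect the only mildly delicate step to be part \ref{item:prop_hd_1}, where the opening angle $\pi/d$ must be used with care: it is exactly what keeps $\alpha\leq\pi$ (so monotonicity of $\cos$ applies, and $h_d$ is injective on the sector) and what makes the constants $1$ and $d$ sharp. Everything else is a careful but routine unwinding of the polar formula, the one point requiring attention being the non-differentiability of $h_d$ at $0$ and on $\Sp^1$ together with the one-sided limits of $\norm{Dh_d}$ and $\norm{Dh_d}_*$ there.
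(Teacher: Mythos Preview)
Your proposal is correct, and parts \ref{item:prop_hd_2} and \ref{item:prop_hd_3} follow essentially the same path as the paper (polar-coordinate Jacobian on $\D\setminus\{0\}$, holomorphicity on $\{\abs z>1\}$). The genuine difference is in part \ref{item:prop_hd_1}, specifically the lower bound. The paper does not use the law of cosines; instead it notes that $h_d$ maps the sector $\Db(\theta_0,\pi/d)$ homeomorphically onto the sector $\Db(d\theta_0,\pi)$, which is again convex, and then runs the \emph{same} line-segment argument on the inverse $h^{-1}$, using $\norm{Dh^{-1}(\zeta')}=\norm{Dh_d(\zeta)}_*^{-1}=1$ from part \ref{item:prop_hd_2}. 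Your trigonometric computation $\abs{h_d(z)-h_d(w)}^2-\abs{z-w}^2=2r\rho(\cos\alpha-\cos d\alpha)\geq0$ is more self-contained and avoids the inverse altogether; the paper's version is more uniform (one mechanism for both inequalities) and makes transparent why the opening angle $\pi/d$ is the right threshold---it is exactly what keeps the image sector convex---which your argument encodes equivalently as $d\alpha\leq\pi$ so that monotonicity of $\cos$ applies.
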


\begin{proof}
  \ref{item:prop_hd_2}
  Clearly, for $\abs{z}>1$ we have
  \begin{equation*}
    \norm{Dh_d(z)}
    =
    \norm{Dh_d(z)}_*
    =
    \abs{d z^{d-1}} = d \abs{z}^{d-1}. 
  \end{equation*}
  Let us now assume that $z= r e^{i\theta}\in \D \setminus\{0\}$.
  Recall that the partial derivatives in polar coordinates are
  (as can easily be verified using the chain rule)
  \begin{align*}
    \frac{\partial}{\partial x}
    &=
      \cos(\theta)\frac{\partial}{\partial r}
      -\frac{\sin(\theta)}{r}\frac{\partial}{\partial\theta}\\
    \frac{\partial}{\partial y}
    &=
      \sin(\theta)\frac{\partial}{\partial r}
      +\frac{\cos(\theta)}{r}\frac{\partial}{\partial\theta}. 
  \end{align*}
  By rotational symmetry, it suffices to compute $\norm{Dh(z)},
  \norm{Dh(z)}_*$ for $\theta=0$ (on the positive real line). The
  Jacobian at $\theta=0$ is 
  \begin{equation*}
    Dh_d
    =
    \begin{pmatrix}
      1&0\\ 0&d
    \end{pmatrix}.
  \end{equation*}
  Thus we have for $z\in \D\setminus \{0\}$
  \begin{equation*}
    \norm{Dh_d(z)} = d,
      \quad
      \norm{Dh_d(z)}_* = 1. 
    \end{equation*}
    The claim follows.

  \smallskip
  \ref{item:prop_hd_3}
  Clearly $h_d$ is holomorphic on $\C\setminus \overline{\D}$,
  meaning its distortion on this set is $0$.
  On $\D\setminus \{0\}$ we have $\norm{Dh_d} = d$ and (its
  Jacobian is) $Jh_d =d$
  by the expression for $Dh_d$ above. Thus its distortion is $K=K(h_d) = d$.

  \smallskip
  \ref{item:prop_hd_1}
  Let $S= \Db(\theta_0, \pi/d)$ be a sector of angle $\pi/d$ as in in \eqref{eq:def_sector} for
  some $\theta_0 \in \R$. 
  Note that $S$ is mapped by $h_d$ homeomorphically to $S'=
  \Db(d\theta_0,\pi)$, a sector of angle $\pi$.     

Let $z,w\in S$ be arbitrary. Note that the sector $S$ is convex,
meaning that the line segment between $z$ and $w$, denoted by
$\gamma_{zw}$, is contained in $S$. Then
  \begin{equation*}
    \abs{h_d(z) - h_d(w)}
    \leq
    \length(h_d\circ \gamma_{zw})
    \leq
    \max_{\zeta\in h_d\circ \gamma_{zw}}\norm{Dh_d(\zeta)}
    \abs{z-w}
    = d \abs{z-w}, 
  \end{equation*}
  which is the right of the desired inequalities.
  
  To see the other inequality, consider now
  $z'=h_d(z),w'=h_d(w)\in S'$. Since $\Db(d\theta_0,\pi)$ is convex, 
  the line segment between $z'$ and $w'$ denoted by
  $\gamma_{z'w'}$, is contained in $S'$. Let us
  denote the inverse of $h_d \colon S\to S'$ by $h^{-1}$.
  Using \ref{item:prop_hd_2} we obtain for each $\zeta' \in S'$ 
  \begin{equation*}
    \norm{D h^{-1}(\zeta')}
    =
    \norm{ \left(Dh_d(\zeta)\right)^{-1}}
    = \norm{D h_d(\zeta)}_*^{-1} =1,
  \end{equation*}
  where $\zeta = h^{-1}(\zeta')$. 
  
  Analogous as above, we have 
  \begin{align*}
    \abs{z-w}
    &=
    \abs{h^{-1}(z') - h^{-1}(w')}
    \leq
      \length(h^{-1}\circ \gamma_{z'w'})
      \\
    &\leq
    \max_{\zeta'\in h^{-1}\circ \gamma_{z'w'}}
      \norm{Dh^{-1}(\zeta')} \abs{z'-w'}
    \\
    &= \abs{h_d(z) - h_d(w)}, 
  \end{align*}
  finishing the proof. 
\end{proof}

\subsection{Defining the maps $f_n$}
\label{sec:defining-maps-f_n}

In Section~\ref{subsection: local coordinates} we have defined
local coordinates, which respect to which $f$ is given as
$z\mapsto z^d$ near a critical point. The latter will be replaced
by the winding map $h_d$, given in Section~\ref{sec:winding-map}
(suitably rescaled). This will yield the maps $f_n$, for all
$n\in \N$. These maps have the property that their derivative
does not vanish anywhere, which is the reason to define them. 

Recall from \eqref{eq:defUncrit} the definition of the set
$\U^{n+1}_{\crit}$, which was the union of sets $U^{n+1}_c$, for $c\in
\crit(f)$, see \eqref{eq:defUnc} (for each $n\in \N_0$). 

Let us first define $f_1$.
On $\CDach \setminus \U^1_{\crit}$, we set
$f_1 = f$.

It remains to define $f_1$ on each component $U^1_c$
for $c\in\crit(f)$.
Recall that we have defined local coordinates in this case, with
respect to which $f\colon U^1_c \to U_p$ is given by $z\mapsto
z^d$. Here $p = f(c)\in \post(f)$ and $d= \deg(f, c)$, see
\eqref{eq:loc_coord_zd}. This 
means, we have conformal maps $\phi\colon U^1_c \to \Db$ and
$\psi\colon U_p \to \Db$, such that $\psi\circ f \circ \phi^{-1}
(z) = z^d$. Here, we replace $z^d$ by the winding map $h_d$. More
precisely, we define for $z\in U^1_c$
\begin{equation*}
  f_1(z)
  \coloneqq
  (\psi^{-1} \circ h_d\circ \phi)(z). 
\end{equation*}
  
In order to visualize this definition, we will introduce the following diagram:
\begin{equation*}
  \xymatrix{ U_c^1 \ar[r]^{f}_{f_1}
    \ar[d]_{\phi}
  & U_p \ar[d]^{\psi} \\
  \overline{\D} \ar[r]^{z^d}_{h_d}
  &
  \overline{\D}. 
}
\end{equation*}

The diagram combines two separate diagrams in one; the diagram
commutes whenever we consider either only the maps on top of the
right-pointing arrows or only the maps below right-pointing
arrows. Note that if we consider the top entry for one arrow and
the bottom entry for the other the diagram does not commute. In
case two maps coincide we will only denote one above the
corresponding arrow. Recall that $z\mapsto z^d$ and $h_d$ agree
on $\Sp^1 = \partial \Db$. Consequently $f$ and $f_1$ agree 
$\partial U^1_c$. Thus $f_1$ is continuous on $\CDach$. 

\smallskip
Let us now define $f_{n+1} \colon \CDach \to \CDach$ for $n\geq
0$ which is fixed from now on (defining $f_{n+1}$ instead of $f_n$
will simplify the discussion).
We now define $f_{n+1}$ as
follows. 

On $\CDach \setminus \U^{n+1}_{\crit}$ we set $f_{n+1}= f$.

Consider now a component $U^{n+1}_c$ of $\U^{n+1}_{\crit}$
containing a critical point $c$. 
%
%
Then $f(U^{n+1}_c)= U^n_p$ where $p = f(c) \in \post(f)$. Let
$\psi= \psi_p$ and $\phi= 
\phi_c$ be the local coordinates near $p$, respectively
$c$. Furthermore, let $\lambda = \lambda_p$ be the multiplier of
$p$ and $d= \deg(f,c)$ be the local degree at $c$. Note that $U^n_p
\subset U_p$, and $U^n_p$ is given in local coordinates by
$\abs{\lambda}^{-n}\Db$, meaning that $\psi(U^n_p) =
\abs{\lambda}^{-n}\Db$. Similarly, $U^{n+1}_c \subset U^1_c$ and
is given in local coordinates by $\abs{\lambda}^{-n/d}\Db$, meaning
that $\phi(U^{n+1}_c) = \abs{\lambda}^{-n/d}\Db$.

Let us define a
winding map on these scaled disks, namely, we define
\begin{align*}
  &h_{n+1,c}\colon |\lambda|^{-n/d}\overline{\D}\to
  |\lambda|^{-n}\overline{\D}
  \\
  &h_{n+1,c}(z)
  =
  \delta_{|\lambda|^{-n}}\circ
  h_d\circ\delta_{|\lambda|^{n/d}}(z). 
\end{align*}

\begin{lemma}
  \label{lemma:hnc is continuous}
  The map $h_{n+1, c}$ agrees with $z^d$ on
  $|\lambda|^{-n/d}\,\Sp^1$. Furthermore, $h_{n+1,c}$ is real
  differentiable on $\abs{\lambda}^{-n/d}\D\setminus \{0\}$.
  We may extend 
  $\norm{Dh_{n+1,c}}$ and $\norm{Dh_{n+1,c}}_*$
  continuously to $|\lambda|^{-n/d}\Db$ by 
  \begin{equation*}
    \norm{Dh_{n+1,c}(z)} = d\abs{\lambda}^{-n + n/d},
    \quad
    \norm{Dh_{n+1,c}(z)}_* = \abs{\lambda}^{-n + n/d},
  \end{equation*}
  for all $z\in |\lambda|^{-n/d}\Db$. 
\end{lemma}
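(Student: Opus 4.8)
The plan is to deduce everything from Lemma~\ref{lemma: h is bilipschitz}, exploiting that $h_{n+1,c}$ is, by definition, the conjugate $\delta_{|\lambda|^{-n}}\circ h_d\circ\delta_{|\lambda|^{n/d}}$ of the winding map $h_d$ by dilations. Writing a dilation $\delta_t(z)=tz$ (for $t>0$) as the real-linear map $t\cdot\mathrm{Id}_2$, one has $\|D\delta_t\|=\|D\delta_t\|_*=t$, and multiplying any linear operator $A$ on the left or on the right by $t\cdot\mathrm{Id}_2$ multiplies both $\|A\|$ and $\|A\|_*$ by $t$. The dilation $\delta_{|\lambda|^{n/d}}$ carries $|\lambda|^{-n/d}\Db$ onto $\Db$, sending $|\lambda|^{-n/d}\D\setminus\{0\}$ onto $\D\setminus\{0\}$ and $|\lambda|^{-n/d}\Sp^1$ onto $\Sp^1$; here $|\lambda|>1$ since $f$ is expanding and $p=f(c)$ is a postcritical fixed point, though this inequality is not actually needed for the computations below.

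First I would verify the boundary identity. For $z$ with $|z|=|\lambda|^{-n/d}$ the point $\delta_{|\lambda|^{n/d}}(z)$ lies on $\Sp^1$, where $h_d$ coincides with $w\mapsto w^d$ (see the remark following Definition~\ref{definition: winding map}); hence $h_d(\delta_{|\lambda|^{n/d}}(z))=|\lambda|^{n}z^d$, and applying $\delta_{|\lambda|^{-n}}$ gives $h_{n+1,c}(z)=z^d$. This is the gluing property making the lemma useful: transported through the coordinate maps $\psi,\phi$ of \eqref{eq:loc_coord_zd}, it says the map $f_{n+1}$ built from $h_{n+1,c}$ agrees with $f$ on $\partial U^{n+1}_c$. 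Real differentiability on $|\lambda|^{-n/d}\D\setminus\{0\}$ is then immediate: on this set $\delta_{|\lambda|^{n/d}}(z)\in\D\setminus\{0\}$, where $h_d$ is real differentiable by Lemma~\ref{lemma: h is bilipschitz}\ref{item:prop_hd_2}, and pre- and post-composition with the smooth linear dilations preserves differentiability. As for $h_d$, differentiability fails at the origin.

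Next, the norms. By the chain rule $Dh_{n+1,c}(z)=D\delta_{|\lambda|^{-n}}\cdot Dh_d(\delta_{|\lambda|^{n/d}}(z))\cdot D\delta_{|\lambda|^{n/d}}$ for $z\in|\lambda|^{-n/d}\D\setminus\{0\}$, so by the remark on dilations above both operator norms of $Dh_{n+1,c}(z)$ equal $|\lambda|^{-n}\cdot|\lambda|^{n/d}$ times the corresponding norm of $Dh_d$ at $\delta_{|\lambda|^{n/d}}(z)$. Since $\delta_{|\lambda|^{n/d}}(z)\in\D$, Lemma~\ref{lemma: h is bilipschitz}\ref{item:prop_hd_2} gives $\|Dh_d\|=d$ and $\|Dh_d\|_*=1$ there, whence $\|Dh_{n+1,c}(z)\|=d\,|\lambda|^{-n+n/d}$ and $\|Dh_{n+1,c}(z)\|_*=|\lambda|^{-n+n/d}$. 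As these functions are constant on the punctured disk $|\lambda|^{-n/d}\D\setminus\{0\}$, they extend continuously to all of $|\lambda|^{-n/d}\Db$ by those same constants, which is exactly the asserted extension.

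I do not expect a genuine obstacle; the only subtlety is bookkeeping, not mathematics. Because $h_{n+1,c}$ fails to be differentiable at $0$ (and, viewed on all of $\C$, across $|\lambda|^{-n/d}\Sp^1$), the statement about $\|Dh_{n+1,c}\|$ and $\|Dh_{n+1,c}\|_*$ on $|\lambda|^{-n/d}\Db$ must be read as a continuous extension of functions a priori defined only on the punctured open disk, not as values of an actual differential at those points. Since those functions are literally constant there, this extension is forced, so there is nothing further to check.
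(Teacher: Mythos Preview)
Your proof is correct and follows essentially the same approach as the paper: both verify the boundary identity by using that $h_d$ agrees with $w\mapsto w^d$ on $\Sp^1$, and both compute the operator norms via the chain rule together with Lemma~\ref{lemma: h is bilipschitz}\ref{item:prop_hd_2}, observing that conjugation by dilations simply rescales the norms by the dilation factors. Your write-up is slightly more explicit about the continuous-extension point, but the substance is the same.
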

Note that for $z\in \abs{\lambda}^{-n/d}\,\Sp^1$ this means
\begin{equation*}
  \abs{(z^d)'}
  =
  \abs{d z^{d-1}}
  =
  d \abs{\lambda}^{-n +n/d}
  =
  \norm{Dh_{n+1,c}(z)}. 
\end{equation*}

\begin{proof}
  In the setting above, consider an arbitrary $z=
  \abs{\lambda}^{-n/d}e^{i\theta}\in |\lambda|^{-n/d}\Sp^1$ (where
    $\theta\in [0,2\pi)$). Then $z^d=
    \abs{\lambda}^{-n}e^{id\theta}$.  On the other hand
    \begin{equation*}
      \begin{split}
        h_{n+1, c}(z)
        &=
          \abs{\lambda}^{-n} h_d(\abs{\lambda}^{n/d} z)\\
        &=
          \abs{\lambda}^{-n} e^{id\theta} = z^d. 
    \end{split}
  \end{equation*}
  To see the second statement, we compute for $z\in
  \abs{\lambda}^{-n/d}\D\setminus \{0\}$
  \begin{equation*}
    \norm{Dh_{n+1,c}(z)}
    =
    \abs{\lambda}^{-n} \norm{Dh_d(\abs{\lambda}^{n/d} z)}
    =
    d\abs{\lambda}^{-n +n/d},
  \end{equation*}
  using Lemma~\ref{lemma: h is bilipschitz}~\ref{item:prop_hd_2}. 
  The statement about $\norm{Dh_{n+1,c}}_*$ is shown analogously.  
\end{proof}
 We define for all $z\in U^{n+1}_c$
 \begin{equation*}
   f_{n+1}(z)
   =
     (\psi^{-1}\circ h_{n+1,c}\circ \phi)(z). 
 \end{equation*}
 Roughly speaking this means $f_{n+1}$ is defined by a (suitably
 scaled) winding map on $U^{n+1}_c$; everywhere else by $f$. 

 In order to visualize $f_{n+1}$ we will be using the following
 diagrams: 
 \begin{equation*}
  \xymatrix{ U_c^{n+1} \ar[r]^{f}_{f_{n+1}}
    \ar[d]_{\phi}
  & U_p^{n} \ar[d]^{\psi} \\
  |\lambda|^{-n/d}\Db\ar[r]^{z^d}_{h_{n+1, c}}
  &
  |\lambda|^{-n}\Db. 
}
\end{equation*}

Here the same convention as above is used, namely the
diagram commutes if we either use the maps on top or on the
bottom of the horizontal arrows. Note that by
Lemma~\ref{lemma:hnc is continuous} we know that $f_{n+1} = f$
and $\norm{D f_{n+1}}+{\CDach} = f^{\#}$ on $\partial U^{n+1}_c$. Thus
$f_{n+1}$ and $\norm{Dh_{n+1},c}$ are continuous. Put 
differently, we define $f_{n+1}$ as
\begin{equation}
  \label{eq:def_fn}
  f_{n+1} \coloneqq
  \begin{cases}
    f, &\text{on } \CDach \setminus \U^{n+1}_{\crit}\\
    \psi^{-1}\circ h_{n+1,c}\circ \phi, &\text{on } U^{n+1}_c
                                          \text{ for each } c\in
                                          \crit.     
  \end{cases}
\end{equation}

\begin{remark}
  The maps $h_{n+1, c}, \phi, \psi$ depend on $c\in \crit(f)$ and $p =
  f(c)\in \post(f)$, but in order to lighten the
  notation subscripts were dropped. We will use the same
  convention in the future. 
\end{remark}

When we restrict the winding map to sectors $\Db(\theta_0,
\alpha)$ (compare \eqref{eq:def_sector}) of a suitable angle
$\alpha>0$ it is injective. In the following it will be
convenient to restrict $f_n$ to corresponding subsets of $U_c^n$
such that it 
is injective. For $z\in U_c^n$ and $\alpha>0$ we will choose the
pre-image of $\Db(\theta_0, \alpha)$ for $\theta_0\in [0,2\pi)$ with
$\phi(z)= |\phi(z)|e^{i\theta_0}$ under the coordinate $\phi$
intersected with $U_c^n$ 
\begin{equation}
  \label{eq:Sectors_in_sphere}
  U_c^n(z, \alpha)
  \coloneqq 
  \phi^{-1}(\Db(\theta_0, \alpha))\cap U_c^n.
\end{equation}

Let us collect some properties of the sequence of functions
$\{f_n\}$ just defined. 

\begin{lemma}
  \label{lem:prop_fn}

  There is a sequence $\{L(n)\}$ with $L(n)\searrow
  1$ as $n\to \infty$ such that the following is true.

  The functions $f_n\colon \CDach\to\CDach$ defined above for $n\in \N$
  satisfy the following.
  \begin{enumerate}
  \item 
    \label{item:prop_fn_1}
    For all $n\in\N$ the maps $f_n$ are continuous.
  \item
    \label{item:prop_fn_2}
    The sequence $f_n$ converges uniformly to $f$ in
    $\CDach$.  
  \item
    \label{item:prop_fn_3}
    Each $f_n$ is real differentiable everywhere except at each
    critical point $c$ and on the sets $\partial
    U_c^n$; $\|Df_n\|_{\CDach}$ has a 
    continuous extension to $\CDach$ and
    \begin{equation*}
      \min\{\norm{Dh_{n+1,c}(z)} : z\in \CDach\} >0.  
    \end{equation*}

  \item
    \label{item:prop_fn_3a}
    For $z\in U^{n+1}_c$, where $c\in \crit(f)$, $p= f(c)\in
    \post(f)$, and $d= \deg(f,c)$, we have
    \begin{align*}
      \norm{D f_{n+1}(z)}_{\CDach}
      &=
        d \abs{\lambda}^{-n+ n/d}
      (\psi^{-1})^{\#}(w') \cdot
        \phi^{\#}(z)
      \\
      &\asymp
        d \abs{\lambda}^{-n+n/d} (\psi^{\#}(p))^{-1} \phi^{\#}(c)
        \text{ and}
      \\
      \norm{D f_{n+1}(z)}_{* \CDach}
      &\asymp
        \abs{\lambda}^{-n+n/d} (\psi^{\#}(p))^{-1} \phi^{\#}(c). 
    \end{align*}
    Here $C(\asymp) = L(n)$ and $w'= h_{n+1,c}\circ \psi(z)$ .
  \item
    \label{item:prop_fn_4}
    For each $z\in \CDach \setminus \U^{n}_{\crit}$ there exist
    $R=R(z)>0$ such that 
    on $B(z, R)$ the function $f_n$ is holomorphic and univalent.
    For any $0<r<R$ and for all $w\in B(z, r)$ we have 
    \begin{equation*}
      \abs{f_n(z)-f_n(w)}_{\CDach}
      \asymp
      f_n^{\#}(z)\abs{z-w}_{\CDach}.
    \end{equation*}
    Here $C(\asymp) = L'(r/R) \searrow 1$ as $r/R\to 0$. 
    %
%
%
    \setcounter{mylistnum}{\value{enumi}}
  \end{enumerate}

  \begin{enumerate}
    \setcounter{enumi}{\value{mylistnum}}
  \item
    \label{item:prop_fn_5}

    For each $c\in \crit(f)$ the following holds. 
    Let $z\in U^n_c$ be arbitrary. Then 
    for all $w\in U_c^n(z,\pi/d)$ we have that
    \begin{equation*}
      \frac{1}{L}\| Df_n(z)\|_{* \CDach}\abs{z-w}_{\CDach}
      \leq \abs{f_n(z)-f_n(w)}_{\CDach}
      \leq L\|Df_n(z)\|_{\CDach} \abs{z-w}_{\CDach},
    \end{equation*}
    where $L= L(n)$. 
  \end{enumerate}
\end{lemma}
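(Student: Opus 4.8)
The plan is to exploit that, on $U^n_c$, the map $f_n$ is by construction the composition $\psi^{-1}\circ h_{n,c}\circ\phi$ (see \eqref{eq:def_fn}) of the conformal coordinate $\phi\colon U^n_c\to\abs{\lambda}^{-(n-1)/d}\Db$, the rescaled winding map $h_{n,c}\colon\abs{\lambda}^{-(n-1)/d}\Db\to\abs{\lambda}^{-(n-1)}\Db$, and the conformal coordinate $\psi^{-1}$, where $p=f(c)$, $d=\deg(f,c)$ and $\lambda=\lambda_p$. I would estimate $\abs{f_n(z)-f_n(w)}_{\CDach}$ by passing through these three maps: spherical Koebe distortion (Lemma~\ref{lem:Koebe_phi_psi}) controls the two conformal factors, while the bi-Lipschitz bound on sectors of angle $\pi/d$ (Lemma~\ref{lemma: h is bilipschitz}~\ref{item:prop_hd_1}), rescaled via $h_{n,c}=\delta_{\abs{\lambda}^{-(n-1)}}\circ h_d\circ\delta_{\abs{\lambda}^{(n-1)/d}}$, controls the winding factor.

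In detail: fix $c\in\crit(f)$, let $z\in U^n_c$ and $w\in U^n_c(z,\pi/d)$, and put $z'=\phi(z)$, $w'=\phi(w)$, $z''=h_{n,c}(z')$, $w''=h_{n,c}(w')$, so that $f_n(z)=\psi^{-1}(z'')$ and $f_n(w)=\psi^{-1}(w'')$. By Lemma~\ref{lem:Koebe_phi_psi}~\ref{item:Koebe_phi} applied to $\phi$ on $U^n_c$, $\abs{z'-w'}\asymp\phi^{\#}(c)\abs{z-w}_{\CDach}$ with $C(\asymp)=L(n)$. By the definition \eqref{eq:Sectors_in_sphere} of $U^n_c(z,\pi/d)$, both $z'$ and $w'$ lie in one sector $\Db(\theta_0,\pi/d)$, so rescaling Lemma~\ref{lemma: h is bilipschitz}~\ref{item:prop_hd_1} gives
\begin{equation*}
\abs{\lambda}^{-(n-1)+(n-1)/d}\abs{z'-w'}
\ \leq\
\abs{z''-w''}
\ \leq\
d\,\abs{\lambda}^{-(n-1)+(n-1)/d}\abs{z'-w'}.
\end{equation*}
Since $z'',w''\in\abs{\lambda}^{-(n-1)}\Db=\psi(U^{n-1}_p)$, Lemma~\ref{lem:Koebe_phi_psi}~\ref{item:Koebe_psi} applied to $\psi^{-1}$ yields $\abs{f_n(z)-f_n(w)}_{\CDach}\asymp(\psi^{\#}(p))^{-1}\abs{z''-w''}$ with $C(\asymp)=L(n)$. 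Multiplying the three estimates shows that $\abs{f_n(z)-f_n(w)}_{\CDach}$ equals, up to a two-sided constant $L(n)^2$ and a factor lying between $\abs{\lambda}^{-(n-1)+(n-1)/d}$ and $d\,\abs{\lambda}^{-(n-1)+(n-1)/d}$, the quantity $(\psi^{\#}(p))^{-1}\phi^{\#}(c)\abs{z-w}_{\CDach}$.

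To finish, I would compare this prefactor with the derivative norms: by part~\ref{item:prop_fn_3a} of this lemma (with $n$ in place of $n+1$), $\norm{Df_n(z)}_{\CDach}\asymp d\,\abs{\lambda}^{-(n-1)+(n-1)/d}(\psi^{\#}(p))^{-1}\phi^{\#}(c)$ and $\norm{Df_n(z)}_{*\,\CDach}\asymp\abs{\lambda}^{-(n-1)+(n-1)/d}(\psi^{\#}(p))^{-1}\phi^{\#}(c)$, each with a comparison constant of the form $L(n)\searrow1$ (alternatively one re-derives these directly from \eqref{eq:Dfgh}, Lemma~\ref{lemma:hnc is continuous} and Koebe distortion). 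Absorbing the winding factor $\in[1,d]$ into the upper comparison (against $\norm{Df_n(z)}_{\CDach}$) and into the lower comparison (against $\norm{Df_n(z)}_{*\,\CDach}$) produces the asserted double inequality with $L=L(n)^3$; since there are only finitely many critical points one may use a single such sequence, and $L(n)^3\searrow1$.

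I expect the only non-routine point to be the second step: restricting $w$ to $U^n_c(z,\pi/d)$ is precisely what places $\phi(z)$ and $\phi(w)$ in one sector of angle $\pi/d$ — the largest angle on which $h_d$ is injective and two-sidedly bi-Lipschitz with the sharp constants $1$ and $d$ — and the lower bound genuinely fails on wider sectors. Everything else is bookkeeping: tracking that all Koebe constants are of the form $L(n)\searrow1$, keeping straight the index shift between the definition of $f_{n+1}$ on $U^{n+1}_c$ and the present statement for $f_n$ on $U^n_c$, and taking a maximum over the finitely many critical points.
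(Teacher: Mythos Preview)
Your proposal is correct and matches the paper's approach essentially line for line: the paper proves~\ref{item:prop_fn_5} by exactly the same three-step decomposition (Koebe for $\phi$ via Lemma~\ref{lem:Koebe_phi_psi}~\ref{item:Koebe_phi}, the rescaled sector bi-Lipschitz bound from Lemma~\ref{lemma: h is bilipschitz}~\ref{item:prop_hd_1}, Koebe for $\psi^{-1}$ via Lemma~\ref{lem:Koebe_phi_psi}~\ref{item:Koebe_psi}), and then identifies the resulting prefactors with $\norm{Df_n}_{\CDach}$ and $\norm{Df_n}_{*\CDach}$ using~\ref{item:prop_fn_3a}, which it derives beforehand in the same way you sketch. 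Your assessment that items~\ref{item:prop_fn_1}--\ref{item:prop_fn_3} and~\ref{item:prop_fn_4} are routine is accurate: the paper dispatches them in a few lines each (continuity was already observed before~\eqref{eq:def_fn}, uniform convergence follows from $\diam(U^n_p)\to 0$, differentiability and the derivative formula come directly from Lemma~\ref{lemma:hnc is continuous}, and~\ref{item:prop_fn_4} is an immediate application of spherical Koebe).
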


\begin{remark}
  Recall that on $\CDach\setminus \U_{\crit}^n$ the map
  $f_n$ is holomorphic and $f^{\#}= \norm{Df}$. There we can rewrite
  \ref{item:prop_fn_5} as follows
      \begin{equation*}
      \frac{1}{L'}\| Df_n(z)\|_{\CDach}\abs{z-w}_{\CDach}
      \leq \abs{f_n(z)-f_n(w)}_{\CDach}
      \leq L'\|Df_n(z)\|_{\CDach} \abs{z-w}_{\CDach},
    \end{equation*}
    where $L'(r/R)\geq 1$ is decreasing as $r/R$ decreases.

  Note that $R$ in the lemma above depends on $z$, but in order to have control of how small the constant $L'$ is we may choose $r$ sufficiently small.
	
  Moreover, 
  the set in \ref{item:prop_fn_3} where $f_n$ is not
  differentiable is
  \begin{equation*}
    \partial\U^n_{\crit} \cup \crit
    =
    \bigcup_{c\in \crit(f)}\partial U_c^n\cup \crit.
  \end{equation*}
  This is a set of measure zero for all $n\in\N$. Hence each $f_n$ is
  differentiable almost everywhere. 
\end{remark}

\begin{proof}
  \ref{item:prop_fn_1}
  We have already seen continuity before~\eqref{eq:def_fn}.  

  \smallskip
  \ref{item:prop_fn_2}
  Let $\epsilon >0$ be arbitrary. Then since $f$ is expanding
  around $p$ there exists a number $N\in\N$ such that
  $\max_{p\in\post(f)}\diam(U_p^n)\leq \epsilon$ for all $n\geq N$. So for
  $n\geq N$ we have that
  \begin{equation*}
    \norm{f_n - f}_\infty
    =
    \max_{z\in \CDach}\abs{f_n(z)-f(z)}
    =
    \max_{z\in \U^n_{\crit}} \abs{f_n(z)-f(z)}
    \leq \max_{p\in\post(f)}\diam(U_p^{n-1})
    \leq
    \epsilon,
  \end{equation*}
  proving the claim. 
  
\smallskip
\ref{item:prop_fn_3}
Lemma~\ref{lemma:hnc is continuous} implies immediately that
$f_{n+1}$ is real differentiable on $\CDach \setminus (\partial
\U^n_{\crit} \cup \crit(f))$ as claimed and that $\norm{Df_{n+1}}_{\CDach}$
has a continuous extension. On $\CDach \setminus \U^{n+1}$ we
have a lower bound for $\norm{Df_{n+1}}_{\CDach} =f^{\#}$ and on
$\U^{n+1}$ we have lower bound for $\norm{Df_{n+1}}_{\CDach}$ by
  Lemma~\ref{lemma:hnc is continuous}. 


\smallskip
\ref{item:prop_fn_4}
Suppose that
$z\in \CDach\setminus \U_{\crit}^n$. Then there exists a
neighborhood $B(z, R)$ such that $f_n(w)=f(w)$ and $f$ is
univalent on $B(z, R)$. We additionally assume that $f(B(z, R))$
is contained in a hemisphere. Let $0<r<R$ meaning that
$ B(z, r)\subset B(z, R)$. The statement is an immediate
application of (Koebe's)
Theorem~\ref{theorem: spherical Koebe}.

\smallskip

To prepare the proof of \ref{item:prop_fn_3a} and
\ref{item:prop_fn_5}, fix a set
$U^{n+1}_c$ for a $c\in \crit(f)$, let $p=f(c)\in
\post(f)$. Consider $z,z_0, z_1\in 
U^{n+1}_c$; let $w,w_o,w_1\in U^n_p$ be their images by $f_{n+1}$; $z',
z'_0, z'_1\in \abs{\lambda}^{-n/d}\Db$ their images by $\phi$;
and $w', w'_0,w'_1\in \abs{\lambda}^{n}\Db$ images of
$z',z'_0,z'_1$ respectively $w,w_0,w_1$ by $h_{n+1,c}$
respectively $\psi$. We visualize this in the following diagram.
\begin{equation*}
  \xymatrix{
    z,z_0,z_1 \in U^{n+1}_c \ar[r]_{f_{n+1}} \ar[d]_{\phi}
    & w,w_0,w_1 \in U^n_p \ar[d]^\psi
    \\
    z',z_0',z_1' \in \abs{\lambda}^{-n/d}\Db \ar[r]_{h_{n+1,c}}
    & w',w'_0,w'_1\in \abs{\lambda}^{-n}\Db\rlap{.}
  }
\end{equation*}
We have the following estimates
\begin{align*}
  &\norm{D h_{n+1,c}(z')}
  = \abs{\lambda}^{-n}
  \cdot\norm{Dh_d(\abs{\lambda}^{n/d}z')}
  \cdot\abs{\lambda}^{n/d}
  =
  d \abs{\lambda}^{-n(1-1/d)},
    \text{ and similarly}
  \\
  &\norm{D h_{n+1,c}(z')}_*
  = 
    \abs{\lambda}^{-n(1-1/d)},
\end{align*}
where Lemma~\ref{lemma: h is bilipschitz}~\ref{item:prop_hd_2}
was used. 
Using Lemma~\ref{lem:Koebe_phi_psi} we obtain
\begin{align*}
  \phi^{\#}(z)
  &\asymp
  \phi^{\#}(c),
  \\
  (\psi^{-1})^{\#}(w')
  &\asymp
    (\psi^{-1})^{\#}(0) = (\psi^\#(p))^{-1},
    \text{ which yields }
  \\
  \norm{D f_{n+1}(z)}_{\CDach}
  &= (\psi^{-1})^{\#}(w') \cdot \norm{D h_{n+1,c}(z')} \cdot
    \phi^{\#}(z)
    \\
  &\asymp
    d \abs{\lambda}^{-n(1-1/d)} (\psi^{\#}(p))^{-1} \phi^{\#}(c)
    \text{ and}
  \\
  \norm{D f_{n+1}(z)}_{*\CDach}
  &\asymp
    \abs{\lambda}^{-n(1-1/d)} (\psi^{\#}(p))^{-1} \phi^{\#}(c). 
\end{align*}
Here $C(\asymp) = L(n)$ for a fixed sequence $\{L(n)\}$ with
$L(n)\searrow 1$ as $n\to \infty$. We have proved
\ref{item:prop_fn_3a}. 

\smallskip
To show \ref{item:prop_fn_5}, we continue with the previous
setting. We will prove the statements for $f_{n+1}$.
From Lemma~\ref{lemma: h is
  bilipschitz}~\ref{item:prop_hd_1} we obtain for $z_o, z_1 \in
U^{n+1}_c(z_0,\pi/d)$ ($\Leftrightarrow z'_0, z'_1\in\Db(\theta,
\pi/d)$ for some $\theta \in [0,2\pi)$).
\begin{align*}
  \abs{w'_0 - w'_1}
  =
  &\abs{h_{n+1,c}(z'_0) - h_{n+1,c}(z'_1)}
  \leq
    d \abs{\lambda}^{-n} \cdot \abs{\lambda}^{n/d}\abs{z'_0 - z'_1}
    \\
  &= d \abs{\lambda}^{-n(1-1/d)}\abs{z'_0 - z'_1}
  \intertext{and similarly}
  &\abs{h_{n+1,c}(z'_0) - h_{n+1,c}(z'_1)}
    \geq
      \abs{\lambda}^{-n(1-1/d)}\abs{z'_0 - z'_1}. 
\end{align*}
Using Lemma~\ref{lem:Koebe_phi_psi} we know that
\begin{align*}
  \abs{z'_0 - z'_1}
  &=
  \abs{\phi(z_0) - \phi(z_1)}
  \asymp
  \phi^{\#}(c)\abs{z_0- z_1}_{\CDach}, \text{ and}
  \\
  \abs{w'_0 - w'_1}
  &=
  \abs{\psi(w_0) - \psi(w_1)}
  \asymp
    \psi^{\#}(p) \abs{w_0-w_1}_{\CDach}
    \\
  =
  &\psi^{\#}(p) \cdot \abs{f_{n+1}(w_0) - f_{n+1}(w_1)}_{\CDach}. 
\end{align*}
Putting this all together, we obtain
\begin{align*}
  \abs{f_{n+1}(z_0) - f_{n+1}(z_1)}_{\CDach}
  &=
  \abs{w_0 -w_1}_{\CDach}
  \asymp
    (\psi^{\#}(p))^{-1} \abs{w'_0- w'_1}
    \\
  &\leq
    d \abs{\lambda}^{-n(1-1/d)}(\psi^{\#}(p))^{-1}\abs{z'_0-z'_1}
    \\
  &\asymp
  d \abs{\lambda}^{-n(1-1/d)}(\psi^{\#}(p))^{-1}
    \phi^{\#}(c)\abs{z_0-z_1}
  \\
  &\asymp
    \norm{Df_{n+1}(z_0)}_{\CDach} \abs{z_0-z_1}_{\CDach}, \text{ here
    \ref{item:prop_fn_3a} was used.}
\end{align*}
Similarly,
\begin{align*}
  \abs{f_{n+1}(z_0) - f_{n+1}(z_1)}_{\CDach}
  \gtrsim
  \norm{Df_{n+1}(z_0)}_{* \CDach} \abs{z_0-z_1}_{\CDach}, \text{ here
  \ref{item:prop_fn_3a} was used}. 
\end{align*}
In all the estimates above, we have $C(\asymp)= L(n)$ and
$C(\gtrsim)= L(n)$, where $L(n)\searrow 1$ as $n\to \infty$. 
\end{proof}

\subsection{Composing the maps $f_n$}
\label{sec:composing-maps-f_n}

We will now look at the compositions of the maps $f_n$ defined
above, i.e., at
\begin{equation*}
  f_1\circ\dots\circ f_n(z). 
\end{equation*}
Roughly speaking, we will replace $f^n$ by these
compositions. Furthermore, the radii of the approximating spheres
will be defined in terms of the derivative of the above
compositions.


Let us fix $n\geq 1$.
On $\CDach \setminus \U^n$, we have $f_1\circ \dots
\circ f_n = f^n$.

Consider now a component $U^n$ of $\U^n$, which we fix for
now. Let $U^{n-1}= f(U^n), U^{n-2}= f^2(U^n),\dots,U^0 =
f^n(U^n)$. Here $U^j\in \U^j$, for $j= \{0,\dots,n\}$. Note that
on $U^n$ we have
\begin{equation*}
  (f_1\circ\dots\circ f_n)|_{U^n}
  =
  (f_1|_{U_1}) \circ \dots \circ (f_n|_{U_n}).
\end{equation*}
In this composition we have $  f_j|_{U^j} = f$, whenever $U^j$ does not
contain a critical point. Let $k\in\{0,\dots, n\}$ be the largest
number such that $U^k= U^k_p$ contains a point
$p\in\post(f)$. Since this is the case for $U^0$, such a
$k$ always exists. Note that then $U^{k-1}= U^{k-1}_p, \dots, U^0
= U^0_p$, meaning that these sets all contain $p$.
When $k<n$, the set $U^{k+1}$ is the only set in this sequence
that may contain a critical point $c$. This means, the
mapping behavior of $f_1\circ \dots \circ f_n$ on $U^n$ is given
as follows.


\begin{equation}
  \label{eq:UnU0}  
  \xymatrix{
    U^n \ar[r]^f &
    U^{n-1} \ar[r]^f &
    \dots \ar[r]^f &
    U^{k+1}_c \ar[r]_{f_{k+1}} &
    U^k_p \ar[r]^f &
    \dots \ar[r]^f &
    U^0\rlap{.}} 
\end{equation}
This means
\begin{equation}
  \label{eq:f_1n_fff}
  (f_1 \circ \dots \circ f_n)|_{U^n}
  =
  f^k \circ f_{k+1} \circ f^{n-k-1}. 
\end{equation}
In particular, in these compositions, we replace $f$ only once by
a non-holomorphic map $f_{k+1}$. 

We will often use an explicit expression for the derivative of $f_1\circ \dots\circ f_n(z)$ for $z\in U_c^n$. 
\begin{lemma}
  \label{lem:Df1n}
  In the setting above we have the following.
  \begin{enumerate}    
  \item 
    \label{item:Df1n} follows immediately from 
    Let $z\in U^n$. Then
    \begin{equation*}
      \norm{D(f_1\circ\dots \circ f_n)(z)}_{\CDach}
      =
      (f^k)^{\#}(z_{k}) \cdot \norm{Df_{k+1}(z_{k+1})}_{\CDach}
      \cdot (f^{n-k-1})^{\#}(z).
    \end{equation*}
    Here $z_{k+1}= f^{n-k-1}(z)\in U^{k+1}$, $z_k =
  f_{k+1}(z_{k+1})\in U^k_p$.
\item 
  \label{item:Df1n_c}
  Let $z\in U_c^{k+1}$, where $U^{n+1}_c$ is a component of
  $\U^{k+1}_{\crit}$. Then  
  \begin{equation*}
    \norm{D(f_1\circ \dots\circ f_{k+1})(z)}_{\CDach}
    =
    (\psi^{-1})^{\#}(z_0') \cdot d \cdot |\lambda|^{k/d}\cdot \phi^{\#}(z),
  \end{equation*}
  where $z_0' = \lambda^{k}\cdot h_{k+1,c}(\phi(z))\in \Db$. Note
  that $(\psi^{-1})^{\#}(z_0') = (\psi^{\#}(z_0))^{-1}$, where
  $z_0 = f_1 \circ \dots \circ f_{k+1} (z)\in U^0_p$.
  
  Here $\phi= \phi_c$, $\psi= \psi_p$ are the local coordinates
  at $c$, respectively $p$, see and
  \eqref{eq:loc_coord_zd} and \eqref{eq:loc_coord_Up} (with
  corresponding $d$ and $\lambda$).
\end{enumerate}
\end{lemma}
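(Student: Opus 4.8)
The plan is to prove Lemma~\ref{lem:Df1n} by a direct application of the chain rule for the spherical derivative, using the explicit factorizations of $f_1\circ\dots\circ f_n$ on a component $U^n$ of $\mathbf U^n$ that were established in \eqref{eq:f_1n_fff}, together with the local-coordinate descriptions from Section~\ref{subsection: local coordinates} and the derivative formulas for the scaled winding maps from Lemma~\ref{lemma:hnc is continuous}. The key observation is that in the composition \eqref{eq:UnU0}, exactly one of the constituent maps ($f_{k+1}$ on $U^{k+1}_c$) is non-holomorphic; all the other factors ($f$ applied to the conformal branches $U^j \to U^{j-1}$, both before and after passing through the critical point) are holomorphic. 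Hence we are in the situation of equation~\eqref{eq:Dfgh}: we are pre- and post-composing the single real-differentiable map with holomorphic maps, and the conformal factors telescope in the standard way.

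For part~\ref{item:Df1n}, I would first record that by \eqref{eq:f_1n_fff} we have $(f_1\circ\dots\circ f_n)|_{U^n} = f^k \circ f_{k+1} \circ f^{n-k-1}$, with $z \in U^n$, $z_{k+1} = f^{n-k-1}(z) \in U^{k+1}$, and $z_k = f_{k+1}(z_{k+1}) \in U^k_p$. Applying \eqref{eq:Dfgh} with $h = f^{n-k-1}$ (holomorphic), $g = f_{k+1}$ (real differentiable), and the outer holomorphic map $f^k$, and noting $\norm{Df_{k+1}(z_{k+1})}_{\CDach}$ plays the role of $\norm{Dg(z')}$ there, gives immediately
\begin{equation*}
  \norm{D(f_1\circ\dots\circ f_n)(z)}_{\CDach}
  = (f^k)^{\#}(z_k)\cdot \norm{Df_{k+1}(z_{k+1})}_{\CDach} \cdot (f^{n-k-1})^{\#}(z).
\end{equation*}
One must be slightly careful that \eqref{eq:Dfgh} was stated for pre/post-composition of \emph{one} real-differentiable map with \emph{one} holomorphic map on each side; here the ``holomorphic side'' is itself an iterate $f^{n-k-1}$ respectively $f^k$, but since composition of holomorphic maps is holomorphic and the spherical chain rule for holomorphic maps is exact, this causes no difficulty --- one applies the holomorphic chain rule to expand $(f^{n-k-1})^{\#}$ and $(f^k)^{\#}$ if needed.

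For part~\ref{item:Df1n_c}, I would specialize to $n = k+1$ and $z \in U^{k+1}_c$, so that the factor $f^{n-k-1} = f^0$ is the identity and the composition is simply $f^k \circ f_{k+1}$ on $U^{k+1}_c$. Using the diagram for $f_{k+1}$, namely $f_{k+1} = \psi^{-1}\circ h_{k+1,c}\circ \phi$ on $U^{k+1}_c$, and the commuting diagram \eqref{eq:loc_coord_Up} which gives $f^k|_{U^k_p} = \psi^{-1}\circ \delta_{\lambda^k}\circ \psi$, the composition $f^k \circ f_{k+1}$ becomes $\psi^{-1}\circ \delta_{\lambda^k}\circ h_{k+1,c}\circ \phi$ on $U^{k+1}_c$. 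Applying the chain rule \eqref{eq:Dfgh} (with $\phi$ as the holomorphic right factor landing in $\C$, $h_{k+1,c}$ as the real-differentiable middle map, and $\delta_{\lambda^k}$ followed by $\psi^{-1}$ as the holomorphic left factors, noting $\delta_{\lambda^k}$ contributes a factor $|\lambda|^k$ and $\norm{Dh_{k+1,c}} = d|\lambda|^{-k+k/d}$ by Lemma~\ref{lemma:hnc is continuous}), the powers of $|\lambda|$ combine to $|\lambda|^{k}\cdot |\lambda|^{-k+k/d} = |\lambda|^{k/d}$, yielding
\begin{equation*}
  \norm{D(f_1\circ\dots\circ f_{k+1})(z)}_{\CDach}
  = (\psi^{-1})^{\#}(z_0')\cdot d\cdot |\lambda|^{k/d}\cdot \phi^{\#}(z),
\end{equation*}
with $z_0' = \delta_{\lambda^k}(h_{k+1,c}(\phi(z))) = \lambda^k h_{k+1,c}(\phi(z)) \in \Db$, and the identification $(\psi^{-1})^{\#}(z_0') = (\psi^{\#}(z_0))^{-1}$ where $z_0 = f_1\circ\dots\circ f_{k+1}(z) = \psi^{-1}(z_0')$ is the usual inverse-function relation for the spherical derivative. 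I do not anticipate a serious obstacle here; the only point requiring care is bookkeeping of which conformal factors cancel in the telescoping chain rule --- in particular making sure the factor $\psi^{\#}$ at the ``seam'' between $h_{k+1,c}\circ\phi$ and the outer map is handled consistently, which is exactly what \eqref{eq:Dfgh} is designed to encode --- and confirming that the winding map is differentiable at the relevant point $\phi(z)$ (true since $z \in U^{k+1}_c$ means $\phi(z)$ avoids $\partial\D$ and we may take $z \neq c$, the case $z = c$ being handled by the continuous extension of $\norm{Df_{k+1}}_{\CDach}$ from Lemma~\ref{lemma:hnc is continuous}).
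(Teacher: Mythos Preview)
Your proposal is correct and follows essentially the same route as the paper. For part~\ref{item:Df1n} both you and the paper simply invoke the factorization \eqref{eq:f_1n_fff} together with the chain rule \eqref{eq:Dfgh}. For part~\ref{item:Df1n_c} the paper cites Lemma~\ref{lem:prop_fn}~\ref{item:prop_fn_3a} and Lemma~\ref{lem:fn_Unp}~\ref{item:fn_Unp} and multiplies those two formulas, whereas you first combine $f^k\circ f_{k+1}$ in local coordinates (observing that the intermediate $\psi\circ\psi^{-1}$ cancels) and then differentiate the single composite $\psi^{-1}\circ\delta_{\lambda^k}\circ h_{k+1,c}\circ\phi$; this is the same computation reorganized, and your version has the minor advantage of being self-contained.
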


\begin{proof}
  \ref{item:Df1n} follows immediately from \eqref{eq:f_1n_fff},
  see also \eqref{eq:Dfgh};
  \ref{item:Df1n_c} follows using
  Lemma~\ref{lem:prop_fn}~\ref{item:prop_fn_3a} and
  Lemma~\ref{lem:fn_Unp}~\ref{item:fn_Unp}. 
\end{proof}


Recall that the reason we introduced the maps $f_n$ was to avoid
the contractive behavior at critical points. However, when $z\in
U^{n+1}_c$ for $c\in \crit(f)$, we have $\norm{D f_{n+1}(z)}_{\CDach} \to
0$ as $n\to \infty$ by
Lemma~\ref{lem:prop_fn}~\ref{item:prop_fn_3a}.  

However, the derivative of the compositions
$f_1\circ \dots \circ f_n$ goes to infinity. 

\begin{lemma}
  \label{lem:Df1n_infty}
  We have
  \begin{equation*}
    \min_{z\in \CDach}\norm{D(f_1\circ \dots \circ f_n)(z)}_{\CDach}
    \to \infty
    \text{ as } n\to \infty. 
  \end{equation*}
\end{lemma}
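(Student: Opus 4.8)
The plan is to reduce the claim to the uniform expansion of $f^n$ on the part of $\CDach$ where the $f_j$ are genuinely holomorphic, and to handle the (bounded) defect coming from the single non-holomorphic factor $f_{k+1}$ by the explicit formulas already recorded. Fix $n$ and a point $z\in\CDach$ realizing (or nearly realizing) the minimum of $\norm{D(f_1\circ\dots\circ f_n)(z)}_{\CDach}$. There are two cases according to whether $z$ lies in $\CDach\setminus\U^n$ or in some component $U^n$ of $\U^n$.

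On $\CDach\setminus\U^n$ we have $f_1\circ\dots\circ f_n=f^n$, so the bound we need is $\min_{z\in\CDach\setminus\U^n}(f^n)^{\#}(z)\to\infty$. This is exactly the statement that a rational expanding Thurston map is (eventually) uniformly expanding with respect to the spherical metric on complements of small neighborhoods of the postcritical set; it follows from expansion (Definition~\ref{def:exp_Tmaps}) together with a standard telescoping/compactness argument: expansion gives that backward images of a fine cover have mesh tending to $0$, which by the spherical Koebe theorem (Theorem~\ref{theorem: spherical Koebe}) translates into a lower bound for $(f^n)^{\#}$ on any region staying a definite distance from $\post(f)$. Concretely, one can argue there exist $C>0$, $\rho>1$, and $N_0$ with $(f^{n})^{\#}(z)\ge C\rho^{\,n}$ for all $z\in\CDach\setminus\U^{n}$ and all $n\ge N_0$; the key point is that along the orbit of such a $z$ the first $n$ iterates all avoid the fixed neighborhood $\U$ of $\post(f)$ away from the local pieces, so no critical contraction is encountered and Koebe applies uniformly.

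On a component $U^n$ of $\U^n$ we use the decomposition \eqref{eq:f_1n_fff}, i.e. $(f_1\circ\dots\circ f_n)|_{U^n}=f^k\circ f_{k+1}\circ f^{n-k-1}$, and the derivative formula of Lemma~\ref{lem:Df1n}. If $U^{k+1}$ contains no critical point, then by Lemma~\ref{lem:fn_Unp}\ref{item:fn_Un} the middle factor contributes $f^{\#}\asymp 1$, while the outer factor $(f^k)^{\#}$ on $U^k_p$ is $\asymp|\lambda|^k$ by Lemma~\ref{lem:fn_Unp}\ref{item:fn_Unp} (recall $|\lambda|>1$), and the inner factor $(f^{n-k-1})^{\#}$ is bounded below by the case above; multiplying gives a lower bound growing in $n$. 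If $U^{k+1}=U^{k+1}_c$ does contain a critical point $c$, we instead invoke Lemma~\ref{lem:Df1n}\ref{item:Df1n_c}: there
\[
  \norm{D(f_1\circ\dots\circ f_{k+1})(z)}_{\CDach}
  =
  (\psi^{-1})^{\#}(z_0')\cdot d\cdot|\lambda|^{k/d}\cdot\phi^{\#}(z),
\]
and since $(\psi^{-1})^{\#}(z_0')=(\psi^{\#}(z_0))^{-1}\asymp(\psi^{\#}(p))^{-1}$ and $\phi^{\#}(z)\asymp\phi^{\#}(c)$ by Lemma~\ref{lem:Koebe_phi_psi}, with all implied constants $\le L(1)$ uniformly in $n$, this factor is $\gtrsim |\lambda|^{k/d}$, which again grows with $k$; composing with the remaining holomorphic factor $(f^{n-k-1})^{\#}$ (bounded below as before, and $\gtrsim 1$ in the worst case $k$ close to $n$) yields a lower bound tending to infinity as $n\to\infty$. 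In every case the bound is independent of which component we are in, because there are only finitely many critical and postcritical points.

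The main obstacle is the interplay of the two growth mechanisms when $z$ lies deep inside a postcritical tower: for $k$ close to $n$ the factor $(f^{n-k-1})^{\#}$ near a critical point is small (indeed $\norm{Df_{n+1}}_{\CDach}\to0$), so one must be sure the compensating factor $|\lambda|^{k}$ (or $|\lambda|^{k/d}$ through the critical piece) really dominates. The resolution is to note that if $z\in U^n_p$ then $k=n$ and Lemma~\ref{lem:fn_Unp}\ref{item:fn_Unp} gives $(f^n)^{\#}(z)\asymp|\lambda|^n\to\infty$ directly, with no small factor at all; and if $z\in U^{n+1}_c$ with $k=n$ the formula above gives $\gtrsim|\lambda|^{n/d}\to\infty$ directly. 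So the only cases where a small near-critical factor appears are those with $k<n$, and there the telescoped lower bound $C\rho^{\,n-k-1}$ for $(f^{n-k-1})^{\#}$ combined with the $|\lambda|^{\Theta(k)}$ from the outer part forces the product to infinity. Finally one takes the minimum over the finitely many structurally distinct situations and concludes.
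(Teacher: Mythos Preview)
Your overall strategy coincides with the paper's: split into $z\in\CDach\setminus\U^n$ (where $f_1\circ\dots\circ f_n=f^n$ and one invokes expansion plus Koebe) and $z\in\U^n$ (where one uses the decomposition \eqref{eq:f_1n_fff} and combines the $|\lambda|^{k}$ or $|\lambda|^{k/d}$ growth from the postcritical/critical part with the lower bound on $(f^{n-k-1})^{\#}(z)$ coming from the first case). However, two points need repair. First, your justification for the first case is wrong: you claim that for $z\in\CDach\setminus\U^n$ ``the first $n$ iterates all avoid the fixed neighborhood $\U$'', but $z\notin\U^n$ only says $f^n(z)\notin\U^0$; intermediate iterates may well land in $\U^0$. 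What is actually used (as in Lemma~\ref{lem:Df1n_CUn}) is that $f^n(z)$ lies a definite distance from $\post(f)$, so there is a univalent branch of $f^{-n}$ on a fixed ball there, and expansion (mesh $\to 0$) together with Koebe gives $(f^n)^{\#}(z)\to\infty$. This yields only $M(n):=\inf_{z\in\CDach\setminus\U^n}(f^n)^{\#}(z)\to\infty$, not the exponential bound $M(n)\gtrsim\rho^n$ you assert.

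Second, your final paragraph relies on that unproven exponential bound to make $|\lambda|^{k/d}\cdot C\rho^{\,n-k-1}\to\infty$ \emph{uniformly in $k$}. With only $M(n)\to\infty$ in hand one must argue differently: since $|\lambda|^{k/d}\ge 1$ and $M(j)\ge M_*:=\min_j M(j)>0$, either $k\ge\lfloor n/3\rfloor$ (so $|\lambda|^{k/d}M(n-k-1)\ge M_*\,|\lambda|^{\lfloor n/(3d)\rfloor}$) or $n-k-1\ge\lfloor n/3\rfloor$ (so the product is $\ge m(\lfloor n/3\rfloor):=\min_{j\ge\lfloor n/3\rfloor}M(j)$); in either case the product tends to infinity with $n$, uniformly in $k$. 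This is precisely how the paper closes the argument in Lemma~\ref{lem:DfnUn}. One should also check that the first-case bound applies to the inner factor, i.e.\ that $z\notin\U^{n-k-1}$: this holds because $z_{k+1}=f^{n-k-1}(z)\in U^{k+1}$ lies in a component of $\U^1$ not containing a postcritical point, which by the standing smallness assumption on the $U_p$ is disjoint from $\U^0$. With these fixes your proof is complete and matches the paper's.
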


To prove this, we consider two cases in the following two lemmas; points
in $\CDach \setminus \U^n$ and points in $\U^n$.

\begin{lemma}
  \label{lem:Df1n_CUn}
  We have the following:
  \begin{equation*}
    \inf\{(f^n)^{\#}(z) : z\in \CDach \setminus \U^n\}
    \to \infty,
    \text{ as }
    n\to \infty.
  \end{equation*}
\end{lemma}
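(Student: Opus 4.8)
The plan is to use the expanding property of $f$ together with a compactness argument. Since $f$ is an expanding Thurston map, one knows (this is a standard consequence of expansion, cf.\ \cite[Chapter~18]{BonkMeyer2017}) that with respect to the spherical metric $f$ is \emph{eventually expanding} in the following quantitative sense: there exist constants $C>0$ and $\Lambda>1$ such that for every $n\in\N$ and every $z\in\CDach$ for which $(f^n)^{\#}(z)$ is defined (i.e.\ $z$ is not a critical point of $f^n$) one has
\begin{equation*}
  (f^n)^{\#}(z)\geq C\,\Lambda^n.
\end{equation*}
Indeed, visual-metric estimates give $\diam_{\CDach}(f^{-n}(U))\lesssim \rho^n$ uniformly over tiles $U$ of the $n$-th level, and by Koebe distortion on a conformal branch of $f^{-n}$ this translates into a uniform lower bound $\Lambda^n\lesssim (f^n)^{\#}$ away from critical points. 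The reference \cite{BonkMeyer2017} phrases this via the derivative of iterates; I would cite the relevant statement there (expanding rational Thurston maps expand the spherical metric at an exponential rate on the complement of neighborhoods of $\post(f)$, and in fact everywhere away from critical points of $f^n$).

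First I would make precise which points are being quantified over: $z\in\CDach\setminus\U^n$ means $z$ lies outside the union $\U^n=f^{-n}(\U)$ of the chosen coordinate neighborhoods of postcritical points, so in particular $z$ is not a critical point of $f^n$ (critical points of $f^n$ all lie in $\bigcup_{j=0}^{n-1} f^{-j}(\crit(f))\subset \U^n$, since each $c\in\crit(f)$ has $f(c)\in\post(f)\subset\U$, hence $c\in\U^1\subset\U^n$ and similarly for its preimages). Thus $(f^n)^{\#}(z)$ is genuinely defined and positive for all such $z$, and the infimum in the statement makes sense.

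Second, I would apply the quantitative expansion estimate directly: for $z\in\CDach\setminus\U^n$ we get $(f^n)^{\#}(z)\geq C\Lambda^n$, and since $\Lambda>1$ the right-hand side tends to $\infty$, giving
\begin{equation*}
  \inf\{(f^n)^{\#}(z):z\in\CDach\setminus\U^n\}\geq C\Lambda^n\to\infty.
\end{equation*}
If one prefers to avoid quoting the sharp exponential bound, an alternative softer argument works: suppose for contradiction that the infima do not go to infinity, so there is $M$ and a subsequence $n_k$ with points $z_k\in\CDach\setminus\U^{n_k}$ and $(f^{n_k})^{\#}(z_k)\leq M$. By the spherical Koebe estimate (Theorem~\ref{theorem: spherical Koebe}) applied to a conformal branch of $f^{-n_k}$ on a fixed-size ball $B(f^{n_k}(z_k),R_0)$ — such a branch exists because $z_k\notin\U^{n_k}$ keeps us away from all critical values of the iterate — the pullback of $B(f^{n_k}(z_k),R_0/2)$ has spherical diameter comparable to $(f^{n_k})^{\#}(z_k)^{-1}\geq M^{-1}$, i.e.\ bounded below uniformly in $k$. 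But these pullbacks are (contained in) elements of $f^{-n_k}(\mathcal U)$ for a fixed open cover $\mathcal U$ of small mesh, contradicting $\mesh(f^{-n_k}(\mathcal U))\to 0$ from Definition~\ref{def:exp_Tmaps}.

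The main obstacle is the bookkeeping around critical points: one must be sure that on $\CDach\setminus\U^n$ the branch of $f^{-n}$ used in the Koebe argument (or the derivative $(f^n)^{\#}$ itself) is honestly defined and that the ``fixed-size ball'' $B(f^n(z),R_0)$ can be chosen with $R_0$ independent of $n$ and of $z$ while still being free of postcritical points of all the relevant levels — this is exactly what the hypothesis $z\notin\U^n$ buys us, but it needs to be stated carefully. Everything else (exponential expansion of an expanding Thurston map, Koebe distortion, finiteness of $\post(f)$) is available from the results already recorded in the excerpt, so the proof is short once this point is nailed down.
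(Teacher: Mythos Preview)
Your first approach contains a false claim: the inequality $(f^n)^{\#}(z)\geq C\Lambda^n$ does \emph{not} hold for every $z$ that is merely not a critical point of $f^n$. Take $z$ close to (but distinct from) a critical point $c$ of $f$; then $f^{\#}(z)$ is arbitrarily small, so already $(f^1)^{\#}(z)$ has no positive lower bound. What is true is that $f$ expands uniformly with respect to the canonical orbifold metric or a visual metric, but those metrics are singular (or only quasisymmetrically equivalent to the spherical metric) near $\post(f)$, and this equivalence does not transfer infinitesimal bounds. An exponential spherical lower bound \emph{on $\CDach\setminus\U^n$} can indeed be recovered, but only via the Koebe argument you outline in your second approach---it is not a black-box citation.

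Your second ``softer'' argument is correct and is essentially the paper's proof. The paper argues directly rather than by contradiction: it fixes a finite cover $\V$ by small balls, together with closed subdisks $D(V)\subset V$ such that the $D(V)$ for balls $V$ avoiding $\post(f)$ cover $\CDach\setminus\U^0$; applies Koebe to each conformal branch $g\colon V\to V^n$ to obtain $\diam(V^n)\gtrsim [(f^n)^{\#}(z)]^{-1}$ for $z\in g(D(V))$; and concludes from $\mesh(\V^n)\to 0$. The one point you gloss over is how your variable balls $B(f^{n_k}(z_k),R_0)$ relate to a \emph{fixed} cover whose pulled-back mesh is known to shrink---the paper's device of fixing the cover and the subdisks $D(V)$ from the start handles this cleanly and absorbs the Koebe constants uniformly.
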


Note that on $\CDach \setminus \U^n$ we have $f_1\circ \dots
\circ f_n = f^n$, meaning that $\norm{D(f_1 \circ \dots \circ
  f_n)}_{\CDach} = (f^n)^{\#}$ on this set. 

\begin{proof}
  Let us consider a finite open cover $\V$ of $\CDach$ with the
  following properties.
  \begin{itemize}
  \item
    Each $V\in \V$ is an open (spherical) ball with $\diam(V)<
    \delta_0$, where $\delta_0$ is the constant from
    Definition~\ref{def:exp_Tmaps}.
  \item
    Each $V\in \V$ contains a closed disk $D= D(V)$. 
  \item We consider the subset of $\V$ not containing
    postcritical points, i.e.,
    \begin{equation*}
      \V' = \{V\in \V : V\cap \post(f) = \emptyset\}.       
    \end{equation*}
    We assume the corresponding closed disks cover the complement
    of $\U^0$:
    \begin{equation*}
      \bigcup_{V\in \V'} D(V) \supset \CDach \setminus \U^0. 
    \end{equation*}
  \end{itemize}
  Clearly such an open cover $\V$, together with the
  corresponding sets $D= D(V)$, exists.
  
  Let $\V^n$ be the set of components of $f^{-n}(V)$ for
  $V\in\V$ and $n\in \N$. 
  From the definition of $\delta_0$, we know that
  \begin{equation}
    \label{eq:meshto0}
    \mesh(\V^n) \to 0
    \text{ as }
    n\to \infty. 
  \end{equation}
  Consider now an $V\in \V'$, the corresponding set $D=D(V)$, and
  a component $V^n$ of $f^{-n}(V)$. We denote the inverse of the
  conformal map 
  $f^n\colon V^n \to V$ by $g=g_{V^n}$. Let $z\in g(D) \subset V^n$ be
  arbitrary.  
  From \cite[Lemma~A.2]{BonkMeyer2017} it follows that
  \begin{equation*}
    \diam(V^n)
    \geq
    \diam(g(D))
    \asymp
    g^{\#}(w)
    =
    [(f^n)^{\#}(z)]^{-1}.
  \end{equation*}
  Here $C(\asymp) = C(V,D)$ and $w= f^n(z)$. Since there are only
  finitely many constants $C(V,D)$, we can assume that
  $C(\asymp)$ is a single constant (depending on our chosen cover
  $\V$ but not on $n\in \N$ or $V^n$). 
  It follows that
  \begin{align*}
    \mesh(\V^n)
    &\geq
      \max\{\diam(V^n) : V^n\in \V \text{ with }
      f^n(V^n) = V' \in \V'\}
    %
    \\
    &\gtrsim
      [(f^n)^{\#}(z)]^{-1},
  \end{align*}
  with a fixed constant $C=C(\gtrsim)$. 
  Here $z\in f^{-n}(D)$ for any $D=D(V')$ with $V'\in \V'$. Since
  the sets $D(V')$ cover $\CDach \setminus \U^0$, it follows that
  the sets $f^{-n}(D)$ cover $\CDach \setminus \U^n$. Together
  with \eqref{eq:meshto0}, the claim follows.   
\end{proof}

Consider now the second case.

\begin{lemma}
  \label{lem:DfnUn}
  Let $z\in \U^n$. Then
  \begin{equation*}
    \min_{z\in \U^n}\norm{D(f_1\circ \dots \circ f_n)(z)}_{\CDach}
    \to \infty
    \text{ as }
    n\to \infty.
  \end{equation*}
\end{lemma}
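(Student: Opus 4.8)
The plan is to use the explicit formula from Lemma~\ref{lem:Df1n}~\ref{item:Df1n}, which tells us that for $z\in U^n$ lying in a component with associated postcritical point $p$ and (if present) critical point $c$,
\[
  \norm{D(f_1\circ\dots\circ f_n)(z)}_{\CDach}
  =
  (f^k)^{\#}(z_k)\cdot \norm{Df_{k+1}(z_{k+1})}_{\CDach}
  \cdot (f^{n-k-1})^{\#}(z),
\]
where $k$ is the largest index with $U^k = U^k_p$ containing $p$. The idea is to bound each of the three factors from below and show that the product tends to $\infty$ uniformly as $n\to\infty$, which forces $k$ or $n-k-1$ (or both) to be large. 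First I would dispose of the case $n-k-1$ large: since $z_{k+1}\in U^{k+1}$ is a component of $\U^{k+1}$ not containing a postcritical point (by maximality of $k$, unless $k=n$), the point $z$ lies in $f^{-(n-k-1)}(U^{k+1})$, and one argues exactly as in Lemma~\ref{lem:Df1n_CUn}: shrinking $\mesh(\V^n)$ forces $(f^{n-k-1})^{\#}(z)\to\infty$. More precisely, applying Lemma~\ref{lem:Df1n_CUn} (or its proof) to the iterate $f^{n-k-1}$ gives $(f^{n-k-1})^{\#}(z)\gtrsim \varphi(n-k-1)$ for some function $\varphi\to\infty$, with a constant independent of the component.

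Next I would handle the factor coming from the postcritical part, $(f^k)^{\#}(z_k)$ with $z_k\in U^k_p$. By Lemma~\ref{lem:fn_Unp}~\ref{item:fn_Unp} we have $(f^k)^{\#}(z_k)\asymp \abs{\lambda_p}^k$, with a universal comparison constant; since $\abs{\lambda_p}>1$ for every $p\in\post(f)$ (expansion), this factor is $\gtrsim \abs{\lambda_p}^k \geq (\min_p\abs{\lambda_p})^k$, which tends to $\infty$ as $k\to\infty$. Finally I would estimate the middle factor $\norm{Df_{k+1}(z_{k+1})}_{\CDach}$. If $U^{k+1}$ contains no critical point, then $f_{k+1}=f$ there and by Lemma~\ref{lem:fn_Unp}~\ref{item:fn_Un} (or simply continuity and compactness) $\norm{Df_{k+1}(z_{k+1})}_{\CDach}=f^{\#}(z_{k+1})\asymp 1$, a harmless bounded factor. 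If $U^{k+1}=U^{k+1}_c$ does contain a critical point $c$, then by Lemma~\ref{lem:prop_fn}~\ref{item:prop_fn_3a} we have $\norm{Df_{k+1}(z_{k+1})}_{\CDach}\asymp d\,\abs{\lambda}^{-(k-1)(1-1/d)}(\psi^{\#}(p))^{-1}\phi^{\#}(c)$ (with $n$ there equal to $k-1$), which is comparable to $\abs{\lambda}^{-k(1-1/d)}$ up to a constant depending only on the finitely many critical points. This is the one factor that \emph{decays}, and it is the main obstacle.

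The resolution is to combine this decaying middle factor with the growing factor $(f^k)^{\#}(z_k)\asymp\abs{\lambda}^k$ coming from the postcritical part (note both involve the \emph{same} $\lambda=\lambda_p$, since $p=f(c)$): the product $\abs{\lambda}^k\cdot\abs{\lambda}^{-k(1-1/d)}=\abs{\lambda}^{k/d}\to\infty$ as $k\to\infty$, because $\abs{\lambda}>1$ and $d\geq 2$. This is exactly the cancellation already recorded in Lemma~\ref{lem:Df1n}~\ref{item:Df1n_c}, where the derivative of $f_1\circ\dots\circ f_{k+1}$ on $U^{k+1}_c$ is computed to be $(\psi^{\#}(z_0))^{-1}\cdot d\cdot\abs{\lambda}^{k/d}\cdot\phi^{\#}(z)$, i.e.\ comparable to $\abs{\lambda}^{k/d}$. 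So putting the three estimates together: for $z\in U^n$ we get
\[
  \norm{D(f_1\circ\dots\circ f_n)(z)}_{\CDach}
  \gtrsim
  \abs{\lambda_p}^{k/d}\cdot \varphi(n-k-1),
\]
with constants independent of $z$, $n$, and the component (there are only finitely many choices of $p$, $c$, $d$, $\lambda$). Since $\abs{\lambda_p}^{1/d}\geq \mu_0>1$ for a uniform $\mu_0$, the right-hand side is bounded below by $\mu_0^{k}\cdot\varphi(n-k-1)$, and as $n\to\infty$ either $k\to\infty$ or $n-k-1\to\infty$ along any sequence, so $\min_{z\in\U^n}\norm{D(f_1\circ\dots\circ f_n)(z)}_{\CDach}\to\infty$. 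Concretely, given $R>0$, choose $K_0$ with $\mu_0^{K_0}>R$ and $N_0$ with $\varphi(m)>R$ for $m\geq N_0$; then for $n > K_0 + N_0 + 1$ every component of $\U^n$ has either $k\geq K_0$ or $n-k-1\geq N_0$, giving the bound $>R$ in either case. Combined with Lemma~\ref{lem:Df1n_CUn} this also proves Lemma~\ref{lem:Df1n_infty}. The delicate point to get right is keeping all comparison constants uniform over the (finitely many) branches and over $n$, which is guaranteed because Lemma~\ref{lem:prop_fn}~\ref{item:prop_fn_3a}, Lemma~\ref{lem:fn_Unp}, and Lemma~\ref{lem:Koebe_phi_psi} all provide constants that either are universal or depend only on the fixed finite data $\{U_p\}_{p\in\post(f)}$ and the sequence $L(n)\searrow 1$.
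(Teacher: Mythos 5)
Your proposal is correct and follows essentially the same route as the paper: you split off the case $k=n$, use the decomposition $\norm{D(f_1\circ\dots\circ f_n)}_{\CDach}=(f^k)^{\#}\cdot\norm{Df_{k+1}}_{\CDach}\cdot(f^{n-k-1})^{\#}$, observe the cancellation $\abs{\lambda}^k\cdot\abs{\lambda}^{-k(1-1/d)}=\abs{\lambda}^{k/d}$ on the postcritical side, use Lemma~\ref{lem:Df1n_CUn} for the tail $(f^{n-k-1})^{\#}$, and conclude via the pigeonhole observation that $k$ or $n-k-1$ must grow (the paper phrases this as $\max\{k,n-k-1\}\geq\lfloor n/3\rfloor$, with the auxiliary quantities $M_*$ and $m(\cdot)$ playing the role of your $\varphi_*$). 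The only blemishes are cosmetic: there is an off-by-one in the exponent when you invoke Lemma~\ref{lem:prop_fn}~\ref{item:prop_fn_3a} (it should be $n=k$, giving $\abs{\lambda}^{-k(1-1/d)}$, not $n=k-1$), and your final quantifier chase tacitly needs a uniform positive lower bound on $\varphi$ (the paper's $M_*$) so that the factor $\mu_0^{K_0}$ alone can dominate $R$; both are one-line fixes.
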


\begin{proof}
  Consider a component $U^n$ of $\U^n$. Consider the sequence of
  sets $U^n, U^{n-1},\dots, U^0$, where $U^{n-j}= f^j(U^n)$ for
  $j=0,\dots,n$. As in \eqref{eq:UnU0}, let $k$ be the smallest
  index such that $U^k= U^k_p$ contains a $p\in \post(f)$. Then
  $U^{k-1}= U^{k-1}_p,\dots, U^0_p$, meaning all these sets
  contain $p$. Let $\lambda$ be the multiplier of $p$.

  When $k=n$, it follows that $f_1\circ \dots \circ f_n = f^n$ on
  $U^n$. Hence
  \begin{equation*}
    \norm{Df}_{\CDach}
    =
    (f^n)^{\#}
    \asymp
    \abs{\lambda}^n,
  \end{equation*}
  using Lemma~\ref{lem:fn_Unp}~\ref{item:fn_Unp}, where $C(\asymp)$ is a
  constant. The statement follows in this case.

  \smallskip
  We assume from now on that $n> k$. From \eqref{eq:f_1n_fff} it
  follow that for $z\in U^n$ we have
  \begin{equation*}
    \norm{D(f_1\circ\dots \circ f_n)(z)}_{\CDach}
    =
    (f^k)^{\#}(z_{k}) \cdot \norm{Df_{k+1}(z_{k+1})}_{\CDach}
    \cdot (f^{n-k-1})^{\#}(z).
  \end{equation*}
  Here $z_{k+1}= f^{n-k-1}(z)\in U^{k+1}$, $z_k =
  f_{k+1}(z_{k+1})\in U^k_p$. Let $c$ be the unique point in
  $U^{k+1}= U^{k+1}_c$ with $f(c) =p$ and $d= \deg(f,c)$. Using
  Lemma~\ref{lem:fn_Unp}~\ref{item:fn_Unp}, as well as 
  Lemma~\ref{lem:prop_fn}~\ref{item:prop_fn_3a} or
  Lemma~\ref{lem:fn_Unp}~\ref{item:fn_Un}, we obtain
  \begin{equation*}
    \norm{D(f_1\circ\dots \circ f_n)}_{\CDach}
    \gtrsim
    \abs{\lambda}^{k/d} \cdot M(n-k-1),
  \end{equation*}
  with a constant $C(\gtrsim)$. 
  Here
  \begin{equation*}
    M(j)
    \coloneqq
    \inf\{(f^j)^{\#}(z) : z\in \CDach \setminus \U^j\},
  \end{equation*}
  for $j\in \N$, and $M(0)\coloneqq 1$. Note that $M(j)>0$ for all $j\in
  \N_0$. Lemma~\ref{lem:Df1n_CUn} says that $M(j) \to
  \infty$ as $j\to \infty$. Let us also define
  \begin{align*}
    M_* &\coloneqq \min_{n\in \N_0} M(n), \text{ and }\\
    m(n) &\coloneqq \min_{j\geq n} M(n). 
  \end{align*}
  Note that $k\geq \lfloor n/3 \rfloor$ or $n-k-1\geq \lfloor n/3
  \rfloor$. Thus
  \begin{align*}
    \norm{D(f_1\circ\dots\circ f_n)(z)}_{\CDach}
    \gtrsim
    \min\{M_*\abs{\lambda}^{\lfloor n/3d \rfloor}, m(\lfloor n/3
    \rfloor)\}
    \to \infty,
  \end{align*}
  as $n\to \infty$.
\end{proof}

\subsection{Sequence of radii and approximating Spheres}
\label{section: sequence of radii}
In this section we will define the functions $r_n(z)$ in terms of the composition $f_1\circ\dots\circ f_n$ and parametrize the approximating spheres. 

\begin{definition}\label{definition: r_n}
	Let $z\in \CDach$ and let $n\in \N_{\geq 1}$. We
        define the $n$-th \textit{radius} by
        \begin{equation*}r_n(z)=
		1-s\, \frac{1}{\|D(f_1\circ\dots\circ f_n)(z)\|_{\CDach}}
		\end{equation*}
	where $s\in(0, 1)$ is a constant chosen such that $r_n(z)\in (0, 1)$ for all $z\in \CDach$ and all $n\in \N$. We will set $r_0 :=\frac{1}{2}\min_{z\in\CDach, n\geq 1}\{r_n(z)\}$ and $r_0(z)=r_0$.
\end{definition}
	In Corollary \ref{corollary: r_n in (0, 1)} we will show that such a constant $c$ exists.

\begin{corollary}\label{corollary: r_n in (0, 1)}
	Let $r_n$ be as in Definition \ref{definition: r_n}. There exists a constant $s\in (0, 1]$ such that $r_n(z)\in (0, 1)$ for all $z\in \CDach$ and all $n\in \N$.
\end{corollary}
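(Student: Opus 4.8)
The plan is to reduce the statement to a single uniform lower bound: namely that the derivative of the compositions $f_1\circ\dots\circ f_n$ is bounded below by a positive constant, independently of $n$ and $z$. To see why this suffices, note that for \emph{any} $s>0$ one has $r_n(z)<1$ automatically, since $\norm{D(f_1\circ\dots\circ f_n)(z)}_{\CDach}$ is finite and strictly positive (it is a finite product of positive quantities, cf.\ Lemma~\ref{lem:Df1n}), so $s/\norm{D(f_1\circ\dots\circ f_n)(z)}_{\CDach}>0$. The inequality $r_n(z)>0$ is equivalent to $s<\norm{D(f_1\circ\dots\circ f_n)(z)}_{\CDach}$. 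Hence it is enough to show
\[
m\;\coloneqq\;\inf\bigl\{\,\norm{D(f_1\circ\dots\circ f_n)(z)}_{\CDach}:n\in\N,\ z\in\CDach\,\bigr\}\;>\;0,
\]
and then to pick any $s\in(0,\min\{1,m\})$, for instance $s=\tfrac12\min\{1,m\}\in(0,\tfrac12]\subset(0,1]$.

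To establish $m>0$ I would first invoke Lemma~\ref{lem:Df1n_infty}: since $\min_{z\in\CDach}\norm{D(f_1\circ\dots\circ f_n)(z)}_{\CDach}\to\infty$ as $n\to\infty$, there is an $N\in\N$ with $\norm{D(f_1\circ\dots\circ f_n)(z)}_{\CDach}\geq 1$ for all $n\geq N$ and all $z\in\CDach$. So the infimum defining $m$ is, apart from the constant $1$, really a minimum over the \emph{finitely many} indices $n\in\{1,\dots,N-1\}$. For each such fixed $n$ it then remains to check that $g_n(z)\coloneqq\norm{D(f_1\circ\dots\circ f_n)(z)}_{\CDach}$ is continuous on the compact space $\CDach$ and strictly positive; granting this, $g_n$ attains a positive minimum $m_n>0$, and one sets $m\coloneqq\min\{1,m_1,\dots,m_{N-1}\}>0$.

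The continuity and positivity of $g_n$ is the one point that needs actual argument, and I would carry it out componentwise along the partition of $\CDach$ induced by $\U^n$. On $\CDach\setminus\U^n$ we have $f_1\circ\dots\circ f_n=f^n$, so $g_n=(f^n)^{\#}$; this is continuous and vanishes only at critical points of $f^n$. But if $z$ is a critical point of $f^n$, then $f^j(z)\in\crit(f)$ for some $0\leq j<n$, hence $f^{j+1}(z)\in\post(f)$, and since every postcritical point is a fixed point (our standing assumption in this section) we get $f^n(z)\in\post(f)$, i.e.\ $z\in f^{-n}(\post(f))\subset\U^n$. Thus $(f^n)^{\#}>0$ on $\CDach\setminus\U^n$. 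On a component $U^n$ of $\U^n$, formula \eqref{eq:f_1n_fff} together with Lemma~\ref{lem:Df1n} expresses $g_n$ as a finite product of the continuous, strictly positive factors $(f^k)^{\#}$, $\norm{Df_{k+1}}_{\CDach}$ (positive by Lemma~\ref{lem:prop_fn}~\ref{item:prop_fn_3}), $\phi^{\#}$, $(\psi^{-1})^{\#}$ and powers of $\abs{\lambda}$, so $g_n$ is continuous and positive on $U^n$, including at the critical point it may contain; and along $\partial\U^n$ the two descriptions agree because $f_{k+1}$ and $\norm{Df_{k+1}}_{\CDach}$ coincide there with $f$ and $f^{\#}$. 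Hence $g_n$ is continuous and strictly positive on all of $\CDach$.

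Finally, with $m>0$ fixed and $s\coloneqq\tfrac12\min\{1,m\}$, for every $n\in\N$ and $z\in\CDach$ we get $s/\norm{D(f_1\circ\dots\circ f_n)(z)}_{\CDach}\leq s/m\leq\tfrac12$, so $r_n(z)\in[\tfrac12,1)\subset(0,1)$; consequently $r_0=\tfrac12\min_{z,\,n\geq1}r_n(z)\in(0,\tfrac12]\subset(0,1)$ as well, which completes the verification. The main (and essentially only) obstacle is the claim that $g_n$ is continuous and nowhere vanishing on $\CDach$; everything else is bookkeeping built on Lemmas~\ref{lem:Df1n_infty}, \ref{lem:Df1n}, and~\ref{lem:prop_fn}.
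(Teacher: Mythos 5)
Your proof is correct and takes essentially the same route as the paper: invoke Lemma~\ref{lem:Df1n_infty} to get $\norm{D(f_1\circ\dots\circ f_n)}_{\CDach}\geq 1$ for all $n\geq N$, then use compactness of $\CDach$ and continuity of $z\mapsto\norm{D(f_1\circ\dots\circ f_n)(z)}_{\CDach}$ to bound the remaining finitely many indices $n<N$ from below, and choose $s$ accordingly. You fill in more detail than the paper on why that function is continuous and nowhere vanishing (via the partition into $\CDach\setminus\U^n$ and the components $U^n$ and Lemma~\ref{lem:Df1n}), and your explicit choice $s=\tfrac12\min\{1,m\}$ sidesteps a sign slip in the paper's last line, which writes ``$s>\max\{\dots\}$'' where it should require $s$ to be smaller than $\min_{n\leq N,\,z}\norm{D(f_1\circ\dots\circ f_n)(z)}_{\CDach}$.
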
 
\begin{proof}
	If there is no $z\in \CDach$ and no $n\in \N$ so that \begin{equation*}
		1-\frac{1}{\|D(f_1\circ \dots\circ f_n)(z)\|_{\CDach}}\notin (0, 1)
	\end{equation*}
	we set $s=1$. 
	Else, we have shown in Lemma \ref{lem:Df1n_infty} there exists $N\in \N$ such that for all $z\in \CDach$ and all $n\geq N$ the derivative $\|D(f_1\circ \dots\circ f_n)(z)\|_{\CDach}>1$ and therefore $\|D(f_1\circ \dots\circ f_n)(z)\|_{\CDach}^{-1}\in (0, 1)$. 
	
	Because the functions \begin{equation*}
		z\mapsto \frac{1}{\|D(f_1\circ\dots\circ f_n)(z)\|_{\CDach}}
	\end{equation*}
	are continuous for all $n$ and $\CDach$ is compact there exists a maximum \begin{equation*}
		\max_{n\leq N}\bigg\{\frac{1}{\|D(f_1\circ\dots\circ f_n)(z)\|_{\CDach}}\bigg\}.
	\end{equation*}
	Then we choose $s> \max_{n\leq N}\{\|D(f_1\circ \dots\circ f_n)(z)\|_{\CDach}: z\in \CDach\}$ and we have shown the claim.
\end{proof}

\begin{definition}
	We define a sequence of sets $S_n\subset B(0, 1)$ called \textit{approximating spheres} by \[S_n:= \{(z, r_n(z)): z\in \CDach\}\] for all $n\in\N$. 
\end{definition}
For an illustration see \ref{figure: non-monotone spheres}.

\begin{figure}\label{figure: non-monotone spheres}
	\begin{center}
		\begin{overpic}[scale=0.7]{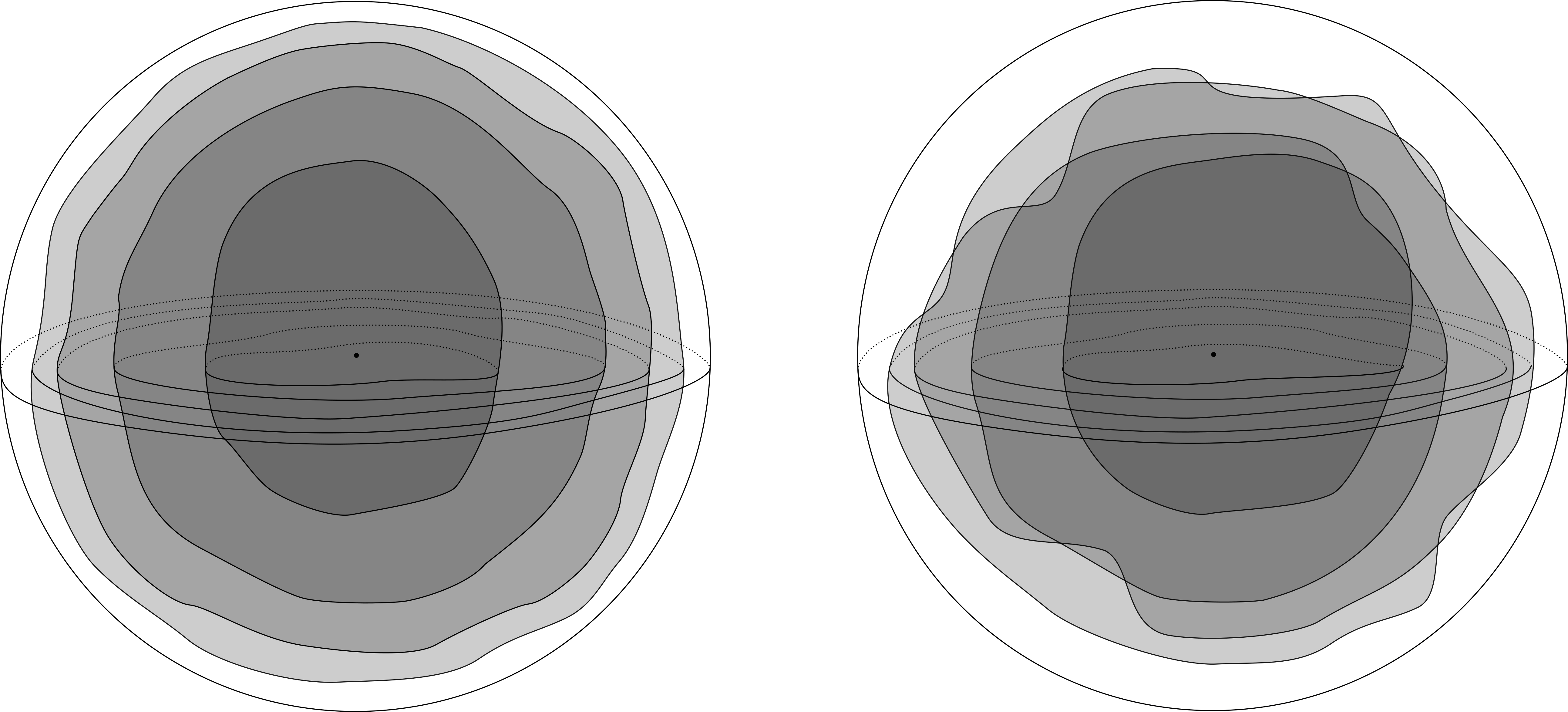}
			\put(15, 28){\footnotesize $S_1$}
			\put (10.5, 30){\footnotesize $S_2$}
			\put(6.5, 32){\footnotesize $S_3$}
			\put(69, 28){\footnotesize $S_1'$}
			\put(65, 29){\footnotesize $S_2'$}
			\put(60.5, 29){\footnotesize $S_3'$}
		\end{overpic}
	\end{center}
	\caption{This illustrates the spheres $S_1, \dots, S_4$ in the case where the approximating spheres are monotone and in the case where the spheres are not monotone and $S_1'\cap S_2'\neq \emptyset$. The degree of the map $f'$ giving rise to $S_1', \dots, S_4'$ is 2.}
\end{figure}

\begin{lemma}
	\label{lemma: approximating spheres}
	Let $S_n$ be the sequence above. We have that \begin{enumerate}
		\item 
		\label{item: approximating spheres 1}
		the approximating spheres accumulate at $\CDach$, i.e., for all $\epsilon >0$ there exists an $N \in \N$ such that $1>r_n(z)\geq 1-\epsilon$ for all $n\geq \N$ and all $z\in \CDach$
		\item 
		\label{item: approximating spheres 2}
		the radius is continuous in $z$ and hence $S_n $ is homeomorphic to $\Sp^2$ for all $n\in\N$.
	\end{enumerate}
\end{lemma}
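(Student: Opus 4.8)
The plan is to read both statements directly off the definition $r_n(z) = 1 - s/\|D(f_1\circ\cdots\circ f_n)(z)\|_{\CDach}$ and the properties already established for the derivative of the compositions. For part \ref{item: approximating spheres 1}, I would invoke Lemma~\ref{lem:Df1n_infty}, which states that $\min_{z\in\CDach}\|D(f_1\circ\cdots\circ f_n)(z)\|_{\CDach}\to\infty$ as $n\to\infty$. Given $\epsilon>0$, choose $N$ so large that this minimum exceeds $s/\epsilon$ for all $n\geq N$; then for every $z\in\CDach$ and $n\geq N$ we have $s/\|D(f_1\circ\cdots\circ f_n)(z)\|_{\CDach} < \epsilon$, hence $r_n(z) > 1-\epsilon$. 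The upper bound $r_n(z)<1$ is automatic since the subtracted term is strictly positive (here I use that $s\in(0,1]$ and, by Corollary~\ref{corollary: r_n in (0, 1)}, that $r_n(z)\in(0,1)$ for all $n$ and $z$, so in particular $r_n(z)<1$). This gives the uniform accumulation.

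For part \ref{item: approximating spheres 2}, the key point is continuity of $z\mapsto \|D(f_1\circ\cdots\circ f_n)(z)\|_{\CDach}$ on $\CDach$, which is where I expect the only real work. By Lemma~\ref{lem:prop_fn}~\ref{item:prop_fn_3}, each $f_n$ is real differentiable away from the finite set $\crit(f)$ and the null-sets $\partial U_c^n$, and $\|Df_n\|_{\CDach}$ extends continuously to all of $\CDach$. For the composition, on $\CDach\setminus\U^n$ the map is simply $f^n$, which is holomorphic with continuous (indeed real-analytic) spherical derivative. On each component $U^n$, formula \eqref{eq:f_1n_fff} expresses the composition as $f^k\circ f_{k+1}\circ f^{n-k-1}$; the outer and inner factors are holomorphic, so by \eqref{eq:Dfgh} the spherical operator norm of the differential is $(f^k)^\#(\cdot)\,\|Df_{k+1}(\cdot)\|_{\CDach}\,(f^{n-k-1})^\#(\cdot)$ evaluated at the appropriate points, a product of continuous functions. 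The one subtlety is matching across $\partial U_c^n$: there $f_{k+1}=f$ and $\|Df_{k+1}\|_{\CDach}=f^\#$ (as noted after \eqref{eq:def_fn} and in Lemma~\ref{lemma:hnc is continuous}), so the two expressions for $\|D(f_1\circ\cdots\circ f_n)\|_{\CDach}$ agree on the common boundary, and the function is continuous on all of $\CDach$. Since $\|D(f_1\circ\cdots\circ f_n)(z)\|_{\CDach}$ is moreover bounded below by a positive constant (Corollary~\ref{corollary: r_n in (0, 1)} guarantees $r_n(z)\in(0,1)$, equivalently the derivative norm is at least $s$), the reciprocal and hence $r_n$ is continuous.

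Finally, continuity of $r_n\colon\CDach\to(0,1)$ immediately yields that $S_n=\{(z,r_n(z)):z\in\CDach\}$ is the image of $\CDach$ under the continuous injective map $z\mapsto (z,r_n(z))$; since $\CDach$ is compact, this map is a homeomorphism onto its image, so $S_n$ is homeomorphic to $\Sp^2$. The main obstacle, as indicated, is purely the bookkeeping of continuity of the composed spherical derivative across the finitely many annular boundaries $\partial U_c^n$ — everything else is a direct consequence of Lemma~\ref{lem:Df1n_infty} and Corollary~\ref{corollary: r_n in (0, 1)}.
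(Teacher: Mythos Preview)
Your proposal is correct and follows essentially the same approach as the paper: part~\ref{item: approximating spheres 1} via Lemma~\ref{lem:Df1n_infty} with the threshold $s/\epsilon$, and part~\ref{item: approximating spheres 2} via continuity of $\|Df_n\|_{\CDach}$ from Lemma~\ref{lem:prop_fn}~\ref{item:prop_fn_3} combined with the chain rule. Your treatment of~\ref{item: approximating spheres 2} is in fact more careful than the paper's, which simply invokes ``the chain rule'' without elaboration; you correctly observe that the equality form~\eqref{eq:Dfgh} applies because the outer and inner factors in the decomposition $f^k\circ f_{k+1}\circ f^{n-k-1}$ are holomorphic, and you explicitly check the boundary matching (one minor imprecision: the boundary in question is $\partial U^n$ for each component $U^n\in\U^n$, whose image under $f^{n-k-1}$ is $\partial U_c^{k+1}$, rather than $\partial U_c^n$ itself).
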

\begin{proof}
	\ref{item: approximating spheres 1}
	Let $\epsilon >0$ be arbitrary. By Lemma \ref{lem:Df1n_infty} there exists $N\in \N$ so that $\|D(f_1\circ\dots\circ f_{n+1})(z)\|_{\CDach} > s/\epsilon$ for all $n\geq N$. By the definition of $r_n$ we then obtain that for all $z\in \CDach$ \begin{equation*}
		r_n(z)
		= 1-s\frac{1}{\|D(f_1\circ \dots\circ f_{n+1})(z)\|_{\CDach}}> 1-\epsilon
	\end{equation*}
	when $n\geq N$.

	\smallskip
	\ref{item: approximating spheres 2}
	By Lemma \ref{lem:prop_fn} \ref{item:prop_fn_3} have that each $f_n$ is differentiable almost everywhere and the derivative can be continuously extended to the points where the function is not differentiable. So by the chain rule, $r_n$ is continuous. And since $r_n(z)> 0$ for all $z\in \CDach$ and all $n\in \N$ the $n$-spheres are all homeomorphic to $\Sp^2$.	
\end{proof}

\begin{definition}
	Let $S_n$ be the sequence of approximating spheres. If we have that $r_n(z)<r_{n+1}(z)$ for all $n\in \N$ and all $z\in\CDach$ we call the sequence $S_n$ \textit{monotone}.
\end{definition}

\begin{remark}
	It is possible that two approximating spheres intersect. However there exists a number $N_0\in \N$ such that $S_1 \cap S_{N_0}=\emptyset$ and $S_{k}\cap S_{k+N_0}=\emptyset$. We can get a monotone sequence $\Tilde{S}_n$ by relabeling $\Tilde{S}_n= S_{n\cdot N_0}$. 
\end{remark}

\begin{corollary} \label{cor:S1_in_SN0}
	There exists a number $N_0\in \N$ such that $S_1\subset \textup{interior}(S_{N_0})$.
\end{corollary}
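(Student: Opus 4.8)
The plan is to translate the set inclusion $S_1 \subset \textup{interior}(S_{N_0})$ into a pointwise inequality between the radius functions, namely $r_1(z) < r_{N_0}(z)$ for all $z \in \CDach$, and then to obtain that inequality from the uniform blow-up of $\|D(f_1 \circ \dots \circ f_n)\|_{\CDach}$ recorded in Lemma~\ref{lem:Df1n_infty}.

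First I would observe that each approximating sphere $S_n = \{(z, r_n(z)) : z \in \CDach\}$ is the radial graph of the positive continuous function $r_n \colon \CDach \to (0,1)$ (Lemma~\ref{lemma: approximating spheres}), hence star-shaped about the origin, so that the bounded component of $\R^3 \setminus S_{N_0}$ is exactly $\{(z,r) : z \in \CDach,\ 0 \le r < r_{N_0}(z)\}$: the ray from $0$ in a direction $z$ meets $S_{N_0}$ in the single point of radius $r_{N_0}(z)$, and the points of that ray of strictly smaller radius lie in the bounded component. Therefore a point $(z, r_1(z)) \in S_1$ lies in $\textup{interior}(S_{N_0})$ precisely when $r_1(z) < r_{N_0}(z)$, and it suffices to arrange $r_1(z) < r_{N_0}(z)$ for every $z \in \CDach$.

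By Definition~\ref{definition: r_n}, this inequality is equivalent to $\|Df_1(z)\|_{\CDach} < \|D(f_1 \circ \dots \circ f_{N_0})(z)\|_{\CDach}$. By Lemma~\ref{lem:prop_fn}~\ref{item:prop_fn_3} the map $z \mapsto \|Df_1(z)\|_{\CDach}$ has a continuous extension to the compact space $\CDach$, so $C_1 \coloneqq \max_{z \in \CDach} \|Df_1(z)\|_{\CDach}$ is finite. By Lemma~\ref{lem:Df1n_infty}, $\min_{z \in \CDach} \|D(f_1 \circ \dots \circ f_n)(z)\|_{\CDach} \to \infty$ as $n \to \infty$, so I would fix $N_0 \in \N$ with $\min_{z \in \CDach} \|D(f_1 \circ \dots \circ f_{N_0})(z)\|_{\CDach} > C_1$. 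Then $\|Df_1(z)\|_{\CDach} \le C_1 < \|D(f_1 \circ \dots \circ f_{N_0})(z)\|_{\CDach}$ for all $z \in \CDach$, hence $r_1(z) < r_{N_0}(z)$, which gives $S_1 \subset \textup{interior}(S_{N_0})$.

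I do not expect a real obstacle here; the only step requiring care is the geometric identification of $\textup{interior}(S_{N_0})$ with the region under the radial graph of $r_{N_0}$, which uses that $S_{N_0}$ is star-shaped about $0$ — equivalently, that $r_{N_0}$ is positive and continuous. Everything else is an immediate consequence of Lemma~\ref{lem:Df1n_infty} together with the compactness of $\CDach$.
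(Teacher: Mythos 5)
Your proof is correct and follows essentially the same route as the paper, which simply invokes Lemma~\ref{lem:Df1n_infty} and takes $N_0$ large enough. You have spelled out the two steps the paper leaves implicit — identifying $\textup{interior}(S_{N_0})$ with the star-shaped region below the radial graph of $r_{N_0}$, and comparing $\min_z\|D(f_1\circ\dots\circ f_{N_0})(z)\|_{\CDach}$ against the finite quantity $\max_z\|Df_1(z)\|_{\CDach}$ rather than against a generic threshold — which makes the argument cleaner, but the underlying idea is identical.
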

\begin{proof}
	We can take $N_0=N$ where $N$ is as in Lemma \ref{lem:Df1n_infty}.
\end{proof}
Recall that in case the approximating spheres are monotone, $K=2$. 

\medskip

\section{Definition and quasi-regularity of the extending map}
\label{section:Def_and_qr_of_Extension}
In this section we will define the extension of $f$ to a map $F: \Omega \to \R^3$ so that $F^n|_{\Omega_n}$ is $K$-quasi-regular form some $K$ and all $n\in \N$. The set $\Omega_n:= F^{-(n-1)}(\Omega)$. Recall that in the outline \eqref{eq:def_extension_outline} we have defined an extension $F|_{S_n}$ restricted to the approximating spheres. Let $p=(z, r_n(z))\in S_n$, then 
\begin{equation*}
	F(z, r_n(z))
	=
	(z', r_{n-1}(z'))
\end{equation*}
for $z'=f_n(z)$. 

In case the approximating spheres are monotone, we will extend the maps $F|_{S_n}$ to a map $F: A(\CDach, S_2)\to \R^3$. When the approximating spheres are not monotone we will only define an extension on $A(\CDach, S_{N_0})$, where $N_0\in \N$ is chosen so that $S_1\subset \textup{interior}(S_{N_0})$. We choose this restriction, because for points $(z, r)$ with $z\in \CDach\setminus \U_{\crit}^n$ we will define $F$ on line segments of the form 
\begin{equation*}
	(z, (1-t)\cdot r_n(z)+t\cdot r_m(z))
\end{equation*}
where $n, m\in \N$ are so that they bound a connected component of $[0, 1]\setminus \{r_n(z): n\in \N\}$, $z\in \CDach $ and $t\in [0, 1]$. Such a line segment will be mapped to a line segment of the form 
\begin{equation*}
	(z', (1-t)\cdot r_{n-1}(z')+t\cdot r_{m-1}(z'))
\end{equation*}
with $z':=f(z)$. That is the reason we wish to avoid $n$ or $m$ being equal to 1 and hence we restrict to $A(\CDach, S_{N_0})$.
When the approximating spheres are monotone the domain is $A(\CDach, S_2)$. 

Let us now turn to an outline of the arguments why the maps $F|_{S_n}$ can be extended to a uniformly quasi-regular map. We will first restrict to a subset of $A(\CDach, S_{N_0})$ that are sufficiently far from critical axes (i.e., from $\{(c, r): c\in \crit(f), r\in (0, 1]\}$). 
Let $n\in \N$ be arbitrary and $z\in \CDach \setminus \U_{\crit}^n$. The spherical component of the map $F|_{S_n}$ is defined by the holomorphic map $f$ on $\CDach$. Let $U\subset\CDach$ be a domain containing $z$. We may choose $U$ so that $f|_U$ is univalent and we can use Koebe's distortion theorem \ref{theorem: spherical Koebe} to conclude that for all $w\in B(z, \rho)\subset U$ we have that 
\begin{equation*}
	|f(z)-f(w)|_{\hat{\mathbb{C}}}\asymp f^{\#}(z)|z-w|_{\hat{\mathbb{C}}}
\end{equation*} 
with $C(\asymp)=C(U, \rho)$, i.e., $C(\asymp)$ depends on how $B(z, \rho)$ is contained in $U$. Let us turn to the distance of $(z, r_n(z))$ to $\CDach$. For a point $(z, r_n(z))$ the distance to $\CDach$ is given by 
\begin{equation*}
	d_n(z)=\textup{dist}\big((z, r_n(z)), \CDach\big)=s\,\frac{1}{(f^n)^{\#}(z)}
\end{equation*} 
and in the image it is, writing $z'=f(z)$,
\begin{equation*}
	d_{n-1}(z')=\textup{dist}\big((z', r_{n-1}(z')\big), \CDach)=s\, \frac{1}{(f^{n-1})^{\#}(z')}.
\end{equation*}
It is immediate from the chain rule that
\begin{equation*}
	d_{n-1}(z')=d_n(z)\cdot f^{\#}(z).
\end{equation*} 
Then since the stretch factor in the spherical component that we obtain from Koebe's distortion theorem roughly agrees with the factor by which the distance to $\CDach$ is increased, we can argue that the map is quasi-regular. 

If $z\in \CDach\setminus \U_{\crit}^n$ is so that $f^k(z)\notin \U_{\crit}^{n-k}$ for all $k\leq m$ for some $m\leq n$, we can argue similarly to the above that $F|_{S_{n-m}}\circ\dots \circ F|_{S_n}$ is quasi-regular in a neighborhood of $(z, r_n(z))$. Let $V\subset \CDach\setminus \U_{\crit}^n$ be a domain containing $z$ so that $f^m|_V$ is holomorphic and univalent. By the definition of $F|_{S_n}$ the spherical part $z$ is mapped as $f^m(z)$. Then by Theorem \ref{theorem: spherical Koebe} we obtain that for all $w\in B(z, \rho)\subset V$
\begin{equation*}
	|f^m(z)-f^m(w)|
	\asymp 
	(f^m)^{\#}(z)|z-w|_{\CDach}
\end{equation*}
where $C(\asymp)=C(U, \rho)$ does not depend on $m$. 
As above we obtain by the chain rule that the distance of $z''=f^m(z)$ to $\CDach$ is given by 
\begin{equation*}
	d_{n-m}(z'')
	= 
	\textup{dist} ((z'', r_{n-m}(z'')), \CDach)
	= 
	d_n(z)\cdot (f^m)^{\#}(z),
\end{equation*}
and so again the stretch factor in the spherical part and factor by which the distance to $\CDach$ is increased roughly agree.
This implies that the extension $F$ is uniformly quasi-regular for all admissible iterates. The extension of $f$ to such neighborhoods is treated in Section \ref{subsection: extension far from critical points}. 

When $(z, r)$ is a point so that the spherical coordinate $z\in \U_{\crit}^n$ we will extend the maps $F|_{S_n}$ slightly differently to a neighborhood of that point. The main idea is to interpolate between the maps $F|_{S_n}$ and $F|_{S_{n+1}}$. This will be treated in Section \ref{subsection: extension near critical points}.

\begin{figure}[htb]
	\begin{overpic}[scale=0.7]{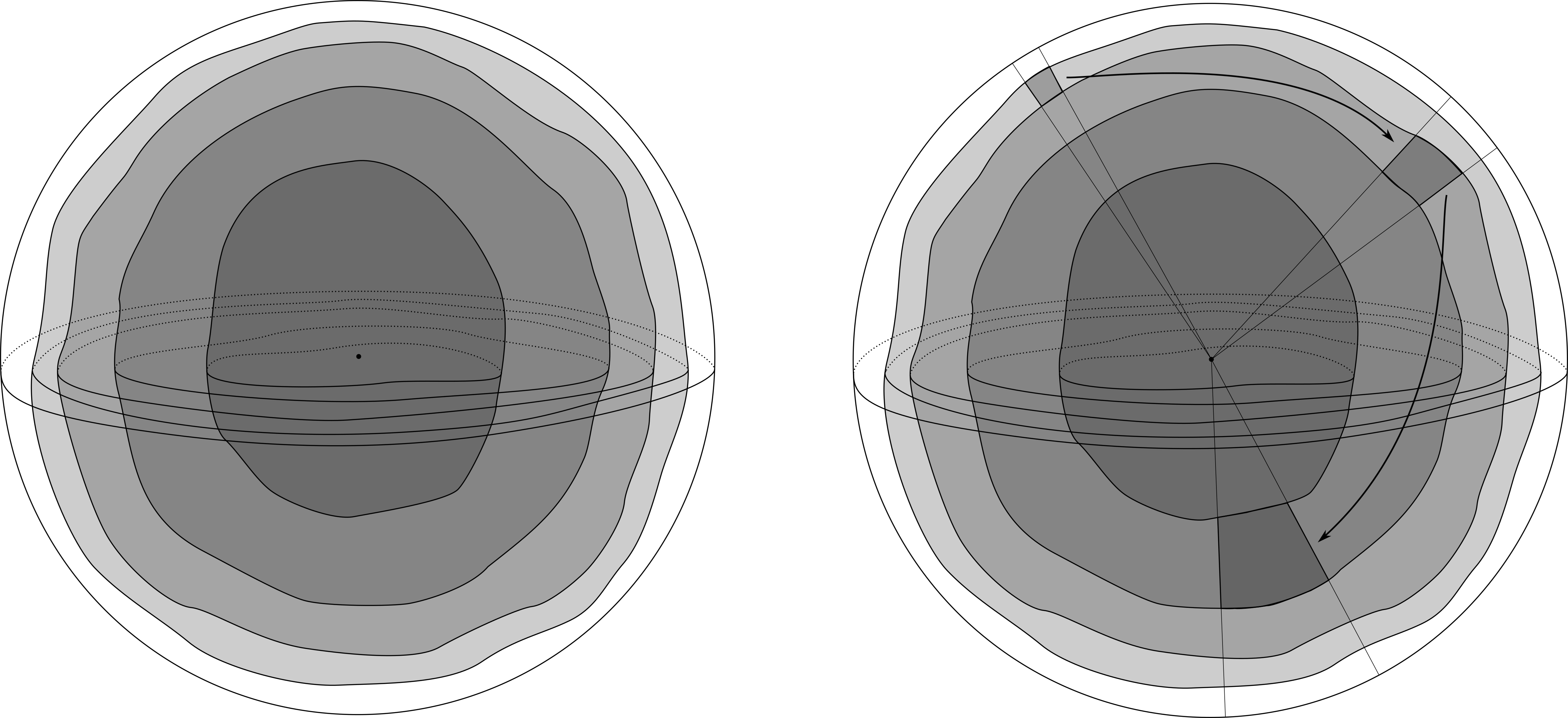}
		\put(89, 15){\footnotesize $F$}
		\put(15, 27){\footnotesize $S_1$}
		\put (10, 29){\footnotesize $S_2$}
		\put(6, 31){\footnotesize $S_3$}
		\put(83, 41){\footnotesize $F$}
	\end{overpic}
	\label{figure: Extension}
	\caption{This image shows the approximating spheres $S_1, S_2, S_3$ and $S_4$ as well as the extension $F$ schematically.}
\end{figure}

In both cases we will use the following coordinates. 

\begin{definition}[Interpolating coordinates]\label{definition: interpolating coordinates}
	Let $z\in\CDach$ and $n\in\N$ be arbitrary. Let $m\in\N$ be such that $r_n(z)$ and $r_m(z)$ bound a connected component of $[0, 1]\setminus \{r_j(z): j\in \N\}$ and $r_m(z)> r_n(z)$. Then we define \begin{equation}\label{equation: interpolating coordinates}
		\rho_n(z, t):= (1-t)\cdot r_n(z)+t\cdot r_m(z),
	\end{equation}
	for $t\in[0, 1]$. We say that $m$ is a \textit{consecutive index of $n$ at $z\in\CDach$}.
\end{definition}

\begin{remark}
	If the approximating spheres are monotone, i.e., $r_n(z)<r_{n+1}(z)$ for all $n\in\N$ we have that the consecutive index of $n$ is equal to $n+1$ for all $n\in\N$. 
	
	It is obvious that all points $p\in A(\CDach, S_1)$ can be written in coordinates of the form $(z, \rho_n(z, t))$ for $z\in\CDach$, $n\in\N$ and $t\in[0, 1]$ suitable. However note that the coordinate representation is not necessarily unique. We will show that this does not raise any problems for having a well-defined extension in Lemma \ref{lemma: extension is well-defined and continuous}.
\end{remark}

\begin{definition}[Distance coordinates] 
	For the distance of a point $z\in S_n$ to the unit sphere we use the notation \[d_n(z):= 1-r_n(z)\] and for the points in between the approximating spheres $S_n$ we use \[\delta_n(z, t):=1-\rho_n(z, t).\]
\end{definition}

\subsection{Extension away from critical axes}\label{subsection: extension far from critical points} In this section we will treat the case where a point with coordinates $(z, r)$ is sufficiently far from a critical axis $l_c=\{(c, t):t\in[0, 1]\}$ for a critical point $c$. The term "sufficiently far" will be made precise in Definition \ref{definition: extension far from critical axes}.
\subsubsection{Monotone approximating spheres} Fix a point $z\in\CDach$ and consider the line $l_z=\{(z, r):r\in [0, 1]\}$. Then the interpolating coordinates of a segment of $l_z$ are given by \begin{equation*}
	\rho_n(z, t):= (1-t)\cdot r_n(z)+ t\cdot  r_{n+1}(z).
\end{equation*}

We suppose that $z\notin \U_{\crit}^n$. At the point $f(z)$ we have a similar subdivision of $l_{f(z)}$. The coordinates $\rho_{n-1}(f(z), t)$ are given by \begin{equation*}
	\rho_{n-1}(f(z), t)=(1-t)\cdot r_{n-1}(f(z))+t\cdot r_n(f(z)).
\end{equation*} 
We will map the line segment at $z$ parametrized by $[0, 1]\to\R^3, \, t\mapsto  (z, \rho_n(z, t))$ to the segment parametrized by \begin{equation*}
	\big(f(z), \rho_{n-1}(f(z), t)\big)
\end{equation*} 
for $t\in[0, 1]$.
We define \begin{equation*}
	F\big(z, \rho_n(z, t)\big) = \big(f(z), \rho_{n-1}(f(z), t)\big).
\end{equation*}

\subsubsection{General case} Here $N_0$ might be larger than 2. Fix $z\in\CDach$ and $n\in\N$. Suppose that $z\notin \U_{\crit}^n$. There is an index $m\in\N$ such that the interpolating coordinates $\rho_n(z, t)$ are given by \begin{equation*}
	\rho_n(z, t)=(1-t)\cdot r_n(z)+t\cdot r_m(z).
\end{equation*}
We will show in Lemma \ref{lemma: reihenfolge bleibt erhalten} that at the point $f(z)$ a consecutive index of $n-1$ is equal to $m-1$. 
Then we can define the extending map by \begin{equation*}
	F\big(z, \rho_n(z)\big)=\big(f(z), \rho_{n-1}(f(z), t)\big).
\end{equation*}

For proving that the map it is crucial that if $m$ is the consecutive index of $n$ at $z\in\CDach$ it follows that a consecutive index of $n-1$ at $f(z)$ is equal to $m-1$. In the case that there exists $k\neq m$ with $r_k(z)=r_m(z)$ we can show that also $r_{k-1}(f(z))=r_{m-1}(f(z))$.

\begin{definition}[Extension] \label{definition: extension far from critical axes}
	Let $p\in A(\CDach, S_{N_0})$ be a point such that there are coordinates $p=(z, \rho_n(z, t))$ with $z\in\CDach$ and $n, t$ chosen suitably with $z\notin U_c^n$. Then we can define the extension $F$ by \begin{equation}\label{equation: definition of F outside U_c^n}
		F\big(z, \rho_n(z, t)\big)=\big(f(z), \rho_{n-1}(f(z), t)\big).
	\end{equation}
\end{definition}

In case the approximating spheres are not monotone, it is possible that for distinct points $z, w\in\CDach$ the coordinates $\rho_n(z, t)$ and $\rho_n(w, s)$ are given by \begin{align*}
	\rho_n(z, t)&=(1-t)\cdot r_n(z)+t\cdot r_m(z)\\
	\rho_n(w, s)&=(1-s)\cdot r_n(w)+s\cdot r_k(w)
\end{align*}
for $m\neq k$. For proving quasi-regularity of the extension it will be convenient to subdivide $A(\CDach, S_{N_0})$ into sets where the interpolating coordinates have the same indices on the whole set. These sets will be called prisms. 

\begin{definition}\label{definition: n-prism}
	Let $n\in \N$ be arbitrary. Suppose that for a connected subset $U\subset \CDach$ there exists an $m\in\N$ that is a consecutive index of $n$ at all points $z\in U$. Then we define the \textit{$n$-prism above $U$} by \begin{equation*}
		P(U,n):=\{(z, \rho_n(z, t)):z\in U, t\in [0, 1]\}.
	\end{equation*}
\end{definition}

\begin{lemma}\label{lemma: extension is well-defined and continuous}
	Let $p\in A(\CDach, S_{N_0})$ be a point such that there are coordinates $p=(z, \rho_n(z, t))$ for $z\in \CDach\setminus \U_{\crit}^n$ and $n\in\N, t\in [0, 1]$ suitable. Then the extension defined in \ref{definition: extension far from critical axes} is well-defined and continuous at $p$. 
	
	Let $P(U, n)$ be an $n$-prism containing $z$. Then $F(P(U, n))= P(f(U), n-1)$ is an $(n-1)$-prism containing $f(z)$. 
\end{lemma}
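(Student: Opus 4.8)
The plan is to reduce the whole statement to a single chain‑rule identity. For $z\in\CDach\setminus\U^{n}_{\crit}$ the map $f_{n}$ agrees with $f$ on a neighbourhood of $z$, so $f_{1}\circ\dots\circ f_{n}=(f_{1}\circ\dots\circ f_{n-1})\circ f$ near $z$; since $f$ is holomorphic, \eqref{eq:Dfgh} (applied with inner holomorphic map $f$) gives
\begin{equation*}
  \|D(f_{1}\circ\dots\circ f_{n})(z)\|_{\CDach}
  =
  \|D(f_{1}\circ\dots\circ f_{n-1})(f(z))\|_{\CDach}\cdot f^{\#}(z),
\end{equation*}
equivalently $d_{n-1}(f(z))=f^{\#}(z)\,d_{n}(z)$ for the distance coordinates $d_{j}(w)=1-r_{j}(w)=s\,\|D(f_{1}\circ\dots\circ f_{j})(w)\|_{\CDach}^{-1}$. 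Thus the sequences $j\mapsto d_{j}(z)$ and $j\mapsto d_{j-1}(f(z))$ differ by the fixed positive factor $f^{\#}(z)$, hence induce the same linear order: $r_{j}(z)\le r_{k}(z)$ iff $r_{j-1}(f(z))\le r_{k-1}(f(z))$, with equality on one side iff equality on the other. Consequently a consecutive index $m$ of $n$ at $z$ transports to a consecutive index $m-1$ of $n-1$ at $f(z)$ (this is Lemma~\ref{lemma: reihenfolge bleibt erhalten}); the only extra point is that the restriction to $A(\CDach,S_{N_{0}})$ forces every index occurring to be $\geq2$, so after the shift the indices are $\geq1$ and the constant level $r_{0}<r_{j}(w)$ ($j\geq1$) never enters the gap structure at $f(z)$.

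\emph{Well-definedness.} A point $p\in A(\CDach,S_{N_{0}})$ has a unique spherical part $z$ and a unique radius $r$; the only freedom in a representation $p=(z,\rho_{n}(z,t))$ with $z\notin\U^{n}_{\crit}$ is the choice of $n$, of its consecutive index $m$, and of $t\in\{0,1\}$ when $r\in\{r_{n}(z),r_{m}(z)\}$. If $r_{n}(z)<r_{m}(z)$ then $t$ is recovered from $r$, and since $m-1$ is consecutive to $n-1$ at $f(z)$ the point $F(p)=(f(z),\rho_{n-1}(f(z),t))$ depends only on $p$. If $r_{n}(z)=r_{m}(z)$ (the prism pinches over $z$), then by the order statement also $r_{n-1}(f(z))=r_{m-1}(f(z))$, so $\rho_{n-1}(f(z),t)$ is independent of $t$ and $F(p)$ is again unambiguous; the identical computation handles the interface between a prism and the one directly beneath it (an index serving as $t=1$ in one and $t=0$ in the other), and the case of two consecutive indices $m,m'$ with $r_{m}(z)=r_{m'}(z)$. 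Hence the formula of Definition~\ref{definition: extension far from critical axes} is well defined at $p$.

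\emph{Continuity.} On a prism $P(U,n)\ni p$ with locally constant consecutive index $m$, the map $F$ reads, in the chart $(z,t)\mapsto(z,\rho_{n}(z,t))$, as $(z,t)\mapsto\bigl(f(z),(1-t)\,r_{n-1}(f(z))+t\,r_{m-1}(f(z))\bigr)$, which is continuous in $(z,t)$ because $f$ and the radii $r_{j}$ are continuous (Lemma~\ref{lemma: approximating spheres}); as this chart is a continuous surjection onto $P(U,n)$ whose fibres $F$ respects (the pinch argument above), $F$ descends to a continuous map on $P(U,n)$. Near $p$ only finitely many prisms occur, since the radii $r_{j}(z)$ accumulate only at $1>r$, so a small ball about $p$ meets only finitely many spheres $S_{j}$; these finitely many closed prisms cover a neighbourhood of $p$, $F$ is continuous on each, and the definitions agree on overlaps by the previous paragraph, so the pasting lemma yields continuity of $F$ at $p$.

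\emph{Image of a prism.} By definition $F(z,\rho_{n}(z,t))=(f(z),\rho_{n-1}(f(z),t))$ for $z\in U$, $t\in[0,1]$; since $m-1$ is a consecutive index of $n-1$ at every point of $f(U)$, the right‑hand side sweeps out exactly $\{(w,\rho_{n-1}(w,t)):w\in f(U),\,t\in[0,1]\}=P(f(U),n-1)$, which contains $f(z_{0})$ for $p=(z_{0},r)$; and $f(U)$ is connected, so this is a genuine $(n-1)$-prism. Therefore $F(P(U,n))=P(f(U),n-1)$. The one place demanding genuine care is the first paragraph: transporting ``consecutive index of $n$ at $z$'' to ``consecutive index of $n-1$ at $f(z)$'' uniformly over $U$, including the degenerate cases where approximating spheres touch and the boundary case involving the constant radius $r_{0}$ — everything else is the single chain‑rule identity together with the pasting lemma.
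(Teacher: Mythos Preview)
Your argument is correct and follows essentially the same route as the paper: both reduce well-definedness and the prism statement to the order-preserving property of the map $j\mapsto j-1$, $z\mapsto f(z)$ on the radii, proved via the chain rule, and then invoke the pasting lemma for continuity. Your presentation is slightly more economical: where the paper splits into Lemma~\ref{lemma: reihenfolge bleibt erhalten} (forward orbit avoids critical neighbourhoods) and Lemma~\ref{lemma: if f(z)in U_c^n} ($f(z)\in\U_{\crit}^{n-1}$), you observe that both cases are instances of the single identity $d_{j-1}(f(z))=f^{\#}(z)\,d_{j}(z)$, valid whenever $f_{j}=f$ near $z$. One small point worth tightening: your blanket claim that the \emph{full} sequences $j\mapsto d_{j}(z)$ and $j\mapsto d_{j-1}(f(z))$ differ by $f^{\#}(z)$ literally needs $z\notin\U_{\crit}^{j}$ for every $j$, which is automatic for $j\geq n$ but not for smaller $j$; the paper's proofs have the same looseness, and for the actual well-definedness and prism claims only the indices $n$ and its consecutive index $m$ (and adjacent prism boundaries) are used, for which the identity holds.
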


In order to show that $F$ is well-defined we will prove that if $m$ and $k$ are consecutive indices of $n$ at $z$ then $m-1$ and $k-1$ are consecutive indices of $n-1$ at $f(z)$. We will show this in Lemma \ref{lemma: reihenfolge bleibt erhalten} and Lemma \ref{lemma: if f(z)in U_c^n}.

\begin{lemma}\label{lemma: reihenfolge bleibt erhalten}
	Let $n\in \N$ be arbitrary and $z\in\CDach\setminus \bigcup_c U_c^n$ be so that $f^j(z)\notin \bigcup_c U_c^{n-j}$ for $j=1, \dots, n-1$. Suppose that $m$ is a consecutive index of $n$ at $z$. Then $m-1$ is a consecutive index of $n-1$ at $f(z)$. If for $k\neq m$ we have $r_m(z)=r_k(z)$ then $r_{m-1}(f(z))=r_{k-1}(f(z))$. 
\end{lemma}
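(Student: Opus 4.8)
The plan is to rephrase everything through the spherical operator norms. Set $g_j(z):=\|D(f_1\circ\dots\circ f_j)(z)\|_{\CDach}$ for $j\geq 1$, so that $r_j(z)=1-s/g_j(z)$. Since $t\mapsto 1-s/t$ is strictly increasing, the relative order of the values $\{r_j(z)\}_{j\geq 1}$ agrees with that of $\{g_j(z)\}_{j\geq 1}$; hence "$m$ is a consecutive index of $n$ at $z$'' means exactly "$g_m(z)>g_n(z)$ and no $g_j(z)$, $j\geq 1$, lies strictly between $g_n(z)$ and $g_m(z)$'', and "$r_m(z)=r_k(z)$'' means "$g_m(z)=g_k(z)$''. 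The constant index $0$ can be ignored throughout, since $r_0(z)=r_0<r_j(z)$ for all $j\geq 1$ and all $z$, so $0$ is never a consecutive index of $n\geq 1$.

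The heart of the matter is the identity $g_i(z)=f^{\#}(z)\cdot g_{i-1}(f(z))$, valid whenever $z\notin\bigcup_{c\in\crit(f)}U_c^i=\U^i_{\crit}$. Indeed, in that case $f_i\equiv f$ on an open neighbourhood of $z$, so $f_i(z)=f(z)$, $\|Df_i(z)\|_{\CDach}=f^{\#}(z)$, and $f_1\circ\dots\circ f_i=(f_1\circ\dots\circ f_{i-1})\circ f$ near $z$; by \eqref{eq:f_1n_fff} the map $f_1\circ\dots\circ f_{i-1}$ is, near any point, either holomorphic or of the form $(\text{conformal})\circ h_\bullet\circ(\text{conformal})$ with $h_\bullet$ a scaled winding map, and in either case \eqref{eq:Dfgh} shows that composing with the holomorphic map $f$ on the right multiplies its spherical operator norm exactly by $f^{\#}(z)$. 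Because the sets $\U^l_{\crit}$ shrink as $l$ grows and $z\notin\U^n_{\crit}$, the identity holds for all $i\geq n$; iterating it along the orbit $z,f(z),\dots,f^{n-1}(z)$ — which avoids the critical sets at the relevant levels by hypothesis, so that all intervening maps are locally holomorphic — also yields $g_n(z)=(f^n)^{\#}(z)$, $g_{n-1}(f(z))=(f^{n-1})^{\#}(f(z))$, and in particular (case $i=n$) $g_n(z)=f^{\#}(z)\,g_{n-1}(f(z))$.

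Let $i_0$ be the largest $l$ with $z\in\U^l_{\crit}$, and $i_0:=0$ if $z$ lies in no $U_c$; then $i_0<n$ and the identity above holds for every $i>i_0$. Dividing the identities at $i$, $j$ and $n$ by $f^{\#}(z)>0$ shows that, restricted to indices $>i_0$, the shift $i\mapsto i-1$ preserves the order relative to the base levels, hence carries "$m$ is a consecutive index of $n$ at $z$'' to "$m-1$ is a consecutive index of $n-1$ at $f(z)$'' and each coincidence $r_m(z)=r_k(z)$ to $r_{m-1}(f(z))=r_{k-1}(f(z))$ — \emph{provided} the indices involved exceed $i_0$.

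So it remains to verify that no index $j$ with $1\leq j\leq i_0$ can be a consecutive index of $n$ at $z$, i.e.\ that $r_j(z)<r_n(z)$ for such $j$; this is the main obstacle. For $j\leq i_0$ we have $z\in U_c^j$ for a single $c\in\crit(f)$ (the $U_{c'}$ being pairwise disjoint), whence $f(z)\in U_p^{i_0-1}$ with $p=f(c)$. Lemma~\ref{lem:Df1n}~\ref{item:Df1n_c} together with Lemma~\ref{lem:Koebe_phi_psi} gives $g_j(z)\asymp|\lambda|^{(j-1)/d}$, while Lemma~\ref{lem:fn_Unp}~\ref{item:fn_Unp} gives $(f^{i_0-1})^{\#}(f(z))\asymp|\lambda|^{i_0-1}$ and, since $f^{i_0}(z)\in U_p$, also $f^{\#}(f^{i_0}(z))\asymp|\lambda|$; combining these with $f^{\#}(z)\asymp|\phi(z)|^{d-1}$, with $|\phi(z)|>|\lambda|^{-i_0/d}$ (valid because $z\notin U_c^{i_0+1}$), and with the factorization $g_n(z)=(f^n)^{\#}(z)=(f^{n-i_0-1})^{\#}(f^{i_0+1}(z))\cdot f^{\#}(f^{i_0}(z))\cdot(f^{i_0-1})^{\#}(f(z))\cdot f^{\#}(z)$ shows, after bookkeeping the exponents of $|\lambda|$ (shrinking the neighbourhoods $U_p$ in advance if needed to keep the implied constants tame), that $g_n(z)>g_j(z)$ for all $j\leq i_0$. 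Hence every consecutive index of $n$ at $z$ exceeds $i_0$, and the lemma follows from the previous paragraph. The delicate point is precisely this last comparison: inside the disk the winding map over-expands relative to $z\mapsto z^d$, so one must check that near the critical point this extra expansion of the modified composition is still dominated by the genuine $n$-fold expansion of $f$ further along the orbit.
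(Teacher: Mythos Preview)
Your core argument matches the paper's: peel off the first factor $f^{\#}(z)$ from $\|D(f_1\circ\cdots\circ f_j)(z)\|_{\CDach}$ via the chain rule, so that all order relations among the $r_j(z)$ are preserved under the shift $j\mapsto j-1$. The paper writes this somewhat loosely, treating every composition as a pure iterate $(f^j)^{\#}(z)$ (the hypothesis only guarantees this literally for $j\le n$); your reformulation through the single identity $g_i(z)=f^{\#}(z)\,g_{i-1}(f(z))$, valid whenever $z\notin\U_{\crit}^i$, is cleaner and is really what the paper's argument needs.

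Where you diverge is the last paragraph, which the paper does not attempt at all (it dismisses ``the other case'' as ``very similar''). You correctly observe that if $z\in U_c^{i_0}$ with $1\le i_0<n$ the shift identity fails for $i\le i_0$, and you try to rule those indices out by estimating $g_j(z)$ against $g_n(z)$. Two issues. First, the bookkeeping is not tight: the factor $(f^{n-i_0-1})^{\#}(f^{i_0+1}(z))$ has no a priori lower bound once the orbit leaves $U_p$, so ``shrinking $U_p$'' alone does not control it. Second, and more serious, even granting $r_j(z)<r_n(z)$ for all $j\le i_0$ (so $m>i_0$), you have not shown that $m-1$ is \emph{consecutive} at $f(z)$: you must still exclude indices $l\in\{1,\dots,i_0-1\}$ at $f(z)$, and your shift only handles $l\ge i_0$. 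This can be done directly---since $f(z)\in U_p$ lies in no critical neighborhood, $g_l(f(z))=(f^l)^{\#}(f(z))\asymp|\lambda|^l$ for $l<i_0$, and one compares this with $g_{n-1}(f(z))$---but your proposal does not carry it out. In short: your treatment is more honest than the paper's about where the subtlety lies, but the last paragraph remains a sketch rather than a proof.
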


\begin{proof}[Proof of Lemma \ref{lemma: reihenfolge bleibt erhalten}]
	We assume that $m>n$, the other case is very similar. The claim follows from the chain rule. We have that $r_n(z)< r_m(z)$, which implies that \begin{equation}\label{equation: equality of spherical derivatives}
	(f^m)^{\#}(z)> (f^n)^{\#}(z).
\end{equation}
Rewrite the derivative as 
\begin{equation*}
	(f^m)^{\#}(z)
	= 
	f^{\#}(f^{m-1}(z)) \dots f^{\#}(f^n(z))(f^n)^{\#}(z),
\end{equation*}
and conclude that 
\begin{equation}
	\label{equation: order of radii for easy case}f^{\#}
	(f^{m-1}(z))\dots f^{\#}(f^n(z)) > 1.
\end{equation}

At the point $f(z)$ the derivatives involved in the definition of $r_{n-1}(f(z))$ and $r_{m-1}(f(z))$ are 
\begin{align*}
	(f^{n-1})^{\#}(f(z))&=f^{\#}(f^{n-1}(z))\dots f^{\#}(f(z))\\
	(f^{m-1})^{\#}(f(z))&= f^{\#}(f^{m-1}(z))\dots f^{\#}(f(z))\\
	&=f^{\#}(f^{m-1}(z))\dots f^{\#}(f^n(z))(f^{n-1})^{\#}(f(z)).
\end{align*}
Observing that the factor $f^{\#}(f^{m-1}(z))\dots f^{\#}(f^n(z))$ that is not equal in both expressions is the same factor as above and strictly larger than 1 we conclude
\begin{equation*}
	(f^{m-1})^{\#}(f(z))> (f^{n-1})^{\#}(f(z))
\end{equation*}
and hence $r_{n-1}(f(z))< r_{m-1}(f(z))$. 
For the second part, let $k<m$ be another consecutive index of $n$ at $z$. Then \begin{equation*}
	(f^m)^{\#}(z)= f^{\#}(f^{m-1}(z))\dots f^{\#}(f^k(z))(f^k)^{\#}(z)
\end{equation*}
and hence $f^{\#}(f^{m-1}(z))\dots f^{\#}(f^k(z))=1$. At the point $f(z)$ we can compute \begin{equation*}
	(f^{m-1})^{\#}(f(z))= f^{\#}(f^{m-1}(z))\dots f^{\#}(f^k(z))f^{\#}(f^{k-1})^{\#}(f(z)) = (f^{k-1})^{\#}(f(z))
\end{equation*}
which shows that $k-1 $ and $m-1 $ are both consecutive indices of $n-1$ at $f(z)$. 
\end{proof}

Note that here we excluded some cases that we aimed treat already in this section, namely if $z\in\CDach\setminus \U_{\crit}^n$ but $f(z)\in \U_{\crit}^{n-1}$. We will prove the same implications as in Lemma \ref{lemma: reihenfolge bleibt erhalten} for this case.

\begin{lemma}\label{lemma: if f(z)in U_c^n}
	Let $z\in \CDach\setminus \U_{\crit}^n$ be a point such that $f(z)\in \U_{\crit}^{n-1}$. If $m\in\N$ is a consecutive index of $n$ at $z$ then $m-1$ is a consecutive index of $n-1$ at $f(z)$.  
\end{lemma}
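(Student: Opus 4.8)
The plan is to reduce the statement to a single chain-rule identity, as in the proof of Lemma~\ref{lemma: reihenfolge bleibt erhalten}. Write $D_j(w):=\|D(f_1\circ\dots\circ f_j)(w)\|_{\CDach}$, so that $r_j(w)=1-s/D_j(w)$ for $j\geq1$, and since $D_j>0$ everywhere, $r_j(w)\bowtie r_{j'}(w)\iff D_j(w)\bowtie D_{j'}(w)$ for $\bowtie\in\{<,=,>\}$.

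The key observation I would establish first is that the hypothesis $f(z)\in\U_{\crit}^{n-1}$ forces $z\notin\U_{\crit}^{j}$ for \emph{every} $j\geq1$, not merely for $j=n$. Indeed, $f(z)$ lies in a component $U_c^{n-1}$ of $\U_{\crit}^{n-1}$ with $c\in\crit(f)$; since $U_c^{n-1}\subseteq U_c^1$, and since the component $U_c^1$ of $\U^1=f^{-1}(\U)$ containing $c$ contains no postcritical point (by the smallness of the $U_p$) and is therefore disjoint from $\U$, we get $f(z)\notin\U$. If now $z\in U_{c'}^{j}$ for some $c'\in\crit(f)$ and $j\geq1$, put $p'=f(c')\in\post(f)$; by~\eqref{eq:defUnc} the set $U_{c'}^{j}$ is a component of $f^{-1}(U_{p'}^{j-1})$, so $f(z)\in U_{p'}^{j-1}\subseteq U_{p'}^{0}\subseteq\U$, a contradiction. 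In particular $z\notin\crit(f)$ (since $\crit(f)\subseteq\U_{\crit}^{n}$), so $f$ is conformal on a neighbourhood of $z$.

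With this in hand, fix $j\geq1$. As $z\notin\U_{\crit}^j$ and $\CDach\setminus\U_{\crit}^j$ is open, $f_j$ coincides with $f$ near $z$, so $f_1\circ\dots\circ f_j=(f_1\circ\dots\circ f_{j-1})\circ f$ near $z$; and since $Df(z)$ is, in the spherical metric, a similarity of norm $f^{\#}(z)>0$, the chain rule gives
\begin{equation*}
  D_j(z)=\|D(f_1\circ\dots\circ f_{j-1})(f(z))\|_{\CDach}\cdot f^{\#}(z)=f^{\#}(z)\,D_{j-1}(f(z)),
\end{equation*}
for every $j\geq1$, with the factor $f^{\#}(z)$ independent of $j$. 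It is worth noting that the winding modification of $f$ at the point $f(z)$---which enters only $f_{n-1}$---causes no trouble: in $D_n(z)$ one differentiates $f_1\circ\dots\circ f_n$ at $z$, where the outermost map $f_n$ already equals $f$, so the modification is buried inside $f_{n-1}$ and is absorbed into $D_{n-1}(f(z))$. Dividing by $f^{\#}(z)$ and using $r_j(w)=1-s/D_j(w)$ gives, for all $j,j'\geq2$, the equivalence $r_j(z)\bowtie r_{j'}(z)\iff r_{j-1}(f(z))\bowtie r_{j'-1}(f(z))$. (On $A(\CDach,S_{N_0})$ one has $r_n(z),r_m(z)>r_1(z)$, hence $m,n\geq2$, so this is exactly the range we need.) In other words, $j\mapsto j-1$ is an order isomorphism of $\{r_j(z)\}_{j\geq2}$ onto $\{r_j(f(z))\}_{j\geq1}$.

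The conclusion then follows at once. If $m$ is a consecutive index of $n$ at $z$, then $r_m(z)>r_n(z)$ and no index $j$ satisfies $r_n(z)<r_j(z)<r_m(z)$; the order isomorphism gives $r_{m-1}(f(z))>r_{n-1}(f(z))$, and an index $l$ with $r_{n-1}(f(z))<r_l(f(z))<r_{m-1}(f(z))$ would force $r_n(z)<r_{l+1}(z)<r_m(z)$, which is excluded. Hence $m-1$ is a consecutive index of $n-1$ at $f(z)$; likewise $r_k(z)=r_m(z)$ with $k\neq m$ forces $r_{k-1}(f(z))=r_{m-1}(f(z))$. I do not expect a genuine obstacle: the one point needing care is the input from the standing setup---that the smallness of the $U_p$ really does make $U_c^1\cap\U=\emptyset$---after which this case is actually easier than Lemma~\ref{lemma: reihenfolge bleibt erhalten}, since the forward orbit of $z$ never re-enters $\U_{\crit}$.
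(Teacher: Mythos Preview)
Your proof is correct and follows the same chain--rule strategy as the paper: both reduce to the identity $\|D(f_1\circ\dots\circ f_j)(z)\|_{\CDach}=f^{\#}(z)\cdot\|D(f_1\circ\dots\circ f_{j-1})(f(z))\|_{\CDach}$ and then divide out the common factor $f^{\#}(z)$. Your version is somewhat more thorough than the paper's: you first prove that $z\notin\U_{\crit}^{j}$ for \emph{every} $j\geq1$ (using that $f(z)\in U_c^1$ forces $f(z)\notin\U$), which cleanly justifies the chain--rule identity for all $j$ simultaneously and yields the full order isomorphism $\{r_j(z)\}_{j\geq2}\cong\{r_{j-1}(f(z))\}_{j\geq1}$, whereas the paper records only the single inequality $r_{m-1}(f(z))>r_{n-1}(f(z))$ and leaves the ``no index in between'' part implicit. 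One small quibble: your closing remark that ``the forward orbit of $z$ never re-enters $\U_{\crit}$'' is misphrased---the orbit certainly hits $\U_{\crit}$ at $f(z)$; what you proved (and what matters) is that $z$ itself lies outside $\U_{\crit}^j$ for every $j$, so the outermost map $f_j$ is always the unmodified $f$ near $z$.
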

\begin{proof}
	We assume that $m>n$, the other case is similar. If $r_m(z)> r_n(z)$ we have that \begin{equation*}
		\| D(f_1\circ\dots\circ f_m)(z)\|_{\CDach}> \|D(f_1\circ \dots \circ f_n)(z)\|_{\CDach}. 
	\end{equation*}
	By the considerations about the sequence $f_1\circ\dots\circ f_m$ and $f_1\circ \dots\circ f_n$ and the chain rule it follows \begin{align*}
		\|D(f_1\circ\dots\circ f_m)(z)\|_{\CDach} &=\|D(f_1\circ\dots\circ f_{m-1})(z)\|_{\CDach} f^{\#}(z)\\
		\|D(f_1\circ\dots\circ f_n)(z)\|_{\CDach} &=\|D(f_1\circ \dots\circ f_{n-1})(f(z))\|_{\CDach} \cdot f^{\#}(z)
	\end{align*}
	therefore \begin{equation*}
		\|D(f_1\circ\dots\circ f_{m-1})(f(z))\|_{\CDach} > \|D(f_1\circ\dots\circ f_{n-1})(f(z))\|_{\CDach}.
	\end{equation*} 
	Note that these are precisely the derivatives appearing in the definition of $r_{n-1}(f(z))$ and $r_{m-1}(f(z))$ and that the inequality implies $r_{m-1}(f(z))> r_{n-1}(f(z))$. 
\end{proof}

\begin{remark}
	In the next section, Lemma \ref{lemma:mon_radii_crit}, will show later that for all $z\in \U_{\crit}^n$ we have that $n+1$ is the consecutive index of $n$ for all $n$.
\end{remark}

\begin{example}
	Intuitively from Lemma \ref{lemma: reihenfolge bleibt erhalten} it follows that the shape of an $n$-prism is roughly preserved. Suppose there is a small region $U\subset \CDach$ such that $r_2(z)\leq r_1(z)$ that is bounded by $\{w: r_1(w)=r_2(w)\}$ and for $z$ outside of $U$ we have that $r_2(z)> r_1(z)$. Then it follows that there exist $d$ regions $f^{-1}(U)$ that are preimages under $f$ with $r_3(z)\leq r_2(z)$ for $z\in f^{-1}(U)$ that are bounded by $\{w\in f^{-1}(\partial U): r_3(w)) =r_2(w)\}$. See Figure \ref{figure: non-monotone spheres} for an illustration.
\end{example}

\begin{proof}[Proof of Lemma \ref{lemma: extension is well-defined and continuous}]
	In order to prove that the extension is well-defined let $z\in\CDach$ and $n\in \N$ be so that there exist distinct $m, k\in\N$ with \begin{equation*}
		\big(z, (1-t)\cdot r_n(z)+t\cdot r_m(z)\big)=\big(z, (1-s)\cdot r_n(z)+ s\cdot r_k(z)\big).
	\end{equation*}
First note that by definition \ref{definition: interpolating coordinates} the pairs $r_n(z), r_m(z)$ and $r_n(z), r_k(z)$ bound a connected component of $[0, 1]\setminus \{r_j(z):j\in\N\}$. Therefore $r_m(z)=r_k(z)$ and $t=s$. By Lemma \ref{lemma: reihenfolge bleibt erhalten} it follows that $r_{m-1}(f(z))=r_{k-1}(f(z))$. And therefore \begin{align*}
	&F\big(z, (1-t)\cdot r_n(z)+t\cdot r_m(z)\big)=\big(f(z), (1-t)\cdot r_{n-1}(f(z))+t\cdot r_{m-1}(f(z))\big) \\
	= &\big(f(z), (1-t)\cdot r_{n-1}(f(z))+ t\cdot r_{m-1}(f(z))\big) = F\big(z, (1-t)r_n(z)+ t\cdot r_k(z)\big)
\end{align*}
and therefore $F$ is well-defined.

Let $P(U, n)$ be an $n$-prism. Let $m$ be the consecutive index of $n$ at $z\in U$. By definition for all $w\in U$ a consecutive index of $n$ at $w$ is $m$. And by Lemma \ref{lemma: reihenfolge bleibt erhalten} and Lemma \ref{lemma: if f(z)in U_c^n} we obtain that $m-1$ is a consecutive index of $n-1$ at $f(z)$ for all $f(z)\in f(U)$. Therefore prisms are preserved. 

The map $F$ is obviously continuous within $n$-prisms, it remains to prove continuity where distinct $n$-prisms intersect. Let $V\subset \CDach$ be a connected set such that there are points $z\in V$ such that $m$ is a consecutive index of $n$ at $z$ and $w\in V$ such that $k$ is a consecutive index of $n$ at $w$ with $m\neq k$, i.e., \begin{align*}
	\rho_n(z, t)&=(1-t)\cdot r_n(z)+t\cdot r_k(z) \, \textup{ and }\\
	\rho_n(w, s)&=(1-s)\cdot r_n(w)+s\cdot r_m(w).
\end{align*}
Note that since the maps $z\mapsto r_n(z)$ are continuous for all $n\in\N$ there exists a set $E\subset V$ with $r_m(z)=r_k(z)$ for all $z\in E$. We can subdivide $V$ into $E$, $V_m :=\{z\in V: r_m(z)<r_k(z)\}$ and $V_k:= \{z\in V: r_k(z)<r_m(z)\}$. Then on $V_m$ and $V_k $ the map $F$ is continuous and since on $E$ both maps agree it follows from the Gluing Lemma that it is continuous on $V$.
\end{proof}

\begin{lemma}\label{lemma: extension is quasi-sim on P(U, n)}
	Let $n\geq N_0$ be arbitrary and $z_0\in \CDach\setminus \U_{\crit}^n$. Suppose the map $f$ is univalent on $B(z_0, R)$ and $B(z_0, R) $ as well as $f(B(z_0, R))$ are contained in a hemisphere. Let $U:=B(z_0, r) \subset B(z_0, R)$. Then the map $F$ defined in \eqref{equation: definition of F outside U_c^n} given by 
		\begin{align*}F\colon P(U, n) &\to P(f(U), n-1) \\
			\big(z, \rho_n(z,t)\big)&\mapsto \big(f(z), \rho_{n-1}(f(z), t)\big).
		\end{align*}
		is an $(L, L',  f^{\#}(z_0))$-quasi-similarity where $L=L(r/R, \min_{z, m}\{r_m(z)\})$ and $L'=L'(r/R, \min_{z, m}\{r_m(z)\})$ can be chosen uniform in $n$. 
\end{lemma}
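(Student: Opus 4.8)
The plan is to show that $F$ restricted to the prism $P(U,n)$ is comparable, up to a fixed stretch factor $\mu = f^{\#}(z_0)$, to a rigid motion composed with a scaling. The two coordinates behave differently: the spherical part $z\mapsto f(z)$ is controlled by the spherical Koebe theorem (Theorem~\ref{theorem: spherical Koebe}), which on $B(z_0,r)\subset B(z_0,R)$ gives exactly that $f$ is a $(L_1,L_2,f^{\#}(z_0))$-quasi-similarity with constants depending only on $r/R$; the radial part, i.e.\ the affine interpolation parameter, is preserved by the very definition of $F$ since $F(z,\rho_n(z,t)) = (f(z),\rho_{n-1}(f(z),t))$ keeps $t$ fixed. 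So the heart of the matter is to verify that the map on the radial direction expands by a factor \emph{comparable to} $f^{\#}(z_0)$ as well, so that the two directions are stretched by comparable amounts and $F$ is a genuine quasi-similarity on the three-dimensional prism.

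**Key steps.** First I would fix coordinates: write a point of $P(U,n)$ as $p=(z,\rho_n(z,t))$ with $z\in U$, $t\in[0,1]$, and its image $F(p) = (f(z),\rho_{n-1}(f(z),t))$. Recall $\rho_n(z,t) = (1-t)r_n(z) + t\,r_m(z)$ where $m$ is the consecutive index of $n$ at $z$; by Lemma~\ref{lemma: reihenfolge bleibt erhalten} and Lemma~\ref{lemma: if f(z)in U_c^n}, $m-1$ is the consecutive index of $n-1$ at $f(z)$, so the image coordinate is well-formed. Second, I would estimate the distance-to-$\CDach$ function: with $\delta_n(z,t) = 1-\rho_n(z,t) = (1-t)d_n(z)+t\,d_m(z)$ and $d_n(z) = s/(f^n)^{\#}(z)$, the chain rule gives $d_{n-1}(f(z)) = d_n(z)\,f^{\#}(z)$ and likewise $d_{m-1}(f(z)) = d_m(z)\,f^{\#}(z)$; hence $\delta_{n-1}(f(z),t) = f^{\#}(z)\,\delta_n(z,t)$ \emph{exactly}, pointwise. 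Third, using the Koebe estimate $f^{\#}(z)\asymp f^{\#}(z_0)$ on $B(z_0,r)$ with $C(\asymp)$ depending only on $r/R$, I conclude that the radial coordinate of the image is comparable to $f^{\#}(z_0)$ times the radial coordinate of the source, again with constants depending only on $r/R$. Fourth, I would assemble the two estimates: for two points $p_1 = (z_1,\rho_n(z_1,t_1))$, $p_2 = (z_2,\rho_n(z_2,t_2))$ in $P(U,n)$, write $|p_1-p_2|^2 = |z_1-z_2|_{\CDach}^2 + |\rho_n(z_1,t_1)-\rho_n(z_2,t_2)|^2$ (using that the radial segments are orthogonal to $\CDach$ up to bounded distortion, since all radii lie in a compact subinterval $[\min_{z,m}r_m(z),1)$ of $(0,1)$), and similarly for the images. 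The spherical term contributes $\asymp f^{\#}(z_0)|z_1-z_2|_{\CDach}$ by spherical Koebe; the radial term I split as $\rho_n(z_1,t_1)-\rho_n(z_2,t_2) = [\rho_n(z_1,t_1)-\rho_n(z_1,t_2)] + [\rho_n(z_1,t_2)-\rho_n(z_2,t_2)]$, where the first bracket is $(t_1-t_2)(r_m(z_1)-r_n(z_1))$ and the second is controlled by the Lipschitz constants of $r_n,r_m$ on $U$, which in turn are $\asymp f^{\#}(z_0)$ times the corresponding quantities upstairs by the distance identity plus Koebe. Collecting, both $|p_1-p_2|$ and $|F(p_1)-F(p_2)|/f^{\#}(z_0)$ are comparable to the same expression $\bigl(|z_1-z_2|_{\CDach}^2 + |\rho_n(z_1,t_1)-\rho_n(z_2,t_2)|^2\bigr)^{1/2}$ up to constants depending only on $r/R$ and $\min_{z,m}r_m(z)$, which is precisely the quasi-similarity claim with $L=L(r/R,\min_{z,m}r_m(z))$, $L'=L'(r/R,\min_{z,m}r_m(z))$.

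**Main obstacle.** I expect the delicate point to be the radial/spherical cross-terms and the geometry of the interpolating coordinates: a point $p=(z,\rho_n(z,t))$ moving in $z$ changes \emph{both} its angular position on $\CDach$ \emph{and}, through $r_n(z)$ and $r_m(z)$, its distance to $\CDach$, and one must check these coupled variations do not destroy the quasi-similarity estimate. The bound on the Lipschitz constant of $z\mapsto r_n(z)$ needs the Koebe comparison $f^{\#}(z)\asymp f^{\#}(z_0)$ \emph{together with} the fact that $(f^n)^{\#}$ and $(f^m)^{\#}$ vary in a controlled way on the small ball $B(z_0,r)$ — which follows from spherical Koebe applied to $f^n$ and $f^m$ respectively, noting that the relevant constant $C(\asymp)$ there depends only on the ratio $r/R$ and not on $n$ or $m$ (this uniformity is the crucial input). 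One also has to be slightly careful that $f^n$ and $f^m$ are univalent on $B(z_0,R)$: this is where the hypothesis $z_0\notin\U^n_{\crit}$ enters, ensuring no critical points of the iterates lie nearby, so that a univalent branch exists — and the hemisphere hypothesis ensures the image condition in Theorem~\ref{theorem: spherical Koebe} is met. Once these uniformities are in hand, the rest is bookkeeping.
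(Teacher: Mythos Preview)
Your overall strategy---Koebe for the spherical part, the pointwise scaling identity $\delta_{n-1}(f(z),t)=f^{\#}(z)\,\delta_n(z,t)$ for the radial part---is exactly right and matches the paper's. The metric comparison you use (Pythagorean up to bounded distortion because the radii lie in $[\min_{z,m} r_m(z),1)$) is equivalent to the sum-norm the paper uses.

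The gap is in your ``Main obstacle'' paragraph. You propose to control the Lipschitz constants of $z\mapsto r_n(z)$ and $z\mapsto r_m(z)$ by applying spherical Koebe to $f^n$ and $f^m$, and you claim the hypothesis $z_0\notin\U^n_{\crit}$ guarantees the needed univalence of these iterates. Neither point holds. The lemma's hypotheses only assert that $f$ is univalent on $B(z_0,R)$ and that $f(B(z_0,R))$ lies in a hemisphere; nothing is assumed about $f^n$ or $f^m$. And $\U^n_{\crit}$ is by definition the union of the components $U^n_c$ containing \emph{critical points of $f$}, not of $f^n$; so $z_0\notin\U^n_{\crit}$ does not prevent some $f^j(z_0)$ with $0<j<n$ from landing near a critical point, and hence does not give univalence of $f^n$ on any fixed ball $B(z_0,R)$. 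Even if you could find such a ball for each $n$, its radius would in general shrink with $n$, destroying the uniformity you need.

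The paper avoids this entirely, and so can you. Rather than estimating $|r_n(z_1)-r_n(z_2)|$ directly, work with the radial \emph{difference} in the image via the scaling identity:
\[
|\delta_{n-1}(f(z),t)-\delta_{n-1}(f(w),s)|
= |f^{\#}(z)\,\delta_n(z,t)-f^{\#}(w)\,\delta_n(w,s)|,
\]
then add and subtract $f^{\#}(z)\delta_n(w,s)$ to get
\[
\leq f^{\#}(z)\,|\delta_n(z,t)-\delta_n(w,s)| + \delta_n(w,s)\,|f^{\#}(z)-f^{\#}(w)|.
\]
Now only a Lipschitz bound on $f^{\#}$ itself is needed, namely $|f^{\#}(z)-f^{\#}(w)|\leq C\,f^{\#}(z)\,|z-w|_{\CDach}$, which \emph{is} Koebe for $f$ on $B(z_0,R)$. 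The cross-term $\delta_n(w,s)\,f^{\#}(z)\,C\,|z-w|_{\CDach}$ is then absorbed into the spherical part using that $\delta_n\le 1$ and $\rho_n\ge \min_{z,m} r_m(z)>0$. The lower bound is obtained symmetrically by running the same computation with domain and target swapped (write $\delta_n(z,t)=\delta_{n-1}(f(z),t)/f^{\#}(z)$ and repeat). No control of $(f^n)^{\#}$ or $(f^m)^{\#}$ is ever required; everything reduces to Koebe for $f$ and the exact scaling identity you already identified.
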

Recall that $N_0\in\N$ is as in Corollary \ref{cor:S1_in_SN0} and is equal to 2 in case the approximating spheres are monotone. 

Similarly we can show that iterates of $F$ -- if they are defined -- are quasi-similarities on orbits that avoid the critical neighborhoods suitably. 

\begin{corollary}\label{corollary: iterate of extension is quasi-sim on P(U, n)}
		Let $n+m\geq N_0$ and let $z_0\in \CDach\setminus \U_{\crit}^{n+m}$. Suppose that $f^m$ is univalent on $B(z_0, R)$ and that $B(z_0, R)$ as well as $f^m(B(z_0, R))$ are contained in a hemisphere. Let $U:=B(z_0, r)$ with $r<R$. Then $F^m$ is given by 
		\begin{equation*}
			F^m\big(z, \rho_{n+m}(z, t)\big)=\big(f^m(z), \rho_{n}(f^m(z), t)\big)
		\end{equation*}
		for $z\in B(z_0, r)$ and is an $(L, L', (f^m)^{\#}(z_0))$-quasi-similarity where $L, L'$ are as in Lemma \ref{lemma: extension is quasi-sim on P(U, n)}.
\end{corollary}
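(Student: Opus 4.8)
The plan is to reduce everything to a \emph{single} application of the spherical Koebe theorem to $f^m$, rather than composing $m$ copies of Lemma~\ref{lemma: extension is quasi-sim on P(U, n)} (which would only produce a distortion bound growing like $L^m$). First I would establish the displayed formula for $F^m$ by induction on $m$. Since $f^m$ is univalent on $B(z_0,R)$, none of $z_0,f(z_0),\dots,f^{m-1}(z_0)$ is a critical point, so at each step the spherical part of $F$ near $f^{j}(z_0)$ acts as the holomorphic map $f$; by Definition~\ref{definition: extension far from critical axes} together with Lemma~\ref{lemma: reihenfolge bleibt erhalten} (and Lemma~\ref{lemma: if f(z)in U_c^n} at the steps where $f^{j}(z_0)$ lies in a critical or postcritical neighborhood at the relevant level) a consecutive index of $n+m-j$ at $f^{j}(z_0)$ is transported to a consecutive index of $n+m-j-1$ at $f^{j+1}(z_0)$ with the interpolation parameter $t$ unchanged. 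Iterating gives $F^m(z,\rho_{n+m}(z,t))=(f^m(z),\rho_n(f^m(z),t))$ for all $z\in U$ and $t\in[0,1]$, and $F^m(P(U,n+m))=P(f^m(U),n)$.

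Next I would estimate the spherical and radial parts separately. For the spherical part, apply Theorem~\ref{theorem: spherical Koebe} to the injective holomorphic map $f^m\colon B(z_0,R)\to\CDach$, whose image lies in a hemisphere by hypothesis: for all $z,w\in U=B(z_0,r)$ one obtains $(f^m)^{\#}(z)\asymp(f^m)^{\#}(z_0)$ and $|f^m(z)-f^m(w)|_{\CDach}\asymp(f^m)^{\#}(z_0)\,|z-w|_{\CDach}$, with $C(\asymp)$ depending only on $r/R$ and, crucially, independent of $m$ and $n$ since the theorem is invoked for one map only. For the radial part, note that along an orbit avoiding the critical neighborhoods one has $d_j(z)=s/(f^j)^{\#}(z)$, so the chain rule gives $d_n(f^m(z))=(f^m)^{\#}(z)\,d_{n+m}(z)$, and the same identity holds for the consecutive index of $n+m$ at $z$; by linearity of the interpolation this yields $\delta_n(f^m(z),t)=(f^m)^{\#}(z)\,\delta_{n+m}(z,t)$, so the distance to $\CDach$ is scaled by $(f^m)^{\#}(z)\asymp(f^m)^{\#}(z_0)$ as well.

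To conclude, observe that every radius occurring, at $z$ and at $f^m(z)$, lies in $[\min_{z,m}\{r_m(z)\},1]$ and is therefore bounded away from $0$, so on each prism the Euclidean metric of $\R^3$ is comparable, with absolute constants, to the product metric $|z-w|_{\CDach}+|\delta-\delta'|$. Since both coordinates of $F^m$ are scaled by $(f^m)^{\#}(z_0)$ up to multiplicative errors depending only on $r/R$, the map $F^m\colon P(U,n+m)\to P(f^m(U),n)$ is an $(L,L',(f^m)^{\#}(z_0))$-quasi-similarity with $L=L(r/R,\min_{z,m}\{r_m(z)\})$ and $L'=L'(r/R,\min_{z,m}\{r_m(z)\})$, i.e.\ with the very same constants as in Lemma~\ref{lemma: extension is quasi-sim on P(U, n)} and independent of $n$ and $m$.

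The main obstacle is the bookkeeping in the first step: one must track how the interpolating coordinates behave along the whole orbit segment $z_0,\dots,f^{m-1}(z_0)$, including the steps where the orbit meets --- but, by univalence of $f^m$, never lands exactly on a critical point within --- a critical or postcritical neighborhood, so that the single formula $F^m(z,\rho_{n+m}(z,t))=(f^m(z),\rho_n(f^m(z),t))$ genuinely holds on all of $P(U,n+m)$. This is exactly where Lemmas~\ref{lemma: reihenfolge bleibt erhalten} and~\ref{lemma: if f(z)in U_c^n} are applied, iteratively. Everything else is Koebe distortion together with the chain rule, and the single point one must not overlook is to apply spherical Koebe once, for $f^m$, so that the quasi-similarity constants stay independent of $m$.
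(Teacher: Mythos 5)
Your overall strategy matches the paper's (which simply says the argument is ``analogous to Lemma~\ref{lemma: extension is quasi-sim on P(U, n)}''): the crucial point, correctly identified, is to apply the spherical Koebe theorem \emph{once} to the single injective map $f^m\colon B(z_0,R)\to\CDach$ rather than $m$ times to $f$, so that $C(\asymp)$ depends only on $r/R$, not on $m$; combined with the identity $\delta_n(f^m(z),t)=(f^m)^\#(z)\,\delta_{n+m}(z,t)$ from the chain rule (Lemma~\ref{lemma: scaling of distance to sp by f'}) and the comparability of the Euclidean metric on $A(\CDach,S_{N_0})$ with the spherical-sum metric, this yields the quasi-similarity with the same constants as in Lemma~\ref{lemma: extension is quasi-sim on P(U, n)}.

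However, your first step --- establishing the formula $F^m(z,\rho_{n+m}(z,t))=(f^m(z),\rho_n(f^m(z),t))$ by iterating Definition~\ref{definition: extension far from critical axes} --- glosses over a genuine issue. That formula requires that for every $j=0,\dots,m-1$ the point $(f^j(z_0),\rho_{n+m-j}(f^j(z_0),t))$ lies in a prism above $\CDach\setminus\U_{\crit}^{n+m-j}$, so that $F$ there is given by the simple formula of Definition~\ref{definition: extension far from critical axes} and not by the $\alpha_2\circ\beta\circ\alpha_1^{-1}$ construction of Section~\ref{subsection: extension near critical points}. You invoke Lemma~\ref{lemma: if f(z)in U_c^n} at ``steps where $f^{j}(z_0)$ lies in a critical or postcritical neighborhood at the relevant level,'' but that lemma only says that consecutive indices transport correctly; it does \emph{not} say that $F$ on a prism $P(U_c^{n+m-j},\,n+m-j)$ reduces to $(z,\rho)\mapsto(f(z),\rho_{n+m-j-1}(f(z),t))$. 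On the contrary, there $F$ is $\alpha_2\circ\beta\circ\alpha_1^{-1}$, whose spherical component interpolates between two rescaled winding maps and is not $f$. The univalence of $f^m$ on $B(z_0,R)$ only rules out $f^j(z_0)$ being a critical point, not $f^j(z_0)\in U_c^{n+m-j}\setminus\{c\}$. The correct reading --- and the one consistent with how the corollary is used in the proof of Theorem~\ref{theorem: extension result} --- is that the orbit $z_0,\dots,f^{m-1}(z_0)$ avoids $\U_{\crit}^{n+m-j}$ at every intermediate step, so that only Lemma~\ref{lemma: reihenfolge bleibt erhalten} (and possibly Lemma~\ref{lemma: if f(z)in U_c^n} at the very last step, where $f^m(z_0)$ is permitted to land in $\U_{\crit}^n$) is needed; under that reading your iteration is fine, but you should state and use that condition explicitly rather than appeal to Lemma~\ref{lemma: if f(z)in U_c^n} to wave away the intermediate critical-neighborhood case.
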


\begin{lemma}\label{lemma: scaling of distance to sp by f'}
	Suppose $z\in \CDach\setminus \U_{\crit}^n$ and let $m\in \{0, \dots, n-1\} $ be so that $f^j(z)\notin \U_{\crit}^{n-m}$ for all $j=0, \dots, m$. Then we have that 
	\begin{equation*}
		\delta_{n-k}(f^k(z), t)
		=
		(f^k)^{\#}(z)\delta_n(z, t) 
	\end{equation*}
		for all $k\leq m$.
\end{lemma}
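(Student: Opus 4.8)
The plan is to unwind the interpolating coordinates, reduce to a scaling identity for the distances $d_j(z)=1-r_j(z)$ of the approximating spheres, and obtain that identity from the chain rule. First I would record, writing $m'$ for a consecutive index of $n$ at $z$ in the sense of Definition~\ref{definition: interpolating coordinates}, that
\[
  \delta_n(z,t)=1-\rho_n(z,t)=(1-t)\,d_n(z)+t\,d_{m'}(z),
  \qquad
  d_j(z)=\frac{s}{\|D(f_1\circ\dots\circ f_j)(z)\|_{\CDach}},
\]
the expression for $d_j$ coming from Definition~\ref{definition: r_n}. By the hypothesis the orbit segment $z,f(z),\dots,f^m(z)$ stays out of $\U^{n-m}_{\crit}$, hence out of every $\U^{\ell}_{\crit}$ with $\ell\geq n-m$, since these sets are nested, $\U^{n-m}_{\crit}\supseteq\U^{n-m+1}_{\crit}\supseteq\dots$; in particular no orbit point is a critical point of $f$. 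Applying Lemma~\ref{lemma: reihenfolge bleibt erhalten} $k$ times then shows that $m'-k$ is a consecutive index of $n-k$ at $f^k(z)$, whence $\delta_{n-k}(f^k(z),t)=(1-t)\,d_{n-k}(f^k(z))+t\,d_{m'-k}(f^k(z))$. Both expressions for $\delta$ are affine in $t$, so it is enough to prove
\[
  d_{j-k}(f^k(z))=(f^k)^{\#}(z)\,d_j(z)\qquad\text{for }j\in\{n,m'\}\text{ and }k\leq m.
\]

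For this I would establish the factorization $f_1\circ\dots\circ f_j=(f_1\circ\dots\circ f_{j-k})\circ f^k$ on a neighbourhood of $z$. Each $f_\ell$ agrees with $f$ off the closed set $\U^{\ell}_{\crit}$, hence on a neighbourhood of every point not in $\U^{\ell}_{\crit}$; the hypothesis gives $f^{i}(z)\notin\U^{n-i}_{\crit}$ for $0\leq i\leq m$ (using $\U^{n-i}_{\crit}\subseteq\U^{n-m}_{\crit}$), which yields the factorization for $j=n$, and for $j=m'$ the required avoidance follows from the nesting when $m'\geq n$ and, when $m'<n$, from the identification of $f_1\circ\dots\circ f_{m'}$ along the orbit with the corresponding iterate of $f$ that already appears in the proof of Lemma~\ref{lemma: reihenfolge bleibt erhalten}. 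Granting the factorization, $f^k$ is holomorphic near $z$ with non-vanishing derivative there, so $Df^k(z)$ acts conformally and the exact chain rule in the form~\eqref{eq:Dfgh} gives $\|D(f_1\circ\dots\circ f_j)(z)\|_{\CDach}=\|D(f_1\circ\dots\circ f_{j-k})(f^k(z))\|_{\CDach}\cdot(f^k)^{\#}(z)$. Dividing $s$ by both sides yields the displayed identity for the $d_j$, and substituting it into the two affine expressions for $\delta$ completes the proof.

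The step I expect to be the real obstacle is the bookkeeping in the second paragraph: pinning down exactly which sets $\U^{\ell}_{\crit}$ the points $z,f(z),\dots,f^{k-1}(z)$ must avoid so that $f_1\circ\dots\circ f_j$ factors through $f^k$ for the consecutive index $j=m'$ as well as for $j=n$ --- the case $m'<n$ being the delicate one, since then $\U^{m'-i}_{\crit}$ is the larger set --- and checking that the hypothesis on the orbit segment really is strong enough. Once the factorizations are secured, nothing more is required beyond the exact spherical chain rule for pre-composition with a conformal map and the elementary observation that $\delta_n(z,t)$ is an affine combination of $d_n(z)$ and $d_{m'}(z)$ with coefficients that $F^k$ leaves unchanged.
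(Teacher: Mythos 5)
Your proof takes the same fundamental route as the paper: factorize $f_1\circ\dots\circ f_j$ as $(f_1\circ\dots\circ f_{j-k})\circ f^k$ near $z$, observe that $f^k$ is holomorphic and locally injective there, and apply the exact chain rule~\eqref{eq:Dfgh} to get the scaling identity for the distances $d_j$. However, you are considerably more careful than the paper's writeup. The paper only verifies the identity for $d_n$ (in the $k=1$ case, and in the ``clean orbit'' case $f^j(z)\notin\U^{n-j}_{\crit}$) and then says ``This suffices to prove the claim'' without explaining how to go from $d_n$ to the interpolating coordinate $\delta_n(z,t)$. You supply exactly what is left unsaid: you write $\delta_n(z,t)=(1-t)d_n(z)+t\,d_{m'}(z)$ with $m'$ the consecutive index, invoke Lemma~\ref{lemma: reihenfolge bleibt erhalten} to propagate the consecutive index along the orbit, and note that the $d$-identity is needed for both $j=n$ and $j=m'$.

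This reduction exposes a genuine subtlety that you flag but do not close, and which the paper's proof does not mention at all. For the factorization $f_1\circ\dots\circ f_{m'}=(f_1\circ\dots\circ f_{m'-k})\circ f^k$ near $z$ you need $f^i(z)\notin\U^{m'-i}_{\crit}$ for $0\leq i\leq k-1$, and in particular $z\notin\U^{m'}_{\crit}$ so that $f_{m'}=f$ near $z$. When $m'\geq n$ the nesting $\U^{m'-i}_{\crit}\subseteq\U^{n-i}_{\crit}\subseteq\U^{n-m}_{\crit}$ delivers this from the hypothesis; when $m'<n$ the sets $\U^{m'-i}_{\crit}$ are \emph{larger}, and the hypothesis gives no control. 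Deferring to the proof of Lemma~\ref{lemma: reihenfolge bleibt erhalten} does not help, since that proof treats only $m>n$ explicitly and dismisses $m<n$ with ``the other case is very similar.'' So the $m'<n$ case is a live gap in your argument --- but it is an \emph{unacknowledged} gap in the paper's proof as well, which never separates the two indices $n$ and $m'$ in the first place; when the approximating spheres are monotone (so $m'=n+1$ always) both arguments are complete. Everything else in your proof --- the affine reduction, the use of nesting for $j=n$, and the chain-rule step --- is correct and matches the paper's intent.
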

\begin{proof}
	We will show the claim for $m=1$ and the case where $f^j(z)\notin \U_{\crit}^{n-1}$ for all $j\leq n-1$. The cases for $m>1$ follow easily.
	
	Let us assume that $f^j\notin\U_{\crit}^{n-j}$ for all $j\leq n-1$. Recall the definition of $d_n(z)= \frac{1}{(f^n)^{\#}(z)}$. To prove the claim we compute \begin{equation*}
		d_{n-1}(f(z))
		=
		\frac{1}{(f^{n-1})^{\#}(f(z))}
		=
		\frac{1}{f^{\#}(f^{n-1}(z))\dots f^{\#}(f(z)))}=\frac{f^{\#}(z)}{(f^n)^{\#}(z)}
	\end{equation*}
	and the last term is equal to $f^{\#}(z)d_n(z)$. This suffices to prove the claim.
	
	In case $m=1$ we have that $f(z)\in U_c^{n-1}$ for some critical point $c\in \crit$ and by definition the distances are given by  
	\begin{align*}
		d_{n}(z)&=\frac{1}{\|Df_1\circ\dots\circ f_n(z)\|_{\CDach}}=\frac{1}{\|D(f_1\circ\dots\circ f_{n-1})(f(z))\|_{\CDach} f^{\#}(z)}\\
		d_{n-1}(f(z))&=\frac{1}{\|Df_1\circ\dots\circ f_{n-1}(f(z))\|_{\CDach}}\\
		&=\frac{1}{\|D(f_1\circ\dots\circ f_{n-1})(f(z))\|_{\CDach} }\\
		&=f^{\#}(z)\cdot d_n(z). \qedhere
	\end{align*}
\end{proof}

\begin{proof}[Proof of Lemma \ref{lemma: extension is quasi-sim on P(U, n)}]
	$1.$
	We use that the metric of $\R^3$ is comparable to a spherical sum norm, i.e., \begin{equation*}
		|(z, \rho)-(w, \tau)| \asymp \frac{1}{2}(\rho+\tau)|z-w|_{\hat{\mathbb{C}}} +|\rho-\tau|.\end{equation*}
	where $z, w \in \CDach$ are contained in a hemisphere and $\rho, \tau \in (0, 1)$.
	
	We examine both summands separately for this map. For the part $|f(z)-f(w)|_{\hat{\mathbb{C}}}$ we can use Koebe's distortion theorem to conclude \begin{equation*}
		C_f'f^{\#}(z)|z-w|_{\hat{\mathbb{C}}}\leq |f(z)-f(w)|_{\hat{\mathbb{C}}}\leq C_f f^{\#}(z)|z-w|_{\hat{\mathbb{C}}}
	\end{equation*}
	where $C_f=C_f(r/R)\searrow 1$ and $C_f'=C_f'(r/R)\nearrow 1$ as $r/R\to 0$.
	
	In order to find an upper bound for the difference in the radii $|\rho_{n-1}(f(z), t)-\rho_{n-1}(f(w), s)|$ we compute 
	\begin{align*}
		|&\rho_{n-1}(f(z), t)-\rho_{n-1}(f(w), s)|= |\delta_{n-1}(f(z), t)-\delta_{n-1}(f(w), s)|\\
		&=|f^{\#}(z)\delta_n(z, t)-f^{\#}(w)\delta_n(w, s)|\\
		&\leq |f^{\#}(z)\delta_n(z, t)-f^{\#}(z)\delta_n(w, s)|+|f^{\#}(z)\delta_n(w, s)-f^{\#}(w)\delta_n(w, s)|\\
		&\leq f^{\#}(z)|\delta_n(z, t)-\delta_n(w, s)|+\delta_n(w, s)|f^{\#}(z)-f^{\#}(w)|\\
		&\leq f^{\#}(z)|\rho_n(z, t)-\rho_n(w, s)|+ \delta_n(w, s)f^{\#}(z)C|z-w|_{\hat{\mathbb{C}}}
	\end{align*}
	where $C=C(r/R)$ is a constant such that \begin{equation*}
		|f^{\#}(z)-f^{\#}(w)|\leq C|z-w|_{\CDach}.
	\end{equation*}

	To summaries: we have \begin{align*}
		|F(z, \rho_n(z, t))&-F(w, \rho_n(w, s))|\leq f^{\#}(z)|\rho_n(z, t)-\rho_n(w, s)|\\
		&+ f^{\#}(z)C_f\frac{1}{2}\big(\rho_{n-1}(f(z),t)+\rho_{n-1}(f(w), s)+C\cdot \delta_n(w, s)\big)|z-w|_{\hat{\mathbb{C}}}.
	\end{align*}
	We estimate the factor that gets multiplied with $|z-w|_{\CDach}$ as follows: \begin{align*}
		\frac{1}{2}&C_f\big(\rho_{n-1}(f(z), t)+\rho_{n-1}(w, s)\big)+C\delta_n(w, s)\\
		\leq &CC_f \left(\frac{1}{2}(\rho_{n-1}(f(z), t))+\rho_{n-1}(f(w), s)) +\delta_{n}(w, s) \right)\\
		\leq &2CC_f
	\end{align*} 
	The goal is to find a bound of this expression in terms of \begin{equation*}
		\frac{1}{2}(\rho_n(z, t)+\rho_n(w, s)). 
	\end{equation*}
	Because there exists a minimum of $r_m(v)$ for $n\in\N$ and $v\in\CDach$ we can find a constant $K$ such that for all $u, v\in\CDach$ \begin{equation*}
		K\cdot \frac{1}{2}(r_m(u)+r_m(v))\geq 2.
	\end{equation*}

	Using this we can estimate \begin{align*}
		|F(z, \rho_n(z, t))&-F(w, \rho_n(w, s))|\leq f^{\#}(z)|\rho_n(z, t)-\rho_n(w, s)|\\
		+KCC_f f^{\#}(z)&\frac{1}{2}\big(\rho_n(z, t)+\rho_n(w, s)\big)|z-w|_{\CDach}\\
		\leq f^{\#}(z) KCC_f&|(z, \rho_n(z, t))-(w, \rho_n(w, s))|.
	\end{align*}	

	In order to find a lower bound we use Lemma \ref{lemma: scaling of distance to sp by f'}. \begin{align*}
		|\rho_n(z, t)&-\rho_n(w, s)|= \left|\delta_n(z, t)-\delta_n(w, s)\right|\\
		&=\left|\frac{\delta_{n-1}(f(z), t)}{f^{\#}(z)}-\frac{\delta_{n-1}(f(w), s)}{f^{\#}(w)}\right|\\
		&\leq \left|\frac{f^{\#}(w)\delta_{n-1}(f(z), t)-f^{\#}(w)\delta_{n-1}(f(w), s)}{f^{\#}(z)f^{\#}(w)}\right|\\
		&+\left|\frac{f^{\#}(w)\delta_{n-1}(f(w), s)-f^{\#}(z)\delta_{n-1}(f(w), s)}{f^{\#}(z)f^{\#}(w)}\right|\\
		&\leq \frac{1}{f^{\#}(z)}\left|\delta_{n-1}(f(z), t)-\delta_{n-1}(f(w), s)\right| +\frac{\delta_{n-1}(f(w), s)}{f^{\#}(z)}C|z-w|_{\mathbb{\hat{C}}} \\
		&\leq \frac{1}{f^{\#}(z)}\left|\delta_{n-1}(f(z), t)-\delta_{n-1}(f(w), s)\right|\\
		&+\frac{\delta_{n-1}(f(w), s)}{(f^{\#}(z))^2}CC_f'|f(z)-f(w)|_{\hat{\mathbb{C}}}\\
		&= \frac{1}{f^{\#}(z)}\left|\delta_{n-1}(f(z), t)-\delta_{n-1}(f(w), s)\right|\\
		&+\frac{\delta_n(w, s)}{f^{\#}(z)}CC_f'\left|f(z)-f(w)\right|_{\hat{\mathbb{C}}}
	\end{align*}
	
	Now we will turn to finding a lower bound, and in the following estimate use some inequalities from above
	\begin{align*}
		|(z, &\rho_n(z, t))-(w, \rho_n(w, s))| = |\rho_n(z, t)-\rho_n(w, s)|+ \frac{1}{2}\big(\rho_n(z, t)+\rho_n(w, s)\big)|z-w|_{\CDach}\\
		&\leq \frac{\delta_n(w, s)}{f^{\#}(z)} CC_f'|f(z)-f(w)|_{\hat{\mathbb{C}}} +\frac{1}{f^{\#}(z)}\left|\rho_{n-1}(f(z), t)-\rho_{n-1}(f(w), s)\right|\\
		&+\frac{1}{2}\big(\rho_{n}(z,t)+\rho_{n}(w, s)\big)\frac{1}{f^{\#}(z) C_f'}|f(z)-f(w)|_{\hat{\mathbb{C}}} \\
		&\leq \frac{1}{f^{\#}(z)}\big(CC'f+K\big)\frac{1}{2}\big(\rho_{n-1}(f(z), t)+\rho_{n-1}(f(w), s)\big)|f(z)-f(w)|_{\hat{\mathbb{C}}}\\		
		&+\frac{1}{f^{\#}(z)}|\rho_{n-1}(f(z), t)-\rho_{n-1}(f(w), s)|\\
		&\lesssim L_{F}'\frac{1}{f^{\#}(z)}|F(z, \rho_n(z, t))-F(w, \rho_n(w, s))|
	\end{align*}
	The constant is $L_F'=CC_f'+K$. Note that both constants $L_F$ and $L_F'$ depend only on $\min_{z, n}\{r_n(z)\}$ and $r/R$.
\end{proof}
\begin{proof}[Proof of Corollary \ref{corollary: iterate of extension is quasi-sim on P(U, n)}] 
	We can prove the claim analogously as Lemma \ref{lemma: extension is quasi-sim on P(U, n)} and note that we will obtain similar constants from Theorem \ref{theorem: spherical Koebe} depending on $r/R$ and $\min_{z\in\CDach}\{r_n(z):n\in\N\}$ and the scaling factor of $(f^m)^{\#}(z_0)$.	
\end{proof}

\subsection{Extension near critical axes}
\label{subsection: extension near critical points}

Now the aim is to define the extension around the critical axes. For the following discussion we will fix a critical point $c$ that gets mapped to $p\in \textup{post}(f)$. We assume that $z\in\CDach$ and $n\in\N$ are so that $z\in U_c^{n+1}$. Then we will show that we can assume the consecutive index of $n+1$ at $z$ to be $n+2$. We will prove this in Lemma \ref{lemma:mon_radii_crit}. That implies that the $(n+1)$-prism above $U_c^{n+1}$ is given by 
\begin{equation*}
	P(U_c^{n+1}, n+1)
	=
	\{(z, r_{n+1}(z)\cdot(1-t)+r_{n+2}(z)\cdot t): z\in U_c^{n+1}, t\in [0, 1] \}.
\end{equation*}

Recall that the aim was to extend the map 
\begin{equation*}
	F\colon S_{n+1}\to S_{n}, \, (z, r_{n+1}(z))\mapsto (f_{n+1}(z), r_{n}(f_{n+1}(z))).
\end{equation*}
If we take a point $z\in U_c^{n+2}\cap U_c^{n+1}$ then we can not
guarantee that $f_{n+1}(z)$ is equal to $f_{n+2}(z)$, so we will not
use the approach of mapping line segments of the form $l_z=\{(z, r): r
\geq r_{N_0}(z)\}$ to a line segment of $l_{f_{n+1}(z)}=\{(f_{n+1}(z), r):
r\geq r_{N_0-1}(f_{n+1}(z))\}$ as in Section \ref{subsection:
	extension far from critical points} naively. If we wanted to
use this approach one has to be careful about the boundary
values. Instead the strategy will be as follows:
\begin{itemize}
	\item We will show that there exists a bi-Lipschitz
	parametrization of the $(n+1)$-prism above $U_c^{n+1}$ from a cylinder
	$|\lambda|^{-n/d}\Db\times [0, q_{n+1}]$. The map will be
	denoted by
	$\alpha_1\colon |\lambda|^{-n/d}\Db\times [0, q_{n+1}] \to
	P(U_c^{n+1}, n+1)$, where $q_{n+1}\in \R$ is given by
	\begin{equation*}
		q_{n+1} 
		\coloneqq
		r_{n+2}(c)-r_{n+1}(c)\asymp d_{n+1}(c)
	\end{equation*}
	for the critical point $c\in U_c^{n+1}$. 
	\item Similarly there exists a bi-Lipschitz parametrization $\alpha_2\colon|\lambda|^{-n}\Db\times [0, q_{n}'] \to P(U_p^{n}, n)$. Here $q_{n}' :=  r_{n+1}(p)-r_{n}(p)\asymp d_{n}(p)$ and $p\in U_p^n$ is the post-critical point. 
	\item In a third step we will show that there exists a map $\beta\colon |\lambda|^{-n/d}\Db\times[0, q_{n+1}]\to |\lambda|^{-n}\Db\times[0, q_{n}']$ that is bi-Lipschitz when it is suitably restricted to sectors $A(\theta_0, \pi/d)\cap |\lambda|^{-n/d}\Db$ (compare \eqref{eq:def_sector}). For points $(z, 0)$ the map $\beta$ is given by 
	\begin{equation*}
		\beta(z, 0)
		=
		(h_{n+1}(z), 0)
	\end{equation*} 
	and for points $(z, q_{n+1})$ the map is given by 
	\begin{equation*}
		\beta(z, q_{n+1})
		=
		(h_{n+1}(z), q'_{n}).
	\end{equation*}
	\item The map $F$ will be given by 
	\begin{equation}\label{equation: def of F on U_c^n}
		\begin{split}
			F\colon P(U_c^{n+1}, n+1)&\to P(U_p^{n}, n)\\ 
			(z, \rho_{n+1}(z, t))&\mapsto \alpha_2\circ \beta\circ\alpha_1^{-1}(z, \rho_{n+1}(z, t)),
		\end{split}
	\end{equation}
	see Figure \ref{fig:F_at_crit} for an illustration.
	\item For the parametrizations $\alpha_1, \alpha_2$ we will use the coordinates $\phi$ and $\psi$. Because the restrictions of $\beta$ satisfy the above equations we can guarantee that the expanding map $F$ agrees with the map $F|_{S_{n+1}}$ defined by $(z, r_{n+1}(z))\mapsto(f_{n+1}(z), r_{n}(f_{n+1}(z)))$ on the approximating sphere $S_{n+1}$.
	The maps $\alpha_1, \alpha_2, \beta$ also depend on $n+1$ but in order to lighten the notation we will not indicate this.	
\end{itemize}

\begin{figure}[htb]
\begin{overpic}[scale=0.6]{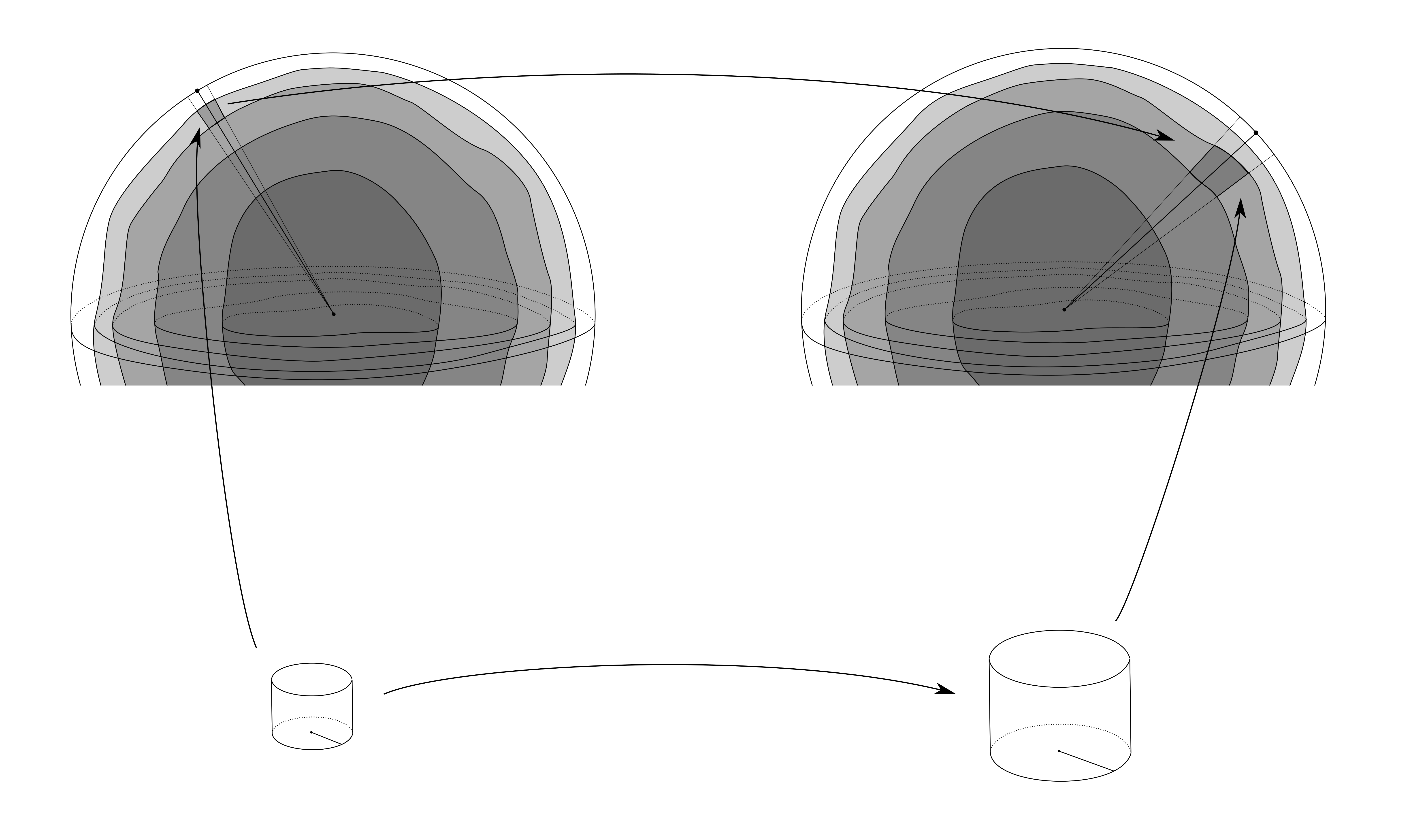}
	\put(13, 53){\footnotesize $c$}
	\put(89, 50){\footnotesize $p$}
	\put(48, 54){\footnotesize $F$}
	\put(12, 23){\footnotesize $\alpha_1$}
	\put(82, 23){\footnotesize $\alpha_2$}
	\put(48, 13){\footnotesize $\beta$}
	\put(24.5, 4){\footnotesize $|\lambda|^{-(n-1)/d}$}
	\put(78.5, 3.5){\footnotesize $|\lambda|^{-(n-1)}$}
\end{overpic}
\caption{We illustrate the extension around critical points in defined by the maps $\alpha_1$, $\alpha_2$ and $\beta$.}
\label{fig:F_at_crit}
\end{figure}

By monotonicity of the radii we have that the $n$-prism over the set $U_c^{n+1}$ is given by 
\begin{equation*}
	P(U^{n+1}_c, n+1)
	=
	\left\{(z, r): z\in U^{n+1}_c,\, r\in [r_{n+1}(z), r_{n+2}(z)]\right\}.
\end{equation*}
Around post-critical points $p$ we have 
\begin{equation*}
	P(U_p^{n}, n)=\{(z, r): z\in U_p^{n}, \, r\in [r_{n}(z), r_{n+1}(z)]\}.
\end{equation*}

\begin{lemma}\label{lemma:mon_radii_crit}
	The neighborhoods in $\U_{\post}^0$ can be chosen so that 
	\begin{enumerate}
		\item 
		\label{item:mon_radii_crit_1}
		for all $n\in \N$ and all $z\in \U_{\crit}^n$ we have that $r_{n+1}(z)>r_n(z)$
		\item 
		\label{item:mon_radii_crit_2}
		for all $n\in \N$ and all $z\in\U_{\post}^n$ we have that $r_{n+1}(z)>r_n(z)$
		\item 
		\label{item:mon_radii_crit_3}
		there exist constants $a, b$ independent of $n$ such that 
		\begin{equation*}
			a\cdot d_n(z)
			\leq 
			r_{n+1}(z)-r_n(z)
			\leq 
			b \cdot d_n(z),
		\end{equation*}
		for all $z\in \U_{\crit}^n$.
	\end{enumerate}
\end{lemma}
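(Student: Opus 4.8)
The plan is to reduce all three statements to estimating a single quantity. For $z\in\CDach$ and $n\in\N$ set
\[
  \theta_n(z)
  \coloneqq
  \frac{\norm{D(f_1\circ\dots\circ f_n)(z)}_{\CDach}}
       {\norm{D(f_1\circ\dots\circ f_{n+1})(z)}_{\CDach}}.
\]
Since $d_n(z)=1-r_n(z)=s\,\norm{D(f_1\circ\dots\circ f_n)(z)}_{\CDach}^{-1}$ by Definition~\ref{definition: r_n}, we have $r_{n+1}(z)-r_n(z)=d_n(z)-d_{n+1}(z)=d_n(z)\bigl(1-\theta_n(z)\bigr)$. Hence \ref{item:mon_radii_crit_1} and \ref{item:mon_radii_crit_2} amount to $\theta_n(z)<1$ on $\U_{\crit}^n$, resp.\ on $\U_{\post}^n$, while \ref{item:mon_radii_crit_3} amounts to a two-sided bound $1-b\le\theta_n(z)\le 1-a$ on $\U_{\crit}^n$ with constants $0<a\le b<1$ independent of $n$; in particular \ref{item:mon_radii_crit_1} will follow from \ref{item:mon_radii_crit_3}.

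For \ref{item:mon_radii_crit_2} I would fix $p\in\post(f)$ and $z\in U^n_p$. Having chosen the $U_q$ ($q\in\post(f)$) small enough that each $U^1_{c'}$ is disjoint from $\U$ (as already arranged in Section~\ref{subsection: local coordinates}), the sets $U^n_p,\dots,U^0_p$ avoid every critical neighborhood, so $f_1\circ\dots\circ f_n=f^n$ and $f_1\circ\dots\circ f_{n+1}=f^{n+1}$ near $z$. Thus $\theta_n(z)=(f^n)^{\#}(z)/(f^{n+1})^{\#}(z)=1/f^{\#}(f^n(z))$ with $f^n(z)\in U_p$. Because $f(p)=p$, the multiplier satisfies $f^{\#}(p)=\abs{f'(p)}=\abs{\lambda}>1$ (since $f$ is expanding), so by continuity of $f^{\#}$ a further shrinking of $U_p$ forces $f^{\#}>1$ on $U_p$, whence $\theta_n(z)<1$. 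This proves \ref{item:mon_radii_crit_2} and, along the way, records that $f^{\#}\asymp\abs{\lambda}$ on $U_p$ with a constant tending to $1$ as $U_p$ shrinks.

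For \ref{item:mon_radii_crit_1} and \ref{item:mon_radii_crit_3} I would fix $c\in\crit(f)$, set $p=f(c)$, $d=\deg(f,c)$, $\lambda=f'(p)$, take $\phi,\psi$ as in \eqref{eq:loc_coord_zd}, \eqref{eq:loc_coord_Up}, and fix $z\in U^n_c$. On $U^n_c$ both $f_n$ and $f_{n+1}$ are rescaled winding maps read through $\phi,\psi$, with $f_n(U^n_c),f_{n+1}(U^n_c)\subset U^{n-1}_p$, and on $U^{n-1}_p$ one has $f_1\circ\dots\circ f_{n-1}=f^{n-1}$ and $f_1\circ\dots\circ f_n=f^n$. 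The chain rule \eqref{eq:Dfgh} (holomorphic outer map, real-differentiable inner map) then gives
\[
  \norm{D(f_1\circ\dots\circ f_n)(z)}_{\CDach}=(f^{n-1})^{\#}(f_n(z))\,\norm{Df_n(z)}_{\CDach},\qquad
  \norm{D(f_1\circ\dots\circ f_{n+1})(z)}_{\CDach}=(f^n)^{\#}(f_{n+1}(z))\,\norm{Df_{n+1}(z)}_{\CDach}.
\]
Since $f_n(z),f_{n+1}(z)\in U^{n-1}_p$, Lemma~\ref{lem:fn_Unp}\ref{item:fn_Unp} yields $(f^{n-1})^{\#}(f_n(z))\asymp\abs{\lambda}^{n-1}$ and (using also the previous paragraph, as $f^{n-1}(f_{n+1}(z))\in U_p$) $(f^n)^{\#}(f_{n+1}(z))=f^{\#}(f^{n-1}(f_{n+1}(z)))\,(f^{n-1})^{\#}(f_{n+1}(z))\asymp\abs{\lambda}^{n}$, with $n$-independent implied constants. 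Hence $\theta_n(z)\asymp\abs{\lambda}^{-1}\,\norm{Df_n(z)}_{\CDach}/\norm{Df_{n+1}(z)}_{\CDach}$. Plugging in Lemma~\ref{lem:prop_fn}\ref{item:prop_fn_3a} (and, for $z\in U^n_c\setminus U^{n+1}_c$, the fact that $f_{n+1}=f$ there, with $f^{\#}(z)$ computed through $\phi,\psi$ via Lemma~\ref{lemma:hnc is continuous} and Lemma~\ref{lem:Koebe_phi_psi}), the factors $(\psi^{\#}(p))^{-1}\phi^{\#}(c)$ cancel, and this last ratio is $\asymp\abs{\lambda}^{1-1/d}$ when $z\in U^{n+1}_c$ and lies between $\asymp 1$ and $\asymp\abs{\lambda}^{1-1/d}$ when $z$ is in the surrounding annulus. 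Consequently $\theta_n(z)\le C\abs{\lambda}^{-1/d}$ and $\theta_n(z)\ge C^{-1}\abs{\lambda}^{-1}$ for all such $z$ and all $n$, with $C$ depending only on finitely many Koebe constants and the constant of the previous paragraph, all tending to $1$ as the $U_q$ shrink. Since $\abs{\lambda}>1$, I would shrink the $U_q$ so that $C\abs{f'(f(c))}^{-1/\deg(f,c)}<1$ for every $c\in\crit(f)$; then $a:=1-\max_c C\abs{f'(f(c))}^{-1/\deg(f,c)}$ and $b:=1-\min_c C^{-1}\abs{f'(f(c))}^{-1}$ lie in $(0,1)$ and give $1-b\le\theta_n(z)\le 1-a$, which is \ref{item:mon_radii_crit_3}; and \ref{item:mon_radii_crit_1} follows.

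The main obstacle is uniformity in $n$: the implied constant in each ``$\asymp$'' must not deteriorate as $n\to\infty$. This works out because $\phi^{\#}(z)\asymp\phi^{\#}(c)$ and the factors $\phi^{\#}(c)$, $(\psi^{\#}(p))^{-1}$ cancel in $\theta_n(z)$; because the two spherical derivatives $(f^{n-1})^{\#}$ are evaluated at points that both lie in the single set $U^{n-1}_p$, where Lemma~\ref{lem:fn_Unp} (Koebe distortion along the $(n-1)$-fold backward branch through $p$) controls them by one $n$-independent constant; and because the winding-map derivatives are exact (Lemma~\ref{lemma:hnc is continuous}). A secondary point is the case split between $U^{n+1}_c$ (where $f_{n+1}$ is a genuine winding map) and the annulus $U^n_c\setminus U^{n+1}_c$ (where $f_{n+1}=f$), on which $\theta_n(z)$ is only trapped between $\asymp\abs{\lambda}^{-1}$ and $\asymp\abs{\lambda}^{-1/d}$ --- still enough for the two-sided bound. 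Finally, arranging $C\abs{\lambda}^{-1/d}<1$ is exactly the reason the lemma permits a further shrinking of the neighborhoods at this stage.
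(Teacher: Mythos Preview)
Your proof is correct and follows essentially the same approach as the paper: both arguments reduce to comparing $\norm{D(f_1\circ\dots\circ f_n)(z)}_{\CDach}$ and $\norm{D(f_1\circ\dots\circ f_{n+1})(z)}_{\CDach}$ via the explicit local-coordinate formulas (Lemma~\ref{lem:Df1n}, Lemma~\ref{lem:prop_fn}\ref{item:prop_fn_3a}), cancel the common factors $\phi^{\#}(c)$ and $(\psi^{\#}(p))^{-1}$, split into the cases $z\in U_c^{n+1}$ and $z\in U_c^n\setminus U_c^{n+1}$, and shrink the $U_p$ so that the Koebe distortion constants are close enough to~$1$ that the gap $\abs{\lambda}^{1/d}>1$ wins. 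The only difference is organizational: you package everything through the ratio $\theta_n(z)$, which gives the identity $r_{n+1}(z)-r_n(z)=d_n(z)(1-\theta_n(z))$ and hence \ref{item:mon_radii_crit_1} and \ref{item:mon_radii_crit_3} simultaneously, whereas the paper computes the difference $\norm{D(f^n\circ f_{n+1})(z)}_{\CDach}-\norm{D(f^{n-1}\circ f_n)(z)}_{\CDach}$ first for \ref{item:mon_radii_crit_1} and then revisits it for \ref{item:mon_radii_crit_3}.
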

\begin{proof}
	\ref{item:mon_radii_crit_1}
	By Lemma \ref{lem:Koebe_phi_psi} there exist constants $K_1(r)<1, K_2(r)>1$ such that for all $x, y\in B(0, r)\subset \D$ we have that 
	\begin{equation*}
		K_1(r)(\psi^{-1})^{\#}(x)
		\leq 
		(\psi^{-1})^{\#}(y)
		\leq 
		K_2(r)(\psi^{-1})^{\#}(x).
	\end{equation*}
	Choose $r$ sufficiently small so that 
	\begin{equation}
		\label{eq:mon_radii_crit_2}
		K_1(r)\cdot |\lambda|^{1/d}\geq 1+\mu \quad \textup{and} \quad |\lambda|^{1/d}> K_2
	\end{equation} 
	for a small number $\mu>0$. From now on we fix the values $r$, $K_1:= K_1(r)$ and $K_2:=K_2(r)$.
	
	With these choices it follows that for all $x, y\in B(0, r)$  
	\begin{equation*}
		\label{eq:mon_radii_crit_1}
		|\lambda|^{1/d}(\psi^{-1})^{\#}(x)-(\psi^{-1})^{\#}(y) 
		\geq 
		(\psi^{-1})^{\#}(y)\big(|\lambda|^{1/d}K_1-1\big)
		> 
		0.
	\end{equation*}
	
	\begin{claim}
		If $U_p^0= \psi^{-1}(B(0, r))$ with $r$ as above then for all $n\in \N$ and all $z\in \U_{\crit}^n$ the claimed inequality holds.
	\end{claim}

	Let $n\in \N$ be arbitrary and suppose $z\in U_c^{n+1}$ for some $c\in \crit$. The difference $r_{n+1}(z)-r_n(z)$ depends on the difference between the derivatives of $f_1\circ\dots\circ f_{n+1}$ and $f_1\circ\dots\circ f_{n}$ at $z$. The point $z$ is mapped by $\phi$ to a point $w:=\phi(z) \in |\lambda|^{-(n+1)/d}\Db$. The map $f_1\circ\dots\circ f_{n+1}$ can be represented as follows  
	
	\begin{equation*}
		\xymatrix{
			U_c^{n+1} \ar[r]^{f}_{f_{n+1}} \ar[d]_{\phi}
			& U_p^{n} \ar[d]^{\psi} \ar[r]^{f^{n}} & U_p \ar[d]^{\psi} \\
			|\lambda|^{-n/d}\Db \ar[r]^{z^d}_{h_{n+1}} 
			& |\lambda|^{-n}\Db \ar[r]^{\delta_{\lambda^{n}}}& \Db 
		}
	\end{equation*}

	and for the map $f_1\circ\dots\circ f_n$ we can easily adjust it by substituting $n+1$ by $n$ and $n$ by $n-1$.
	
	For the map $f_1\circ\dots\circ f_{n+1}$ the point $w$ is mapped to $h_{n+1}(w)$ and $|h_{n+1}(w)| \leq |\lambda|^{-n}$. Composing with $\psi^{-1}$ we see that $\psi^{-1}(h_{n+1}(w))\in U_p^{n}$ where the linearization is defined. 
	
	Denote $w_{n+1}:= \delta_{\lambda^{n}}\circ h_{n+1}(w)\in \Db$, by definition $(f^{n}\circ f_{n+1})(z)=\psi^{-1}(w_{n+1})$ and similarly $w_{n}= \delta_{\lambda^{n-1}}\circ h_n(w)\in\Db$. To prove monotonicity we compute the difference between the derivatives using the formula we proved in Lemma \ref{lem:Df1n}
	\begin{align*}
		\|D(f^{n}&\circ f_{n+1})(z)\|_{\CDach}-\|D(f^{n-1}\circ f_{n})(z)\|_{\CDach}
		= (\psi^{-1})^{\#}(w_{n+1}) d |\lambda|^{n/d} \phi^{\#}(z)\\ 
		&- (\psi^{-1})^{\#}(w_{n}) d|\lambda|^{(n-1)/d}\phi^{\#}(z)\\
		&= d|\lambda|^{(n-1)/d}\phi^{\#}(z)(|\lambda|^{1/d}\big(\psi^{-1})^{\#}(w_{n+1})-(\psi^{-1})^{\#}(w_{n})\big).
	\end{align*}
	We have shown the claim in the first case.
	
	We also need to treat the case when $z\in U^c_{n}\setminus \inte ( U^c_{n+1})$. 
	
	The map $f_1\circ\dots\circ f_{n}(z)=f^{n-1}\circ f_{n}(z)$ as above. But since $z\notin U_c^{n+1}$ we have that $f_{n+1}(z)=f(z)$. Because the point $z$ is still close to a critical point we can use the conjugacy to $z^d$ and the linearization:
	
	\begin{tikzcd}
		U_c^{n}\setminus \inte (U_c^{n+1}) \arrow[r, "f"] \arrow[d, "\phi"]
		& U_p^{n-1}\setminus \inte (U_p^{n}) \arrow[d, "\psi"] \arrow[r, "f^{n}"] & U_p \arrow[d, "\psi"] \\
		\overline{A}(|\lambda|^{-(n-1)/d},|\lambda|^{-n/d}) \arrow[r, "z^d"] 
		&\overline{A}(|\lambda|^{-(n-1)}, |\lambda|^{-n})  \arrow[r, "\lambda^{n}"]& \D
	\end{tikzcd}
	
	Let $\tilde{w}_{n+1}:= \delta_{\lambda^{n}}(w^d) \in \Db$. A lower bound for $(f^{n+1})^{\#}(z)$ can be computed as follows: \begin{align*}
		(f^{n+1})^{\#}(z)
		\geq 
		(\psi^{-1})^{\#}(\Tilde{w}_{n+1}) |\lambda|^n d |\lambda|^{-(n/d) \cdot (d-1)}\phi^{\#}(z)\\ 
		=
		(\psi^{-1})^{\#}(\Tilde{w}_{n+1}) d |\lambda|^{n/d}\phi^{\#}(z). 
	\end{align*} 
	And hence for the lower bound of the difference between $( f^{n+1})^{\#}(z) $ and $\|D(f^{n-1}\circ f_{n})(z)\|_{\CDach}$:
	\begin{align*}
		(f^{n+1})^{\#}(z)&-\|Df^{n-1}\circ f_{n}(z)\|_{\CDach} 
		\geq (\psi^{-1})^{\#}(\Tilde{w}_{n+1})d|\lambda|^{n/d}\phi^{\#}(z)\\
		&-(\psi^{-1})^{\#}(w_{n})d|\lambda|^{(n-1)/d}\phi^{\#}(z)\\
		&=d|\lambda|^{(n-1)/d}\phi^{\#}(z)\left(|\lambda|^{1/d}(\psi^{-1})^{\#}(\Tilde{w}_{n+1})-(\psi^{-1})^{\#}(w_{n})\right)
	\end{align*}
	
	This proves the claim in the second case.
	
	\smallskip
	\ref{item:mon_radii_crit_2}
	Since $z\in U_p^n$ we have that $f_1\circ\dots\circ f_n(z)=f^n(z)$. The inequality claimed above is equivalent to: \begin{equation*}
		(f^{n+1})^{\#}(z)> (f^n)^{\#}(z).
	\end{equation*}
	Then we can reformulate this to \begin{equation*}
		(f^n)^{\#}(z)\big(f^{\#}(f^n(z))-1\big)>0.
	\end{equation*}
	Then choose $U_p^0\subset \psi^{-1}(B(0, r))$ small enough to guarantee the above inequality.
	
	\smallskip
	\ref{item:mon_radii_crit_3}
	We assume $r$ to be chosen as above in \eqref{eq:mon_radii_crit_2} and $U_p^0$ accordingly. 
	
	Let $z\in U_c^{n+1}$ be arbitrary and denote $w:= \phi(z)$ and $w_{n+1}:= \delta_{\lambda^{n}}\circ h_{n+1}(w)\in \Db$ and similarly $w_{n}= \delta_{\lambda^{n-1}}\circ h_n(w)\in\Db$. Using the estimates in part \ref{item:mon_radii_crit_1} of the proof we compute 
	\begin{equation*}
		\begin{split}
			&\left(1-s\frac{1}{\|Df^{n}\circ f_{n+1}(z)\|_{\CDach}}- 1+s\frac{1}{\|Df^{n-1}\circ f_{n}(z)\|_{\CDach}}\right)\\
			=&
			s\frac{\|Df^{n}\circ f_{n+1}(z)\|_{\CDach} -\|Df^{n-1}\circ f_{n}(z)\|}{\|Df^{n}\circ f_{n+1}(z)\|_{\CDach} \cdot \|Df^{n-1}\circ f_{n}(z)\|_{\CDach}}\\
			=& s\frac{d|\lambda|^{(n-1)/d}\phi^{\#}(z)\cdot\big(|\lambda|^{1/d}(\psi^{-1})^{\#}(w_{n+1})-(\psi^{-1})^{\#}(w_{n})\big)}{\|Df^{n}\circ f_{n+1}(z)\|_{\CDach} \cdot \|Df^{n-1}\circ f_{n}(z)\|_{\CDach}}.
		\end{split}
	\end{equation*}
	The expression 
	\begin{equation*}
		|\lambda|^{1/d}(\psi^{-1})^{\#}(w_{n+1})-(\psi^{-1})^{\#}(w_{n})
	\end{equation*}
	can be estimated by 
	\begin{equation*}
		\begin{split}
			(\psi^{-1})^{\#}(w_{n+1})(|\lambda|^{1/d}-K_2)
			&\leq 
			|\lambda|^{1/d}(\psi^{-1})^{\#}(w_{n+1})-(\psi^{-1})^{\#}(w_{n})\\
			&\leq 
			(\psi^{-1})^{\#}(w_{n+1})(|\lambda|^{1/d}-K_1).
		\end{split}
	\end{equation*}
	So we can estimate the difference of the radii 
	\begin{equation*}
		\begin{split}
			|\lambda|^{-1/d}(|\lambda|^{1/d}-K_2)\cdot d_n(z)
			&\geq			
			r_{n+1}(z)-r_n(z)\\
			&\geq 
			|\lambda|^{-1/d}(|\lambda|^{1/d}-K_1)\cdot d_{n}(z)
		\end{split}
	\end{equation*}
	Assume that $z\in U_c^{n}\setminus U_c^{n+1}$. Let $w:=\phi(z)$ and $w_{n}= \delta_{\lambda^{n-1}}\circ h_n(w)\in\Db$ and $\tilde{w}_{n+1}:= \delta_{\lambda^{n}}(w^d) \in \Db$. Using the estimates in the proof of part \ref{item:mon_radii_crit_1}, we obtain 
	\begin{equation*}
		\begin{split}
			\bigg( 1&- s\frac{1}{(f^{n+1})^{\#}(z)}-1+s\frac{1}{\|Df^{n-1}\circ f_{n}(z)\|_{\CDach}} \bigg) \\
			&=
			s\frac{(\psi^{-1})^{\#}(\tilde{w}_{n+1})d|\lambda|^n|w|^d\phi^{\#}(z)- (\psi^{-1})^{\#}(w_n)d|\lambda|^{(n-1)/d}\phi^{\#}(z)}{(f^{n+1})^{\#}(z)\cdot \|Df^{n-1}\circ f_{n}(z)\|_{\CDach}}\\
			&\geq
			s\frac{d|\lambda|^{(n-1)/d}\phi^{\#}(z)\big(|\lambda|^{1/d}(\psi^{-1})^{\#}(\Tilde{w}_{n+1})-(\psi^{-1})^{\#}(w_{n})\big)}{(f^{n+1})^{\#}(z)\cdot \|Df^{n-1}\circ f_{n}(z)\|_{\CDach}}\\
			&\geq 
			C (|\lambda|^{1/d}-K_2) \cdot d_n(z)
		\end{split}
	\end{equation*} 
	where $C$ is a constant such that $C|\lambda|^{(n-1)/d}\geq |\lambda|^{n}|w|^{d-1}$, and the upper bound is similar to the above. 
\end{proof}

With this we can show that on pre-images of $P(U_c^n, n)$ a consecutive index of $n+1$ at $z\in f^{-1}(U_c^n)$ is $n+2$. In particular this implies that the map $F$ defined away from the critical axes will be orientation-preserving and continuous. 
\begin{corollary}
	If $z\in \CDach\setminus \U_{\crit}^{n+j+1}$ and $f^j(z)\in U_c^n$ for some critical point $c$ and $j\in \N$ then $r_{n+j+1}(z)> r_{n+j}(z)$. 
\end{corollary}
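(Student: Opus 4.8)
The plan is to unwind the definition of the radii, reduce the claim to an inequality between the two derivatives $\|D(f_1\circ\dots\circ f_{n+j+1})(z)\|_{\CDach}$ and $\|D(f_1\circ\dots\circ f_{n+j})(z)\|_{\CDach}$, and then transport this inequality through the first $j$ iterates down to the point $w:=f^j(z)\in U_c^n$, where Lemma~\ref{lemma:mon_radii_crit}\,\ref{item:mon_radii_crit_1} applies directly. By Definition~\ref{definition: r_n}, $r_{n+j+1}(z)>r_{n+j}(z)$ is equivalent to $\|D(f_1\circ\dots\circ f_{n+j+1})(z)\|_{\CDach}>\|D(f_1\circ\dots\circ f_{n+j})(z)\|_{\CDach}$, so it suffices to establish the latter.

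First I would locate $z$ in the tower of preimage components. Because $f^j(z)=w\in U_c^n\subset f^{-n}(\U)$, the point $z$ lies in the (unique) component $U^{n+j}$ of $f^{-(n+j)}(\U)$ whose forward orbit under $f$ is $U^{n+j}\to\dots\to U^n=U_c^n\to U_p^{n-1}\to\dots\to U_p^0$, and -- carrying out the same book-keeping as in Section~\ref{sec:composing-maps-f_n} around \eqref{eq:UnU0}--\eqref{eq:f_1n_fff}, and shrinking the $U_p^0$ if necessary (exactly as in Lemma~\ref{lemma:mon_radii_crit}) so that distinct $U_p^0$ and their images $f(U_p^0)$ stay disjoint, which together with $c\notin\post(f)$ (a consequence of the standing assumption that critical values are fixed) forbids any further critical point in the components $U^{n+j},\dots,U^{n+1}$ -- the only critical passage occurs at the step $U_c^n\to U_p^{n-1}$. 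This, combined with the hypothesis $z\notin\U_{\crit}^{n+j+1}$ (so that $f_{n+j+1}$ is unmodified near $z$), gives, on a neighborhood of $z$, the factorizations
\begin{equation*}
  f_1\circ\dots\circ f_{n+j}=(f_1\circ\dots\circ f_n)\circ f^j,
  \qquad
  f_1\circ\dots\circ f_{n+j+1}=(f_1\circ\dots\circ f_{n+1})\circ f^j,
\end{equation*}
where $f^j$ is the genuine iterate, holomorphic and locally univalent near $z$, with $f^j(z)=w$.

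Differentiating and using the chain rule in the form \eqref{eq:Dfgh} -- absorbing the holomorphic factor $f^j$ into the holomorphic part flanking the single non-holomorphic factor ($f_n$, respectively $f_{n+1}$) inside each composition -- yields
\begin{align*}
  \|D(f_1\circ\dots\circ f_{n+j})(z)\|_{\CDach}
  &=\|D(f_1\circ\dots\circ f_n)(w)\|_{\CDach}\cdot (f^j)^{\#}(z),
  \\
  \|D(f_1\circ\dots\circ f_{n+j+1})(z)\|_{\CDach}
  &=\|D(f_1\circ\dots\circ f_{n+1})(w)\|_{\CDach}\cdot (f^j)^{\#}(z).
\end{align*}
Since $w\in U_c^n\subset\U_{\crit}^n$, Lemma~\ref{lemma:mon_radii_crit}\,\ref{item:mon_radii_crit_1} gives $r_{n+1}(w)>r_n(w)$, i.e.\ $\|D(f_1\circ\dots\circ f_{n+1})(w)\|_{\CDach}>\|D(f_1\circ\dots\circ f_n)(w)\|_{\CDach}$; multiplying this strict inequality by the positive number $(f^j)^{\#}(z)$ gives $\|D(f_1\circ\dots\circ f_{n+j+1})(z)\|_{\CDach}>\|D(f_1\circ\dots\circ f_{n+j})(z)\|_{\CDach}$, hence $r_{n+j+1}(z)>r_{n+j}(z)$. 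The only step requiring genuine care is the factorization above -- that, because $f^j(z)\in U_c^n$, the branching of $f_1\circ\dots\circ f_{n+j}$ and of $f_1\circ\dots\circ f_{n+j+1}$ near $z$ is confined to the single passage through $U_c^n$; this is where the smallness of the $U_p^0$ and the explicit coordinates near $c$ and $p$ from Section~\ref{subsection: local coordinates} enter, while everything else is just the chain rule.
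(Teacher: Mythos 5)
Your proof is correct and follows the same route as the paper's: reduce the inequality of radii to an inequality of derivative norms, peel off the first $j$ factors as the holomorphic iterate $f^j$ via the chain rule, and invoke Lemma~\ref{lemma:mon_radii_crit}\,\ref{item:mon_radii_crit_1} at $w=f^j(z)\in U_c^n$. The paper carries this out explicitly only for $j=1$ and declares the general case analogous; you supply the full bookkeeping (in particular the verification that no intermediate component $U^{n+j},\dots,U^{n+1}$ can contain a critical point, using that critical values are fixed and that distinct components of $\U^n$ are disjoint), which is exactly the ``analogous'' part the paper leaves to the reader.
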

\begin{proof}
	We will prove the statement for $j=1$, the case $j>1$ is analogous. The inequality $r_{n+2}(z)>r_{n+1}(z)$ is equivalent to \begin{equation*}
		\|Df_1\circ\dots\circ f_{n+1}(f(z))\|_{\CDach} f^{\#}(z)> \|Df_1\circ\dots\circ f_n(f(z))\|_{\CDach} f^{\#}(z).
	\end{equation*}
	And above we have proved that $\|Df_1\circ\dots\circ f_{n+1}(f(z))\|_{\CDach} > \| Df_1\circ\dots\circ f_n(f(z))\|_{\CDach}$. 
\end{proof}

\begin{remark}
	In the following we will give a parametrization of prisms over $U_c$. We will change the coordinate $\rho_n(z)$ and adapt it to an interval $[0, h]$ where $h$ is not necessarily 1 by setting 
	\begin{equation}
		\label{eq:conv_rho_n_height_h}
		\rho_n(z)=\frac{q-t}{q}r_n(z)+\frac{t}{q}r_{n+1}(z),
	\end{equation}
	the value of $h$ will be clear from the context, so we will not introduce new notation for this expression. 
\end{remark}

\begin{lemma}\label{lemma:param_of_P(U_c)} 
	For all $n\in \N$ and all critical points $c\in \CDach$ we find a parametrization of $P(U_c^n, n)$. It is given by 
	\begin{align*}
		\alpha_1\colon |\lambda|^{-(n-1)/d}\Db\times[0, q_n]&\to P(U_c^{n}, n)\\
		(x, t)&\mapsto (\phi^{-1}(x), \rho_n(\phi^{-1}(x), t))
	\end{align*} 
	with $q_n=r_{n+1}(c)-r_n(c)$ and is a bi-Lipschitz map and the constant $L_{\alpha_1}=L_{\alpha_1}(|\lambda|^{-(n-1)/d}, |\lambda|^{-(n-1)}, d)$ is decreasing as $n$ grows. Here $\rho_n(z)$ is as in Equation \eqref{eq:conv_rho_n_height_h}.
\end{lemma}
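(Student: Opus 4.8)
The plan is to compare the Euclidean metric on the prism $P(U_c^n,n)\subset\R^3$ with the spherical sum norm already exploited in the proof of Lemma~\ref{lemma: extension is quasi-sim on P(U, n)}, and then to verify the bi-Lipschitz inequality one coordinate direction at a time. All points of $P(U^n_c,n)$ have radial part in $[\,\min_{z,m}\{r_m(z)\},1)$, which is bounded away from $0$ by a constant depending only on $f$, and $U^n_c$ has small spherical diameter, hence lies in a hemisphere; therefore, writing $z=\phi^{-1}(x)$ and $w=\phi^{-1}(y)$,
\[
  |\alpha_1(x,t)-\alpha_1(y,s)|\asymp|z-w|_{\CDach}+\big|\rho_n(z,t)-\rho_n(w,s)\big|
\]
with a universal implicit constant, while the cylinder carries the product Euclidean metric, $|(x,t)-(y,s)|\asymp|x-y|+|t-s|$. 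For the spherical part, Lemma~\ref{lem:Koebe_phi_psi}~\ref{item:Koebe_phi} applied to $\phi$ on $U^n_c$ gives $|z-w|_{\CDach}\asymp(\phi^{\#}(c))^{-1}|x-y|$ with $C(\asymp)=L(n)\searrow1$; since there are only finitely many critical points and each $\phi=\phi_c$ is fixed (independent of $n$), $\phi^{\#}(c)\asymp1$ with a constant depending only on $f$, so $|z-w|_{\CDach}\asymp|x-y|$.

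For the radial part I would first freeze $z=w$: by \eqref{eq:conv_rho_n_height_h} we have $\rho_n(z,t)=r_n(z)+\tfrac{t}{q_n}\big(r_{n+1}(z)-r_n(z)\big)$, hence $|\rho_n(z,t)-\rho_n(z,s)|=\tfrac{|t-s|}{q_n}\big(r_{n+1}(z)-r_n(z)\big)$, and by Lemma~\ref{lemma:mon_radii_crit}~\ref{item:mon_radii_crit_3} together with $d_n(z)\asymp d_n(c)$ on $U^n_c$ (which follows from the explicit formula in Lemma~\ref{lem:Df1n}~\ref{item:Df1n_c} and the Koebe estimates of Lemma~\ref{lem:Koebe_phi_psi}) one gets $\tfrac{r_{n+1}(z)-r_n(z)}{q_n}\asymp1$, so $|\rho_n(z,t)-\rho_n(z,s)|\asymp|t-s|$. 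The remaining ingredient is the horizontal estimate: $r_n$, and with it $r_{n+1}$ and $\rho_n(\cdot,t)$, is Lipschitz on $U^n_c$ (with respect to $|\cdot-\cdot|_{\CDach}$) with constant bounded uniformly in $n$, i.e.\ $|r_n(z)-r_n(w)|\lesssim|z-w|_{\CDach}$. Granting this, the triangle inequality and the sum-norm identity give the upper bound $|\alpha_1(x,t)-\alpha_1(y,s)|\lesssim|x-y|+|t-s|$ at once; for the lower bound, write $|t-s|=\tfrac{q_n}{r_{n+1}(z)-r_n(z)}|\rho_n(z,t)-\rho_n(z,s)|\lesssim|\rho_n(z,t)-\rho_n(w,s)|+|\rho_n(w,s)-\rho_n(z,s)|\lesssim|\rho_n(z,t)-\rho_n(w,s)|+|z-w|_{\CDach}$ and combine with $|x-y|\lesssim|z-w|_{\CDach}$ to get $|x-y|+|t-s|\lesssim|\alpha_1(x,t)-\alpha_1(y,s)|$. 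Since $\alpha_1$ is a bijection (the fibre over $z$ is covered bijectively by the affine map $t\mapsto\rho_n(z,t)$, using $r_{n+1}(z)>r_n(z)$ from Lemma~\ref{lemma:mon_radii_crit}~\ref{item:mon_radii_crit_1}), it is bi-Lipschitz.

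\textbf{Main obstacle.} The one nontrivial point will be the horizontal Lipschitz bound for $r_n$ on $U^n_c$ with a constant independent of $n$ (in fact improving as $n\to\infty$, which is the source of the asserted monotone dependence of $L_{\alpha_1}$). Here one writes $r_n=1-s/D_n$ with $D_n\coloneqq\|D(f_1\circ\dots\circ f_n)(\cdot)\|_{\CDach}$ and uses $D_n(z)=(\psi^{-1})^{\#}\!\big(\lambda^{n-1}h_{n,c}(\phi(z))\big)\cdot d\,|\lambda|^{(n-1)/d}\cdot\phi^{\#}(z)$ from Lemma~\ref{lem:Df1n}~\ref{item:Df1n_c}. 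A chain-rule estimate for $|D_n(z)-D_n(w)|$ combines: $\phi^{\#}$ and $(\psi^{-1})^{\#}$ are smooth with derivatives bounded independently of $n$; the map $z\mapsto\lambda^{n-1}h_{n,c}(\phi(z))$ is Lipschitz on $U^n_c$ with constant $\asymp d\,|\lambda|^{(n-1)/d}\phi^{\#}(c)$ by Lemma~\ref{lemma:hnc is continuous} and Lemma~\ref{lemma: h is bilipschitz}~\ref{item:prop_hd_2}; and $D_n\asymp d\,|\lambda|^{(n-1)/d}(\psi^{\#}(p))^{-1}\phi^{\#}(c)$. Dividing in $|r_n(z)-r_n(w)|=s\,|D_n(z)-D_n(w)|/(D_n(z)D_n(w))$ the powers of $|\lambda|^{(n-1)/d}$ cancel and the Lipschitz constant comes out of the form $s\,\psi^{\#}(p)\big(1+C\,d^{-1}|\lambda|^{-(n-1)/d}\phi^{\#}(c)^{-2}\big)$, bounded in terms of $f$, $d$ and the Koebe constants for $\phi$ on $|\lambda|^{-(n-1)/d}\Db$ and for $\psi^{-1}$ on $|\lambda|^{-(n-1)}\Db$ (those ratios tending to $0$ as $n\to\infty$). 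A minor separate remark is needed at $c$ itself, where $h_d$ has a corner but is still $d$-Lipschitz, so the estimate extends across the critical axis.
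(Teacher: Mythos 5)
Your proposal is correct, and it takes a cleaner route than the paper at the key step. Both proofs start from the same sum-norm identity and use the same ingredients: Koebe's estimate for $\phi$ on $U^n_c$ (so $|z-w|_{\CDach}\asymp |x-y|$ with constant depending only on $f$), the comparability $d_n(z)\asymp d_n(c)\asymp q_n$ on $U^n_c$ together with Lemma~\ref{lemma:mon_radii_crit}~\ref{item:mon_radii_crit_3}, and a Lipschitz estimate for $z\mapsto r_n(z)$ on $U^n_c$. Where you differ is in the decomposition of $|\rho_n(z,t)-\rho_n(w,s)|$: you freeze $z$ first to get $|\rho_n(z,t)-\rho_n(z,s)|\asymp|t-s|$ and then handle the horizontal variation with the single inequality $|r_n(z)-r_n(w)|\lesssim|z-w|_{\CDach}$, whereas the paper splits the same quantity into four triangle-inequality pieces and, for the lower bound, solves for $t$ and performs a more elaborate product expansion. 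Your two-step "anisotropic" decomposition is more transparent and makes the lower bound essentially a one-line consequence of the upper bound plus the comparison $q_n/(r_{n+1}(z)-r_n(z))\asymp 1$. It also shows that the full strength of the paper's Lemma~\ref{lemma:radius_Lipschitz_Ucrit} (the extra factor $d_n(z)$, which is carried through the paper's lower-bound computation) is not actually needed for bi-Lipschitzness: the coarser bound you quote suffices, since the refined scaling in the $t$-direction is already captured by your first step. You also absorb the factor $\tfrac12(\rho_n(z,t)+\rho_n(w,s))$ of the sum norm into the implicit constant, which is legitimate here because the radii are bounded away from $0$ (this is exactly what the paper does at the end of its upper-bound estimate using $r_{n+1}+d_n\lesssim 1$). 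Your sketch of the horizontal Lipschitz bound in the "main obstacle" paragraph is slightly looser than the paper's proof of Lemma~\ref{lemma:radius_Lipschitz_Ucrit}, but the claim you rely on is precisely a weakened form of that lemma, so there is no gap.
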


\begin{lemma}
	\label{lemma:radius_Lipschitz_Ucrit}
	Let $z\in \U_{\crit}^n$. Then for $z\mapsto r_n(z)$ we can show that 
	\begin{equation*}
		|r_n(z)-r_n(w)|\leq L_{r_n}\cdot d_n(z)|z-w|_{\hat{\mathbb{C}}},
	\end{equation*} 
	for all $w\in U_c^n$ where $L_{r_n}=L_{r_n}(|\lambda|^{-(n-1)/d}, |\lambda|^{-(n-1)}, d, \phi)$ is decreasing as $n$ increases and can therefore be chosen uniformly.
\end{lemma}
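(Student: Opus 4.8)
The plan is to reduce the estimate to the explicit product formula for $\norm{D(f_1\circ\dots\circ f_n)}_{\CDach}$ on a critical component and then track constants carefully. Abbreviate $D_n(z):=\norm{D(f_1\circ\dots\circ f_n)(z)}_{\CDach}$, so that by Definition~\ref{definition: r_n} we have $r_n(z)=1-s/D_n(z)$ and $d_n(z)=1-r_n(z)=s/D_n(z)$. The algebraic identity
\begin{equation*}
  r_n(z)-r_n(w)=\frac{s}{D_n(w)}-\frac{s}{D_n(z)}=d_n(z)\Bigl(\frac{D_n(z)}{D_n(w)}-1\Bigr)
\end{equation*}
shows that it suffices to bound $\bigl|D_n(z)/D_n(w)-1\bigr|$ by a multiple of $|z-w|_{\CDach}$ and to declare $L_{r_n}$ to be that multiple. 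For $z\in U_c^n$, Lemma~\ref{lem:Df1n}~\ref{item:Df1n_c} gives $D_n(z)=(\psi^{-1})^{\#}(z_0')\cdot d\,|\lambda|^{(n-1)/d}\cdot\phi^{\#}(z)$ with $z_0'=\lambda^{n-1}h_{n,c}(\phi(z))$; the middle factor is independent of $z$, so $D_n(z)/D_n(w)=\tfrac{(\psi^{-1})^{\#}(z_0')}{(\psi^{-1})^{\#}(w_0')}\cdot\tfrac{\phi^{\#}(z)}{\phi^{\#}(w)}$, and it remains to control the two quotients on the right.

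Both are handled by Lipschitz continuity. Since $\phi$ extends conformally past $\overline{U_c}$, the function $\phi^{\#}$ is $C^1$ and bounded away from $0$ on $\overline{U_c}$, whence $\bigl|\phi^{\#}(z)/\phi^{\#}(w)-1\bigr|\lesssim|z-w|_{\CDach}$ with a constant depending only on $\phi$. Likewise $(\psi^{-1})^{\#}$ is $C^1$ and bounded below on $\overline{\Db}$, so $\bigl|(\psi^{-1})^{\#}(z_0')/(\psi^{-1})^{\#}(w_0')-1\bigr|\lesssim|z_0'-w_0'|$. Writing $h_{n,c}=\delta_{|\lambda|^{-(n-1)}}\circ h_d\circ\delta_{|\lambda|^{(n-1)/d}}$, using convexity of the disc $\phi(U_c^n)=|\lambda|^{-(n-1)/d}\Db$ together with the operator-norm bound $\norm{Dh_{n,c}}=d\,|\lambda|^{-(n-1)+(n-1)/d}$ from Lemma~\ref{lemma:hnc is continuous}, and then the Koebe estimate of Lemma~\ref{lem:Koebe_phi_psi}~\ref{item:Koebe_phi} ($|\phi(z)-\phi(w)|\le L(n)\,\phi^{\#}(c)\,|z-w|_{\CDach}$ with $L(n)\searrow1$), one obtains
\begin{equation*}
  |z_0'-w_0'|=|\lambda|^{n-1}\bigl|h_{n,c}(\phi(z))-h_{n,c}(\phi(w))\bigr|\le d\,|\lambda|^{(n-1)/d}\,|\phi(z)-\phi(w)|\lesssim d\,|\lambda|^{(n-1)/d}\,|z-w|_{\CDach}.
\end{equation*}
Combining the two quotient bounds yields $\bigl|D_n(z)/D_n(w)-1\bigr|\le L_{r_n}|z-w|_{\CDach}$ with $L_{r_n}$ of the claimed form $L_{r_n}(|\lambda|^{-(n-1)/d},|\lambda|^{-(n-1)},d,\phi)$, concretely $L_{r_n}\lesssim d\,|\lambda|^{(n-1)/d}$ (with implicit constant depending only on the coordinate maps $\phi,\psi$ and $d$). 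Since the same product formula together with the two-sided bounds on $\phi^{\#}$ and $(\psi^{-1})^{\#}$ gives $d_n(z)\asymp|\lambda|^{-(n-1)/d}$ on $U_c^n$, the product $L_{r_n}d_n(z)$ — the effective Lipschitz constant of $r_n$ on $U_c^n$ — is bounded uniformly in $n$, which is the only feature used in the subsequent parametrization estimates (e.g. in Lemma~\ref{lemma:param_of_P(U_c)}).

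I expect the main obstacle to be the bookkeeping of the $n$-dependence through the rescaled winding map: although $\norm{Dh_{n,c}}=d\,|\lambda|^{-(n-1)+(n-1)/d}$ is small, post-composing with the blow-up $\delta_{\lambda^{n-1}}$ leaves the map $z\mapsto z_0'$ with net Lipschitz constant $d\,|\lambda|^{(n-1)/d}$, which grows with $n$; one has to notice that this growth is cancelled exactly by the factor $d_n(z)\asymp|\lambda|^{-(n-1)/d}$ in the identity above. A secondary point is that $(\psi^{-1})^{\#}$ need not be close to constant on $\Db$ — only the distortion of $\phi$ on the shrinking discs $U_c^n$ tends to $1$ — so its variation does contribute an honest relative change over $U_c^n$; this is harmless, being dominated by the $d\,|\lambda|^{(n-1)/d}$ term, and one also has to keep in mind the boundary cases $z=c$ and $z,w\in\partial U_c^n$, where $f_n$ fails to be differentiable but $\norm{Df_n}_{\CDach}$, hence the product formula, extends continuously (Lemma~\ref{lem:prop_fn}~\ref{item:prop_fn_3}).
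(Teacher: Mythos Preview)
Your argument is correct and cleaner than the paper's. The paper splits $D_n=\|D(f^{n-1}\circ f_n)\|_{\CDach}=(f^{n-1})^{\#}(f_n(\cdot))\cdot\|Df_n(\cdot)\|_{\CDach}$ and estimates the two factors through several triangle-inequality steps, whereas you go straight to the closed product formula of Lemma~\ref{lem:Df1n}~\ref{item:Df1n_c} and reduce everything to the two quotients $\phi^{\#}(z)/\phi^{\#}(w)$ and $(\psi^{-1})^{\#}(z_0')/(\psi^{-1})^{\#}(w_0')$. Both routes feed on the same ingredients (Koebe for $\phi$ and $\psi^{-1}$, the constant value of $\|Dh_{n,c}\|$), and both in fact produce a constant of order $|\lambda|^{(n-1)/d}$: in the paper's version this comes from the term $C_{f^{n-1}}L_{f_n}$, where the relative Lipschitz constant $C_{f^{n-1}}$ of $(f^{n-1})^{\#}$ on $U_p^{n-1}$ is of order $|\lambda|^{n-1}$ (the map blows the small disc $U_p^{n-1}$ up to the fixed set $U_p^0$), not decreasing as the paper asserts.

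You are therefore right to flag that the ``$L_{r_n}$ decreasing'' clause in the statement is not what either argument actually delivers: the map $z\mapsto z_0'$ takes $U_c^n$ onto all of $\Db$ with Lipschitz constant $\asymp|\lambda|^{(n-1)/d}$, and since $(\psi^{-1})^{\#}$ genuinely varies over $\Db$, the bound $L_{r_n}\asymp|\lambda|^{(n-1)/d}$ is sharp. Your observation that the product $L_{r_n}\,d_n(z)$ is uniformly bounded (equivalently, $r_n$ is uniformly Lipschitz on $U_c^n$) is the correct usable conclusion, and it is precisely the way the lemma is invoked in the proof of Lemma~\ref{lemma:param_of_P(U_c)}.
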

\begin{proof}
	Let $z, w\in U_c^n$ be arbitrary. Recall that $f_n$ was defined as \[f_n(z)= (\psi^{-1}\circ h_n\circ \phi) (z)\] in $U_c^n$. The difference $|r_n(w)-r_n(z)|$ can be estimated by  
	\begin{align*}
		\bigg|\frac{1}{\|D(f^{n-1}\circ f_n)(z)\|_{\CDach}}&-\frac{1}{\|D(f^{n-1}\circ f_n)(w)\|_{\CDach}} \bigg|\\
		&\leq 
		\left|\frac{(f^{n-1})^{\#}(f_n(w))\cdot \big(\|Df_n(w)\|_{\CDach} -\|Df_n(z)\|_{\CDach}\big)}{\|D(f^{n-1}\circ f_n)(z)\|_{\CDach}\cdot \|D(f^{n-1}\circ f_n)(w)\|_{\CDach}} \right|\\ &+\left|\frac{\|Df_n(z)\|_{\CDach}\cdot \big((f^{n-1})^{\#}(f_n(w)) -(f^{n-1})^{\#}(f_n(z))\big)}{\|D(f^{n-1}\circ f_n)(z)\|_{\CDach}\cdot \|D(f^{n-1}\circ f_n)(w)\|_{\CDach}} \right|
	\end{align*}
	We will treat the two summands separately. In the following we will establish an estimate 
	\begin{equation*}
		\|D(f^{n-1}\circ f_n)(z)\|_{\CDach}-\|D(f^{n-1}\circ f_n)(w)\|_{\CDach}
		\leq 
		C\|D(f^{n-1}\circ f_n)(w)\|_{\CDach}\cdot |z-w|_{\hat{\mathbb{C}}}\end{equation*} 
	for a constant $C= C(|\lambda|^{-(n-1)/d}, |\lambda|^{-(n-1)}, \phi, d)$ that is decreasing as $n$ increases and can hence be chosen uniformly. We will use the following notation 
	\begin{equation*}x:=\phi(z)\in |\lambda|^{-(n-1)/d}\Db \textup{ and } y:= \phi(w)\in|\lambda|^{-(n-1)/d}\Db.\end{equation*} 
	In order to find an estimate for the first summand we use the chain rule to then make estimates for the derivatives in the product 
	\begin{align*}
		&(f^{n-1})^{\#}(f_n(w))\cdot \left( \|Df_n(w)\|_{\CDach}-\|Df_n(z)\|_{\CDach}\right)\\
		=&(f^{n-1})^{\#}(f_n(w))\cdot \big( (\psi^{-1})^{\#}(h_n(y))\cdot\|Dh_n(y)\|\cdot\phi^{\#}(w)\\
		& -(\psi^{-1})^{\#}(h_n(x))\cdot \|Dh_n(x)\|\cdot \phi^{\#}(z) \big)
	\end{align*}
	Then we use that $\|Dh_n(u)\|=|\lambda|^{-(n-1)(1-1/d)}d$ for all $u\in |\lambda|^{-(n-1)/d}\Db$, and we will denote it by $\|Dh_n\|$. In order to find an estimate it is convenient to separate the expression further: \begin{equation}\label{eq:radius_Lip_Uc1}
		\begin{split}
			&|(f^{n-1})^{\#}(f_n(w))\cdot ( \|Df_n(w)\|_{\CDach}-\|Df_n(z)\|_{\CDach})|\\
			&\leq 
			(f^{n-1})^{\#}(f_n(w))\cdot \|Dh_n\| \cdot \bigg( (\psi^{-1})^{\#}(h_n(x))\cdot  \big|\phi^{\#}(z)-\phi^{\#}(w)\big|\\
			&+\phi^{\#}(w)\cdot  \big|(\psi^{-1})^{\#}(h_n(y))-(\psi^{-1})^{\#}(h_n(x))\big|\bigg)
		\end{split}
	\end{equation}
	For the first summand using Lemma~\ref{lem:Koebe_phi_psi}~\ref{item:Koebe_phi} we find that there exists a constant $C_{\phi}$ such that
	\begin{equation*}
		| \phi^{\#}(z)-\phi^{\#}(w)| 
		\lesssim 
		\phi^{\#}(w) C_\phi |z-w|_{\hat{\mathbb{C}}}
	\end{equation*}
	and for the second summand by Lemma~\ref{lem:Koebe_phi_psi}~\ref{item:Koebe_psi} there exists a constant $C_{\psi^{-1}}$ such that 
	\begin{align*}
		| (\psi^{-1})^{\#}(h_n(x))&-(\psi^{-1})^{\#}(h_n(y)) |\\
		&\leq 
		(\psi^{-1})^{\#}(h_n(x))\cdot C_{\psi^{-1}}|h_n(y)-h_n(x)|.
	\end{align*}
	For $h_n$ we have by Lemma \ref{lemma:hnc is continuous} that there exists a constant $L_{h_n}$ such that 
	\begin{equation*}
		|h_n(x)-h_n(y)|
		\leq 
		L_{h_n}|x-y|.
	\end{equation*}
	And lastly we estimate $|x-y|\leq L_{\phi}\cdot \phi^{\#}(z)|z-w|_{\CDach}$. Putting everything together we obtain the following estimate
	\begin{equation*}
		|(\psi^{-1})^{\#}(h_n(x))-(\psi^{-1})^{\#}(h_n(y))|
		\leq (\psi^{-1})^{\#}(h_n(x))C_{\psi^{-1}}L_{h_n}\phi^{\#}(w)L_{\phi}|z-w|_{\hat{\mathbb{C}}}.
	\end{equation*}
	
	And inserting these estimates in \eqref{eq:radius_Lip_Uc1} we obtain 
	\begin{align*}
		&(f^{n-1})^{\#}(f_n(w))\cdot \left|\|Df_n(w)\|_{\CDach}-\|Df_n(z)\|_{\CDach}\right|\\
		&\leq (f^{n-1})^{\#}(f_n(w))\cdot
		\|Dh_n\|
		((\psi^{-1})^{\#}(h_n(x))\cdot \left|\phi^{\#}(z)-\phi^{\#}(w)\right|\\
		&+(f^{n-1})^{\#}(f_n(w))
		\|Dh_n\|
		\phi^{\#}(w)\cdot \left|(\psi^{-1})^{\#}(h_n(x))-(\psi^{-1})^{\#}(h_n(y))\right|\\	
		&\leq(f^{n-1})^{\#}(f_n(w))
		\|Dh_n\|
		(\psi^{-1})^{\#}(h_n(x))\cdot\phi^{\#}(w)C_{\phi}|z-w|_{\hat{\mathbb{C}}}\\
		&+(f^{n-1})^{\#}(f_n(w))
		\|Dh_n\|
		\phi^{\#}(w)(\psi^{-1})^{\#}(h_n(y))\\
		&\cdot C_{\psi^{-1}}L_{h_n}\phi^{\#}(w)C_{\phi}|z-w|_{\hat{\mathbb{C}}}
	\end{align*}	
	By Lemma~\ref{lem:Koebe_phi_psi}~\ref{item:Koebe_psi} there exists a constant $C_{\psi^{-1}}'$ such that 
	\[(\psi^{-1})^{\#}(h_n(x))\leq C_{\psi^{-1}}'(\psi^{-1})^{\#}(h_n(y)).\]
	So we may summarize $(\psi^{-1})^{\#}(h_n(x))\|Dh_n\|\phi^{\#}(w)\leq C_{\psi^{-1}}' \|Df_n(w)\|_{\CDach}$ and the estimate has the form 
	\begin{align*}
		&(f^{n-1})^{\#}(f_n(w))\cdot \left|\|Df_n(w)\|_{\CDach}-\|Df_n(z)\|_{\CDach}\right|\\
		\leq&\|Df^{n-1}\circ f_n(w)\|_{\CDach}C_{\psi^{-1}}'C_{\phi}|z-w|_{\hat{\mathbb{C}}}\\
		+&\|Df^{n-1}\circ f_n(w)\|_{\CDach}C_{\psi^{-1}}L_{h_n}\phi^{\#}(w)C_{\phi}|z-w|_{\hat{\mathbb{C}}}.
	\end{align*}
	In the last expression we can estimate $\phi^{\#}(w) $ by the maximum of $\phi^{\#}(u)$ for $u\in \U_{\crit}$. 
	
	We will turn to bounding the second summand of \eqref{eq:radius_Lip_Uc1}. By \ref{theorem: spherical Koebe} there exists a constant $C_f$ such that 
	\begin{equation*}
		(f^{n-1})^{\#}(f_n(w))-(f^{n-1})^{\#}(f_n(z))
		\leq 
		C_{f^{n-1}}(f^{n-1})^{\#}(f_n(w))|f_n(z)-f_n(w)|_{\CDach},
	\end{equation*}
	with $C_{f^{n-1}}$ depending on $|\lambda|^{-(n-1)}$ and decreasing as $n$ increases.
	
	With this we can bound the second summand as follows
	\begin{align*}
		&\|Df_n(w)\|_{\CDach}\big((f^{n-1})^{\#}(f_n(w))-(f^{n-1})^{\#}(f_n(z))\big)\\
		\leq 
		&\|Df_n(w)\|_{\CDach}\cdot (f^{n-1})^{\#}(f_n(w))C_{f^{n-1}}|f_n(z)-f_n(w)|_{\hat{\mathbb{C}}}\\
		=&
		\|Df^{n-1}\circ f_n(w)\|_{\CDach}C_{f^{n-1}}|f_n(w)-f_n(z)|_{\hat{\mathbb{C}}}\\
		\leq 
		&\|Df^{n-1}\circ f_n(w)\|_{\CDach} C_{f^{n-1}}L_{f_n}|w-z|_{\hat{\mathbb{C}}}
	\end{align*}
	
	The constant $L_{f_n}$ is the Lipschitz constant of $f_n$ restricted to $U_c^n$.
	Now if we use both estimates we get \begin{align*}
		|r_n(z)-r_n(w)|&\leq \frac{\|Df^{n-1}\circ f_n(w)\|_{\CDach}}{\|Df^{n-1}\circ f_n(z)\|_{\CDach}\cdot \|Df^{n-1}\circ f_n(w)\|_{\CDach}}\\
		&\cdot\big(C_{\psi^{-1}}'C_{\phi}
		+C_{\psi^{-1}}L_{h_n}C_{\phi}\max_{u\in U_c}\phi^{\#}(x)
		+C_{f^{n-1}}L_{f_n}\big)|w-z|_{\hat{\mathbb{C}}}\\
		&=:L_{r_n}d_n(z)|z-w|_{\hat{\mathbb{C}}}
	\end{align*}
	The constants $C_{\phi}$, $C_{\psi^{-1}}'$, $C_{\psi^{-1}}$ and $C_{f^{n-1}}$ are from Lemma \ref{lem:Koebe_phi_psi} and \ref{lem:prop_fn}, they depend on $|\lambda|^{-(n-1)}$, respectively $|\lambda|^{/(n-1)/d}$. The constant $L_{h_n}=L(d, |\lambda|, n)$ depends on the degree, $|\lambda|$ and $n$.
	
	So to summarize we can write 
	\begin{equation*}
		|r_n(z)-r_n(w)|
		\lesssim 
		d_n(z)|z-w|_{\hat{\mathbb{C}}}
	\end{equation*} 
	with $C(\lesssim)=C(|\lambda|^{-n}, d, \phi)$. 
\end{proof}

Now we are in shape to prove Lemma \ref{lemma:param_of_P(U_c)}

\begin{proof}[Proof of Lemma \ref{lemma:param_of_P(U_c)}]
	Let $(x, t), (y, s)\in |\lambda|^{-(n-1)/d}\Db\times [0, q_n]$ be arbitrary. We will denote \[z:= \phi^{-1}(x)\textup{ and }w:= \phi^{-1}(y).\] Then we will use 
	\begin{align*}|(z, \rho_n(z, t))&-(w, \rho_n(w, s))\asymp \frac{1}{2}(\rho_n(z, t)+\rho_n(w, s))|z-w|_{\hat{\mathbb{C}}}\\ &+|\rho_n(z, t)-\rho_n(w, s)|.\end{align*} 
	Treat both coordinates separately, for the spherical component we use Lemma \ref{lem:Koebe_phi_psi} to conclude that
	\begin{equation}\label{equation: estimate lem 4.19} |z-w|_{\hat{\mathbb{C}}}\asymp(\phi^{-1})^{\#}(x)\cdot |x-y|.\end{equation}
	For the difference of the radii we estimate 
	\begin{align*}
		&|\rho_n(z, t)-\rho_n(w, s)|=|\delta(z, t)-\delta(w, s)|\\
		\leq &\left|d_n(z)\frac{q_n-t}{q_n}-d_n(w)\frac{q_n-s}{q_n}\right|+\left|d_{n+1}(z)\frac{t}{q_n}-d_{n+1}(w)\frac{s}{q_n}\right|\\
		\leq &\left|d_{n}(z)\frac{q_n-t}{q_n}-d_{n}(z)\frac{q_n-s}{q_n}\right|\\+ 
		&\left|d_{n}(z)\frac{q_n-s}{q_n}-d_{n}(w)\frac{q_n-s}{q_n}\right|\\
		+&\left|d_{n+1}(z)\frac{t}{q_n}-d_{n+1}(z)\frac{s}{q_n}\right|+ \left|d_{n+1}(z)\frac{s}{q_n}-d_{n+1}(w)\frac{s}{q_n}\right|\\
		\leq &d_{n}(z)\left|\frac{t-s}{q_n}\right| + \frac{q_n-s}{q_n}|r_n(z)-r_n(w)|\\
		+&d_{n+1}(z)\left|\frac{t-s}{q_n}\right|+\frac{s}{q_n}|r_{n+1}(z)-r_{n+1}(w)|.
	\end{align*}
	We have that $d_n(z)\asymp d_n(c)\asymp q_n$ since $z\in U_c^n$. Further, $d_{n+1}(z)\asymp d_n(c)$ with a different constant, so we can estimate \begin{equation*}
		d_n(z)\left|\frac{t-s}{q_n}\right|+d_{n+1}(z)\left|\frac{t-s}{q_n}\right|\lesssim |t-s|.
	\end{equation*}
	
	Further we used that $|r_n(z)-r_n(w)|=|d_n(z)-d_n(w)|$ and at this point we apply Lemma \ref{lemma:radius_Lipschitz_Ucrit} to conclude that \begin{align*}
		\frac{q_n-s}{q_n}|r_n(z)-r_n(w)|&+\frac{s}{q_n}|r_{n+1}(z)-r_{n+1}(w)|\\
		&\leq \frac{q_n-s}{q_n}L_{r_n}d_n(z) |z-w|_{\hat{\mathbb{C}}}+\frac{s}{q_n}L_{r_{n+1}}d_n(z)|z-w|_{\hat{\mathbb{C}}}\\
		&\leq 2L_{r_n}d_n(z) |z-w|_{\hat{\mathbb{C}}}.
	\end{align*}
	Let us summarize the upper bound
	\begin{align*}|\alpha_1(x, t)-\alpha_1(y, s)|&\lesssim (\phi^{-1})^{\#}(z)|x-y| + 2|t-s|+ 2L_{r_n}d_n(z)|z-w|_{\hat{\mathbb{C}}}\\
		&\lesssim 2(\phi^{-1})^{\#}(z)L_{r_n} |x-y| +2|t-s|.
	\end{align*} 
	In the last step we used \eqref{equation: estimate lem 4.19}, that $L_{r_n}\geq 1$ and that $r_{n+1}(z)+d_n(z)\lesssim 1$ with $C(\lesssim)$ uniform. Overall we get a constant $L= L(|\lambda|^{-(n-1)}, |\lambda|^{-(n-1)/d}, d, D\phi)$ that is decreasing as $n$ increases.

	For a lower bound note that since 
	\begin{align*}
		\rho_n(z, t)
		&=
		r_{n}(z)\frac{q_n-t}{q_n}+r_{n+1}(z)\frac{t}{q_n}\\
		\rho_n(w, s)
		&=
		r_{n}(w)\frac{q_n-s}{q_n}+r_{n+1}(w)\frac{s}{q_n}
	\end{align*}
	then we find that 
	\begin{align*}
		t
		&=
		q_n\cdot \frac{\rho_n(z, t)-r_{n}(z)}{r_{n+1}(z)-r_{n}(z)}\\
		s
		&= 
		q_n\cdot \frac{\rho_n(w, s)-r_{n}(w)}{r_{n+1}(w)-r_{n}(w)}.
	\end{align*}
	
	Using the above expressions we can find a lower bound to
	\begin{equation*}
		|t-s|
		=
		q_n\left|\frac{\rho_n(z, t)-r_{n}(z)}{r_{n+1}(z)-r_{n}(z)}-\frac{\rho_n(w, s)-r_{n}(w)}{r_{n+1}(w)-r_{n}(w)}\right|,
	\end{equation*}
	first we use Lemma \ref{lemma:mon_radii_crit} \ref{item:mon_radii_crit_3} to conclude that there exists a constant $a$ such that $r_{n+1}(z)-r_n(z)\geq a\cdot d_n(z)$ and the same statement for $w$. We also expand the expression using the triangle inequality 
	\begin{align*}
		|t-s| 
		\leq 
		\frac{q_n}{a^2\cdot d_n(z)d_n(w)}|\big(&\rho_n(z, t)-r_n(z)\big)\big(r_{n+1}(z)-r_n(w)\big)\\
		&-
		\big(\rho_n(w, s)-r_n(w)\big)\big(r_{n+1}(z)+r_{n+1}(w)\big)|
	\end{align*}
	Then we can estimate \begin{align*}
		|\big(&\rho_n(z, t)-r_n(z)\big)\cdot \big(r_{n+1}(z)-r_n(w)\big)\\
		&-\big(\rho_n(w, s)-r_n(w)\big)\cdot \big(r_{n+1}(z)+r_{n+1}(w)\big)|\\
		&\leq \big(\rho_n(z, t)-r_n(z)\big)\cdot |r_{n+1}(w)-r_n(w)-r_{n+1}(z)+r_n(z)|\\
		&+|\rho_n(z, t)-r_n(z)-\rho_n(w, s)+r_n(w)|\cdot\big(r_{n+1}(z)-r_n(z)\big)\\
		&\lesssim d_{n+1}(z)\cdot \big(2L_{r_n}d_n(z)+L_{r_{n+1}}(z)d_{n+1}(z)\big)|z-w|_{\hat{\mathbb{C}}}\\
		&+ d_{n+1}(z)|\rho_n(z, t)-\rho_n(w, s)|\\	
		&\lesssim d_{n+1}(z)\cdot \big(2L_{r_n}d_n(z)+L_{r_{n+1}}(z)d_{n+1}(z)\big)|z-w|_{\hat{\mathbb{C}}}\\
		&+ d_{n+1}(z)|\rho_n(z, t)-\rho_n(w, s)|\\	
		&\lesssim d_{n+1}(z) d_n(z)|z-w|_{\hat{\mathbb{C}}} + d_{n+1}(z)|\rho_n(z, t)-\rho_n(w, s)|
	\end{align*}
	where $C(\lesssim)=C(|\lambda|^{-(n-1)}, |\lambda|^{-n-1/d}, d, \phi^{\#})$ as in Lemma \ref{lemma:radius_Lipschitz_Ucrit}. 
	
	We can summarize this estimate to \begin{align*}
		|t-s|&\lesssim q_n |z-w|_{\hat{\mathbb{C}}}+|\rho_n(z, t)-\rho_n(w, s)|
	\end{align*}
	So we can estimate that \begin{align*}
		|(x, t)-(y, s)|&\asymp |x-y|+|t-s|\\
		&\lesssim \phi^{\#}(z)\cdot |z-w|_\mathbb{C} + q_n|z-w|_{\CDach}+ |\rho_n(z, t)-\rho_n(w, s)|\\
		&\leq
		L' |(z, \rho_n(z, t))-(w, \rho_n(w, s))|
	\end{align*}
	with $L'=L'(|\lambda|^{-(n-1)}, |\lambda|^{-(n-1)/d}, \phi^{\#}, d)$. Now take the maximum of $L, L'$.
\end{proof} 

\begin{lemma}\label{lemma:parap_of_P(Up)}
	For all $n\in \N$ and all post-critical points $p\in \CDach$ we find a parametrization of $P(U_p^{n-1}, n-1)$. For $q_{n-1}'= d_{n-1}(p)$ the map is defined as follows \begin{align*}
		\alpha_2:|\lambda|^{-(n-1)}\D\times[0, q_{n-1}']&\to P(U^{n-1}_p, n-1)\\
		(x, t)&\mapsto (\psi^{-1}(x), \rho_{n-1}(\psi^{-1}(x), t)).
	\end{align*}
	It is a bi-Lipschitz map with constant $L_{\alpha_2}=L_{\alpha_2}(|\lambda|^{-(n-1)/d}, |\lambda|^{-(n-1)}, d)$ that is decreasing as $n$ increases and can hence be chosen uniformly.
	Here $\rho_{n-1}(z)$ is as in Equation \eqref{eq:conv_rho_n_height_h}.
\end{lemma}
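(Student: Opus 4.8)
The plan is to mirror the proof of Lemma~\ref{lemma:param_of_P(U_c)}, which simplifies here because near a postcritical point $p$ the map $f$ is conformal: on $U_p^{n-1}$ one has $f^{n-1} = \psi^{-1}\circ\delta_{\lambda^{n-1}}\circ\psi$ by \eqref{eq:loc_coord_Up}, and no winding map intervenes. Since $\psi$ extends conformally to $V = \psi^{-1}(B(0,R))$ with $R>1$, the function $\psi^{-1}$ is univalent on $\Db$, which strictly contains $\psi(U_p^{n-1}) = |\lambda|^{-(n-1)}\Db$; hence Lemma~\ref{lem:Koebe_phi_psi}~\ref{item:Koebe_psi} (applied with $n$ replaced by $n-1$) supplies the distortion estimates for $\psi^{-1}$ on $U_p^{n-1}$ with constants $C(\asymp) = L(n-1)\searrow 1$. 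Together with Lemma~\ref{lem:fn_Unp}~\ref{item:fn_Unp} and the chain rule this gives $(f^{n-1})^{\#}\asymp |\lambda|^{n-1}$ and $(f^n)^{\#}\asymp|\lambda|^{n}$ on $U_p^{n-1}$, so that $d_{n-1}(z) \asymp d_n(z) \asymp d_{n-1}(p) = q_{n-1}'$ and $r_n(z)-r_{n-1}(z)\asymp d_{n-1}(z)$ for all $z\in U_p^{n-1}$, with constants independent of $n$.

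First I would record the radial Lipschitz estimate, the analogue of Lemma~\ref{lemma:radius_Lipschitz_Ucrit}: for $z,w\in U_p^{n-1}$,
\begin{equation*}
  |r_{n-1}(z) - r_{n-1}(w)| \lesssim d_{n-1}(z)\,|z-w|_{\CDach}
  \quad\text{and}\quad
  |r_n(z) - r_n(w)| \lesssim d_{n-1}(z)\,|z-w|_{\CDach},
\end{equation*}
with $C(\lesssim)$ uniform in $n$. As in Lemma~\ref{lemma:radius_Lipschitz_Ucrit}, one writes $r_{n-1}(z)-r_{n-1}(w) = s\big((f^{n-1})^{\#}(w)-(f^{n-1})^{\#}(z)\big)/\big((f^{n-1})^{\#}(z)(f^{n-1})^{\#}(w)\big)$, expands $(f^{n-1})^{\#}(z) = |\lambda|^{n-1}(\psi^{-1})^{\#}(\lambda^{n-1}\psi(z))\,\psi^{\#}(z)$, and bounds the numerator using the Lipschitz continuity of $(\psi^{-1})^{\#}$ on $\Db$ (it is real-analytic on a neighborhood of $\Db$) and of $\psi^{\#}$ on $U_p^{n-1}$, together with the estimate $|\psi(z)-\psi(w)| \asymp \psi^{\#}(p)|z-w|_{\CDach}$ from Lemma~\ref{lem:Koebe_phi_psi}~\ref{item:Koebe_psi}; dividing by $(f^{n-1})^{\#}(z)(f^{n-1})^{\#}(w)$ and using $d_{n-1}(z) = s/(f^{n-1})^{\#}(z)$ gives the claim. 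The estimate for $r_n$ is identical, since $f_1\circ\dots\circ f_n = f^n$ is likewise conformal on $U_p^{n-1}$.

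With these at hand the bi-Lipschitz estimate for $\alpha_2$ is a near-verbatim copy of the corresponding part of the proof of Lemma~\ref{lemma:param_of_P(U_c)}: using the comparability $|(z,\rho)-(w,\tau)| \asymp \frac{1}{2}(\rho+\tau)|z-w|_{\CDach}+|\rho-\tau|$ of the Euclidean metric of $\R^3$ with the spherical sum norm, one estimates the spherical component via $|z-w|_{\CDach}\asymp(\psi^{-1})^{\#}(0)|x-y|$ from Lemma~\ref{lem:Koebe_phi_psi}~\ref{item:Koebe_psi} (with $x=\psi(z)$, $y=\psi(w)$), and the radial component by expanding $\rho_{n-1}(z,t) = \frac{q_{n-1}'-t}{q_{n-1}'}r_{n-1}(z)+\frac{t}{q_{n-1}'}r_n(z)$ and splitting $|\rho_{n-1}(z,t)-\rho_{n-1}(w,s)|$ into terms controlled by $|t-s|$ (using $d_{n-1}(z),d_n(z)\asymp q_{n-1}'$) and terms controlled by the radial Lipschitz estimate above. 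For the lower bound one inverts $t = q_{n-1}'\,(\rho_{n-1}(z,t)-r_{n-1}(z))/(r_n(z)-r_{n-1}(z))$, using $r_n(z)-r_{n-1}(z)\gtrsim d_{n-1}(z)$ to control the denominators, exactly as in the critical case. The resulting constant is $L_{\alpha_2} = L_{\alpha_2}(|\lambda|^{-(n-1)/d},|\lambda|^{-(n-1)},d)$.

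The only delicate point -- and the main, though minor, obstacle -- is the uniformity of every constant in $n$: one must verify that each distortion estimate traces back either to Lemma~\ref{lem:Koebe_phi_psi} on the shrinking disks $|\lambda|^{-(n-1)}\Db \subset \Db$ (whose constants tend to $1$), to the Lipschitz constants of the fixed functions $(\psi^{-1})^{\#}$ and $\psi^{\#}$ on these disks, or to Lemma~\ref{lem:fn_Unp}~\ref{item:fn_Unp}; and that the quantities $q_{n-1}'$, $d_{n-1}(z)$, $d_n(z)$, $r_n(z)-r_{n-1}(z)$ are mutually comparable on $U_p^{n-1}$ with constants independent of $n$ (for the finitely many small $n$ where the backward orbit of $U_p^{n-1}$ could still meet a critical neighborhood, the constants are simply absorbed into a maximum). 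Granting this, $L_{\alpha_2}$ is bounded and non-increasing in $n$, hence may be chosen uniformly, as claimed.
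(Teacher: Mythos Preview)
Your proposal is correct and follows essentially the same route as the paper: the paper also first isolates the radial Lipschitz estimate (Lemma~\ref{lemma:radii_Lipsch_Upost}) and the comparability $r_n(z)-r_{n-1}(z)\asymp d_{n-1}(z)$ (Lemma~\ref{lemma:lower_bound_mon_radii_Up}), then runs the upper and lower bi-Lipschitz bounds for $\alpha_2$ exactly as in Lemma~\ref{lemma:param_of_P(U_c)}. The only cosmetic difference is that for the radial Lipschitz estimate the paper applies Koebe (Theorem~\ref{theorem: spherical Koebe}) directly to $f^n$ to obtain $|(f^n)^{\#}(z)-(f^n)^{\#}(w)|\leq L_r(f^n)^{\#}(w)|z-w|_{\CDach}$, whereas you factor through the coordinates $\psi,\psi^{-1}$; both yield the same bound with constants decreasing in $n$.
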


This is very similar to Lemma \ref{lemma:param_of_P(U_c)}. We will first prove two lemmas that will be used in the proof.

\begin{lemma}
	\label{lemma:radii_Lipsch_Upost}
	Let $z\in \U_{\post}^{n}$. Then for $z \mapsto r_{n}(z)$ we can show that  \begin{equation*}
		|r_{n}(z)-r_{n}(w)|
		\leq 
		L_r \cdot d_{n}(z)|z-w|_{\hat{\mathbb{C}}},
	\end{equation*} 
	for all $w\in \U_{\post}^{n}$ where $L_{r}=L_r(|\lambda|^{-(n-1)})$ is decreasing as $n$ increases.
\end{lemma}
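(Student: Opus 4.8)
The plan is to run the argument of the proof of Lemma~\ref{lemma:radius_Lipschitz_Ucrit}, which is considerably simpler in the post-critical case because on $\U_{\post}^n$ no winding map intervenes and the relevant iterate is conformal. First I would reduce to an oscillation estimate for the spherical derivative. Each $p\in\post(f)$ is a fixed point and the backward orbit $U_p^n\to U_p^{n-1}\to\dots\to U_p^0$ consists of sets all containing $p$, hence of sets containing no critical point; so by \eqref{eq:f_1n_fff} we have $f_1\circ\dots\circ f_n=f^n$ on each component $U_p^n$ of $\U_{\post}^n$, and by \eqref{eq:loc_coord_Up} this map equals $\psi^{-1}\circ\delta_{\lambda^n}\circ\psi$ there, which is conformal. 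Hence $r_n=1-s/(f^n)^{\#}$ and $d_n=s/(f^n)^{\#}$ on $U_p^n$, so that for $z,w$ in a common component
\begin{equation*}
	|r_n(z)-r_n(w)|
	=
	s\left|\frac{1}{(f^n)^{\#}(z)}-\frac{1}{(f^n)^{\#}(w)}\right|
	=
	d_n(z)\,\frac{|(f^n)^{\#}(z)-(f^n)^{\#}(w)|}{(f^n)^{\#}(w)},
\end{equation*}
and it remains to prove $|(f^n)^{\#}(z)-(f^n)^{\#}(w)|\le L_r\,(f^n)^{\#}(w)\,|z-w|_{\CDach}$.

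For this I would pass to the Koenig coordinate: writing $x=\psi(z)$ and $y=\psi(w)\in|\lambda|^{-n}\Db$, the chain rule together with $\psi^{\#}(z)=((\psi^{-1})^{\#}(x))^{-1}$ gives (compare Lemma~\ref{lem:fn_Unp}~\ref{item:fn_Unp})
\begin{equation*}
	(f^n)^{\#}(z)
	=
	\frac{(\psi^{-1})^{\#}(\lambda^n x)}{(\psi^{-1})^{\#}(x)}\,|\lambda|^n,
	\qquad \lambda^n x,\ \lambda^n y\in\Db .
\end{equation*}
Since $\psi^{-1}$ is conformal on a neighbourhood of $\Db$, the function $(\psi^{-1})^{\#}$ is $C^1$ on $\Db$, hence Lipschitz there with a constant $L_\psi$ and bounded between positive constants $m_\psi\le(\psi^{-1})^{\#}\le M_\psi$. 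Splitting the difference of the two quotients (just as the second summand is handled in the proof of Lemma~\ref{lemma:radius_Lipschitz_Ucrit}), estimating $|(\psi^{-1})^{\#}(\lambda^n x)-(\psi^{-1})^{\#}(\lambda^n y)|\le L_\psi|\lambda|^n|x-y|$ and $|(\psi^{-1})^{\#}(x)-(\psi^{-1})^{\#}(y)|\le L_\psi|x-y|$, and finally inserting the spherical Koebe estimate $|x-y|=|\psi(z)-\psi(w)|\lesssim\psi^{\#}(p)\,|z-w|_{\CDach}$ of Lemma~\ref{lem:Koebe_phi_psi}~\ref{item:Koebe_psi}, one reaches the claimed inequality, the factors of $|\lambda|^n$ coming from the derivative of $\delta_{\lambda^n}$ being absorbed against $(f^n)^{\#}(w)\asymp|\lambda|^n$. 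All remaining constants are Koebe constants (tending to $1$) together with $|\lambda|$, $\psi^{\#}(p)$ and the scale $|\lambda|^{-(n-1)}$; as $\post(f)$ is finite, a single $L_r=L_r(|\lambda|^{-(n-1)})$ serves for all post-critical points, and combined with the reduction this is the statement.

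The step I expect to be the main obstacle is upgrading Koebe's conclusion. The spherical Koebe theorem only yields the comparability $(\psi^{-1})^{\#}(\cdot)\asymp(\psi^{-1})^{\#}(0)$ on $|\lambda|^{-n}\Db$, which is not enough to run the splitting; one genuinely needs the Lipschitz (indeed $C^1$) control of $(\psi^{-1})^{\#}$ up to $\Sp^1$, available precisely because $\psi^{-1}$ extends conformally across the unit circle, and then must track carefully how $|\lambda|^{-(n-1)}$ enters each constant so that the final bound has exactly the asserted scale dependence.
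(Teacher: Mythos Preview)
Your reduction to the oscillation estimate
\[
|(f^n)^{\#}(z)-(f^n)^{\#}(w)|\le L_r\,(f^n)^{\#}(w)\,|z-w|_{\CDach}
\]
is exactly what the paper does, and from there the computation of $|r_n(z)-r_n(w)|$ is identical. The only difference is how that oscillation estimate is justified: the paper simply cites the spherical Koebe theorem (Theorem~\ref{theorem: spherical Koebe}) and asserts the Lipschitz bound in one line, whereas you pass explicitly to the Koenig coordinate, write $(f^n)^{\#}$ as $|\lambda|^n(\psi^{-1})^{\#}(\lambda^n x)/(\psi^{-1})^{\#}(x)$, and use the $C^1$ regularity of $(\psi^{-1})^{\#}$ on $\Db$ coming from the conformal extension of $\psi^{-1}$ across $\Sp^1$.

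Your more explicit derivation is correct and in fact addresses precisely the point you flag as the obstacle: Theorem~\ref{theorem: spherical Koebe} as stated only gives comparability $f^{\#}(z)\asymp f^{\#}(w)$, not a Lipschitz modulus in $|z-w|_{\CDach}$. The paper uses this Lipschitz-type consequence of Koebe in several places (compare the constant $C$ in the proof of Lemma~\ref{lemma: extension is quasi-sim on P(U, n)} and the constant $C_{f^{n-1}}$ in the proof of Lemma~\ref{lemma:radius_Lipschitz_Ucrit}) without spelling it out; your route via Koenig coordinates is a clean, self-contained way to supply it here, and it makes the dependence on the scale $|\lambda|^{-(n-1)}$ transparent.
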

\begin{proof}
	Let $z, w\in U_p^n$ be arbitrary. By Theorem \ref{theorem: spherical Koebe} there exists a constant $L_r$ such that 
	\begin{equation*}
		|(f^n)^{\#}(z)-(f^n)^{\#}(w)|
		\leq 
		(f^n)^{\#}(w)|L_r|z-w|_{\hat{\mathbb{C}}}.
	\end{equation*} 
	So we can estimate \begin{align*}
		|r_n(z)-r_n(w)|
		&=\left|\frac{1}{(f^n)^{\#}(w)}-\frac{1}{(f^n)^{\#}(z)}\right|
		=\frac{|(f^n)^{\#}(z)-(f^n)^{\#}(w)|}{(f^n)^{\#}(z)(f^n)^{\#}(w)}\\
		&\leq \frac{L_{r}(f^n)^{\#}(w) |z-w|_{\hat{\mathbb{C}}} }{(f^n)^{\#}(z)(f^n)^{\#}(w)} \leq L_{r}\cdot d_n(z)|z-w|_{\hat{\mathbb{C}}}.
	\end{align*}
	The constant $L_{r_n}=L(|\lambda|^{-(n-1)})$ is decreasing as $n$ increases and can hence be chosen uniformly over $n$.
\end{proof}
\begin{lemma}
	\label{lemma:lower_bound_mon_radii_Up}
	Let $z\in \U_{\post}^n$. Then there exist constants $a$ and $b$ such that
	\begin{equation*}
		a\cdot d_{n-1}(z)\leq r_n(z)-r_{n-1}(z)\leq b\cdot d_{n-1}(z)
	\end{equation*}
	where $a, b$ are independent of $n$.
\end{lemma}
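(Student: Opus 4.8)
The plan is to reduce the statement to the chain rule for the spherical derivative and then estimate a single scalar factor. The key point is that on a neighborhood $U_p^n$ of a postcritical fixed point the maps $f_j$ never differ from $f$, so the composition $f_1\circ\dots\circ f_n$ collapses to the honest iterate $f^n$.

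First I would verify that $f_1\circ\dots\circ f_n=f^n$ on $U_p^n$. Applying the maps in turn carries $U_p^n\to U_p^{n-1}\to\dots\to U_p^0$, and at the $j$-th stage the set $U_p^j$ is disjoint from every component $U_c^j$ of $\U^j_{\crit}$, because each $U_c^j$ is contained in $U_c^1$, which was chosen disjoint from $\U^0\supset U_p^0\supset U_p^j$, so $f_j=f$ there. Hence for $j=0,\dots,n$ we have $r_j(z)=1-s/(f^j)^\#(z)$ and $d_j(z)=s/(f^j)^\#(z)$. Using $(f^n)^\#(z)=f^\#(f^{n-1}(z))\cdot(f^{n-1})^\#(z)$, this gives
\begin{equation*}
	r_n(z)-r_{n-1}(z)=d_{n-1}(z)-d_n(z)=d_{n-1}(z)\Bigl(1-\frac{1}{f^\#(f^{n-1}(z))}\Bigr),
\end{equation*}
so everything reduces to two-sided bounds, uniform in $n$, for $1-1/f^\#(f^{n-1}(z))$.

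Next, since $f^{n-1}(U_p^n)=U_p^1$, the point $w:=f^{n-1}(z)$ lies in the compact set $U_p^1$, on which $f^\#$ is continuous; set $m_p:=\min_{U_p^1}f^\#$ and $M_p:=\max_{U_p^1}f^\#$. Because $p$ is a fixed point, $f^\#(p)=|\lambda|>1$, so by shrinking $U_p^0$ exactly as in the proof of Lemma~\ref{lemma:mon_radii_crit}~\ref{item:mon_radii_crit_2} --- which is the choice of neighborhoods referred to in the statement --- we may assume $m_p>1$. Then $1-1/m_p\le 1-1/f^\#(w)\le 1-1/M_p$, and with $a:=\min_{p\in\post(f)}(1-1/m_p)>0$ and $b:=\max_{p\in\post(f)}(1-1/M_p)$, both depending only on the (already fixed) sets $U_p^0$ and not on $n$, we obtain $a\,d_{n-1}(z)\le r_n(z)-r_{n-1}(z)\le b\,d_{n-1}(z)$. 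For $n=1$ the radius $r_0$ is the constant of Definition~\ref{definition: r_n}, and both sides are then trivially bounded above and below by positive constants.

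This is the $\U_{\post}$-analogue of Lemma~\ref{lemma:mon_radii_crit}~\ref{item:mon_radii_crit_3}, but much shorter since no winding map occurs: on $U_p^n$ the map is $f^n$, linearized by $\psi$. I do not expect a serious obstacle; the only point needing attention is the bookkeeping ensuring $f_j=f$ all along the orbit $U_p^n\to\dots\to U_p^0$ and $f^\#>1$ on $U_p^1$, both of which follow from the neighborhoods $U_p^0$ having been chosen small enough (pairwise disjoint, with $U_c^1$ disjoint from $\U^0$, and $U_p^0\subset\psi^{-1}(B(0,r))$ with $r$ small).
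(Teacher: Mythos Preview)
Your proof is correct and follows essentially the same route as the paper: reduce to $f_1\circ\dots\circ f_n=f^n$ on $U_p^n$, apply the chain rule to get $r_n(z)-r_{n-1}(z)=d_{n-1}(z)\bigl(1-1/f^\#(f^{n-1}(z))\bigr)$, and bound the scalar factor using that $f^{n-1}(z)\in U_p^1$ is a compact set on which $f^\#$ is continuous and bounded below away from $1$. Your bookkeeping (verifying $f_j=f$ along the orbit, and handling $n=1$ separately) is slightly more careful than the paper's, but the argument is the same.
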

\begin{proof}
	We can compute \begin{align*}
		r_n(z)-r_{n-1}(z)=1-\frac{1}{(f^n)^{\#}(z)}-(1-\frac{1}{(f^{n-1})^{\#}(z)})=\frac{f^{\#}(f^{n-1}(z))-1}{(f^{n-1})^{\#}(z)}
	\end{align*}
	For the upper bound note that $f^{n-1}(z)$ is contained in a compact subset in $\U_{\post}$ hence there is a maximum of $f^{\#}(f^{n-1}(z))$ independent of $n$ so we set 
	\begin{equation*}
		b
		=
		\max_{w\in \U_{\post}}f^{\#}(w)-1.
	\end{equation*} 
	For the lower bound recall that $U_p$ is a domain where we have a linearization and the map $f$ is expanding, hence $f^{\#}(w)\geq 1+\epsilon$ for some $\epsilon>0$. Then the claim holds with $a\leq \epsilon$. 
\end{proof}
\begin{proof}[Proof of Lemma \ref{lemma:parap_of_P(Up)}]
	Denote \begin{equation*}
		z:=\psi^{-1}(x)\textup{ and }w:= \psi^{-1}(y).
	\end{equation*}
	We will use that 
	\begin{align*}|(z, \rho_{n-1}(z, t))&-(w, \rho_{n-1}(w, s))|\asymp \frac{1}{2}\big(\rho_{n-1}(z, t)+\rho_{n-1}(w, s)\big)|z-w|_{\hat{\mathbb{C}}}\\ 
		&+|\rho_{n-1}(z, t)-\rho_{n-1}(w, s)|.
	\end{align*}
	For the first summand we can estimate the
	distance $|z-w|_{\hat{\mathbb{C}}}$ using Lemma~\ref{lem:Koebe_phi_psi}~\ref{item:Koebe_psi}
	\begin{equation*}
		|z-w|_{\hat{\mathbb{C}}}
		\asymp 
		(\psi^{-1})^{\#}(x)\cdot |x-y|,
	\end{equation*} 
	where $C(\asymp)$ is a uniform constant.
	By Lemma \ref{lemma:radii_Lipsch_Upost} we obtain that 
	\begin{equation*}
		|r_n(z)-r_n(w)|
		\leq 
		L_r \cdot d_n(z)|z-w|_{\CDach}
	\end{equation*}
	where $L_r=L_r(n)$ is decreasing as $n$ increases.
	
	For estimates of the difference in the radii we compute similar as in Lemma \ref{lemma:param_of_P(U_c)}
	\begin{align*}
		&|\rho_{n-1}(z, t)-\rho_{n-1}(w, s)|=|\delta_{n-1}(z)-\delta_{n-1}(w)|\\
		&=\left|d_{n-1}(z)\frac{q_{n-1}'-t}{q_{n-1}'}+d_{n}(z)\frac{t}{q_{n-1}'}
		-\left(d_{n-1}(w)\frac{q_{n-1}'-s}{q_{n-1}'}
		+d_{n}(w)\frac{s}{q_{n-1}'}\right)\right|\\
		&\leq \big(d_{n-1}(z)+d_{n}(z)\big)\left|\frac{s-t}{q_{n-1}'}\right|
		+L_r\cdot d_{n-2}(z)|x-y|\\
		&+L_r\cdot d_{n-1}(z)|x-y|.
	\end{align*}
	To summarize, we get that \begin{align*}
		|(z, &\rho_{n-1}(z, t))-(w, \rho_{n-1}(w, s))|\\
		&\lesssim (\psi^{-1})^{\#}(x)|x-y|
		+L_r \big(d_{n-2}(z)+d_{n-1}(z)\big)|x-y|\\
		&+ \frac{d_{n-1}(z)+d_n(z)}{q_n'}|s-t|\\
		&\leq L\big(|x-y|+|t-s|\big)
	\end{align*}
	with $L=L(|\lambda|^{-(n-1)}, (\psi^{-1})^{\#})$.
	
	For proving the lower bound recall that  
	\begin{equation*}
		\rho_{n-1}(z, t)
		=
		r_{n-1}(z)\frac{q_{n-1}'-t}{q_{n-1}'}+r_{n}(z)\frac{t}{q_{n-1}'}
	\end{equation*}
	solving for $t$ yields
	\begin{equation*}
		t
		=
		q_{n-1}' \frac{\rho_{n-1}(z, t)-r_{n-1}(z)}{r_{n}(z)-r_{n-1}(z)}.
	\end{equation*}
	The analogous expression is obtained for $\rho_{n-1}(w, s), y$ and $s$. By Lemma \ref{lemma:lower_bound_mon_radii_Up} there exists a uniform constant $a$ such that $r_n(z)-r_{n-1}(z)\geq a\cdot d_{n-1}(z)$. Using this Lemma and the estimates above we obtain: 
	\begin{align*}
		&|t-s|
		=
		q_{n-1}'\left|\frac{\rho_{n-1}(z, t)-r_{n-1}(z)}{r_{n}(z)-r_{n-1}(z)}-\frac{\rho_{n-1}(w, s)-r_{n-1}(w)}{r_{n}(z)-r_{n-1}(w)}\right|\\
		\leq
		&\frac{q_{n-1}'}{a^2\cdot d_{n-1}(z)d_{n-1}(w)}\big|\big(\rho_{n-1}(z, t)-r_{n-1}(z)\big)\big(r_n(w)-r_{n-1}(w)\big)\\
		-&\big(\rho_{n-1}(w, s)-r_{n-1}(w)\big)\big(r_n(z)-r_{n-1}(z)\big)\big|\\
		\lesssim &\frac{q_{n-1}'}{a^2\cdot d_{n-1}(z)d_{n-1}(w)}\big(d_n(z)d_{n-1}(z)|z-w|_{\CDach}+d_{n}(z)|\rho_{n-1}(z, t)-\rho_{n-1}(w, s)|\big)\\
		\lesssim 
		&q_{n-1}'|z-w|_{\hat{\mathbb{C}}}+ |\rho_{n-1}(z, t)-\rho_{n-1}(w, s)|
	\end{align*}
	as in the proof of Lemma \ref{lemma:param_of_P(U_c)}. To summarize: \begin{align*}
		|x-y|+|t-s|&\lesssim \psi^{\#}(z) |z-w|_{\CDach} q_n'|z-w|_{\CDach} \\
		&+ |\rho_{n-1}(z, t)-\rho_{n-1}(w, s)|\\
		&\lesssim L'|(z, \rho_{n-1}(z, t))-(w, \rho_{n-1}(w, s))|
	\end{align*}
	with $L'=L'(|\lambda|^{-(n-1)}, (\psi)^{\#})$. Then we take the maximum of $L, L'$ as the constant. 
\end{proof}

\begin{lemma}\label{lemma: map between cylinders}
	The map 
	\begin{align*}
		\beta:|\lambda|^{-n/d}\Db\times [0, q_n] &\to |\lambda|^{-(n-1)}\Db\times[0, q_{n-1}']\\
		(x, t) &\mapsto (h_{n}(x)\frac{q_n-t}{q_n}+h_{n+1}(x)\frac{t}{q_n}, \mu t)
	\end{align*}
	with $\mu=\frac{q_{n-1}'}{q_n}\asymp |\lambda|^{-(n-1)(1-1/d)}$ 
	is Lipschitz and when restricted to sectors (recall the notation\eqref{eq:def_sector}) $|\lambda|^{-n/d}\Db(\theta_0, \pi/d)\times [0, q_n]$ for some $\theta_0\in [0, 2\pi]$ the map is bi-Lipschitz.
\end{lemma}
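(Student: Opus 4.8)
The plan is to put $\beta$ into a transparent normal form and then read both estimates off the behaviour of the winding map $h_d$ on sectors together with the size comparisons for $q_n$ and $q'_{n-1}$. First I would observe that on the disk $|\lambda|^{-n/d}\Db$ — which lies strictly inside the ``winding régime'' of both $h_n=h_{n,c}$ and $h_{n+1}=h_{n+1,c}$, since $|x|\le|\lambda|^{-n/d}$ forces $|\lambda|^{(n-1)/d}|x|<1$ and $|\lambda|^{n/d}|x|\le 1$ — the defining formula $h_{m,c}=\delta_{|\lambda|^{-(m-1)}}\circ h_d\circ\delta_{|\lambda|^{(m-1)/d}}$ reduces to
\[
  h_n(x)=\kappa_n\,W(x),\qquad h_{n+1}(x)=\kappa_{n+1}\,W(x),\qquad \kappa_m:=|\lambda|^{-(m-1)(1-1/d)},
\]
where $W(\rho e^{i\theta}):=\rho e^{id\theta}$ is the restriction of $h_d$ to $\Db$ (note $W$ is homogeneous: $W(cx)=cW(x)$ for $c>0$). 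Consequently $\beta(x,t)=\bigl(\nu(t)W(x),\,\mu t\bigr)$ with $\nu(t):=\kappa_n\frac{q_n-t}{q_n}+\kappa_{n+1}\frac{t}{q_n}$; since $t\in[0,q_n]$ and $\kappa_{n+1}<\kappa_n$, the coefficient $\nu(t)$ is a convex combination, so $\kappa_{n+1}\le\nu(t)\le\kappa_n$, and $|\nu(t)-\nu(s)|=\frac{\kappa_n-\kappa_{n+1}}{q_n}|t-s|$.

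Next I would collect the size comparisons. Lemma~\ref{lemma:mon_radii_crit}\,\ref{item:mon_radii_crit_3} gives $q_n\asymp d_n(c)$, and Lemma~\ref{lem:Df1n}\,\ref{item:Df1n_c} evaluated at the critical point $c$ (where $\phi(c)=0$) gives $d_n(c)\asymp|\lambda|^{-(n-1)/d}$; Lemma~\ref{lem:fn_Unp}\,\ref{item:fn_Unp} at $p$ gives $q'_{n-1}=d_{n-1}(p)\asymp|\lambda|^{-(n-1)}$. Hence $\mu=q'_{n-1}/q_n\asymp|\lambda|^{-(n-1)(1-1/d)}=\kappa_n$ (this proves the stated asymptotics of $\mu$), while $\kappa_{n+1}=|\lambda|^{-(1-1/d)}\kappa_n\asymp\kappa_n$ because $d=\deg(f,c)\ge 2$. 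Also, using $|W(y)|=|y|\le|\lambda|^{-n/d}$,
\[
  |\nu(t)-\nu(s)|\,|W(y)|\ \le\ \frac{\kappa_n}{q_n}\,|\lambda|^{-n/d}\,|t-s|\ \asymp\ |\lambda|^{-1/d}\kappa_n|t-s|\ \lesssim\ \kappa_n|t-s|,
\]
all constants depending only on the fixed data $c,p,\lambda,d,\phi,\psi$ and not on $n$. The global Lipschitz bound is then immediate: $h_d$ is $d$-Lipschitz on the convex disk $\overline\D$ (by Lemma~\ref{lemma: h is bilipschitz}, splitting the segment joining two points at $0$ if necessary), so $W$ is $d$-Lipschitz on $|\lambda|^{-n/d}\overline\D$, and together with $\nu(t)\le\kappa_n$, $\mu\asymp\kappa_n$ and the displayed estimate this yields $|\beta(x,t)-\beta(y,s)|\lesssim\kappa_n(|x-y|+|t-s|)\lesssim|(x,t)-(y,s)|$, uniformly in $n$.

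For the bi-Lipschitz claim on a sector $|\lambda|^{-n/d}\Db(\theta_0,\pi/d)\times[0,q_n]$, I would invoke Lemma~\ref{lemma: h is bilipschitz}\,\ref{item:prop_hd_1}, rescaled by $|\lambda|^{-n/d}$ via the homogeneity of $W$, to get $|x-y|\le|W(x)-W(y)|\le d|x-y|$ there. The upper bound of the previous paragraph still applies. For the lower bound, $|\beta(x,t)-\beta(y,s)|\ge\mu|t-s|\gtrsim\kappa_n|t-s|$ always; let $C$ be the constant with $|\nu(t)-\nu(s)||W(y)|\le C\kappa_n|t-s|$ from the displayed estimate, and fix $\varepsilon\in(0,1]$ small enough that $\kappa_{n+1}-C\kappa_n\varepsilon\ge\tfrac12\kappa_{n+1}$ (possible since $\kappa_{n+1}\asymp\kappa_n$, so $\varepsilon$ depends only on $d,\lambda,C$). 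If $|t-s|<\varepsilon|x-y|$, then
\[
  |\nu(t)W(x)-\nu(s)W(y)|\ \ge\ \nu(t)|W(x)-W(y)|-|\nu(t)-\nu(s)||W(y)|\ \ge\ \bigl(\kappa_{n+1}-C\kappa_n\varepsilon\bigr)|x-y|\ \ge\ \tfrac12\kappa_{n+1}|x-y|,
\]
and since $|t-s|<|x-y|$ this gives $|\beta(x,t)-\beta(y,s)|\gtrsim\kappa_n(|x-y|+|t-s|)$; if instead $|t-s|\ge\varepsilon|x-y|$, then $|\beta(x,t)-\beta(y,s)|\ge\mu|t-s|\gtrsim\kappa_n\varepsilon|x-y|$ and again $|\beta(x,t)-\beta(y,s)|\gtrsim\kappa_n(|x-y|+|t-s|)$. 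Thus on the sector $|\beta(x,t)-\beta(y,s)|\asymp\kappa_n|(x,t)-(y,s)|$, i.e.\ $\beta$ restricted to the sector is an $(L,L',\kappa_n)$-quasi-similarity (Definition~\ref{definition: quasi-similarity}) with $L,L'$ uniform in $n$; in particular it is bi-Lipschitz.

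The computations are routine once the normal form is in place; the two points that genuinely need care are the bookkeeping that produces $\mu\asymp\kappa_n\asymp\kappa_{n+1}$ — this is exactly what turns $\beta$ from a potentially distorting interpolation into a uniform quasi-similarity, so it cannot be bypassed — and the dichotomy in the last step, since a naive triangle-inequality lower bound for $|\nu(t)W(x)-\nu(s)W(y)|$ degenerates precisely when $|t-s|$ is comparable to $|x-y|$, which is why one must play the second coordinate of $\beta$ off against the first.
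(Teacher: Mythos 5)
Your proof is correct, and it follows essentially the same route as the paper: reduce $h_n,h_{n+1}$ to scalar multiples $\kappa_n W,\ \kappa_{n+1}W$ of the degree-$d$ winding map on the disk, work with the sum-norm decomposition, use the Koebe-type comparisons $q_n\asymp d_n(c)\asymp|\lambda|^{-(n-1)/d}$ and $q'_{n-1}=d_{n-1}(p)\asymp|\lambda|^{-(n-1)}$ to identify the scaling $\mu\asymp\kappa_n$, and invoke Lemma~\ref{lemma: h is bilipschitz}\ref{item:prop_hd_1} on sectors for the lower bound. Your explicit normal form $\beta(x,t)=(\nu(t)W(x),\mu t)$ with $\nu$ a convex combination of $\kappa_n$ and $\kappa_{n+1}$ is a cleaner way to package the polar-coordinate computation the paper does in place, and your remark that the conclusion is really an $(L,L',\kappa_n)$-quasi-similarity with $L,L'$ uniform in $n$ matches the paper's closing sentence.

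Where you genuinely add something is the lower bound. The paper's proof establishes $\bigl|\nu(t)W(x)-\nu(t)W(y)\bigr|\gtrsim\nu(t)|x-y|$ (same $t$ in both slots) and then, in the next display, asserts $\bigl|\nu(t)W(x)-\nu(s)W(y)\bigr|\ge\Lambda(n-1,t)|x-y|$ for $t\ne s$ without further argument. That step is not true as stated: for $W(y)\ne 0$ one can choose $t,s$ so that $(\nu(t)-\nu(s))W(y)$ nearly cancels $\nu(t)(W(x)-W(y))$, making the first coordinate of $\beta$ arbitrarily small while $|x-y|$ stays fixed. The cure is exactly what you do — observe that cancellation forces $|t-s|\gtrsim|x-y|$ and then let the second coordinate $\mu|t-s|$ carry the lower bound — and your $\varepsilon$-dichotomy makes this rigorous with constants depending only on $d$ and $|\lambda|$. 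So your proposal is not a different approach, but it does close a small gap in the paper's written argument at precisely the point you flagged as needing care.
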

\begin{proof}
	Let $(x, t), (y, s) \in |\lambda|^{-(n-1)/d}\D\times [0, q_n]$ be arbitrary. To compute the difference \begin{align*}
		&\left|(h_{n}(x)\frac{q_n-t}{q_n}+h_{n+1}(x)\frac{t}{q_n}, \mu t)-(h_{n}(y)\frac{q_n-s}{q_n}+h_{n+1}(y)\frac{s}{q_n}, \mu s)\right|\\
		\asymp &\left|h_{n}(x)\frac{q_n-t}{q_n}+h_{n+1}(x)\frac{t}{q_n}-(h_{n}(y)\frac{q_n-s}{q_n}+h_{n+1}(y)\frac{s}{q_n})\right|+\mu|t-s|\\
		\leq &\left|h_{n}(x)\frac{q_n-t}{q_n}-h_{n}(x)\frac{q_n-s}{q_n}\right|+\left|h_{n}(x)\frac{q_n-s}{q_n}-h_{n}(y)\frac{q_n-s}{q_n}\right|\\
		+&\left|h_{n+1}(x)\frac{t}{q_n}-h_{n+1}(x)\frac{s}{q_n}\right|+\left|h_{n+1}(x)\frac{s}{q_n}-h_{n+1}(y)\frac{s}{q_n}\right|+\mu|t-s|\\
		\leq&\left|\frac{s-t}{q_n}\right|\big(|h_{n}(x)|+\left|h_{n+1}(x)\right|\big)+\frac{q_n-s}{q_n}L_{h_{n}}|x-y|+\frac{s}{q_n}L_{h_{n+1}}|x-y|+\mu|t-s|\\
		\leq &\left|\frac{s-t}{q_n}\right|\big(|h_{n}(x)|+|h_{n+1}(x)|\big)+\mu|t-s|+2\max\{L_{h_{n}}, L_{h_{n+1}}\}|x-y|\\
		\lesssim&\frac{2|\lambda|^{-(n-1)}}{q_n}|t-s|+\mu |t-s|+2 L_{h_n}|x-y|
	\end{align*}
	The maps $h_{n}$ and $h_{n+1}$ are bounded hence the map is Lipschitz. The constants $L_{h_n}=d|\lambda|^{-(n-1)(1-1/d)}$ and $|h_n(z)|\leq |\lambda| ^{-(n-1)}$. 
	We can estimate \begin{equation*}
		\frac{2|\lambda|^{-(n-1)}}{q_n}\asymp |\lambda|^{-(n-1)(1+1/d)}.
	\end{equation*}
	
	It remains to find a lower bound to the difference, we use that the sum norm is equivalent to a maximum norm.
	In order to prove that it is bi-Lipschitz on suitable subsets we restrict to sectors $|\lambda|^{-n/d}\Db(\theta_0, \pi/d)\times [0, q_n]$.
	By Lemma \ref{lemma: h is bilipschitz} \ref{item:prop_hd_1} the map $h_n$ restricted to sectors $\Db(\theta_0, \pi/d)$ is bi-Lipschitz.
	First find a lower bound for points $(x, t), (y, t)\in |\lambda|^{-n/d}\Db(\theta_0, \pi/d)\times [0, q_n]\times [0, q_n]$ for any $\theta_0\in [0, 2\pi]$. 
	\begin{align*}
		&\left|\left(h_n(x)\frac{h_n-t}{q_n}+h_{n+1}(x)\frac{t}{q_n}, \mu t\right)-\left(h_n(y)\frac{q_n-t}{q_n}+h_{n+1}(y)\frac{t}{q_n}, \mu t\right)\right|\\
		\asymp &\left|h_n(x)\frac{q_n-t}{q_n}+h_{n+1}(x)\frac{t}{q_n}-\left(h_n(y)\frac{q_n-t}{q_n}+h_{n+1}(y)\frac{t}{q_n}\right)\right|+\mu|t-t|
	\end{align*}
	In polar coordinates we get that if $x=r_xe^{i\theta}$ we have $h_n(x)=|\lambda|^{-(n-1)(1-1/d)}r_xe^{id\theta}$ and $h_{n+1}(x)=|\lambda|^{-n(1-1/d)}r_xe^{id\theta}$. Therefore 
	\begin{align*}
		\frac{q_n-t}{q_n}h_n(x)+\frac{t}{q_n}h_{n+1}(x)
		&= r_x\left(\frac{q_n-t}{q_n}|\lambda|^{-(n-1)(1-1/d)}+\frac{t}{q_n}|\lambda|^{-n(1-1/d)}\right)e^{id\theta}\\
		&=:r_x\Lambda(n-1, t)e^{id\theta}.
	\end{align*}
	For $y=r_ye^{i\varphi}$ we get a similar expression. To estimate the difference we compute \begin{align*}
		&\left|h_n(x)\frac{q_n-t}{q_n}+h_{n+1}(x)\frac{t}{q_n}-\left(h_n(y)\frac{q_n-t}{q_n}+h_{n+1}(y)\frac{t}{q_n}\right)\right|\\
		\asymp &\frac{1}{2}\big(r_x\Lambda(n-1, t)+r_y\Lambda(n-1, t)\big)|d\theta-d\varphi|+|r_x\Lambda(n-1, t)-r_y\Lambda(n-1, t)|\\
		\gtrsim &\Lambda(n-1, t)|x-y|
	\end{align*}
	Now for $t\geq s$ we get that \begin{align*}
		&\left|\left(h_n(x)\frac{q_n-t}{q_n}+h_{n+1}(x)\frac{t}{q_n}, \mu t\right)-\left(h_n(y)\frac{q_n-s}{q_n}+h_{n+1}(y)\frac{s}{q_n}, \mu s\right)\right|\\
		\gtrsim &\left|h_n(x)\frac{q_n-t}{q_n}+h_{n+1}(x)\frac{t}{q_n}-\left(h_n(y)\frac{q_n-s}{q_n}+h_{n+1}(y)\frac{s}{q_n}\right)\right|+\mu|s-t|\\
		\geq &\Lambda(n-1, t)|x-y|+\mu|t-s| \geq |\lambda|^{-n(1-1/d)}(|x-y|+|t-s|)
	\end{align*}
	The scaling factor can be chosen to be $|\lambda|^{-(n-1)(1-1/d)}$ and there are uniform Lipschitz constants $L_{\beta}, L_{\beta}'$.
\end{proof}

\subsection{Quasi-regularity of the extension}
In this section we will show that the map $F\colon\Omega\to \R^3$ is continuous and quasi-regular. 

It is clear that the extension $F$ is continuous on prisms $P(U_c^n, n)$ near the critical axes as well as on prisms $P(U, n)$ where $U\subset \CDach$ is open and disjoint from $\U_{\crit}^n$. It remains to show that the boundary values $F(z, \rho_n(z, t))$ for $z\in \partial \U_{\crit}^n$ agree.

\begin{lemma}
	The extending map $F$ is continuous. 
\end{lemma}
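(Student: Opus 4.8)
The plan is to establish continuity piece by piece and then across the finitely many types of interface between the pieces. Recall that $A(\CDach,S_{N_0})$ is covered by the first-kind prisms $P(U,n)$ with $U\subset\CDach\setminus\U_{\crit}^n$ open, on which $F$ is given by the explicit formula \eqref{equation: definition of F outside U_c^n}, and the critical prisms $P(U_c^{n+1},n+1)$, on which $F=\alpha_2\circ\beta\circ\alpha_1^{-1}$ as in \eqref{equation: def of F on U_c^n}. On a first-kind prism $F$ is continuous because $f$ and each $r_j$ are continuous (Lemma~\ref{lemma: approximating spheres}~\ref{item: approximating spheres 2}); on a critical prism $F$ is continuous because $\alpha_1$, $\alpha_2$ and $\beta$ are (bi-Lipschitz, hence) continuous by Lemmas~\ref{lemma:param_of_P(U_c)}, \ref{lemma:parap_of_P(Up)} and \ref{lemma: map between cylinders}. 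So the whole statement reduces to matching boundary values along three kinds of interface: (i) the approximating spheres $S_n$, where a prism with consecutive index $n$ meets a prism $P(\cdot,n)$, and where two prisms over regions with different consecutive indices abut along a locus $\{r_m=r_k\}$; (ii) the lateral boundaries $\partial U_c^{n+1}\times[r_{n+1},r_{n+2}]$ of the critical prisms; and (iii) the unit sphere $\CDach$. Once these are matched, the Gluing Lemma yields continuity on $A(\CDach,S_{N_0})$, and the symmetry $M(\Omega)=\Omega$ propagates it to all of $\Omega$.

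For interface (i) I would simply re-use the argument of Lemma~\ref{lemma: extension is well-defined and continuous}. Continuity across an $S_n$, and across abutting prisms with distinct consecutive indices, both reduce to the fact that a consecutive index $m$ of $n$ at $z$ yields the consecutive index $m-1$ of $n-1$ at $f(z)$ — this is Lemmas~\ref{lemma: reihenfolge bleibt erhalten} and \ref{lemma: if f(z)in U_c^n}, supplemented by Lemma~\ref{lemma:mon_radii_crit} and the corollary after it, so that on a critical neighborhood $\U_{\crit}^{n}$ the consecutive index is always $n+1$; and on the locus $\{r_m=r_k\}$ the two local formulas for $F$ agree because $r_{m-1}(f(z))=r_{k-1}(f(z))$ there.

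The bulk of the work — and the step I expect to be the main obstacle — is interface (ii). Fix $c\in\crit(f)$, $p=f(c)$, $n\in\N$, and a boundary point $z\in\partial U_c^{n+1}$; write $x=\phi(z)$, so that $x$ lies on the outer circle of the disc $\phi(U_c^{n+1})$. On that circle the winding maps entering the endpoint conditions for $\beta$ coincide with $\xi\mapsto\xi^d$ by Lemma~\ref{lemma:hnc is continuous}; hence $f_{n+1}=f$ (and likewise $f_{n+2}=f$) along $\partial U_c^{n+1}$, and the spherical component of $\beta$ degenerates there to $\xi\mapsto\xi^d$, independently of the height parameter, while its radial component is the affine bijection of the two height intervals carrying the conventions $\beta(\xi,0)=(\xi^d,0)$, $\beta(\xi,q_{n+1})=(\xi^d,q_n')$. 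I would then chase the coordinates through the composition: $\alpha_1^{-1}$ sends $(z,\rho_{n+1}(z,t))$ to a pair $(x,\tau)$ with $\tau$ an affine rescaling of $t$; $\beta$ sends it to $(x^d,\tau')$; and $\alpha_2$ sends $(x^d,\tau')$ to $(\psi^{-1}(x^d),\rho_n(\psi^{-1}(x^d),t))$. Using $\psi\circ f\circ\phi^{-1}(\xi)=\xi^d$ from \eqref{eq:loc_coord_zd} one has $\psi^{-1}(x^d)=f(z)$, so the composition equals $(f(z),\rho_n(f(z),t))$ — exactly the value \eqref{equation: definition of F outside U_c^n} prescribed by the neighbouring first-kind prism (note $f(z)\in\partial U_p^n$, where $n$ has consecutive index $n+1$ by Lemma~\ref{lemma:mon_radii_crit}, so the interpolation intervals correspond). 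The delicate part is the bookkeeping: keeping straight the rescaling of the height parameter built into \eqref{eq:conv_rho_n_height_h}, the constants $q_{n+1}=r_{n+2}(c)-r_{n+1}(c)$ and $q_n'=r_{n+1}(p)-r_n(p)$, the switch between the coordinate $\phi$ at $c$ and $\psi$ at $p$, and the two distinct winding maps that $\beta$ interpolates — it is exactly on the common boundary circle that those two winding maps agree with $\xi\mapsto\xi^d$, which is what makes the matching go through.

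Finally, for interface (iii) I would show that $F$ extends continuously to $\CDach$ with $F|_{\CDach}\equiv f$: given $\epsilon>0$, Lemma~\ref{lem:prop_fn}~\ref{item:prop_fn_2} and Lemma~\ref{lemma: approximating spheres}~\ref{item: approximating spheres 1} furnish an $N$ with $\|f_n-f\|_\infty<\epsilon$ and $\inf_w r_{n-1}(w)>1-\epsilon$ for $n\geq N$, so that $F$ carries every point of $A(\CDach,S_{N_0})$ lying beyond $S_N$ to within $2\epsilon$ of $f(z)$; together with continuity of $f$ this gives continuity of $F$ up to $\CDach$. Combining all pieces via the Gluing Lemma, $F$ is continuous on $A(\CDach,S_{N_0})$, and since $F$ on the reflected side is defined by conjugating with the reflection $M$ (so $M\circ F=F\circ M$ on $\CDach$), $F$ is continuous on all of $\Omega$.
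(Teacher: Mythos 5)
Your proposal follows the paper's argument exactly for the decisive step — matching the two definitions of $F$ across $\partial U_c^n$ by chasing a boundary point through $\alpha_1^{-1}$, then $\beta$, then $\alpha_2$, and using that on the boundary circle $|w|=|\lambda|^{-(n-1)/d}$ both rescaled winding maps reduce to $w\mapsto w^d$, so the critical-prism formula collapses to $(f(z),\rho_{n-1}(f(z),t))$. You additionally verify continuity at $\CDach$ itself (via the uniform convergences $f_n\to f$ and $r_n\to 1$) and invoke the reflection symmetry $M$ to cover the other half of $\Omega$; the paper's proof leaves both of these implicit, so your write-up is slightly more complete, but the route is the same.
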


\begin{proof}
	We have shown continuity whenever $z\in P(U, n)$ with $U\subset \CDach\setminus \U_{\crit}^n$ and when $z\in P(U_c^n, n)$ for a critical point $c$. By construction $\alpha_2\circ\beta\circ \alpha_1^{-1}$ agrees with $F|_{S^n}$. It remains to show that when $z\in\partial \U_{\crit}^n$ the map $F$ defined outside agrees with the extension of $\alpha_2\circ \beta\circ\alpha_1^{-1}$ to the boundary of $P(U_c^n, n)$. Now fix a critical point $c$ and $n\in \N$. We find that 
	\begin{equation*}
		\alpha_1^{-1}(z, \rho_n(z, t))=(\phi(z), q_n\cdot t).
	\end{equation*}
	Denote $w:=\phi(z)$, because $z\in\partial U_c^n$ we have that $|w|=|\lambda|^{-(n-1)/d}$, which implies that $h_n(w)=h_{n+1}(w)=w^d$. So if we apply $\beta$ we obtain \begin{align*}
		\beta (w, q_n\cdot t)&= \left(h_n(w)(1-t)+h_{n+1}(w)t, \frac{q_n'}{q_n}q_nt\right)\\
		&= (w^d, q_n't).
	\end{align*}
	Then the composition with $\alpha_2$ yields
	\begin{equation*}
		\alpha_2(w^d, q_n't)= \big(\psi^{-1}(w^d), \rho_{n-1}(\psi^{-1}(w^d), t)\big).
	\end{equation*}
	Recall that on $U_c^1\setminus U_c^n$ we have that $f_n(z)=f(z)$ and $f(z)=\psi^{-1}(w^d)$ in the coordinates above. 
	
	Therefore the values at the boundary agree and by the gluing lemma the extension is continuous. 
\end{proof}

Now we are in shape to prove the main theorem. 

\begin{proof}[Proof of Theorem \ref{theorem: extension result}]
	We will prove the assertion on a subset of full measure of $A(\CDach, S_{N_0})$. Recall that $N_0$ is as in Corollary \ref{cor:S1_in_SN0}.
	Let 
	\begin{equation*}
		N
		:= 
		\bigcup_{n, c} \left\{(f^{-n}(c), r):r\in[0, 1]\right\}\cup  \bigcup_{n} \partial P(\U_{\crit}^n, n),
	\end{equation*}
	i.e., the pre-critical axes and the boundaries of prisms around critical points. Note that $N$ is a set of Lebesgue measure 0. Now let $p\in A(\CDach)\setminus N$ be arbitrary. Suppose $P(U, n)$ is an $n$-prism such that $p\in P(U, n)$, and $p=(z, \rho_n(z, t))$ for $z\in \CDach$ and $t\in[0, 1]$ suitable. The orbit of $z$ under $f_1\circ\dots\circ f_m$ for $m=n-N_0$ is of either of the forms below \begin{equation*}
		\begin{cases}
			f^{j-1}\circ f_j\circ f^{m-j}(z) \quad &\textup{if } f^{m-j}(z)\in \U_{\crit}^{j}, \, j\leq m\\
			f^m(z) \quad &\textup{else.}
		\end{cases}
	\end{equation*}
	In the first case, if needed restrict to $U'\subset U$ such that $f^{m-j}(U')$ is contained in a hemisphere, $f^{m-j}|_{U'}$ is univalent and further assume that $f^{m-j}(U')\subset U_c^j$. Then by Corollary \ref{corollary: iterate of extension is quasi-sim on P(U, n)} the map $F^{m-j}$ is a $(L, L', (f^{m-j})^{\#}(z))$-quasi-similarity.
	
	Let $P(V, j):= F^{m-j}(P(U', n)) \subset P(U_c^j, j)$. We can restrict $U'$ further such that $V\subset U_c^j(z_0, \pi/d)$ (recall \eqref{eq:Sectors_in_sphere}) for some $z_0\in V$, in particular where $\alpha_2\circ \beta\circ \alpha_1^{-1}|_{P(V, j)}$ is injective. In that case $\alpha_2\circ \beta\circ \alpha_1^{-1}$ is a $(C, C')$-bi-Lipschitz map by the Lemmas \ref{lemma:param_of_P(U_c)}, \ref{lemma: map between cylinders} and \ref{lemma:parap_of_P(Up)}.
	
	Let $w:= f_j\circ f^{m-j}(z)$ and let $P(W, j-1)=\alpha_2\circ \beta\circ \alpha_1^{-1}\subset P(U_p^j, j-1)$. Then since $W\subset U_p^j$ we can apply Corollary \ref{corollary: iterate of extension is quasi-sim on P(U, n)} to conclude that $F^{j-1}$ is a $(L, L', (f^{j-1})^{\#}(w))$ quasi-similarity. 
	
	Let $r>0$ be such that $B(p, \epsilon)\subset P(U', n)$ and on $f^{m-j}(B(z, r))$ the map $\alpha_2\circ\beta\circ\alpha_1^{-1}$ is injective. Then for all $\epsilon <r$ we have that 
	\begin{align*}
		K(p)=\liminf_{r>\epsilon \to 0}\frac{\max_{|q-p|=\epsilon} |F^m(p)-F^m(q)|}{\min_{|p-q'|=\epsilon}|F^m(p)-F^m(q')|} \leq \frac{L}{L'}\cdot \frac{C}{C'}\cdot \frac{L}{L'}.
	\end{align*}
	Since the involved constants are uniform, we have proved the assertion in the first case. 
	
	The second case is the same as the first part of the first case, so we pick $U\subset \CDach$ such that $p\in P(U, n)$ and $f^j(U)$ is contained in a hemisphere as well as $f^j(U)$ is univalent for all $j\leq n-N_0$. Then for $r>0$ such that $B(p, r)\subset P(U, n)$ we have that 
	\begin{equation*}
		K(p)\leq \frac{L}{L'}. \qedhere
	\end{equation*}
\end{proof}

\section{Removing the assumption that critical values are fixed}\label{section:removing_assumption}
There are several cases to consider:
\begin{enumerate}
	\item there is a critical point $c$ such that there is another critical point $c'$ in its orbit that is then mapped to a post-critical fixed point,
	\item the orbit of a critical point passes to $p_1, \dots, p_i$ and then lands on a post-critical fixed point $p$,
	\item a critical point lands in a periodic orbit of post-critical points.
\end{enumerate}
In the following we will treat all these cases separately and in case a map has all three properties then one can patch the methods for dealing with each of the cases together.
\subsection{Several critical points in an orbit}
For simplicity we assume that a critical point $c$ has the orbit $c\mapsto c' \mapsto p$ where $c'$ is another critical point and $p$ is a fixed point. Let $d$ and $d'$ denote the degree of $c$ respectively $c'$. Because the map $f$ is expanding, $p$ is a repelling fixed point and as above we can linearize the map around a closed neighborhood $U_p^1$. 

\begin{equation*}
	\xymatrix{
		U_p^1 \ar[r]^{f} \ar[d]_{\psi}
		& U_p^0 \ar[d]^{\psi} \\
		|\lambda|^{-1}\Db \ar[r]^{\delta_{\lambda}}
		&  \Db,
	}
\end{equation*}

As above, we define $U_p^n$ as the unique connected component of $f^{-n}(U_p^0)$ containing $p$. In particular $U_p^n=\psi^{-1}(\lambda^{-n}\Db)$. Then we define the neighborhoods around $c'$ as above with $U_{c'}^n$ as the unique component of $f^{-n}(U_p^0)$ containing $c'$. Similarly $U_c^n$ is the unique component of $f^{-n}(U_p^0)$ containing $c$ for $n\geq 2$. There are maps $\phi'\colon U_{c'}\to \Db$ and $\phi\colon U_c\to \Db$ such that 

\begin{equation*}
	\xymatrix{
		U_c^2 \ar[r]^{f} \ar[d]_{\phi} & U_{c'}^1 \ar[r]^{f} \ar[d]^{\phi'} 
		& U_p^0 \ar[d]^{\psi} \\
		\Db \ar[r]^{z^d}  & \Db
		\ar[r]^{z^{d'}}
		& \Db 
	}
\end{equation*}

Define $h_{n, c}$ and $h_{n, c'}$ analogously to the definition in Section \ref{sec:defining-maps-f_n}. Let \begin{align*}
	h_{n+1, c}\colon |\lambda|^{-(n-1)/d}\overline{\D}&\to |\lambda|^{-(n-1)}\overline{\D}\\
	z&\mapsto \delta_{|\lambda|^{-(n-1)}} \circ h_d\circ \delta_{|\lambda|^{(n-1)/d}}(z)
\end{align*}
and \begin{align*}
	h_{n, c'}\colon|\lambda|^{-(n-1)/d'}\overline{\D}&\to |\lambda|^{-(n-1)}\overline{\D}\\
	z&\mapsto \delta_{|\lambda|^{-(n-1)}} \circ h_{d'}\circ \delta_{|\lambda|^{(n-1)/d}}(z).
\end{align*}

On the sets $U_c^n$ and $U_{c'}^n$ we will define the function $f_n$ differently from $f$. 
\begin{equation*}
	\xymatrix{
		U_c^{n+1} \ar[r]^{f}_{f_{n+1}} \ar[d]_{\phi} & U_{c'}^n \ar[r]^{f}_{f_n} \ar[d]^{\phi'} 
		& U_p^{n-1} \ar[d]^{\psi} \\
		|\lambda|^{-(n-1)/dd'}\Db \ar[r]^{z^d}_{h_{n-1}}  & |\lambda|^{-(n-1)/d'}\Db
		\ar[r]^{z^{d'}}_{h_n'}
		& |\lambda|^{-(n-1)}\Db}
\end{equation*}

For $n\in \N$ we define \begin{equation*}f_n(z)=\begin{cases}
		(\psi^{-1}\circ h_n'\circ \phi')(z) \quad &\textup{if } z \in U_{c'}^n\\
		(\phi'^{-1}\circ h_n\circ\phi)(z) \quad &\textup{if } z\in U_c^n\\
		f(z) \quad &\textup{else.}
\end{cases}\end{equation*}

With this we define a local radius as follows 
\begin{equation*}r_n(z)= 1-s\, \frac{1}{\|D(f_1\circ\dots\circ f_n)(z)\|_{\CDach}}
\end{equation*}
where $s\in(0, 1]$ is a constant chosen to guarantee that $r_n(z)\in (0, 1)$ for all $n\in \N $ and all $z\in \CDach$. 

Then the computations and results from above can be adapted to the new sequence $f_n$. A difference is that one can get $f_1\circ\dots\circ f_n= f^{n-2}\circ f_{n-1}\circ f_{n}$ for $z\in U_c^n$. 

\subsection{Pre-fixed critical value}
We assume that there is a critical point $c$ with the orbit \[c \mapsto p_1\mapsto \dots \mapsto p_{N-1}\mapsto p_N\circlearrowleft.\] We assume that none of the points $p_i$ for $i=1, \dots, N$ is a critical point.

We will use the linearization and define $U_p^n$ as above, i.e., $U_p^1$ such that there exists $\psi\colon U_p^0\to \Db$ with 
\begin{equation*}
	\xymatrix{ U_{p_N}^1\ar[r]^{f} \ar[d]_{\psi} & U_{p_N}^0 \ar[d]^{\psi}\\
		|\lambda|^{-1/d}\Db \ar[r]^{z^d} & \Db.
	}
\end{equation*}
We will then define $U_{p_i}^n$ as the unique connected component of $f^{-n}(U_{p_N}^0)$ containing $p_i$ for $i=1, \dots, N$. For the critical point $c$ there are neighborhoods $f^{-n}(U_p^0)$ containing $c$ for all $n\geq N$
\begin{equation*}
	\xymatrix{ U_c^N \ar[r]^{f} & U_{p_1}^{N-1}\dots \ar[r]^{f} & U_{p_{N-1}}^1\ar[r]^{f}&U_{p_N}^0.
	}
\end{equation*}
By choosing $U_p^0$ is small enough we can assume that $f^N|_{U_{p_1}^{N-1}}$ is a homeomorphism. 

\begin{equation}
	\xymatrix{
		U_c^{N+1} \ar[r]^{f} \ar[d]_{\phi} & U_{p_1}^N \ar[r]^{f} \ar[d]^{\varphi} \dots & U_{p_{N-1}}^2 \ar[r]^{f} & U^1_{p_N} \ar[r]^{f} \ar[d]^{\psi} & U_{p_N} \ar[d]^{\psi}\\
		|\lambda|^{-1/d}\Db \ar[r]^{z^d} & |\lambda|^{-1}\Db &  & |\lambda|^{-1}\Db \ar[r]^{\delta_{|\lambda|}} &\Db\\
	}
\end{equation}

Then we define $f_n$ for $n\geq N$ by \begin{equation*}f_n(z)=\begin{cases}
		(\varphi\circ h_{n-N}\circ \phi)(z) \quad &\textup{if } z\in U_c^{n}\\
		f(z) \quad &\textup{else}
\end{cases}\end{equation*}

Then if we look at points $z\in U_c^n$ the orbit $f_1\circ\dots\circ f_n$ for $n\geq N$ can be expressed as \begin{align*}f_1\circ\dots\circ f_n(z)&=f^{n-1}\circ f_n(z) \\ &= \psi^{-1}\circ \delta_{|\lambda|^{n-N}}\circ\psi\circ f^{N-1}\circ \varphi^{-1}\circ h_n\circ\phi(z).\end{align*} The part $\psi^{-1}\circ \delta_{|\lambda|^{n-N}}\circ \psi$ comes from the linearization around $p_N$.

\begin{equation}
	\xymatrix{
		U_c^{n+N} \ar[r]^{f}_{f_{n+N}} \ar[d]_{\phi} & U_{p_1} \ar[r]^{f} \ar[d]^{\varphi} \dots & U_{p_{N-1}} \ar[r]^{f} & U^1_{p_N} \ar[r]^{f} \ar[d]^{\psi} & U_{p_N} \ar[d]^{\psi}\\
		|\lambda|^{-n/d}\Db \ar[r]^{z^d}_{h_n} & |\lambda|^{-1}\Db &  & |\lambda|^{-1}\Db \ar[r]^{\delta_{\lambda}} &\Db
	}
\end{equation}

For $n\geq N$ we will define the functions $r_n$ as 
\begin{equation*}r_n(z)=1- s\, \frac{1}{\|D(f_1\circ\dots\circ f_n)(z)\|_{\CDach}} 
\end{equation*}
with $s\in (0, 1]$ so that for all $n\in \N$ and all $z\in \CDach$ $r_n(z)\in (0, 1)$. Then we define $S_n$ for $n\geq N$ as above and extend the map as above. 

\subsection{Periodic critical value}

For the sake of simplicity we will assume that the critical value is of period 2 and the pre-image of the critical point is regular, i.e.,

\begin{tikzcd}[cramped, sep=small]
	&v\arrow[r] &c \arrow[r] &p_1 \arrow[r] &p_2 \arrow[l, bend right=30]
\end{tikzcd}

\noindent where $c$ is a critical point of degree $d$, the points $v, p_i$ for $i=1, 2$ are not critical points. We will denote $f^2$ by $F$ and use capital letters $F_n, D_n, R_n$ for the analogs of $f_n, d_n, r_n$ for the function $F$. We will denote an extension of $f$ by $\overline{f}$. By $\Lambda$ we denote the multiplier of $F$ at $p_1$, i.e., $\Lambda=F^{\#}(p_1)=f^{\#}(p_1)f^{\#}(p_2)$. Let $U_{p_1} \ni p_1$ be a closed neighborhood on which the linearization from Koenig's Linearisation Theorem is defined,

\begin{equation*}
	\xymatrix{
		U_{p_1}^2 \ar[r]^{F} \ar[d]_{\psi}
		& U_{p_1}^0 \ar[d]^{\psi} \\
		\Lambda^{-1} \Db \ar[r]^{\delta_{\Lambda}}
		& \Db.
	}
\end{equation*}

We assume additionally that $\partial U_{p_i}^0$ is a Jordan curve of zero Lebesgue measure. Define a sequence of neighborhoods $U_{p_1}^{2n}$ for $n\in \N$ as the unique component of $f^{-2n}(U_{p_1}^0)$ containing $p_1$. Note that with this definition $U_{p_1}^{2n}$ is defined as $\Tilde{U}_{p_1}^n$ for the map $f^2$ would be defined. Then define $U_c^{2n+1}$ as the unique component of $f^{-1}(U_{p_1}^{2n})$ containing $c$. 
\begin{equation*} 
	\xymatrix{U_c^{1}\ar[r]^{f} \ar[d]_{\phi} & U_{p_1}^0 \ar[d]^{\psi}\\
		\Db \ar[r]^{z^d} & \Db
	}
\end{equation*}

and for $n\geq 1$ we obtain \begin{equation*}
	\xymatrix{U_c^{2n+1} \ar[r]^{f} \ar[d]_{\phi} & U_{p_1}^{2n} \ar[r]^{f^{2n}}\ar[d]^{\psi} & U_{p_1}^0\ar[d]^{\psi}\\
		|\Lambda|^{-n/d}\Db \ar[r]^{z^d} & |\Lambda|^{-n}\Db \ar[r]^{\delta_{|\Lambda|^n}} &\Db
	}
\end{equation*}

We will further define $U_{z}^{2n+2}$ as the unique component of $f^{-2}(U_{p_1}^{2n})$ containing $z$. We can visualize this as follows 
\begin{equation*}
	\xymatrix{U_{v}^{2n+2} \ar[r]^f & U_{c}^{2n+1} \ar[r]^{f} & U_{p_1}^{2n} \ar[r]^f &U_{p_2}^{2(n-1)+1}\ar[r]^f& U_{p_1}^{2(n-1)}
	}
\end{equation*}
Similar to above we will define a sequence $f_n$ by 
\begin{equation*}f_n(z)=\begin{cases}
		(\psi^{-1}\circ\delta_{|\Lambda|^{-m}}\circ h\circ \delta_{|\Lambda|^{n/d}}\circ \phi)(z) \quad &\textup{if } n=2m+1, \, z \in U_c^n\\
		f(z) \quad &\textup{else,}
\end{cases}\end{equation*}
similar to above, we can represent this with diagram:

\begin{equation*}
	\xymatrix{
		U_c^n \ar[r]^{f} \ar[d]_{\phi}
		& U_{p_1}^n \ar[d]^{\psi} \\
		|\Lambda|^{-m/d}\Db \ar[r]^{z^d}_{h_n}
		& |\Lambda|^{-m}\Db}
\end{equation*}

Using this we can define \begin{equation}
	\label{definition: F_m for per. c}
	F_m(z):=\begin{cases}
		f_{2m-1}\circ f(z)  &z\in U_v^{2m}\\
		f\circ f_{2m+1}(z) &z\in U_c^{2m+1}\\
		f^2(z) &\textup{else.}
\end{cases}\end{equation}

For an overview of what happens around the critical and pre-critical neighborhoods the following diagram might help:

\begin{equation*}
	\xymatrix{
		U_{v}^{2m+2} \ar[r]^{f} & U_{c}^{2m+1} \ar[r]^{f}_{f_{2m+1}} \ar[d]^{\phi}
		& U_{p_1}^{2m} \ar[d]^{\psi} \ar[r]^{f^{2m}} & U_{p_1}^{0} \ar[d]^{\psi} \\
		&
		|\Lambda|^{-(m-1)/d}\Db \ar[r]^{z^d}_{h_{2m+1}} & |\Lambda|^{-(m-1)}\Db \ar[r]^{\delta_{\Lambda^m}} & \Db.
	}
\end{equation*}

Now we are in shape to define $D_m(z)$ for $F$. With this we define \begin{equation*}D_m(z):= s\, \frac{1}{\|DF_1\circ \dots\circ F_m(z)\|_{{\CDach}}},
\end{equation*}
As shown in Corollary \ref{corollary: r_n in (0, 1)}, there exists a factor $s\in (0, 1]$ to guarantee that for all $z\in \CDach$ and all $m\in \N$ we have $D_m(z)\in (0, 1)$. Then we define a sequence of radii 

\begin{equation*}
	R_m(z)= 1-D_m(z)
\end{equation*}
and approximating spheres \begin{equation*}
	\Tilde{S}_m= \{(z, r_m(z)): z\in \CDach, m\in \N\}.
\end{equation*}

Using this sequence, we will define $d_n$ for $f$. The idea is to interpolate with geometric means between $\Tilde{S}_m$ and $\Tilde{S}_{m+1}$. 

Observe that for points $z\in \CDach$ with $z=f^{-2(m+1)}(w) $ for some $w\in \CDach\setminus (U_{p_1}^0\cup U_{p_2}^1)$. Then we find that   \begin{align*}D_{m+1}(z)&=\frac{1}{(f^{2(m+1)})^{\#}}=\frac{1}{f^{\#}(f^{2m+1}(z))f^{\#}(f^{2m}(z))(f^{2m})^{\#}(f^2(z))}\\
	&=D_n(z)\cdot \frac{1}{f^{\#}(f^{2m+1}(z))f^{\#}(f^{2m}(z))}.\end{align*}

Now the idea is to split the factor \begin{equation*}\frac{1}{f^{\#}(f^{2m+1}(z))f^{\#}(f^{2m}(z))}
	=
	\frac{1}{F^{\#}(F^{m+1}(z))}
\end{equation*}
and for $n=3m+k$ for $k=0, 1, $ define 
\begin{equation*}d_n(z):=D_m(z)\cdot \frac{1}{[F^{\#}(F^{m+1}(z))]^{k/2}}.\end{equation*}
Note that for $k=2$ the above definition coincides with $d_{n+2}(z)=D_{m+1}(z)$.

In order to define $d_n$ everywhere, i.e., also on pre-images of $U_{p_1}^0$ and $U_{p_2}^1$, for $n=2m+k$ with $k=0, 1$ we set 
\begin{equation*}
	d_n(z)= D_m(z)\left[\frac{D_{m+1}(z)}{D_m(z)}\right]^{k/2}.
\end{equation*}

Around the critical and post-critical points we will define
\begin{align*}
	U_c^{2m}&:= U_c^{2m-1} & U_{v}^{2m+1}&:= U_c^{2m}\\
	U_{p_1}^{2m+1}&:= U_{p_1}^{2m} & U_{z_2}^{2m}&:= U_{z_2}^{2m-1}
\end{align*}
for $m\geq 1$.

Recall that above we defined $f_n$ around $c$ only for $n=2m+1$. We will define $f_n$ for $n=2m$ on $U_c^{2m}$ by 
\begin{equation*}
	f_n(z)=f_{2m-1}(z) \quad z\in U_c^{2m}.
\end{equation*}

Setting $r_m(z)=1-d_m(z)$ we define and the maps
\begin{equation*}
	\overline{f}|_{S_n}: S_n\to S_{n-1}, \, (z, r_n(z))\mapsto (f_n(z), r_{n-1}(f_n(z))).
\end{equation*}

\subsubsection{Orbits that avoid regions around critical and pre-critical points}

The following Lemma resembles Lemma \ref{lemma: scaling of distance to sp by f'} that was key point for proving quasi-regularity outside the critical neighborhoods. 

\begin{lemma}
	Let $z\in \CDach \setminus f^{-n}\big(\U_{\post}\cup \U_{\crit}^1\big)$. Then \begin{equation*}
		\begin{split}
			d_{n-1}(f(z)) &\asymp f^{\#}(z)\cdot d_n(z) \\
			d_{n-2}(f^2(z)) &= (f^2)^{\#}(z)\cdot d_n(z)
		\end{split}
	\end{equation*}
	where $C(\asymp)$ can be chosen uniform for all $n\in \N$. 
\end{lemma}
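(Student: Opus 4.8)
\emph{Proof plan.} The plan is to reduce everything to the chain rule for the spherical derivative, together with two observations: (i) under the hypothesis all the modified compositions collapse to pure iterates of $f$ near the relevant points, so the quantities $D_m$ acquire a clean closed form; and (ii) $f^{\#}$ is continuous and vanishes only at critical points, hence is pinched between uniform positive constants off the critical neighborhoods. Concretely, fix $C_0:=\max_{\CDach}f^{\#}$ and $c_0:=\min\{f^{\#}(w):w\in\CDach\setminus\inte(\U_{\crit}^{1})\}$; since all critical points lie in $\inte(\U_{\crit}^{1})$ we have $0<c_0\le C_0<\infty$. I read the hypothesis $z\notin f^{-n}(\U_{\post}\cup\U_{\crit}^{1})$, in analogy with Lemma~\ref{lemma: scaling of distance to sp by f'}, as saying that the orbit $z,f(z),\dots,f^{n}(z)$ avoids $\U_{\post}\cup\U_{\crit}^{1}$. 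Because every critical‑ and post‑critical‑type neighborhood entering the definition of the $F_j$ (the sets $U_v^{2j},U_c^{2j+1},U_{p_1}^{2j},\dots$) is nested inside $\U_{\crit}^{1}$ or inside $\U_{\post}$, this forces $F_1\circ\dots\circ F_m=F^m=f^{2m}$ in a neighborhood of $z$ and of each $f^{j}(z)$; hence $D_m(z)=s/(f^{2m})^{\#}(z)$, and likewise at the images. Writing $n=2m+k$ with $k\in\{0,1\}$ we get
\begin{equation*}
  d_n(z)=D_m(z)\Bigl[\tfrac{D_{m+1}(z)}{D_m(z)}\Bigr]^{k/2}
  =\frac{s}{(f^{2m})^{\#}(z)}\Bigl[\frac{(f^{2m})^{\#}(z)}{(f^{2m+2})^{\#}(z)}\Bigr]^{k/2}.
\end{equation*}

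Next I would prove the auxiliary comparison $d_n(z)\asymp s/(f^{n})^{\#}(z)$ with constant $\le\sqrt{C_0/c_0}$, uniform in $n$. For even $n$ this is an equality. For odd $n$, $d_n(z)=s\bigl((f^{2m})^{\#}(z)(f^{2m+2})^{\#}(z)\bigr)^{-1/2}$, and the chain rule rewrites the product under the root as $\bigl((f^{2m+1})^{\#}(z)\bigr)^{2}\cdot f^{\#}(f^{2m+1}(z))/f^{\#}(f^{2m}(z))$; as $f^{2m}(z),f^{2m+1}(z)\notin\inte(\U_{\crit}^{1})$, the last ratio lies in $[c_0/C_0,C_0/c_0]$, which gives the claim. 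The first assertion of the lemma now follows by composing comparisons: $d_{n-1}(f(z))\asymp s/(f^{n-1})^{\#}(f(z))$, while $(f^{n})^{\#}(z)=(f^{n-1})^{\#}(f(z))\,f^{\#}(z)$, so $s/(f^{n-1})^{\#}(f(z))=f^{\#}(z)\,s/(f^{n})^{\#}(z)\asymp f^{\#}(z)\,d_n(z)$; the total constant is $\le C_0/c_0$, independent of $n$. (The points $f(z),\dots,f^{n}(z)$ needed here are exactly the ones the hypothesis avoids.)

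For the second, \emph{exact}, assertion the auxiliary comparison is useless, so I would instead use that $d_{n-2}$ and $d_n$ share the parity $k$: with $n-2=2(m-1)+k$,
\begin{equation*}
  d_{n-2}(f^{2}(z))=D_{m-1}(f^{2}(z))\Bigl[\tfrac{D_m(f^{2}(z))}{D_{m-1}(f^{2}(z))}\Bigr]^{k/2}.
\end{equation*}
The chain rule $(f^{2j+2})^{\#}(z)=(f^{2j})^{\#}(f^{2}(z))\,(f^{2})^{\#}(z)$ gives $D_j(f^{2}(z))=(f^{2})^{\#}(z)\,D_{j+1}(z)$ for every relevant $j$ (here $(f^{2})^{\#}(z)\ne0$ since $z,f(z)$ are not critical); substituting $j=m-1$ and $j=m$, the factors $(f^{2})^{\#}(z)$ cancel inside the bracket, leaving $d_{n-2}(f^{2}(z))=(f^{2})^{\#}(z)\,D_m(z)[D_{m+1}(z)/D_m(z)]^{k/2}=(f^{2})^{\#}(z)\,d_n(z)$.

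The step I expect to be the real obstacle is the first, bookkeeping one: verifying that the hypothesis genuinely confines the orbit to the region where each $F_j$ (hence each $f_j$ used in the interpolation) coincides with a pure iterate, so that the $D_m$ have the clean form above. This requires pinning down the precise meaning of ``$z\notin f^{-n}(\U_{\post}\cup\U_{\crit}^{1})$'' and checking the nesting of all the auxiliary neighborhoods from the periodic construction. Once that is settled, the remainder is just the parity bookkeeping in the geometric‑mean interpolation together with two applications of the chain rule, and uniformity of $C(\asymp)$ is automatic from $c_0\le f^{\#}\le C_0$ off $\inte(\U_{\crit}^{1})$.
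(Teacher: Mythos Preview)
Your proposal is correct and follows essentially the same route as the paper. For the second (exact) assertion your argument is identical to the paper's: verify $D_j(f^{2}(z))=(f^{2})^{\#}(z)\,D_{j+1}(z)$ and note that the parity $k$ is preserved under $n\mapsto n-2$, so the bracketed ratios match. For the first (approximate) assertion the paper does the two parity cases $k=0,1$ by hand and isolates the explicit correction factor $\bigl[f^{\#}(f^{2m\pm1}(z))/f^{\#}(f^{2m}(z))\bigr]^{1/2}$, while you package the same computation into the auxiliary comparison $d_n(z)\asymp s/(f^{n})^{\#}(z)$; these are the same calculation reorganized, and your uniform constant $C_0/c_0$ is exactly what the paper obtains. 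The bookkeeping issue you flag (that the hypothesis must really be read as ``the orbit up to time $n$ avoids $\U_{\post}\cup\U_{\crit}^{1}$'', so that all $F_j$ collapse to $f^{2}$ and the endpoint derivatives are bounded away from $0$) is handled in the paper only implicitly, so your caution there is appropriate rather than a gap.
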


\begin{proof}
	For the first inequality we will distinguish the cases $k=0$ and $k=1$. For $k=0$, we have that \begin{equation*}
		d_n(z)
		= 
		D_{2m}(z) \quad
		\textup{and}\quad 
		d_{n-1}(f(z))
		=
		D_{2(m-1)+1}(f(z)).
	\end{equation*}
	If we write the full expressions in terms of derivatives of $f$ we obtain \begin{equation*}
		\begin{split}
			d_n(z)
			&=
			\frac{1}{(f^{2m})^{\#}(z)}\\
			d_{n-1}(f(z))
			&= 
			\frac{1}{(f^{2(m-1)})^{\#}(f(z))} \left[\frac{1}{f^{\#}(f^{2m}(z))f^{\#}(f^{2m-1}(z))}\right]^{1/2}.
		\end{split}
	\end{equation*}
	\begin{equation*}
		\begin{split}
			d_{n-1}(f(z))
			&=
			\frac{1}{f^{\#}(f^{2m-2}(z))\cdot \dots\cdot f^{\#}(f(z))}\left[\frac{1}{f^{\#}(f^{2m}(z))f^{\#}(f^{2m-1}(z))}\right]^{1/2}\\
			&=
			\frac{f^{\#}(f^{2m-1}(z))f^{\#}(z)}{(f^{2m})^{\#}(z)} \left[\frac{1}{f^{\#}(f^{2m}(z))f^{\#}(f^{2m-1}(z))}\right]^{1/2}\\
			&=
			(f^{\#}(z)) \cdot d_n(z) \left[\frac{f^{\#}(f^{2m-1}(z))}{f^{\#}(f^{2m}(z))}\right]^{1/2}.
		\end{split}
	\end{equation*}
	For $k=1$ and $n=2m+k$ the computations are similar, \begin{equation*}
		\begin{split}
			d_n(z)
			&=
			\frac{1}{(f^{2m})^{\#}(z)}\left[\frac{1}{f^{\#}(f^{2m+1}(z))f^{\#}(f^{2m}(z))}\right]^{1/2}\\
			d_{n-1}(f(z))
			&=
			\frac{1}{(f^{2m})^{\#}(f(z))}.	
		\end{split}
	\end{equation*}
	We can rewrite $d_{n-1}(z)$ as follows 
	\begin{equation*}
		\begin{split}
			d_{n-1}(f(z))
			&= 
			f^{\#}(z)\cdot d_n(z) \cdot \frac{[(f^{\#}(f^{2m +1}(z))f^{\#}(f^{2m}(z))]^{1/2}}{f^{\#}(f^{2m}(z))}\\
			&= 
			f^{\#}(z)\cdot d_n(z) \cdot \left[\frac{(f^{\#}(f^{2m +1}(z))}{f^{\#}(f^{2m}(z))}\right]^{1/2}
		\end{split}
	\end{equation*}
	Because we chose $z\in \CDach \setminus f^{-n}\big( \bigcup_{v\in\crit(f)\cup \post(f)} U_v^0\big)$ we obtain that $f^{\#}(f^n(z))$ and $f^{\#}(f^{-(n-1)}(z))$ are uniformly bounded from below and above. We can estimate the factors by \begin{equation*}
		C
		:=
		\max_{w\in \CDach\setminus \bigcup_{v\in \crit(f)\cup \post(f)}U_v}\left\{ \left[\frac{f^{\#}(w)}{f^{\#}(f^{-1}(w))}\right]^{1/2}, \left[\frac{f^{\#}(f^{-1}(w))}{f^{\#}(w)}\right]^{1/2}\right\},
	\end{equation*}
	and with this we have shown that \begin{equation*}
		\frac{1}{C}f^{\#}(z)d_{n}(z)
		\leq 
		d_{n-1}(f(z))
		\leq 
		Cf^{\#}(z)d_n(z),
	\end{equation*}
	and as we claimed, $C$ is a uniform constant.

	For the second equality, let $n=2m +k$. If $k=0$ then we have that \begin{equation*}
		\begin{split}
			d_{n-2}(f^2(z))&
			= 
			D_{m-1}(z) 
			= 
			\frac{1}{(f^{2(m-1)-1})^{\#}(f^2(z))} \\
			&= 
			\frac{1}{f^{\#}(f^{2m-1}(z))\cdot \dots\cdot f^{\#}(f^2(z))}\\
			&=
			\frac{(f^2)^{\#}(z)}{(f^{2m})^{\#}(z)}
			=
			(f^2)^{\#}(z)D_m(z)
			=
			(f^2)^{\#}(z)d_{n}(z).	
		\end{split}
	\end{equation*}
	If $k=1$ we verify that \begin{equation*}
		\frac{D_m(f^2(z))}{D_{m-1}(f^2(z))}
		=
		\frac{D_{m+1}(z)}{D_m(z)}.
	\end{equation*}
	\begin{equation*}
		\begin{split}
			\frac{D_m(f^2(z))}{D_{m-1}(f^2(z))}
			&=
			\frac{1}{f^{\#}(f^{2m-1}(f^2(z))\cdot f^{\#}(f^{2m-2}(f^2(z)))}\\
			&=
			\frac{1}{f^{\#}(f^{2m+1}(z))f^{\#}(f^{2m}(z))}
		\end{split}
	\end{equation*}
	and the other factor can be rewritten as \begin{equation*}
		\begin{split}
			\frac{D_{m+1}(z)}{D_m(z)} 
			&=
			\frac{1}{f^{\#}(f^{2(m+1)-1}(z)f^{\#}(f^{2(m+1)-2}(z))}\\
			&= \frac{1}{f^{\#}(f^{2m+1}(z))f^{\#}(f^{2m}(z))},
		\end{split}
	\end{equation*}
	so since the two are equal, we obtain the claimed result that \begin{equation*}
		d_{n-2}(f^2(z))
		=
		(f^2)^{\#}d_n(z).\qedhere
	\end{equation*}
\end{proof}

\subsubsection{Near critical and pre-critical points}
Similarly to Lemma \ref{lemma:mon_radii_crit} we may choose $U_{p_1}^0$ smaller so that for all $z\in U_c^{2m+1}$ we have that $R_{m+1}(z)> R_m(z)$ and for all $w\in U_c^{2m}$ we have that $R_{m+1}(w)>R_m(w)$.

\begin{lemma}
	\label{lemma: derivative formula per. c}
	\begin{enumerate}
		\item 
		\label{item: derivative formula per. c 1}
		Let $z\in U_c^{2m+1}$ for $m\in \N$ and let $w=\phi(z)\in |\Lambda|^{-m/d}\D$. Then the derivative of the composition of the functions $(F_1\circ \dots\circ F_m)(z)$ can be expressed as \begin{equation*}
			\|D(F_1\circ \dots\circ F_m)(z)||_{\CDach}
			= 
			f^{\#}(z_{m})\cdot (\psi^{-1})^{\#}(w_m)\cdot |\Lambda|^{(m-1)/d} \cdot \phi^{\#}(z),
		\end{equation*}
		where $w_m = \Lambda^{m-1}\cdot h_{2m+1}(w)$ and $z_m=\psi^{-1}(w_m)$. 
		\item 
		\label{item: derivative formula per. c 2}
		Let $z'\in U_v^2m$ for $m\in \N$ and let $w'=\phi(f(z'))$. Then the derivative of $(F_1\circ \dots\circ F_m)(z')$ is equal to 
		\begin{equation*}
			\|D(F_1\circ\dots\circ F_m)(z)\|_{\CDach} 
			=
			(\psi^{-1})^{\#}(w_m')\cdot |\Lambda|^{-(m-1)/d}d\cdot \phi^{\#}(f(z'))f^{\#}(z').
		\end{equation*}
		when we set $w'= \phi(f(z'))$ and $w_m'= \Lambda^{-(m-1)}\cdot h_{2m-1}(w')$
	\end{enumerate}	
\end{lemma}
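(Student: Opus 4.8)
The plan is to reduce both identities to the corresponding computation in the fixed--critical--value case (Lemma~\ref{lem:Df1n}). The point is that, exactly as in \eqref{eq:f_1n_fff}, the composition $F_1\circ\dots\circ F_m$ agrees on the relevant neighborhood with an honest iterate of $f$ in which $f$ has been replaced by a rescaled winding map only once; the extra ingredient here is that this iterate must be unfolded through the period--$2$ cycle $p_1\mapsto p_2\mapsto p_1$, and that the high iterates of $F=f^2$ near $p_1$ are handled by Koenig's linearization (Theorem~\ref{theorem: Koenig}).

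\emph{Unfolding the orbit.} Fix $z\in U_c^{2m+1}$. By the definition \eqref{definition: F_m for per. c} of $F_m$ we have $F_m(z)=f\circ f_{2m+1}(z)$, and by construction $f_{2m+1}(z)\in U_{p_1}^{2m}$, so $F_m(z)\in U_{p_2}^{2m-1}$. Since the sets $U_v,U_c,U_{p_1},U_{p_2}$ (and all the backward iterates of the single small Jordan domain $U_{p_1}^0$ occurring here) are pairwise disjoint, $F_m(z)$ lies outside $U_v^{2(m-1)}$ and $U_c^{2(m-1)+1}$, so $F_{m-1}$ acts on it simply as $f^2$; iterating this remark, each of $F_{m-1},\dots,F_1$ acts as $f^2$, hence
\begin{equation*}
	(F_1\circ\dots\circ F_m)(z)
	=
	f^{2(m-1)}\bigl(f\circ f_{2m+1}(z)\bigr)
	=
	\bigl(f\circ F^{m-1}\circ f_{2m+1}\bigr)(z),
\end{equation*}
with $f_{2m+1}(z)\in U_{p_1}^{2m}$, which lies inside the Koenig linearization domain of $F$ at $p_1$. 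For part~\ref{item: derivative formula per. c 2} the same unfolding, starting from $z'\in U_v^{2m}$ and $F_m(z')=f_{2m-1}\circ f(z')\in U_{p_1}^{2m-2}$, gives $(F_1\circ\dots\circ F_m)(z')=\bigl(F^{m-1}\circ f_{2m-1}\circ f\bigr)(z')$.

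\emph{Differentiating.} Now apply the chain rule. The maps $f$ and $f^{2(m-1)}=F^{m-1}$ are holomorphic with no critical point along the orbit of $p_1$, so pre-- and post--composition with them multiplies the spherical operator norm exactly, as in \eqref{eq:Dfgh}; the only non--holomorphic factor is the single rescaled winding map inside $f_{2m+1}$ (resp.\ $f_{2m-1}$). For part~\ref{item: derivative formula per. c 1} this gives
\begin{equation*}
	\|D(F_1\circ\dots\circ F_m)(z)\|_{\CDach}
	=
	f^{\#}(z_m)\cdot(F^{m-1})^{\#}\bigl(f_{2m+1}(z)\bigr)\cdot\|Df_{2m+1}(z)\|_{\CDach},
\end{equation*}
with $z_m=F^{m-1}(f_{2m+1}(z))$. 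On $U_{p_1}^{2m}$ Koenig gives $F^{m-1}=\psi^{-1}\circ\delta_{\Lambda^{m-1}}\circ\psi$, so $(F^{m-1})^{\#}(y)=(\psi^{-1})^{\#}(\Lambda^{m-1}\psi(y))\cdot|\Lambda|^{m-1}\cdot\psi^{\#}(y)$; and since $f_{2m+1}=\psi^{-1}\circ h_{2m+1}\circ\phi$ we get $\|Df_{2m+1}(z)\|_{\CDach}=(\psi^{-1})^{\#}(h_{2m+1}(\phi(z)))\cdot\|Dh_{2m+1}(\phi(z))\|\cdot\phi^{\#}(z)$, where $\psi(f_{2m+1}(z))=h_{2m+1}(\phi(z))$ makes the factors $\psi^{\#}(f_{2m+1}(z))$ and $(\psi^{-1})^{\#}(h_{2m+1}(\phi(z)))$ cancel. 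Inserting the value of $\|Dh_{2m+1}\|$ from Lemma~\ref{lemma:hnc is continuous} and collecting the power of $|\Lambda|$ yields the stated identity, with $w_m=\Lambda^{m-1}h_{2m+1}(\phi(z))=\Lambda^{m-1}\psi(f_{2m+1}(z))$ and $z_m=\psi^{-1}(w_m)$. Part~\ref{item: derivative formula per. c 2} is identical, with $\phi(f(z'))$ in the role of $\phi(z)$ and the extra factor $f^{\#}(z')$ coming from the pre--composition with $f$ on $U_v^{2m}$.

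\emph{Main obstacle.} The evaluation of the individual derivative factors (linearization plus the explicit operator norms of the rescaled winding maps) is routine. The delicate point is the unfolding step: one must verify that $F_{m-1},\dots,F_1$ genuinely reduce to $f^2$ along the orbit, which relies on the neighborhoods of $v,c,p_1,p_2$ and the relevant preimages of $U_{p_1}^0$ being mutually disjoint and honest components of backward iterates --- the normalization built into the construction --- and one must keep track of the exponents of $|\Lambda|$ through the two rescalings hidden in $h_{2m\pm1}$ without sign errors.
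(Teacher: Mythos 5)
Your proposal is correct and follows essentially the same route as the paper: unfold $F_1\circ\dots\circ F_m$ on $U_c^{2m+1}$ to $f\circ F^{m-1}\circ f_{2m+1}$, replace $F^{m-1}$ on $U_{p_1}^{2m}$ by the Koenig conjugacy $\psi^{-1}\circ\delta_{\Lambda^{m-1}}\circ\psi$, and apply the chain rule, with the two $\psi$-factors cancelling exactly as you point out; the paper simply packages the composition directly as $f\circ\psi^{-1}\circ\delta_{\Lambda^{m-1}}\circ h_{2m+1}\circ\phi$ before differentiating, so the cancellation is implicit rather than spelled out.
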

\begin{proof}
	\ref{item: derivative formula per. c 1} Let $z\in U_c^{2m+1}$ and $m\in \N$ be arbitrary. Then by \eqref{definition: F_m for per. c} and the linearization of $F=f^2$ on $U_{p_1}^{2m}$ we can rewrite $F_1\circ\dots\circ F_m$ as follows: 
	\begin{equation*}
		\begin{split}
			(F_1\circ\dots\circ F_m)(z)
			&= 
			(F^{m-1}\circ f\circ f_{2m+1})(z)\\
			&= 
			(f\circ F^{m-1}\circ f_{2m+1})(z)\\
			&= 
			(f\circ \psi^{-1}\circ \delta_{\Lambda^{m-1}}\circ h_{2m+1}\circ \phi)(z).
		\end{split}
	\end{equation*}
	Now can use the chain rule to find an explicit expression for the derivative and using the notation $w_m$ and $z_m$ as above we obtain: 
	\begin{equation*}
		\begin{split}
			\|D(F_1\circ \dots\circ F_m)(z)\|_{\CDach}
			&=
			f^{\#}(z_m)\cdot (\psi^{-1})^{\#}(w_m)\cdot |\Lambda|^{m-1} |\Lambda|^{-(m-1)} d |\Lambda|^{(m-1)/d} \phi^{\#}(z)\\
			&= 
			f^{\#}(z_m)\cdot (\psi^{-1})^{\#}(w_m)|\Lambda|^{(m-1)/d}\cdot d\cdot \phi^{\#}(z).
		\end{split}
	\end{equation*}
	\ref{item: derivative formula per. c 2} Let $z'\in U_v^{2m}$ and $m\in \N$ be arbitrary. Then $f(z)\in U_c^{2m-1}$ and by definition $F_m(z)= f_{2m-1}\circ f(z)$. Then $f_{2m-1}(F_m(z)) \in U_{p_1}^{2(m-1)}$ and we can use the linearization around $U_{p_1}$. 
	\begin{equation*}
		\begin{split}
			(F_1\circ \dots\circ F_m)(z')
			&= 
			(\psi^{-1}\circ \delta_{\Lambda^{m-1}}\circ h_{2m-1}\circ \phi\circ f)(z')
		\end{split}
	\end{equation*}
	For the derivative we obtain with the chain rule that 
	\begin{equation*}
		\begin{split}
			\|D(&F_1\circ\dots\circ F_m)(z)\|_{\CDach} \\
			=
			&(\phi^{-1})^{\#}(w_m')\cdot |\Lambda|^{-(m-1)} |\Lambda|^{m-1} d |\Lambda|^{-(m-1)/d}\cdot \phi^{\#}(f(z'))\cdot f^{\#}(z')\\
			=
			& (\psi^{-1})^{\#}(w_m')|\Lambda|^{-(m-1)/d}d\cdot \phi^{\#}(f(z'))f^{\#}(z').
		\end{split}
	\end{equation*}
	with $w'= \phi(f(z'))$ and $w_m'= \Lambda^{-(m-1)}\cdot h_{2m-1}(w')$ as above.
\end{proof}

\begin{lemma}
	\label{lemma: monotonicity of Rm around critical pt}
	The set $U_{p_1}^0$ can be chosen such that for all $m\in \N$ and all $z\in U_c^{2m+1}$ we have that $R_{m+1}(z)> R_m(z)$. 
\end{lemma}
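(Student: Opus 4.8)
The plan is to reduce the asserted inequality to the computation in Lemma~\ref{lemma:mon_radii_crit}~\ref{item:mon_radii_crit_1}, now with $F=f^2$ in the role of $f$ and with Lemma~\ref{lemma: derivative formula per. c} in the role of Lemma~\ref{lem:Df1n}. Since $R_{m+1}(z)=1-D_{m+1}(z)$ with $D_{m+1}(z)=s/\|D(F_1\circ\dots\circ F_{m+1})(z)\|_{\CDach}$, and the composed maps have nowhere vanishing derivative, the inequality $R_{m+1}(z)>R_m(z)$ is equivalent to
\begin{equation*}
  \|D(F_1\circ\dots\circ F_{m+1})(z)\|_{\CDach}>\|D(F_1\circ\dots\circ F_m)(z)\|_{\CDach}.
\end{equation*}
The only structural input needed is that the relevant multiplier is $\Lambda=F^{\#}(p_1)=f^{\#}(p_1)f^{\#}(p_2)$ with $|\Lambda|>1$: because $f$ is expanding, the periodic cycle $\{p_1,p_2\}$ is repelling, so $|\Lambda|=|f'(p_1)f'(p_2)|>1$ and hence $|\Lambda|^{1/d}>1$, where $d=\deg(f,c)$ equals the local degree of $F$ at $c$ as well.

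Mirroring the proof of Lemma~\ref{lemma:mon_radii_crit}~\ref{item:mon_radii_crit_1}, I would fix a small $r>0$, put $U_{p_1}^0:=\psi^{-1}(B(0,r))$, and use the Koebe constants $K_1(r)<1<K_2(r)$ of Lemma~\ref{lem:Koebe_phi_psi}~\ref{item:Koebe_psi} on $B(0,r)$, so that $K_1(r)(\psi^{-1})^{\#}(x)\leq(\psi^{-1})^{\#}(y)$ for $x,y\in B(0,r)$, with $K_1(r)\to1$ as $r\to0$. Shrinking $r$ if necessary I would also arrange that $f^{\#}(x)/f^{\#}(y)\geq 1-\eta(r)$ for all $x,y\in U_{p_1}^0$, with $\eta(r)\to0$; this is possible since $f^{\#}$ is continuous and positive near the non-critical point $p_1$ and $\diam_{\CDach}(U_{p_1}^0)\to0$ as $r\to0$. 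Then I would split into two cases, exactly as in that proof.

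For $z\in U_c^{2m+3}$, Lemma~\ref{lemma: derivative formula per. c}~\ref{item: derivative formula per. c 1} applied at level $m$ and at level $m+1$ writes both derivatives as products that share the common factor $d\,\phi^{\#}(z)$ and differ only in the factors $f^{\#}(z_{m+1})$ versus $f^{\#}(z_m)$, $(\psi^{-1})^{\#}(w_{m+1})$ versus $(\psi^{-1})^{\#}(w_m)$, and $|\Lambda|^{m/d}$ versus $|\Lambda|^{(m-1)/d}$, where $w_j=\Lambda^{j-1}h_{2j+1}(\phi(z))$ and $z_j=\psi^{-1}(w_j)$. By construction $z_j\in U_{p_1}^0$ (indeed $(F_1\circ\dots\circ F_j)(z)=f(z_j)$ lands near $p_2$), so $w_j=\psi(z_j)\in B(0,r)$; hence the ratio of the two derivatives is at least $(1-\eta(r))\,K_1(r)\,|\Lambda|^{1/d}$, which exceeds $1$ once $r$ is small enough.

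For $z\in U_c^{2m+1}\setminus U_c^{2m+3}$ one has $F_{m+1}(z)=f^2(z)$, and since $f$ is $\zeta\mapsto\zeta^d$ in the coordinates $\phi,\psi$ near $c$, on this annulus $|\phi(z)|\geq|\Lambda|^{-(m+1)/d}$. Arguing as in the second half of the proof of Lemma~\ref{lemma:mon_radii_crit}~\ref{item:mon_radii_crit_1}, this forces $\|D(F_1\circ\dots\circ F_{m+1})(z)\|_{\CDach}$ to be bounded below by an expression of the same shape as the one Lemma~\ref{lemma: derivative formula per. c}~\ref{item: derivative formula per. c 1} gives for $\|D(F_1\circ\dots\circ F_m)(z)\|_{\CDach}$, but with the scale factor $|\Lambda|^{(m-1)/d}$ replaced by a fixed positive multiple of $|\Lambda|^{m/d}$; comparing the two once more produces the gain $|\Lambda|^{1/d}$ up to Koebe errors, so a single small choice of $r$ settles both cases uniformly in $m$. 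The main obstacle I anticipate is purely bookkeeping: reconciling the slightly different index normalizations appearing in the diagrams of this subsection, and in the second case pushing the annulus estimate of Lemma~\ref{lemma:mon_radii_crit} through the extra linearization $F^m=\psi^{-1}\circ\delta_{\Lambda^m}\circ\psi$ near $p_1$ together with the final application of $f$, while checking that each point occurring in the $\Db$-coordinate lies in $B(0,r)$ so that Lemma~\ref{lem:Koebe_phi_psi} applies. No analytic ingredient beyond Koebe distortion, continuity of $f^{\#}$, and $|\Lambda|>1$ is needed.
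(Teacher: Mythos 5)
Your proposal is correct and follows essentially the same route as the paper's proof: reduce to the derivative inequality, factor the two compositions via Lemma~\ref{lemma: derivative formula per. c}, split into the cases $z\in U_c^{2m+3}$ and $z\in U_c^{2m+1}\setminus U_c^{2m+3}$, extract the common factor $d\,\phi^{\#}(z)$, and observe that the gain is $\abs{\Lambda}^{1/d}>1$ up to Koebe/continuity errors that become as close to $1$ as desired once $U_{p_1}^0$ is shrunk. The only cosmetic difference is that you control the ratios of $(\psi^{-1})^{\#}$ and of $f^{\#}$ separately (via Koebe and via continuity of $f^{\#}$ near the non-critical point $p_1$, respectively), whereas the paper applies the spherical Koebe theorem once to the single holomorphic map $f\circ\psi^{-1}$; these are the same estimate organized slightly differently.
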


\begin{proof} We will distinguish several cases, first, we let $z\in U_c^{2(m+1)+1}$. The difference $R_{m+1}(z)-R_m(z)$ depends on $\|D(F_1\circ\dots\circ F_{m+1})(z)\|_{\CDach}\| D(F_1\circ \dots\circ F_m)(z)\|_{\CDach}$. So using Lemma \ref{lemma: derivative formula per. c}
	\begin{equation*}
		\begin{split}
			\|D(&F_1\circ\dots\circ F_{m+1})(z)\|_{\CDach}-\| D(F_1\circ \dots\circ F_m)(z)\|_{\CDach} \\
			=
			&f^{\#}(z_{m+1})(\psi^{-1})^{\#}(w_{m+1}) d|\Lambda|^{m/d}\phi^{\#}(z) 
			-
			f^{\#}(z_m)(\psi^{-1})^{\#}(w_m)d|\Lambda|^{(m-1)/d} \phi^{\#}(z)\\
			=
			&d |\Lambda|^{(m-1)/d} \phi^{\#}(z) \big(|\Lambda|^{1/d}f^{\#}(z_{m+1})(\psi^{-1})^{\#}(w_{m+1})- f^{\#}(z_m) (\psi^{-1})^{\#}(w_m)\big).
		\end{split}
	\end{equation*}
	Note that the above expression is positive whenever \begin{equation*}
		\label{equation: Monotonicity of R_m1}
		|\Lambda|^{1/d}f^{\#}(z_{m+1})(\psi^{-1})^{\#}(w_{m+1})- f^{\#}(z_m) (\psi^{-1})^{\#}(w_m)
	\end{equation*}
	is positive. We will first turn to the case when $z\in U_c^{2m+1}\setminus U_c^{2(m+1)+1}$ and then make an argument that will apply to both cases.

	Since $z\notin U_c^{2(m+1)+1}$ the radius $R_{m+1}$ depends on $(F^{m+1})^{\#}(z)$ and using the conjugacy of $f$ to $z^d$ on $U_c^{1m+1}$ we can express that function as \begin{equation*}
		\begin{split}
			F^{m+1}(z)
			&= 
			(f\circ f^{2m}\circ \psi^{-1}\circ p_d\circ \phi)(z)\\
			&=
			(f\circ \psi^{-1}\circ \delta_{\Lambda^m}\circ p_d\circ \phi)(z)
		\end{split}
	\end{equation*}
	with $p_d(z)=z^d$ and using the linearization on $U_{p_1}^{2m}$. 
	Compute a lower bound to the derivative. Let again $w=\phi(z) $ and we denote $\Tilde{w}_{m+1}:= \Lambda^{m}\cdot w^d$ and $\Tilde{z}_{m+1}:= \psi^{-1}(\Tilde{w}_{m+1})$. Note that $|\Tilde{w}_{m+1}|\geq |\Lambda|^{-m/d}$
	\begin{equation*}
		\begin{split}
			(F^{m+1})^{\#}(z) 
			&= 
			f^{\#}(\Tilde{z}_{m+1})\cdot (\psi^{1})^{\#}(\Tilde{w}_{m+1})|\Lambda|^m \cdot d|w|^{d-1}\cdot \phi^{\#}(z)\\
			&\geq 
			f^{\#}(\Tilde{z}_{m+1})\cdot (\psi^{1})^{\#}(\Tilde{w}_{m+1})|\Lambda|^{m}\cdot d|\Lambda|^{-m+m/d} \phi^{\#}(z)\\
			&=
			f^{\#}(\Tilde{z}_{m+1})\cdot (\psi^{1})^{\#}(\Tilde{w}_{m+1}) |\Lambda|^{m/d}\cdot d \phi^{\#}(z).
		\end{split}
	\end{equation*}
	Using this expression and the same formula for $\|D(F_1\circ\dots\circ F_m)(z)\|_{\CDach}$ we obtain \begin{equation*}
		\begin{split}
			(F^{m+1})^{\#}(z)&-\|D(F_1\circ\dots\circ F_m)(z)\|_{\CDach} \\
			&= 
			f^{\#}(\Tilde{z}_{m+1})\cdot (\psi^{1})^{\#}(\Tilde{w}_{m+1})|\Lambda|^m \cdot d|w|^{d-1}\cdot \phi^{\#}(z)\\
			-
			&f^{\#}(z_m)(\psi^{-1})^{\#}(w_m)d|\Lambda|^{(m-1)/d} \phi^{\#}(z)\\
			&\geq d|\Lambda|^{(m-1)/d} \phi^{\#}(z)\big(|\Lambda|^{1/d}f^{\#}(\Tilde{z}_{m+1})(\psi^{-1})^{\#}(\Tilde{w}_{m+1})- f^{\#}(z_m)(\psi^{-1})^{\#}(w_m)\big).
		\end{split}
	\end{equation*}
	So the expression is positive whenever \begin{equation*}
		\label{equation: monotonicity R_m2}
		|\Lambda|^{1/d}f^{\#}(\Tilde{z}_{m+1})(\psi^{-1})^{\#}(\Tilde{w}_{m+1})- f^{\#}(z_m)(\psi^{-1})^{\#}(w_m)
	\end{equation*}
	is positive.
	
	Recall that $|\Lambda|>1$ and so $|\Lambda|^{1/d}>1$. Further, $z_m, z_{m+1}, \Tilde{z}_{m+1}\in U_{p_1}^2$ and $w_m, w_{m+1}, \Tilde{w}_{m+1}\in |\Lambda|^{-1}\D$. Because $w_m, w_{m+1}, \Tilde{w}_{m+1}\in |\Lambda|^{-1}\D$ we may use apply Lemma~\ref{lem:Koebe_phi_psi}~\ref{item:Koebe_psi} and Theorem \ref{theorem: spherical Koebe} to conclude that for the holomorphic map $f\circ \psi^{-1}$ we have that 
	\begin{equation*}
		(f\circ \psi^{-1})^{\#}(u)\asymp (f\circ \psi^{-1})^{\#}(u')
	\end{equation*}
	for all $u, u'\in r\D$ with $C(\asymp)\searrow 1 $ as $r\to 1$. 
	
	So if we pick $r\in (0, 1)$ sufficiently small the expressions \eqref{equation: Monotonicity of R_m1} and \eqref{equation: monotonicity R_m2} are positive when we choose $U_{p_1}^0= \psi^{-1}(r\D)$. 
\end{proof}

\begin{corollary} 
	Let $z\in f^{-n}(w)$ for some $w\in U_c^{2m+1}$ and $n\in \N$ arbitrary. Then $R_{2m+n+2}\geq R_{2m+n+1}$. 
\end{corollary}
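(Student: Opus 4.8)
The plan is to transport the monotonicity of Lemma~\ref{lemma: monotonicity of Rm around critical pt} from $w$ back along the orbit to $z$ by the chain rule, exactly mirroring the corollary to Lemma~\ref{lemma:mon_radii_crit} in the critical-value-fixed case. Writing $w=f^n(z)\in U_c^{2m+1}$, the asserted inequality $r_{2m+n+2}(z)\geq r_{2m+n+1}(z)$ (the statement for the $f$-radii $r_\bullet=1-d_\bullet$) is equivalent to $d_{2m+n+2}(z)\leq d_{2m+n+1}(z)$, and I would reduce this to the inequality $d_{2m+2}(w)\leq d_{2m+1}(w)$ at the point $w$, which is essentially a restatement of Lemma~\ref{lemma: monotonicity of Rm around critical pt}.

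First I would check that the finite backward orbit $z,f(z),\dots,f^{n-1}(z)$ avoids the critical point $c$. Since $c$ is the only critical point and its preimage $v$ is regular, no iterated preimage of $c$ is critical, so — after shrinking $U_{p_1}^0$ if necessary — none of these points lies in a neighbourhood $U_c^{\bullet}$ of $c$ at the relevant level, and hence along this orbit the modified maps $f_j$ coincide with $f$. The spherical chain rule then gives, for every index $N$,
\begin{equation*}
  \|D(f_1\circ\dots\circ f_{N+n})(z)\|_{\CDach}
  =
  \|D(f_1\circ\dots\circ f_{N})(w)\|_{\CDach}\cdot (f^n)^{\#}(z),
\end{equation*}
with one and the same strictly positive factor $(f^n)^{\#}(z)$ for all $N$. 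Unwinding the definition of $d_\bullet$ as geometric means of consecutive $D_m=s/\|D(F_1\circ\dots\circ F_m)\|_{\CDach}$, this shows that $d_{2m+n+1}(z)$ and $d_{2m+n+2}(z)$ differ from $d_{2m+1}(w)$ and $d_{2m+2}(w)$ only by the common factor $(f^n)^{\#}(z)^{-1}$ (together with a harmless relabelling of the half-integer exponents when $n$ is odd, permissible because $(a,b)\mapsto\sqrt{ab}$ is monotone in each argument). Consequently the sign of $r_{2m+n+2}(z)-r_{2m+n+1}(z)$ agrees with that of $r_{2m+2}(w)-r_{2m+1}(w)$.

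It then remains to invoke Lemma~\ref{lemma: monotonicity of Rm around critical pt}: since $w\in U_c^{2m+1}$ it gives $R_{m+1}(w)>R_m(w)$, i.e. $D_{m+1}(w)<D_m(w)$, so that
\begin{equation*}
  r_{2m+2}(w)-r_{2m+1}(w)
  =
  \sqrt{D_m(w)D_{m+1}(w)}-D_{m+1}(w)
  =
  \sqrt{D_{m+1}(w)}\Bigl(\sqrt{D_m(w)}-\sqrt{D_{m+1}(w)}\Bigr)\ \geq\ 0,
\end{equation*}
which finishes the argument. For general $n\geq 1$ one may instead induct, applying the case $n=1$ at $f^{n-1}(z)\in f^{-1}(w)$ and then pushing back one preimage at a time.

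The step I expect to be the main obstacle is precisely this bookkeeping: one must verify carefully that a single $f$-preimage shifts the radius index by exactly one, and that the interplay between the two-step dynamics ($F=f^2$), the half-integer exponents in the definition of $d_n$, and the modifications $f_j\neq f$ near $c$ (which switch on exactly at the level of $w$) produces the \emph{same} extra factor on the two sides of the comparison, so that it cancels when one compares $r_{2m+n+2}(z)$ with $r_{2m+n+1}(z)$. Once this is arranged the monotonicity is an immediate consequence of Lemma~\ref{lemma: monotonicity of Rm around critical pt}, just as in the critical-value-fixed case.
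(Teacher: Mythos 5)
The paper gives no proof here (the source has a placeholder), so I assess your proposal against the surrounding constructions. Your high-level strategy --- pull the endpoint monotonicity back along the orbit by a chain-rule cancellation, mirroring the corollary after Lemma~\ref{lemma:mon_radii_crit} --- is the right one, and reading the conclusion as $r_{2m+n+2}(z)\geq r_{2m+n+1}(z)$ fits the index pattern of that section. However, the chain rule you write is for $\|D(f_1\circ\dots\circ f_{N})\|_{\CDach}$, and in the periodic case the radii are \emph{not} defined through these compositions: $r_n=1-d_n$ is built from $D_M=s/\|D(F_1\circ\dots\circ F_M)\|_{\CDach}$ with $F=f^2$, interpolated by $d_{2M+k}=D_M\bigl(D_{M+1}/D_M\bigr)^{k/2}$. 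The correct pullback is at the $F$-level: provided $z$ avoids $U_v^{2(M+1)}\cup U_c^{2(M+1)+1}$ one has $D_{M+1}(z)=D_M(F(z))/F^{\#}(z)$, so your common-factor cancellation is exactly correct when $n$ is even. For a single $f$-step there is no exact cancellation (the paper only records $d_{n-1}(f(z))\asymp f^{\#}(z)\,d_n(z)$ for generic points), so the parity of $n$ genuinely matters.

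That is why the ``harmless relabelling for $n$ odd'' is precisely where the argument bends, and I do not think it is harmless. Writing $n=2\ell+j$ with $j\in\{0,1\}$, the conclusion always unwinds to a single $F$-level inequality $D_{M+1}(z)\leq D_M(z)$ for $M=m+\ell+j$, which one pulls back two $f$-steps at a time to the point $F^{\ell}(z)$, assuming the intermediate points avoid the modification regions. When $n$ is even, $F^{\ell}(z)=w\in U_c^{2m+1}$ and Lemma~\ref{lemma: monotonicity of Rm around critical pt} closes the argument as you say. When $n$ is odd, $F^{\ell}(z)$ is an $f$-preimage of $w$, not $w$; the endpoint lemma one must invoke is either the $U_v$-monotonicity (item~\ref{item: monotonicity crit neigh per. p2}, if $F^{\ell}(z)\in U_v^{2m+2}$) or, after one more $F$-step to $f(w)\in U_{p_1}^{2m}$, the monotonicity lemma at the postcritical neighbourhoods; the computation $\sqrt{D_m(w)D_{m+1}(w)}-D_{m+1}(w)\geq 0$ at $w$ that you carry out is the $n$-even endpoint only. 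Relatedly, ``shrinking $U_{p_1}^0$'' does not keep the orbit out of the modification regions: the $f$-preimage of $w\in U_c^{2m+1}$ near $v$ is $U_v^{2m+2}$ however small $U_{p_1}^0$ is, so --- as in the Section~\ref{section:Def_and_qr_of_Extension} analogue, which explicitly assumes $z\notin\U_{\crit}^{n+j+1}$ --- you need a stated orbit-avoidance hypothesis together with a case analysis at the final step.
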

\begin{proof}
	content...
\end{proof}

\begin{lemma}
	If $U_{p_1}^0$ and $U_{p_2}^1$ are chosen such that $(f^2)^{\#}(z)>1 $ for all $z\in U_{p_1}^0\cup U_{p_2}^1$ then we have that \begin{equation*}
		R_{m+1}(w)> R_m(w)
	\end{equation*}
	for all $w\in U_c^{2m+1}$. 
\end{lemma}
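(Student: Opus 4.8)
The plan is to run, almost verbatim, the argument from the proof of Lemma~\ref{lemma: monotonicity of Rm around critical pt}, and to observe that the quantitative hypothesis $(f^2)^{\#}>1$ on $U_{p_1}^0\cup U_{p_2}^1$ is exactly what is needed to make the decisive estimate go through. Since $R_m(w)=1-D_m(w)$ and $D_m(w)=s\,\|D(F_1\circ\cdots\circ F_m)(w)\|_{\CDach}^{-1}$, the assertion $R_{m+1}(w)>R_m(w)$ is equivalent to
\[
  \|D(F_1\circ\cdots\circ F_{m+1})(w)\|_{\CDach}
  >
  \|D(F_1\circ\cdots\circ F_m)(w)\|_{\CDach}.
\]
First I would record that neighborhoods as in the hypothesis exist: because $f$ is expanding and $p_1,p_2$ lie on a common $f$-periodic orbit, $(f^2)^{\#}(p_1)=(f^2)^{\#}(p_2)=|\Lambda|>1$, so by continuity $(f^2)^{\#}>1$ on $U_{p_1}^0\cup U_{p_2}^1$ once these sets are small enough, say $U_{p_1}^0=\psi^{-1}(r\D)$ with $r$ close to $1$; from now on fix such a choice.

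Next I would split the verification into the two regions that already appeared in the proof of Lemma~\ref{lemma: monotonicity of Rm around critical pt}: $w\in U_c^{2(m+1)+1}$, where the next-generation winding map $h_{2(m+1)+1}$ is active, and $w\in U_c^{2m+1}\setminus U_c^{2(m+1)+1}$, where $F_{m+1}=f^2$. In the first region Lemma~\ref{lemma: derivative formula per. c}\ref{item: derivative formula per. c 1} gives, with $A_j:=f^{\#}(z_j)(\psi^{-1})^{\#}(w_j)=(f\circ\psi^{-1})^{\#}(w_j)$ and $w_j=\Lambda^{\,j-1}h_{2j+1}(\phi(w))\in|\Lambda|^{-1}\Db$,
\[
  \|D(F_1\circ\cdots\circ F_{m+1})(w)\|_{\CDach}-\|D(F_1\circ\cdots\circ F_m)(w)\|_{\CDach}
  =
  d\,|\Lambda|^{(m-1)/d}\phi^{\#}(w)\bigl(|\Lambda|^{1/d}A_{m+1}-A_m\bigr),
\]
so the claim reduces to $|\Lambda|^{1/d}A_{m+1}>A_m$. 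In the second region one uses the lower bound for $(F^{m+1})^{\#}(w)$ coming from the factorisation $F^{m+1}=f\circ f^{2m}\circ\psi^{-1}\circ(z\mapsto z^d)\circ\phi$, as in Lemma~\ref{lemma: monotonicity of Rm around critical pt}; this reduces the claim to $|\Lambda|^{1/d}\widetilde A_{m+1}>A_m$ with $\widetilde A_{m+1}=(f\circ\psi^{-1})^{\#}(\widetilde w_{m+1})$, $\widetilde w_{m+1}\in|\Lambda|^{-1}\Db$.

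In both cases the decisive inequality has the form $|\Lambda|^{1/d}\cdot(f\circ\psi^{-1})^{\#}(u)>(f\circ\psi^{-1})^{\#}(u')$ with $u,u'\in|\Lambda|^{-1}\Db$. The map $f\circ\psi^{-1}$ is holomorphic and non-constant on a fixed neighborhood of $\Db$ (since $\psi$, the Koenig linearisation of $f^2$ at $p_1$, extends conformally past $U_{p_1}^0$), and all the relevant arguments lie in the fixed compact set $|\Lambda|^{-1}\Db$; the spherical Koebe distortion theorem (Theorem~\ref{theorem: spherical Koebe}) therefore gives $(f\circ\psi^{-1})^{\#}(u)\asymp(f\circ\psi^{-1})^{\#}(u')$ with a constant uniform in $m$ that tends to $1$ as $U_{p_1}^0$ shrinks. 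Since $(f^2)^{\#}>1$ on $U_{p_1}^0\cup U_{p_2}^1$ is precisely the statement that these sets are small enough for the $f^2$-expansion to be uniform there — the same smallness that makes the Koebe constant close enough to $1$ — the strictly expanding factor $|\Lambda|^{1/d}>1$ dominates, yielding $|\Lambda|^{1/d}A_{m+1}>A_m$ (resp.\ $|\Lambda|^{1/d}\widetilde A_{m+1}>A_m$) and hence $R_{m+1}(w)>R_m(w)$.

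The only real work is bookkeeping: keeping straight which winding map $h_{2j+1}$ and which branch of the $f^2$-linearisation are active at stage $m$ versus $m+1$ — the period-$2$ structure makes the relevant post-critical neighborhood alternate between $U_{p_1}$ and $U_{p_2}$, which is why the hypothesis involves both $U_{p_1}^0$ and $U_{p_2}^1$ — and checking that the Koebe constants for $f\circ\psi^{-1}$ are genuinely independent of $m$; the latter holds because both the domain of $f\circ\psi^{-1}$ and the disk $|\Lambda|^{-1}\Db$ containing the arguments are fixed. This is the same structure as the last paragraph of the proof of Lemma~\ref{lemma: monotonicity of Rm around critical pt}; the present statement simply packages the requisite smallness of the post-critical neighborhoods as the clean condition $(f^2)^{\#}>1$ on $U_{p_1}^0\cup U_{p_2}^1$, and the identical computation also yields the analogous monotonicity of $R_m$ on the post-critical neighborhoods themselves.
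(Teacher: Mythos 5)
The paper does not in fact supply a proof of this lemma (there is no \texttt{proof} environment following it; the text just before says only ``Similarly to Lemma~\ref{lemma:mon_radii_crit} we may choose $U_{p_1}^0$ smaller\ldots'', indicating the authors regard it as an adaptation of Lemma~\ref{lemma: monotonicity of Rm around critical pt}). Your structural reading is the right one: split $U_c^{2m+1}$ into the region $U_c^{2(m+1)+1}$, where the winding map $h_{2(m+1)+1}$ is active, and its complement, where $F_{m+1}=f^2$; use Lemma~\ref{lemma: derivative formula per. c} to reduce $R_{m+1}(w)>R_m(w)$ to an inequality of the form $|\Lambda|^{1/d}(f\circ\psi^{-1})^{\#}(w_{m+1})>(f\circ\psi^{-1})^{\#}(w_m)$ (resp.\ the tilde variant), and then control the ratio by spherical Koebe distortion for $f\circ\psi^{-1}$ on $|\Lambda|^{-1}\Db$. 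That is exactly what the proof of Lemma~\ref{lemma: monotonicity of Rm around critical pt} does.

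The gap is in the sentence equating the stated hypothesis $(f^2)^{\#}>1$ on $U_{p_1}^0\cup U_{p_2}^1$ with ``the same smallness that makes the Koebe constant close enough to $1$''. These are independent smallness conditions, and one does not imply the other. The Koebe comparison constant $C(\asymp)$ for $(f\circ\psi^{-1})^{\#}$ on $|\Lambda|^{-1}\Db$ is governed by how $|\Lambda|^{-1}\Db$ sits inside the domain of injectivity of $f\circ\psi^{-1}$, and the argument of Lemma~\ref{lemma: monotonicity of Rm around critical pt} requires $C(\asymp)<|\Lambda|^{1/d}$ --- a bound that can be very stringent when $d$ is large, since $|\Lambda|^{1/d}$ is then barely above $1$. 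By contrast, $(f^2)^{\#}>1$ is a pure expansion condition that can easily hold on a much larger neighborhood of the repelling cycle than the one on which $C(\asymp)<|\Lambda|^{1/d}$ holds. (Note that $(f^2)^{\#}>1$ is precisely what the preceding lemma uses for the \emph{post}-critical neighborhoods $U_{p_i}^n$, where the algebra produces a factor $1-\big(F^{\#}(F^m(z))\big)^{-1/2}$ directly; the critical case does not reduce that cleanly.) To close the gap you must either impose the Koebe smallness $C(\asymp)<|\Lambda|^{1/d}$ as a separate requirement, as Lemma~\ref{lemma: monotonicity of Rm around critical pt} effectively does, or give an argument that actually derives it from $(f^2)^{\#}>1$; as written, the last step of your proof asserts rather than proves the decisive inequality.
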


Recall that around critical neighborhoods the extension was defined by parametrizations of cylindrical neighborhoods around $c$. Here we will consider similar neighborhoods around $c, u$ for each $n$. 

Around critical points we aim to use the same strategy as above. Recall that we needed the radii to be monotone around critical points. That depended on the difference 
\begin{equation}\label{equation: removing assumption}\|DF^{m}\circ F_{m+1}(z)\|_{{\CDach}}-\|DF^{m-1}\circ F_m(z)\|_{{\CDach}}.
\end{equation} 
Let $n=2m+1$. Using the linearization around $U_{p_1}^{2m}$ we can find the following expression for $F^{m-1}\circ F_{m}$: \[F^{n-1}\circ F_{n}(z)= \begin{cases}
	f\circ (\psi^{-1}\circ \delta_{\Lambda^{m-1}}\circ\psi)\circ f_{2m+1} (z)\quad &z\in U_c^{2n+1}\\
	(\psi^{-1}\circ \delta_{\Lambda^{m-1}}\circ \psi)\circ f_{2m-1}\circ f(z) &z\in U_{v}^{2m}
\end{cases}\]

In the following we will focus on the case with $z\in U_c^n$ but the other cases are similar. The radii are monotone in case the difference \eqref{equation: removing assumption} is positive. We will use the notation 
\begin{equation*}
	z_n:= \delta_{\Lambda^{n-1}}\circ \psi\circ f_n(z), \, z_{n+1}:= \delta_{\Lambda^n}\circ \psi\circ f_{2m+1}(z)
\end{equation*}
The difference of the radii depends on 
\begin{align*} &\|DF^{n}\circ F_{n+1}(z)\|_{{\CDach}}-\|DF^{n-1}\circ F_n(z)\|_{{\CDach}} \\
	= &(f^2)^{\#}(\psi^{-1}(z_{n+1}))\cdot (\psi^{-1})^{\#}(z_{n+1}) |\Lambda|^{-1+(n+1)/d}\cdot d\cdot \phi^{\#}(z)\\
	-&(f^2)^{\#}(\psi^{-1}(z_n))\cdot (\psi^{-1})^{\#}(z_n)\|\Lambda^{-1+n/d}\cdot  d\cdot \phi^{\#}(z)\\
	= &|\Lambda|^{-1+n/d}\cdot d\cdot \phi^{\#}(z) \cdot ((f^2)^{\#}(\psi^{-1}(z_{n+1}))\cdot (\psi^{-1})^{\#}(z_{n+1})|\Lambda^{-1/d}|\\  &-(f^2)^{\#}(\psi^{-1}(z_n))\cdot (\psi^{-1})^{\#}(z_n)).
\end{align*}
Here we used the formula for the derivative proved in Lemma \ref{lemma: derivative formula per. c}.

\begin{lemma}
	There exists a choice of $U_{p_1}^0$ such that for all $n\in \N$ and all $z\in U_{p_i}^n$ we have that $r_{n+1}(z)\geq r_n(z)$.
\end{lemma}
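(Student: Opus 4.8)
The plan is to reduce the assertion $r_{n+1}(z)\ge r_n(z)$, which by $r_m=1-d_m$ is equivalent to $d_{n+1}(z)\le d_n(z)$, to the single inequality $D_{m+1}(z)\le D_m(z)$. Write $n=2m+k$ with $k\in\{0,1\}$. From $d_n(z)=D_m(z)\bigl[D_{m+1}(z)/D_m(z)\bigr]^{k/2}$ we get $d_{2m}=D_m$, $d_{2m+1}=D_m^{1/2}D_{m+1}^{1/2}$, $d_{2m+2}=D_{m+1}$; in the case $k=0$ (comparing $D_m$ with $D_m^{1/2}D_{m+1}^{1/2}$) and in the case $k=1$ (comparing $D_m^{1/2}D_{m+1}^{1/2}$ with $D_{m+1}$) the desired inequality $d_{n+1}(z)\le d_n(z)$ reduces in each case to $D_{m+1}(z)\le D_m(z)$. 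So it is enough to prove $D_{m+1}(z)\le D_m(z)$ for all $m$ and all $z\in U_{p_i}^n$.

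For such $z$ the maps $F_1,\dots,F_m$ are not modified near $z$: they differ from $f^2$ only on neighbourhoods of the critical point $c$ and of the precritical point $v$, which are disjoint from the postcritical neighbourhoods of $p_1$ and $p_2$; and by construction the relevant initial piece of the $F$-orbit of $z$ stays inside those postcritical neighbourhoods. Hence $D_m(z)=s\,(f^{2m})^{\#}(z)^{-1}$, and the chain rule gives $(f^{2m+2})^{\#}(z)=(f^2)^{\#}\!\bigl(f^{2m}(z)\bigr)\,(f^{2m})^{\#}(z)$, where $f^{2m}(z)$ lies in $U_{p_1}^0$ if $z$ sits above $p_1$ and in $U_{p_2}^1$ if $z$ sits above $p_2$. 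Therefore $D_{m+1}(z)\le D_m(z)$ is equivalent to $(f^2)^{\#}\!\bigl(f^{2m}(z)\bigr)\ge 1$, and it suffices to choose $U_{p_1}^0$ so small that $(f^2)^{\#}>1$ on $U_{p_1}^0\cup U_{p_2}^1$.

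That this choice is possible, and compatible with all earlier requirements, is the last point. Since $f$ is expanding, $F=f^2$ is expanding and $p_1,p_2$ are repelling fixed points of $F$; thus $(f^2)^{\#}(p_1)=(f^2)^{\#}(p_2)=|\Lambda|>1$, and by continuity of the spherical derivative $(f^2)^{\#}>1$ on a neighbourhood of $\{p_1,p_2\}$. As $U_{p_2}^1$ is the component of $f^{-1}(U_{p_1}^0)$ through $p_2$, shrinking $U_{p_1}^0=\psi^{-1}(r\overline{\D})$ by decreasing $r$ shrinks $U_{p_2}^1$ too, so for $r$ small both sets are inside that neighbourhood. All the constraints on $U_{p_1}^0$ imposed so far — here and in Lemma~\ref{lemma: monotonicity of Rm around critical pt} and the accompanying monotonicity lemmas for $R_m$ — ask only for $U_{p_1}^0$ to be small enough, hence may be imposed at once.

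The case check $k=0,1$ is routine arithmetic; the one delicate point is the index bookkeeping in the middle paragraph — confirming that for $z\in U_{p_i}^n$ the composition $F_1\circ\dots\circ F_m$ coincides with the holomorphic map $f^{2m}$ near $z$, and that $f^{2m}(z)$ lands in $U_{p_1}^0$ or in $U_{p_2}^1$ with the right parity — so that the expansion hypothesis on exactly those two sets is precisely what the argument consumes. After that the conclusion is immediate.
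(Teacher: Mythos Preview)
Your proof is correct and follows the same approach as the paper: both reduce $r_{n+1}(z)\ge r_n(z)$ to the inequality $D_{m+1}(z)\le D_m(z)$, observe that on the postcritical neighbourhoods the $F_j$ coincide with $f^2$, and conclude via the chain rule that the condition becomes $(f^2)^\#>1$ on a neighbourhood of $\{p_1,p_2\}$, which holds since $|\Lambda|>1$. Your reduction is slightly cleaner in that you unify the two parity cases $k=0,1$ into the single inequality $D_{m+1}\le D_m$, whereas the paper treats them separately; one small parity-bookkeeping point is that for $z\in U_{p_2}^{2m}=U_{p_2}^{2m-1}$ the image $f^{2m}(z)$ lands in $f(U_{p_1}^0)$ rather than in $U_{p_2}^1$, but this set also shrinks to $\{p_2\}$ and the argument is unaffected.
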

\begin{proof}
	Let $z\in U_{p_i}^n$ be arbitrary for $i\in\{1, 2\}$, let $n=2m+k$ for $k\in \{0, 1\}$.
	
	We can distinguish several cases, first let $n+1=2m+1$. Then \begin{equation*}
		\begin{split}
			r_{n+1}(z)-r_n(z) 
			&=
			D_m(z)-D_{m+1}(z)\left[\frac{D_{m+1}(z)}{D_m(z)}\right]^{1/2}\\
			&= 
			D_m(z)\left(1- \frac{1}{\big(F^{\#}(F^m(z))\big)^{1/2}}\right).
		\end{split}
	\end{equation*}
	Note that $r_{n+1}(z)-r_n(z)>0 $ when $F^{\#}(F^m(z))>1$. The point $F^m(z)\in U_{p_1}^0$.
	
	The second case is when $n+1=2m$. In that case, the difference is \begin{equation*}
		\begin{split}
			r_{2m}(z)-r_{2(m-1)+1}(z)
			&=
			D_{m-1}(z)\left[\frac{D_m(z)}{D_{m-1}(z)}\right]^{1/2} - D_m(z)\\
			&= 
			D_m(z)^{1/2}\left(1-\frac{1}{\big(F^{\#}(F^{m-1}(z))\big)^{1/2}}\right).
		\end{split}
	\end{equation*}
	In this case, $r_{2m}(z)-r_{2(m-1+1)}(z)>0$ when $F^{\#}(F^{m-1}(z))>1$. Here $F^{m-1}(z)\in U_{p_1}^2$. 
	
	Because $F$ is expanding, we may choose $U_{p_1}^0 $ and hence $U_{p_1}^n$ small enough to guarantee that $F^{\#}(w)>1$ for all $w\in U_{p_1}^0$. 
	
	We can do the analogous computations for neighborhoods of $p_2$ and if necessary pick $U{p_1}p^0$ so that $F^{\#}(w)>1$ for all $w\in U_{p_2}^0$. 
\end{proof}

\begin{lemma}
	\begin{enumerate}
		\item Let $z\in U_c^n$ and $n\in \N$ be arbitrary. Then $r_{n+1}(z)> r_n(z)$. 
		\label{item: monotonicity crit neigh per. p1}
		\item Let $z'\in U_v^n$ and $n\in \N$ be arbitrary. Then $r_{n+1}(z)> r_n(z)$.
		\label{item: monotonicity crit neigh per. p2}
	\end{enumerate}
\end{lemma}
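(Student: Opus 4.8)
The plan is to reduce everything to the monotonicity of the $F$--radii $\{R_m\}$, which near $c$ has already been proved in Lemma~\ref{lemma: monotonicity of Rm around critical pt}, and then to transport that conclusion to the interpolated $f$--radii $r_n$ through the geometric--mean formula $d_n=D_m(D_{m+1}/D_m)^{k/2}$ defining them. Fix a point and write $n=2m+k$ with $k\in\{0,1\}$. Then $d_{2m}=D_m$ and $d_{2m+1}=\sqrt{D_mD_{m+1}}$, so in either case the desired inequality $r_{n+1}>r_n$, i.e.\ $d_{n+1}<d_n$, is equivalent to $D_{m+1}<D_m$, that is, to
\begin{equation*}
  \|D(F_1\circ\dots\circ F_{m+1})(z)\|_{\CDach}>\|D(F_1\circ\dots\circ F_m)(z)\|_{\CDach}.
\end{equation*}
Since $U_c^{2m}=U_c^{2m-1}$, it therefore suffices to prove this last inequality for all $z\in U_c^{2m+1}\cup U_c^{2m-1}$, for every $m\in\N$, together with the analogous inequality for $z'$ in the neighbourhoods of $v$ occurring as the $U_v^n$. (Once established, this also records that the consecutive index of $n$ at such points is $n+1$, which is what the prism constructions require.)

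For $z\in U_c^{2m+1}$ the inequality is exactly Lemma~\ref{lemma: monotonicity of Rm around critical pt}. For $z\in U_c^{2m-1}\setminus U_c^{2m+1}$ I would use the orbit combinatorics: since $z$ is near $c$ but not inside $U_c^{2m+1}$, and since $v$ is the only marked preimage of $c$, the forward orbit $z,f(z),f^2(z),\dots$ enters the post--critical neighbourhoods of $p_1,p_2$ after two steps and never returns to a neighbourhood of $c$ or $v$; hence no winding--type modification is triggered and $F_1\circ\dots\circ F_j$ coincides with $f^{2j}$ on a neighbourhood of $z$ for all $j\le m+1$. The inequality then becomes $(f^{2m+2})^{\#}(z)>(f^{2m})^{\#}(z)$, i.e.\ $F^{\#}(f^{2m}(z))>1$; and $f^{2m}(z)\in f(U_{p_1}^0)$ lies in an arbitrarily small neighbourhood of $p_2$ once $U_{p_1}^0$ is small, on which $F^{\#}>1$ because $F^{\#}(p_2)=|\Lambda|>1$. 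Shrinking $U_{p_1}^0$ settles this case.

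The pre--critical point is handled by the same scheme, now in two subcases according to whether $f(z')$ is \emph{deep} in $U_c^{\bullet}$ or in its \emph{shallow} annular part. In the deep subcase I would invoke the derivative formula of Lemma~\ref{lemma: derivative formula per. c}(\ref{item: derivative formula per. c 2}), which has the same shape as the one near $c$ in part~(\ref{item: derivative formula per. c 1}) up to an extra bounded, strictly positive factor $f^{\#}(z')$ produced by the conformal map $f\colon U_v^{\bullet}\to U_c^{\bullet}$. Subtracting the $m$--th term of the composition's derivative from the $(m+1)$--st and factoring out the common positive quantities, the sign of the difference is the sign of $|\Lambda|^{1/d}(\psi^{-1})^{\#}(w_{m+1}')-(\psi^{-1})^{\#}(w_m')$; since both arguments lie in $|\Lambda|^{-1}\Db$, Lemma~\ref{lem:Koebe_phi_psi}(\ref{item:Koebe_psi}) bounds the distortion of $\psi^{-1}$ there by a constant $K_2(r)\searrow 1$ as $r\to 0$, so with $U_{p_1}^0=\psi^{-1}(r\Db)$ and $|\Lambda|^{1/d}>K_2(r)$ the difference is positive — precisely the mechanism of Lemma~\ref{lemma: monotonicity of Rm around critical pt}. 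The shallow subcase is handled exactly as in the previous paragraph. All the smallness requirements placed on $U_{p_1}^0$ (disjointness of the marked neighbourhoods, the hypothesis of Lemma~\ref{lemma: monotonicity of Rm around critical pt}, $F^{\#}>1$ on $f(U_{p_1}^0)$, and $|\Lambda|^{1/d}>K_2(r)$) are finitely many and hold simultaneously for $r$ small.

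The hard part is not any single estimate but the bookkeeping: for each $z\in U_c^n$ (or $U_v^n$) and each index $j$ one must know whether the $j$--th factor of $F_1\circ\dots\circ F_m$ acts as a scaled winding map or simply as $f^2$, i.e.\ whether the corresponding point of the orbit lies in $U_c^{2j+1}$, in $U_v^{2j}$, or in a post--critical neighbourhood. Once that is pinned down, every case reduces either to the trivial comparison $F^{\#}>1$ in the post--critical region or to the Koebe--versus--$|\Lambda|^{1/d}$ comparison already carried out near $c$, and the geometric--mean interpolation carries the conclusion back to the radii $r_n$.
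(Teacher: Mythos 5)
Your proposal is correct and follows the same basic strategy as the paper: use the interpolation formula $d_{2m}=D_m$, $d_{2m+1}=\sqrt{D_mD_{m+1}}$ to reduce $r_{n+1}>r_n$ to $D_{m+1}<D_m$, and then invoke the monotonicity of the $F$-radii near $c$. You are also right about the direction of the required inequality; the paper's printed proof writes ``$D_{m+1}(z)>D_m(z)$,'' which would make the computed difference $D_m(1-\sqrt{D_{m+1}/D_m})$ negative, so that is a sign error that your write-up silently corrects ($R_{m+1}>R_m$ is equivalent to $D_{m+1}<D_m$).

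Where you go genuinely further is in noticing that, in the even case $n=2m$, the point $z$ lies in $U_c^{2m}=U_c^{2m-1}$ but need not lie in $U_c^{2m+1}$, so the statement of Lemma~\ref{lemma: monotonicity of Rm around critical pt} (which requires $z\in U_c^{2m+1}$) does not literally cover it. The paper simply cites that lemma and moves on. Your shallow-case argument supplies the missing step: for $z\in U_c^{2m-1}\setminus U_c^{2m+1}$ the orbit enters post-critical neighbourhoods after one step and never re-enters a neighbourhood of $c$ or $v$, so $F_1\circ\dots\circ F_j$ coincides with $f^{2j}$ near $z$ for $j\le m+1$, and $D_{m+1}<D_m$ reduces to $F^{\#}(f^{2m}(z))>1$ with $f^{2m}(z)\in f(U_{p_1}^0)$, a small neighbourhood of $p_2$ where $F^{\#}\to|\Lambda|>1$; shrinking $U_{p_1}^0$ does it. You also spell out the $U_v^n$ case via Lemma~\ref{lemma: derivative formula per. c}(ii) and the same Koebe-versus-$|\Lambda|^{1/d}$ mechanism, where the paper only says ``analogously.'' None of this changes the fundamental approach, but it makes the argument genuinely complete where the paper's is sketchy.
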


\begin{proof}\ref{item: monotonicity crit neigh per. p1} 
	Let $z\in U_c^n$ and suppose that $n=2m+k$ for $k\in \{0, 1\}$. If $n+1=2m+1$ we can compute the difference as \begin{equation*}
		\begin{split}
			r_{n+1}(z)-r_n(z)
			&=
			D_m(z)- D_{m}(z)\left[\frac{D_{m+1}(z)}{D_{m}(z)}\right]^{1/2}\\
			&=
			D_m(z)\left(1-\left[\frac{D_{m+1}(z)}{D_m(z)}\right]^{1/2}\right)
		\end{split}
	\end{equation*}
	Again, this depends on $D_{m+1}(z)/D_m(z)$. By Lemma \ref{lemma: monotonicity of Rm around critical pt} we can conclude that $D_{m+1}(z)> D_m(z)$ and so the difference is positive. 
	
	\smallskip
	\ref{item: monotonicity crit neigh per. p2} This can be shown analogously. 
\end{proof}

We will proceed to construct the extension around the critical neighborhoods similarly as in Section \ref{subsection: extension near critical points}. Recall that we defined $U_c^{2m}= U_c^{2m-1}$ for $m\in \N$.

\begin{lemma}
	Let $n= 2m+1$ for some $m\in \N$ and $z\in U_c^n$. Then the map \begin{equation*}
		\begin{split}
			\overline{f}: P(U_c^n, n)&\to P(U_p^{n-1}, n-1)\\
			(z, \rho_n(z))&\mapsto (f_n(z), \rho_{n-1}(f_n(z)))
		\end{split}
	\end{equation*}
	is a quasi-similarity when the spherical coordinate is restricted to suitable subsets of $U_c^n\setminus \{c\}$. 
\end{lemma}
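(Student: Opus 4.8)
The plan is to transcribe the three–step scheme of Section~\ref{subsection: extension near critical points} to the two–fold iterate $F=f^{2}$, carrying along the bounded holomorphic correction terms that show up in Lemma~\ref{lemma: derivative formula per. c}. As in the fixed case, the map $\overline f$ on $P(U_c^{n},n)$ is not literally the naive formula $(z,\rho_n(z,t))\mapsto(f_n(z),\rho_{n-1}(f_n(z),t))$ — that has the usual boundary–value defect on the top face — but is instead defined as a composition $\overline f|_{P(U_c^n,n)}=\alpha_2\circ\beta\circ\alpha_1^{-1}$ that \emph{agrees} with the displayed formula on the approximating sphere $S_n$; the statement to be proved is that this composition, restricted to the $\phi$–preimage of a sector $\Db(\theta_0,\pi/d)$ (i.e.\ to a set $U_c^n(z_0,\pi/d)$ as in \eqref{eq:Sectors_in_sphere}), is an $(L,L',\mu)$–quasi–similarity with $L,L'$ uniform in $m$. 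Here $n=2m+1$, the target prism is $P(U_{p_1}^{n-1},n-1)=P(U_{p_1}^{2m},2m)$, and $\alpha_1,\alpha_2,\beta$ are parametrizations of the source cylinder $|\Lambda|^{-(m-1)/d}\Db\times[0,q_n]$, the target cylinder $|\Lambda|^{-(m-1)}\Db\times[0,q'_{n-1}]$, and the map between them that interpolates linearly in the vertical variable between the scaled winding maps prescribed on the two horizontal faces.

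First I would prove the two parametrization lemmas, mirroring Lemma~\ref{lemma:param_of_P(U_c)} and Lemma~\ref{lemma:parap_of_P(Up)}. The argument for $\alpha_1$ needs only: (i) $\phi$ is bi-Lipschitz from $(U_c^n,|\cdot|_{\CDach})$ to $(\Db,|\cdot|)$ with Koebe constants $\to1$ as $n\to\infty$ (Lemma~\ref{lem:Koebe_phi_psi}); (ii) $z\mapsto r_n(z)$ is Lipschitz on $U_c^n$ with constant $\lesssim d_n(z)$, the analog of Lemma~\ref{lemma:radius_Lipschitz_Ucrit} — valid because on $U_c^{2m+1}$ one has $d_n=(D_mD_{m+1})^{1/2}$ and, modulo the bounded holomorphic factor $(f^{2})^{\#}$ evaluated inside the \emph{fixed} compact set $U_{p_1}^{2}$, $D_m$ and $D_{m+1}$ are given by the same winding–and–linearize formula as in the fixed case, so the chain–rule–plus–Koebe estimate of Lemma~\ref{lemma:radius_Lipschitz_Ucrit} carries over verbatim; and (iii) the comparabilities $r_{n+1}(z)-r_n(z)\asymp d_n(z)\asymp d_n(c)$ on $U_c^n$, which is the analog of Lemma~\ref{lemma:mon_radii_crit}(\ref{item:mon_radii_crit_3}) — monotonicity is already Lemma~\ref{lemma: monotonicity of Rm around critical pt}, and the two–sided bound follows from the same estimate with $K_1,K_2$ chosen so that $|\Lambda|^{1/d}K_1>1>K_2$, the factor $(f^2)^{\#}$ contributing only a bounded multiplicative error. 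For $\alpha_2$ I would mirror Lemma~\ref{lemma:parap_of_P(Up)}; the only genuinely new point is that near $p_1$ the $n$-th radius is the geometric interpolant $d_{2m+1}=(D_mD_{m+1})^{1/2}$ between the $F$–levels $d_{2m}=D_m$ and $d_{2m+2}=D_{m+1}$, so one needs a Lipschitz-in-$z$ bound for $z\mapsto D_m(z)^{1/2}$; this follows from the analog of Lemma~\ref{lemma:radii_Lipsch_Upost} applied to $F$ together with the lower bound $D_m(z)\gtrsim d_n(z)$ on the compact set $U_{p_1}^{2m}$.

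Next I would define $\beta$ on $|\Lambda|^{-(m-1)/d}\Db\times[0,q_n]$ by
\[
\beta(x,t)=\Bigl(\tfrac{q_n-t}{q_n}\,h_{2m+1}(x)+\tfrac{t}{q_n}\,h_{2m+3}(x),\ \mu\,t\Bigr),\qquad \mu=\frac{q'_{n-1}}{q_n},
\]
with $h_{2m+1},h_{2m+3}$ the appropriately scaled winding maps, so that the horizontal faces of $\beta$ match $f_n$ and $f_{n+2}$ read in the $\phi,\psi$ coordinates, and then repeat the computation of Lemma~\ref{lemma: map between cylinders}: $\beta$ is globally Lipschitz, and on each sector $\Db(\theta_0,\pi/d)\times[0,q_n]$ it is bi-Lipschitz because there both winding maps are bi-Lipschitz with constants between $1$ and $d$ by Lemma~\ref{lemma: h is bilipschitz}(\ref{item:prop_hd_1}), while the radial term has the explicit polar form $r_x\,\Lambda(m,t)\,e^{id\theta}$ with $\Lambda(m,t)$ pinched between the two scale factors. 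Composing the three bi-Lipschitz maps then exhibits $\overline f$ on the sector as bi-Lipschitz, hence (after tracking the three stretch factors via Lemma~\ref{lemma: derivative formula per. c}) as an $(L,L',\mu)$–quasi–similarity for a suitable $\mu$ comparable to $d_{n-1}(f_n(z))/d_n(z)$, with $L,L'$ depending only on $d$, on $D\phi$, $D\psi$, and on $|\Lambda|^{-(m-1)}$, hence decreasing in $m$ and choosable uniformly.

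The main obstacle I anticipate is the bookkeeping on the target side. Unlike in Section~\ref{subsection: extension near critical points}, the $n$-th and $(n+1)$-st radii over $U_{p_1}^{2m}$ are no longer two consecutive terms of a single family $r_j=1-s/\|D(f_1\circ\dots\circ f_j)\|_{\CDach}$ but the $F$–defined level $D_m$ and its geometric interpolant, so the clean identity $d_{n-1}(f(z))=f^{\#}(z)\,d_n(z)$ of Lemma~\ref{lemma: scaling of distance to sp by f'} is replaced by the asymptotic identities already established in the first lemma of Section~\ref{section:removing_assumption} (periodic case), each carrying a uniformly bounded multiplicative error coming from $f^{\#}$ and $(f^2)^{\#}$ evaluated in the fixed neighborhoods $U_{p_1}^{0},U_{p_2}^{1}$. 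Propagating these bounded errors through both parametrizations without letting them grow with $m$ — which is exactly why $U_{p_1}^{0}$ must be shrunk so that the relevant Koebe constants (as in Lemma~\ref{lem:Koebe_phi_psi} and Theorem~\ref{theorem: spherical Koebe}) lie within a fixed factor of $1$ — is where essentially all the work lies; once that uniformity is secured, the quasi-similarity conclusion is immediate from composing bi-Lipschitz maps.
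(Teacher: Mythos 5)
Your overall strategy — mirror the $\alpha_1,\alpha_2,\beta$ decomposition from Section~\ref{subsection: extension near critical points}, prove the analogous bi-Lipschitz parametrization lemmas, and compose — is the paper's intended approach, which it carries out implicitly via the subsequent $\overline{\alpha}_1,\overline{\alpha}_2$ lemmas. Your attention to the geometric interpolant $d_{2m+1}=(D_mD_{m+1})^{1/2}$, the Lipschitz bound on $D_m^{1/2}$, and the bounded multiplicative factors $f^{\#}$, $(f^2)^{\#}$ on the fixed compact neighborhoods is exactly the right bookkeeping, and your closing paragraph correctly locates where the uniformity has to be secured.

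However, your $\beta$ rests on a misreading of the prism structure for odd $n$. The prism $P(U_c^{2m+1},2m+1)$ is bounded by $S_{2m+1}$ and $S_{2m+2}$, and since the paper sets $f_{2m+2}=f_{2m+1}$ on $U_c^{2m+2}=U_c^{2m+1}$, \emph{both} faces of this prism are carried by the \emph{same} scaled winding map in the $\phi,\psi$ coordinates. So the boundary-value defect you anticipate on the top face simply does not occur for $n=2m+1$: the naive formula $(z,\rho_n(z,t))\mapsto(f_n(z),\rho_{n-1}(f_n(z),t))$ is already consistent across both faces, and $\beta$ degenerates to $(x,t)\mapsto(h_{2m+1,c}(x),\mu t)$. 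Interpolating between $h_{2m+1}$ and $h_{2m+3}$ is therefore both unnecessary and incorrect for this lemma. The genuine interpolation between \emph{distinct} winding maps ($h_{2m-1}$ versus $h_{2m+1}$) occurs instead on the even prism $P(U_c^{2m},2m)$, where the faces use $f_{2m}=f_{2m-1}$ and $f_{2m+1}$; that is the case the paper's $\overline{\alpha}_1,\overline{\alpha}_2$ lemmas actually treat (stated for $n=2m$), so the ``$n=2m+1$'' in the statement is plausibly a typo. Your pair $h_{2m+1},h_{2m+3}$ matches neither case, and it also creates a concrete technical snag: $h_{2m+3}$ is naturally scaled to $|\Lambda|^{-(m+1)/d}\Db$, a strictly smaller disk than your source cylinder's base, so invoking Lemma~\ref{lemma: h is bilipschitz}~\ref{item:prop_hd_1} for it across the whole cylinder would need extra justification. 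Apart from this index confusion the plan is sound, and the correct version is in fact simpler than what you wrote.
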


We will define parametrizations of $U_c^n$ and $U_p^{n-1}$ for $n= 2m$ with $m\in \N$ arbitrary in a similar fashion to Lemma \ref{lemma:param_of_P(U_c)} and Lemma \ref{lemma:parap_of_P(Up)}. 

Similarly to the above we will parametrize the set $P(U_c^n, n)$ with a cylinder $|\Lambda|^{(n-2)/d}\times [0, q_n]$ where $\overline{q}_n= r_{n+1}(c)-r_n(z)$. In the following we will use the notation $\rho_n(z, t)$ also when $t\in [0, h]$ with $h$ not necessarily equal to one for 
\begin{equation}
	\label{equation: convention for rhon with h not necc 1 period}
	\rho_n(z, t):= \frac{h-t}{h}r_n(z)+\frac{t}{h}r_{n+1}(z).
\end{equation}
The parameter $h$ will be clear from the context by specifying the interval in which $t$ lives. 
\begin{lemma}
	Let $n=2m$ for some $m\in \N$. Set $\overline{q}_n := r_{n+1}(c)-r_n(c)$. Then the following parametrization of $P(U_c^n, n)$ given by
	\begin{equation*}
		\begin{split}
			\overline{\alpha}_1: |\Lambda|^{(n-2)/d}\times [0, \overline{q}_n']&\to P(U_c^n, n)\\
			(x, t)&\mapsto \big(\phi^{-1}(x), \rho_n(\phi^{-1}(x))\big) 
		\end{split}
	\end{equation*}
	is bi-Lipschitz with constant $L_{\overline{\alpha}_1}= L_{\overline{\alpha}_1}(|\Lambda|^{-(n-2)/d}, |\Lambda|^{-(n-2)}, d)$ that is decreasing as $n$ increases.
	Here $\rho_n$ is as in Equation \eqref{equation: convention for rhon with h not necc 1 period}.
\end{lemma}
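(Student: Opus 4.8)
The plan is to repeat the proof of Lemma~\ref{lemma:param_of_P(U_c)} essentially verbatim, with the winding maps of Section~\ref{subsection: extension near critical points} replaced by the maps $h_{2m-1},h_{2m+1}$ of the periodic construction and with the derivative formula of Lemma~\ref{lemma: derivative formula per. c} playing the role of Lemma~\ref{lem:Df1n}. Throughout one has $n=2m$, so $r_n(z)=1-d_n(z)=1-D_m(z)$ with $D_m(z)=s\,\|D(F_1\circ\dots\circ F_m)(z)\|_{\CDach}^{-1}$, and $h=\overline q_n$ in \eqref{equation: convention for rhon with h not necc 1 period}. As in the fixed case, three ingredients are needed: (a) the comparison $|(z,\rho)-(w,\tau)|\asymp\tfrac12(\rho+\tau)|z-w|_{\CDach}+|\rho-\tau|$ of the Euclidean metric on $\R^3$ with the spherical sum norm; (b) the Koebe estimate $|z-w|_{\CDach}\asymp(\phi^{-1})^{\#}(x)\,|x-y|$ for $x,y\in\phi(U_c^n)$, from Lemma~\ref{lem:Koebe_phi_psi}~\ref{item:Koebe_phi}; and (c) the Lipschitz estimate $|r_n(z)-r_n(w)|\lesssim d_n(z)\,|z-w|_{\CDach}$ for $z,w\in U_c^n$, with $C(\lesssim)$ depending only on $d$, $|\Lambda|^{-(n-2)/d}$, $|\Lambda|^{-(n-2)}$ and decreasing in $n$. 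Combining these with the comparabilities $d_n(z)\asymp d_n(c)\asymp\overline q_n$ and $d_{n+1}(z)\asymp d_n(c)$ for $z\in U_c^n$ (which come from bounded distortion of $\phi$, $\psi^{-1}$, $f$ on the fixed compact sets in play) yields the asserted bi-Lipschitz bound in exactly the same way as the two display chains at the end of the proof of Lemma~\ref{lemma:param_of_P(U_c)}.

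The genuinely new step is (c), the periodic analogue of Lemma~\ref{lemma:radius_Lipschitz_Ucrit}. First I would write, for $z,w\in U_c^n$,
\[
|r_n(z)-r_n(w)|=|D_m(z)-D_m(w)|
=s\,\frac{\big|\,\|D(F_1\circ\dots\circ F_m)(w)\|_{\CDach}-\|D(F_1\circ\dots\circ F_m)(z)\|_{\CDach}\,\big|}{\|D(F_1\circ\dots\circ F_m)(z)\|_{\CDach}\,\|D(F_1\circ\dots\circ F_m)(w)\|_{\CDach}},
\]
and then estimate the numerator via the factorisation
\[
\|D(F_1\circ\dots\circ F_m)(z)\|_{\CDach}=f^{\#}(z_m)\,(\psi^{-1})^{\#}(w_m)\,|\Lambda|^{(m-1)/d}\,d\,\phi^{\#}(z)
\]
of Lemma~\ref{lemma: derivative formula per. c}~\ref{item: derivative formula per. c 1} (together with its variant with $d|w|^{d-1}$ replacing $d|\Lambda|^{-m+m/d}$ for $z\in U_c^n\setminus U_c^{n+2}$, handled by the same two-case splitting as in the proof of Lemma~\ref{lemma:mon_radii_crit}). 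Each factor contributes a difference-quotient bound with the right uniform constant: $\phi^{\#}$ by Lemma~\ref{lem:Koebe_phi_psi}~\ref{item:Koebe_phi}; $(\psi^{-1})^{\#}$ by Lemma~\ref{lem:Koebe_phi_psi}~\ref{item:Koebe_psi} together with the bounds for $h_{2m\pm1}$ from Lemma~\ref{lemma:hnc is continuous} and the Koebe bound for $\phi$; and the new factor $f^{\#}(z_m)$ by writing $f^{\#}(z_m)=(f\circ\psi^{-1})^{\#}(w_m)\cdot(\psi^{-1})^{\#}(w_m)^{-1}$ and applying Theorem~\ref{theorem: spherical Koebe} to the holomorphic map $f\circ\psi^{-1}$ on $|\Lambda|^{-1}\Db$, as in the proof of Lemma~\ref{lemma: monotonicity of Rm around critical pt}. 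Since $z_m,w_m$ lie in the fixed compact set $U_{p_1}^2$ (equivalently $w_m\in|\Lambda|^{-1}\Db$), all of these constants are bounded and decrease as $n\to\infty$; collecting terms gives $\|D(F_1\circ\dots\circ F_m)(z)\|_{\CDach}-\|D(F_1\circ\dots\circ F_m)(w)\|_{\CDach}\leq C\,\|D(F_1\circ\dots\circ F_m)(w)\|_{\CDach}\,|z-w|_{\CDach}$, hence (c).

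With (c) available I would run the two halves of the estimate as in Lemma~\ref{lemma:param_of_P(U_c)}. For the upper bound: split $|\overline\alpha_1(x,t)-\overline\alpha_1(y,s)|\asymp\tfrac12(\rho_n(z,t)+\rho_n(w,s))|z-w|_{\CDach}+|\rho_n(z,t)-\rho_n(w,s)|$ with $z=\phi^{-1}(x)$, $w=\phi^{-1}(y)$; bound the spherical factor by $(\phi^{-1})^{\#}(x)\,|x-y|$; and split $|\rho_n(z,t)-\rho_n(w,s)|$ into a $t$-variation piece $\lesssim|t-s|$ (using $d_n(z)\asymp d_{n+1}(z)\asymp\overline q_n$) and a $z$-variation piece $\lesssim d_n(z)\,|z-w|_{\CDach}\lesssim(\phi^{-1})^{\#}(x)\,|x-y|$ (using (c)). For the lower bound: solve $t=\overline q_n\big(\rho_n(z,t)-r_n(z)\big)\big(r_{n+1}(z)-r_n(z)\big)^{-1}$, use $r_{n+1}(z)-r_n(z)\gtrsim d_n(z)$ (monotonicity of $R_m$ near $c$, Lemma~\ref{lemma: monotonicity of Rm around critical pt}, plus the comparability $D_{m+1}(c)\asymp D_m(c)$ read off from its proof), and repeat the triangle-inequality expansion of Lemma~\ref{lemma:param_of_P(U_c)} to get $|t-s|\lesssim\overline q_n\,|z-w|_{\CDach}+|\rho_n(z,t)-\rho_n(w,s)|$, whence $|x-y|+|t-s|\lesssim L'\,|(z,\rho_n(z,t))-(w,\rho_n(w,s))|$. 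Taking $\max\{L,L'\}$ gives $L_{\overline\alpha_1}=L_{\overline\alpha_1}(d,|\Lambda|^{-(n-2)/d},|\Lambda|^{-(n-2)})$, decreasing in $n$. \textbf{Main obstacle.} I expect (c) to be the delicate point: unlike in Section~\ref{subsection: extension near critical points}, the derivative of the composition now carries the extra holomorphic factor $f^{\#}(z_m)$ from the last application of $f$ in $F=f^2$, and one must check that this factor — hence the quotient defining $D_m$ — still has a Lipschitz modulus with a constant uniform in (indeed decreasing in) $n$; once Theorem~\ref{theorem: spherical Koebe} is applied to $f\circ\psi^{-1}$ on the fixed disk $|\Lambda|^{-1}\Db$ this is manageable, but the bookkeeping of the two cases $z\in U_c^{n+2}$ versus $z\in U_c^n\setminus U_c^{n+2}$ together with the geometric-mean interpolation is where care is required.
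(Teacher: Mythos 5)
Your proposal is correct and follows exactly the route the paper intends: the paper's own proof is literally "The proof is analogous to the proof of Lemma~\ref{lemma:param_of_P(U_c)}," and you have fleshed out the analogy in the right way, identifying the periodic analogue of Lemma~\ref{lemma:radius_Lipschitz_Ucrit} (your step (c)) as the genuinely new ingredient and correctly noting that the additional factor $f^{\#}(z_m)$ coming from the last application of $f$ in $F=f^2$ is controlled by Koebe's theorem applied to $f\circ\psi^{-1}$ on the fixed disk $|\Lambda|^{-1}\Db$, exactly as in the proof of Lemma~\ref{lemma: monotonicity of Rm around critical pt}. The two-case splitting $z\in U_c^{n+2}$ versus $z\in U_c^n\setminus U_c^{n+2}$ that you flag is the right place where the bookkeeping gets delicate, and the rest (metric comparison, Koebe estimate for $\phi^{-1}$, solving for $t$ to get the lower bound) transfers verbatim. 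This is a fuller account than the paper gives, but it is the same argument.
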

\begin{proof}
	The proof is analogous to the proof of Lemma \ref{lemma:param_of_P(U_c)}.
\end{proof}

\begin{lemma}
	Let $n=2m$ for some $m\in\N$. Then the following parametrization of $P(U_{p_1}^{n-1}, n-1)$ given by \begin{equation*}
		\begin{split}
			\overline{\alpha}_2: |\Lambda|^{n-1}_{p_1}\times [0, \overline{q}_n']&\to P(U_{p_1}^{n-1}, n-1) \\
			(x, t)&\mapsto \big(\psi^{-1}(x), \rho_{n-1}(\psi^{-1}(x), \rho_{n-1}(\psi^{-1}(x), t))\big)
		\end{split}
	\end{equation*}
	is bi-Lipschitz with constant $L_{\overline{\alpha}_2}= L_{\overline{\alpha}_2}(|\Lambda|^{n-2}, d)$. 
\end{lemma}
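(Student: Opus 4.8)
The plan is to transcribe the proof of Lemma~\ref{lemma:parap_of_P(Up)} almost verbatim, with $f$ replaced by $F=f^2$, the multiplier $\lambda$ by $\Lambda$, and the radii now built from the $D_m$ via the geometric interpolant defining $d_n$. Write $z:=\psi^{-1}(x)$ and $w:=\psi^{-1}(y)$, where $x,y$ range over the scaled disk $\psi(U_{p_1}^{n-1})$. As in the earlier proofs, use that near $\CDach$ the Euclidean metric on $\R^3$ is comparable to the spherical sum norm,
\[
  |(z,\rho)-(w,\tau)|\;\asymp\;\tfrac12(\rho+\tau)\,|z-w|_{\CDach}+|\rho-\tau|,
\]
which is legitimate after shrinking $U_{p_1}^0$ so that $U_{p_1}^{n-1}$ lies in a hemisphere and all radii stay in a fixed compact subinterval of $(0,1)$. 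Treat the two summands separately. Since $\psi^{-1}$ extends conformally past $\overline{\psi(U_{p_1}^{n-1})}$, Lemma~\ref{lem:Koebe_phi_psi}~\ref{item:Koebe_psi} gives $|z-w|_{\CDach}\asymp(\psi^{-1})^{\#}(x)\,|x-y|$ with a constant uniform in $n$.

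The one genuinely new ingredient is the Lipschitz estimate for $z\mapsto r_n(z)$ on $U_{p_1}^{n-1}$, i.e.\ the analog of Lemma~\ref{lemma:radii_Lipsch_Upost}. Here $n=2m$ and, on $U_{p_1}^{n-1}=U_{p_1}^{2(m-1)}$, no forward $F$-iterate meets a critical point of $F$, so $F_1\circ\dots\circ F_j=F^j$ there and $D_j(z)=s/(F^j)^{\#}(z)$ for $j\le m$; hence $r_{2m}(z)=1-s/(F^m)^{\#}(z)$ and $r_{2m-1}(z)=1-s\bigl[(F^{m-1})^{\#}(z)(F^m)^{\#}(z)\bigr]^{-1/2}$, with $F^{m-1}(z)\in U_{p_1}^0$. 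Theorem~\ref{theorem: spherical Koebe} applied to $F^{m-1}$ and $F^m$ on $U_{p_1}^{n-1}$ gives $(F^j)^{\#}(z)\asymp(F^j)^{\#}(w)$ and $|(F^j)^{\#}(z)-(F^j)^{\#}(w)|\lesssim(F^j)^{\#}(z)\,|z-w|_{\CDach}$, and, since $u\mapsto u^{-1}$ and $u\mapsto u^{-1/2}$ are Lipschitz on the resulting compact range, one obtains
\[
  |r_n(z)-r_n(w)|=|d_n(z)-d_n(w)|\;\lesssim\;d_n(z)\,|z-w|_{\CDach},
\]
with constant depending only on $|\Lambda|^{-(n-1)}$ and decreasing in $n$, and likewise for $r_{n-1}$. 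I expect this passage through the fractional power --- equivalently, through the ratio $D_m/D_{m-1}=1/F^{\#}(F^{m-1}(z))$, which stays in a fixed compact subset of $(0,1)$ because $F^{m-1}(z)$ ranges over $U_{p_1}^0$ --- to be the only place requiring real care.

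With this in hand, note that on $U_{p_1}^{n-1}$ all of $d_{n-1}(z),d_n(z),\overline{q}_n'=d_{n-1}(p_1)$ are comparable, by Koebe applied in the linearization domain. Writing $\rho_{n-1}(z,t)=r_{n-1}(z)\tfrac{\overline{q}_n'-t}{\overline{q}_n'}+r_n(z)\tfrac{t}{\overline{q}_n'}$, split $|\rho_{n-1}(z,t)-\rho_{n-1}(w,s)|$ into a $|t-s|$-part bounded by $(d_{n-1}(z)+d_n(z))|t-s|/\overline{q}_n'\lesssim|t-s|$ and a $|z-w|_{\CDach}$-part bounded via the radius estimate by $d_{n-1}(z)\,|z-w|_{\CDach}$; combined with the spherical estimate this yields the upper bound $|\overline{\alpha}_2(x,t)-\overline{\alpha}_2(y,s)|\lesssim|x-y|+|t-s|$.

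For the lower bound, invert to $t=\overline{q}_n'(\rho_{n-1}(z,t)-r_{n-1}(z))/(r_n(z)-r_{n-1}(z))$, use the analog of Lemma~\ref{lemma:lower_bound_mon_radii_Up} --- which follows from the monotonicity statements for $\{R_m\}$ already proved in this section, since $F$ is expanding so $F^{\#}>1$ on a sufficiently small $U_{p_1}^0$ --- to get $r_n(z)-r_{n-1}(z)\gtrsim d_{n-1}(z)$, expand the difference quotient by the triangle inequality exactly as in the proof of Lemma~\ref{lemma:parap_of_P(Up)}, and absorb the errors into $|z-w|_{\CDach}$ and $|\rho_{n-1}(z,t)-\rho_{n-1}(w,s)|$. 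This gives $|x-y|+|t-s|\lesssim|(z,\rho_{n-1}(z,t))-(w,\rho_{n-1}(w,s))|$; taking the maximum of the two constants yields a single $L_{\overline{\alpha}_2}=L_{\overline{\alpha}_2}(|\Lambda|^{n-2},d)$ which is decreasing in $n$ and hence can be chosen uniformly. The only real obstacle, again, is the radius Lipschitz estimate in the presence of the geometric interpolation; the remainder is a line-by-line copy of the proofs of Lemmas~\ref{lemma:param_of_P(U_c)} and~\ref{lemma:parap_of_P(Up)}.
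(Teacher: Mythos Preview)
Your proposal is correct and follows the same approach the paper intends: the paper's proof simply reads ``This is analogous to the proof of Lemma~\ref{lemma:parap_of_P(Up)},'' and you have carried out precisely that transcription with $f$ replaced by $F=f^2$ and $\lambda$ by $\Lambda$. In fact you supply more than the paper does, since you explicitly identify and handle the one genuinely new point---the Lipschitz estimate for $z\mapsto r_n(z)$ when $r_n$ is defined through the geometric interpolant $d_n(z)=D_m(z)[D_{m+1}(z)/D_m(z)]^{k/2}$---which the paper leaves entirely to the reader.
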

\begin{proof}
	This is analogous to the proof of Lemma \ref{lemma:parap_of_P(Up)}.
\end{proof}

Continuity is by construction and uniform quasi-regularity can be proved analogously to the case of fixed critical values. The quasi-regularity constant will be larger due to the factors.

\bibliographystyle{alpha}
\bibliography{bibliography}
\end{document}